\documentclass[10pt]{article}
\usepackage[
	paper=a4paper,
	top=3cm,
	inner= 2.54cm,
	outer= 2.54cm,
	bottom= 3cm,
	headheight=5ex,
	headsep=5ex]{geometry}
\usepackage{enumitem}
\usepackage{amsmath, amssymb}
	\numberwithin{equation}{section} 
\usepackage{tikz-cd}
	\usetikzlibrary{decorations.pathreplacing}
	\newcommand{\tikznode}[3][inner sep=0pt]{\tikz[remember picture,baseline=(#2.base)]{\node(#2)[#1]{$#3$};}}
\usepackage{ytableau}
\usepackage{mathdots}
\usepackage{natbib}
	\setcitestyle{numbers,square}
\usepackage[bookmarks,colorlinks ]{hyperref}
	\hypersetup{linkcolor =blue,
	filecolor=red}

\usepackage{amsthm}
\newtheorem{thm}{Theorem}[section]
\newtheorem{prop}[thm]{Proposition}
\newtheorem{lem}[thm]{Lemma}

\theoremstyle{definition}
\newtheorem{din}[thm]{Definition}
\newtheorem{hyp}[thm]{Hypothesis}

\theoremstyle{remark}
\newtheorem{rmk}{Remark}[subsection]
\newtheorem{eg}{Example}

\newcommand{\N}{\mathbb N}
\newcommand{\Z}{\mathbb Z}
\newcommand{\R}{{\mathbb R}}
\newcommand{\C}{{\mathbb C}}
\newcommand{\Id}{{\operatorname{Id}}}
\newcommand{\GL}{{\mathrm {GL}}}
\newcommand{\U}{{\mathrm U}}
\newcommand{\SL}{{\mathrm {SL}}}
\newcommand{\Sp}{{\mathrm {Sp}}}
\newcommand{\SO}{{\mathrm {SO}}}
\newcommand{\g}{{\mathfrak g}}
\newcommand{\h}{{\mathfrak h}}
\newcommand{\q}{{\mathfrak q}}
\newcommand{\p}{{\mathfrak p}}

\renewcommand{\l}{{\mathfrak l}}
\renewcommand{\k}{{\mathfrak k}}
\newcommand{\m}{{\mathfrak m}}
\newcommand{\n}{{\mathfrak n}}
\renewcommand{\t}{{\mathfrak t}}

\newcommand{\gl}{{\mathfrak {gl}}}
\renewcommand{\u}{{\mathfrak u}}
\newcommand{\la}{\left \langle}
\newcommand{\ra}{\right \rangle}

\begin{document}
\title{Non-zero condition on M{\oe}glin-Renard's parametrization for Arthur packets of $\U(p,q)$}
\author{Chang Huang}
\maketitle

\newcommand{\keywords}[1]{\textbf{Keywords:} #1}
\begin{abstract}
	M{\oe}glin-Renard parametrized A-packet of unitary group through cohomological induction in good parity case.
	Each parameter gives rise to an $A_\q(\lambda)$ which is either $0$ or irreducible.
	Trapa proposed an algorithm to determine whether a ``mediocre'' $A_\q(\lambda)$ of $\U(p, q)$ is non-zero. 
	Based on his result, we present a further understanding of the non-zero condition on M{\oe}glin-Renard's parametrization.
	Our criterion comes out to be a system of linear constraints, and has the same formulation as $p$-adic case.
	This suggests a map from A-packets of real unitary group to A-packets of $p$-adic symplectic group or special orthogonal group.
	
\end{abstract}

\tableofcontents
\pagestyle{plain}

\section{Introduction}
	Local A-packet plays an important role in the endoscopic classification for automorphic representations.
	It is determined by endoscopic character identities (\cite{Arthur13, Mok15}),
	thus difficult to further understand.
	For example, an A-packet by definition could have multiplicity,
	and it is an important problem to determine whether this multi-set is in fact multiplicity-free.
	
	To make the A-packet more explicit,	
	M{\oe}glin (\cite{Moeglin06-summary, Moeglin06, Moeglin09, Moeglin11}) and M{\oe}glin-Renard (\cite{MR18, MR, MR20}) has consecutively worked on its construction.
	For real classical groups, they obtained the following result:
	\begin{itemize}
	\item by parabolic induction, one can construct a general A-packet from an A-packet of good parity,
	where the multiplicity one property is preserved;
	\item by cohomological induction, one can construct an A-packet of good parity from a unipotent A-packet.
	\end{itemize}
	Note that an A-packet is associated with an A-parameter,  to which the conditions ``of good parity'' and ``unipotent'' actually apply.
	
	For real unitary groups, their result in \cite{MR} is simpler.
	Its A-packet of good parity is constructed directly by cohomological induction. 
	Generally, the multiplicity one property could be lost in cohomological induction, unless the A-parameter of good parity is ``very regular''.
	However, for unitary group it is obtained without the assumption of regularity.
	In this case, the induction process starts from characters of a $\theta$-stable Levi lying in the ``weakly fair range'';
	due to an irreducibility result special for unitary group, it produces either irreducible representations or $0$.
	This is the origin of multiplicity one property.
	
	Nevertheless, it's difficult to detemine whether the cohomological induction from ``weakly fair range'' does not vanish.
	Trapa (\cite{Trapa}) proposed a criterion algorithm,
	but highly involved with combinatorics over Young diagrams.
	In this article, we would sduty his criterion in detail, 
	and deduce a new one (Theorem \ref{non-vanishing}) for M{\oe}glin-Renard's parametrization.

	At first glance, our new criterion is not simpler;
	however, its value lies in the similarity with what has happened on the $p$-adic side.
	For $p$-adic symplectic groups and orthogonal groups, 
	\cite{Atobe22} summarizes the parametrization and non-zero criterion of their A-packets.
	Here we sketch that for a non-negative A-parameter (teminology introduced in \cite{Atobe22}) as follows:
\begin{itemize}
	\item this A-parameter can be decomposed into irreducible components, each of which corresponds to a segement and provides a component to the parametrization of A-packet
	\item each pair of adjacent segements in the decomposition provides a necessary condition to the criterion;
	\item decomposition of A-parameter could be different, resulting in different admissible arrangements/orders of segements, 
	but equivalent parametrizations (by a transition map);
	\item those necessary conditions,
	 provided by all pairs of segements that are adjacent in some admissible arrangements/orders,
	 compose into the sufficient condition in the criterion. 
\end{itemize}
	Our criterion (Theorem \ref{non-vanishing}) is formally the same as above. 
	We can even write out a correspondence between the combinatorial parameters on two sides, under which those two criteria are equivalent!
	
	The correspondence on the combinatoric level suggests a correspondence on the representation level.
	We may expect a representation theoretical map from A-packets of real unitary groups to A-packets of $p$-adic symplectic groups or special orthogonal groups;
	under this map the corresponded representations would have similar Langlands quotient/submodule parameters.
	A recent project of the author with Taiwang Deng, Bin Xu and Qixian Zhao would support this from the geometric perspective.
	
	Here is an outline of this article.
	In section \ref{result} we breifly review M{\oe}glin-Renard's construction for A-packets of unitary groups, 
	and prepare necessary notions about segements and their admissible arrangements for our non-vanishing criterion.
	In section \ref{compare}, we will compare our criterion with that formulated in \cite[Section 4]{Atobe22}.
	This will lead to a map of Arthur-Vogan packets from real side to $p$-adic side.
	Section \ref{sec: remained}, \ref{tableau}, \ref{swap-tableau}, \ref{sufficiency} are devoted to the proof of Theorem \ref{non-vanishing}.


\newcommand{\FDR}{{\mathcal S}}
\newcommand{\gp}{{\mathrm g}}
\newcommand{\bp}{{\mathrm b}}
\newcommand{\Herm}{{\mathrm H}}
\newcommand{\bg}{{\operatorname b}}
\newcommand{\ed}{{\operatorname e}}
\newcommand{\Seg}{{\mathrm{Seg}}}
\newcommand{\MR}{{\mathrm {MR}}}
\section{Notations and results}\label{result}
\subsection{A-parameter}
	Let $G^*$ be the quasi-split real unitary group of rank $n\geqslant 1$, more explicitly, $\U(\frac n 2, \frac n 2)$ if $n$ is even and $\U(\frac {n-1}2, \frac{n+1} 2)$ if $n$ is odd.
	In this subsection we will review relevant notions about A-parameters of $G^*$.
	
	The Langlands dual group of $G^*$ is $\hat G= \GL(n, \C)$, with an action of $W_\R$ (after fixing a $\R$-splitting of $G^*$).
	An A-parameter of $G^*$ is a $\hat G$-conjugacy class of admissible homomorphism $\psi: W_\R \times \SL(2, \C) \to \hat G \rtimes W_\R$.
	By abuse of notation we may denote by $\psi$ either an admissible homomorphism or its $\hat G$ conjugacy class.
	\cite[lemma 2.2.1]{Mok15} shows that $\psi$ can be determined by its base change to $W_\C$:
	\[\begin{tikzcd}
	{W_\C\times \SL(2, \C)} \arrow[r] \arrow[d, hook] \arrow[rr, "\psi_\C", bend left] & {\GL(n,\C) \times W_\C} \arrow[d, hook] \arrow[r] & {\GL(n, \C)} \\
	{W_\R \times \SL(2, \C)} \arrow[r, "\psi"]                                          & {\GL(n, \C) \rtimes W_\R}                         &             
\end{tikzcd},\]
	where the square is a pull-back.
	Moreover, this base change $\psi_{\C}$ can be regarded as a semisimple representation, which is conjugate self-dual with parity $(-1)^{n-1}$. 
	
	Decompose the representation $\psi_\C$ into irreducible components as
	\[ \psi_\C = \bigoplus_{i=1}^r \rho_i \boxtimes \FDR_{m_i},\]
	where each $\rho_i$ is a character of $W_\C= \C^\times$ and $\FDR_{m_i}$ is the $m_i$-dimensional representation of $\SL(2, \C)$.
	Following \cite{MR} we say an irreducible representation $\rho \boxtimes \FDR_{m}$ of $W_\C \times \SL(2, \C)$ has good parity if it is conjugate self-dual.
	In this case $\rho$ takes the form $z^{\frac{a}2} \bar z^{-\frac{a}2}$, and then $\rho\boxtimes \FDR_m$ has parity $(-1)^{a+m -1}$.
	Consequently, if $\psi_\C$ has a component $(\frac z {\bar z})^{\frac {a_i} 2} \boxtimes \FDR_{m_i}$ of good parity, then due to the parity consistency, $a_i + m_i \equiv n \mod 2$.
	On the other hand, if $\psi_\C$ has a component $\rho_i \boxtimes \FDR_{m_i}$ of bad parity, then there must be another component $\rho_j \boxtimes \FDR_{m_j}$ as its conjugate dual, 
	such that $\rho_i \boxtimes \FDR_i \oplus \rho_j \boxtimes \FDR_j$ is conjugate self-dual, and can be equipped with parity consistent with $\psi_\C$.
	In conclusion, there is a (unique) decomposition $\psi = \psi_\gp \oplus \psi_\bp$ such that $\psi_{\mathrm g, \C}$ consists of all components of $\psi_\C$ having good parity.

\subsection{Construction and parametrization of A-packet}\label{subsec: parametrize-A-packet}
	Let $G= \U(p,q)$ be a real unitary group of rank $n$.
	In this subsection we will review M{\oe}glin-Renard's construction for A-packets of $G$.
	The A-packet theory for non-split group $G$ is developed in \cite{MR-non-split}.
	
	Since $G$ lies in the same inner class as $G^*$, they share the same A-parameters.
	Let $\psi$ be an A-parameter of $G$, with the decomposition $\psi = \psi_\gp \oplus \psi_\bp$ according to parity.
	The base change of $\psi_\bp$ decompose further into $\psi_\bp' \oplus \psi_\bp'^\vee$,
	where $\psi_\bp'^\vee$ is the conjugate dual of $\psi_\bp'$.
	Moreover, $\psi_\bp': W_\C \times \SL(2, \C) \to \GL(n_\bp', \C)$ can be regarded as an A-parameter of $\GL(n_\bp', \C)$, and 
	uniquely determines a representation $\rho(\psi_\bp')$ via the local Langlands correspondence for $\GL(n_\bp', \C)$.
\begin{thm}[{\cite[Theorem 5.3]{MR}}]
	Denote the A-packet of $G$ attached to $\psi$ by $\Pi_\psi(G)$.
	There is a bijection:
	\[\Pi_{\psi_\gp}(G_\gp) \to \Pi_\psi(G), \quad 
	\pi \mapsto \rho(\psi_\bp') \rtimes \pi,  \]
	where $G_\gp = \U(p- n_\bp', q- n_\bp')$.
	In particular, if $\Pi_{\psi_\gp}(G_\gp)$ is multiplicity free, then so is $\Pi_\psi(G)$.
\end{thm}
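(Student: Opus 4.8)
The plan is to deduce the statement from two ingredients: the compatibility of A-packets with parabolic induction, and an irreducibility property forced by bad parity. First I would observe that $\psi$ factors through ${}^L L$, where $L \cong \GL(n_\bp', \C) \times G_\gp$ is a Levi subgroup of $G$, with $\psi$ restricting to $\psi_\bp'$ on the $\GL(n_\bp', \C)$-factor and to $\psi_\gp$ on the $G_\gp$-factor, the complementary summand $\psi_\bp'^\vee$ being built into the embedding ${}^L L \hookrightarrow {}^L G$. By the endoscopic formalism of Arthur \cite{Arthur13} and of Mok \cite{Mok15} for unitary groups (and \cite{MR-non-split} for non-quasi-split $G$), the A-packet attached to a parameter factoring through a Levi $L$ is obtained, \emph{as a multi-set}, by normalized parabolic induction of the A-packet of $L$: every member of $\Pi_\psi(G)$ occurs as a constituent of $\rho(\psi_\bp') \rtimes \pi$ for some $\pi$ in the A-packet $\Pi_{\psi_\gp}(G_\gp)$, with total multiplicities preserved. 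Here I use that the A-packet of the $\GL(n_\bp', \C)$-factor attached to $\psi_\bp'$ is the singleton $\{\rho(\psi_\bp')\}$ supplied by the local Langlands correspondence for $\GL(n_\bp', \C)$.

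The heart of the argument is the claim that $\rho(\psi_\bp') \rtimes \pi$ is \emph{irreducible} for every $\pi \in \Pi_{\psi_\gp}(G_\gp)$. This is precisely where bad parity enters: by construction no irreducible constituent of $\psi_\bp'$ is conjugate self-dual with parity $(-1)^{n-1}$, so neither $\rho(\psi_\bp')$ nor $\rho(\psi_\bp')^\vee$ shares a constituent with the $W_\C$-support of any member of $\Pi_{\psi_\gp}(G_\gp)$ at a point where an intertwining operator could acquire a pole or a kernel. I would make this precise by passing through the Langlands classification: realize $\rho(\psi_\bp')$ and $\pi$ as Langlands quotients of explicit standard modules, so that $\rho(\psi_\bp') \rtimes \pi$ becomes an induced-from-standard module of $G$, and then prove irreducibility either by showing the associated $R$-group is trivial (which it is precisely because no self-dual constituent of the ``right type'' occurs) or by a Jacquet-module/derivative computation showing the long intertwining operator is holomorphic and nonzero. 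Alternatively, one can feed the explicit cohomological-induction description of the members of $\Pi_{\psi_\gp}(G_\gp)$ (in the weakly fair range) into known irreducibility theorems for real parabolic induction of such $A_\q(\lambda)$-modules.

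Granting irreducibility, $\pi \mapsto \rho(\psi_\bp') \rtimes \pi$ is a well-defined map $\Pi_{\psi_\gp}(G_\gp) \to \Pi_\psi(G)$, surjective by the first paragraph. For injectivity I would recover $\pi$ from the $W_\C$-support of $\rho(\psi_\bp') \rtimes \pi$, which equals $\psi_{\bp,\C}' \oplus (\psi_{\bp,\C}')^\vee$ together with the support of $\pi$; since the decomposition $\psi = \psi_\gp \oplus \psi_\bp$ was designed so that the good- and bad-parity supports are disjoint, the support of $\pi$ is determined, hence so is $\pi$, using that distinct members of an A-packet of $G_\gp$ with a fixed parameter stay distinct. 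Finally, because the bijection is realized by \emph{irreducible} induction, it transports the multi-set structure faithfully: the multiplicity of $\rho(\psi_\bp') \rtimes \pi$ in $\Pi_\psi(G)$ equals that of $\pi$ in $\Pi_{\psi_\gp}(G_\gp)$, so multiplicity freeness of the latter forces multiplicity freeness of the former. The step I expect to be the main obstacle is the irreducibility of $\rho(\psi_\bp') \rtimes \pi$: without the bad-parity hypothesis this induced representation genuinely can reduce, and pinning down the intertwining operators / $R$-group in the archimedean setting is the one place where real-group input --- e.g.\ Speh--Vogan-type reducibility results for induction from cohomologically induced modules, or explicit intertwining-operator computations for $\U(p,q)$ --- is indispensable.
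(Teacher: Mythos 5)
The paper does not prove this statement at all: it is cited verbatim as \cite[Theorem 5.3]{MR} and used as a black box to reduce to the good-parity case. So there is no argument in the paper to compare your sketch against, and the honest assessment is that a proof here is out of scope for this article.

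That said, your outline does capture the structure of the argument one actually finds in the source: (i) the parameter factors through a Levi $\GL(n_\bp',\C)\times G_\gp$; (ii) $\rho(\psi_\bp')\rtimes\pi$ is irreducible for $\pi\in\Pi_{\psi_\gp}(G_\gp)$, which is where bad parity is the decisive hypothesis; (iii) the bijection and the transfer of multiplicity-freeness follow. A few cautionary remarks if you wanted to turn this into a proof. First, your opening claim that the endoscopic formalism of \cite{Arthur13, Mok15} directly yields that $\Pi_\psi(G)$ is the multi-set of constituents of $\rho(\psi_\bp')\rtimes\pi$ is too quick: that statement is itself nontrivial and is effectively what \cite{MR} establishes (one has to show the packet they construct actually satisfies the endoscopic character identities). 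Second, the ``$W_\C$-support'' argument for injectivity should really be phrased in terms of cuspidal support or Langlands data; as written it is informal, though it points in the right direction. Third, and most importantly, the irreducibility step you flag as the main obstacle really is the crux, and the archimedean counterpart of the $R$-group/intertwining-operator argument for $\U(p,q)$ (or the alternative via parabolic induction of cohomologically induced modules) is where genuine real-group input enters. You are right that this cannot be waved away, and it is precisely what the cited reference supplies.
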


	The above result reduces the construction of A-packet to the case of good parity.
	From now on we may always denote by $\psi$ an A-parameter of $G$ with base change
	\[\psi_{\C} = \bigoplus_{i=1}^r (\frac z{\bar z} )^{\frac {a_i}2} \boxtimes \FDR_{m_i}.\]
	To parametrize $\Pi_\psi(G)$, \cite{MR} introduces the set
	\[\mathcal D(\psi): = \{\underline p:=(p_1, \cdots, p_r )\in \N^r \mid 0 \leqslant p_i \leqslant m_i, \, p_1 + \cdots + p_r=p \}.\]
	For $\underline p \in \mathcal D(\psi)$, there is a $\theta$-stable parabolic subalgebra $\q_{\underline p} \subseteq \g = \gl(n, \C)$ with stabilizer
	\[N_G(\q_{\underline p}) = : L_{\underline p} \cong  \prod_{i=1}^r \U(p_i, m_i - p_i);\]
	define integers
\begin{equation}\label{lambda-nu}
	\lambda_i = \frac{a_i + m_i - n} 2 + \sum_{j< i} m_j,
\end{equation}
	such that $\boxtimes_{i=1}^r \det^{\lambda_i}$ becomes a character $\lambda_{\underline p}$ of $L_{\underline p}$ via the above isomorphism;
	then we can take the cohomologically induced module $A_{\q_{\underline p}}(\lambda_{\underline p})$ following the notation in \cite[(5.6)]{KV}.
	
	Here is a detailed definition for $\q_{\underline p}$.
	The unitary group $G= \U(p, q)$ is a real form of $\GL(n, \C)$ determined by the Cartan involution 
	\[\theta(g) := \begin{pmatrix}
	I_p 	& \\
		& -I_q
	\end{pmatrix}
	g 
	 \begin{pmatrix}
	I_p 	& \\
		& -I_q
	\end{pmatrix}.\]
	Let $\t \subseteq \g = \gl(n, \C)$ consist of diagonal matrices.
	Its root system $\Delta(\g, \t)$ consists of $e_i - e_j$, $1\leqslant i \not =j \leqslant n$, 
	where $e_i$ denotes the $i$-th coordinate funcition of $\t \cong \C^n$.
	Let $t_{\underline p} \in \C^n \cong \t^*$ be the following element:
	\[(\underbrace{r, \cdots, r}_{p_1};
	\cdots;
	\underbrace{1, \cdots, 1}_{p_r};
	\overbrace{1, \cdots, 1}^{q_r};
	\overbrace{2, \cdots, 2}^{q_{r-1}};
	\cdots;
	\overbrace{r \cdots, r}^{q_1} ),\]
	where $q_i = m_i - p_i$.
	Then take
	\[\l_{\underline p} = \t \oplus \bigoplus_{\alpha \in \Delta(\g, \t), \alpha( t_{\underline p}) = 0} \g_\alpha, \quad
	\q_{\underline p} = \t \oplus \bigoplus_{\alpha \in \Delta(\g, \t), \alpha( t_{\underline p})\geqslant 0} \g_\alpha, \quad
	L_{\underline p}= N_G(\q_{\underline p})\]
	as \cite[(3.1)]{MR}.
	It's easy to see $\l_{\underline p}$ is exactly the complex Lie algebra of $L_{\underline p}$, 
	and check $L_{\underline p} \cong \prod_{i=1}^r \U(p_i, q_i)$. 
	
	There are serval range conditions on $(\q, \lambda)$ for the irreducibility and non-vanishing of $A_\q(\lambda)$.
\begin{din}
	Suppose $\q = \l \oplus \u$.
	Choose a Cartan $\t \subseteq \l$, and a positive root system $\Delta^+(\g, \t) \subseteq \Delta(\g, \t)$ containing $\Delta(\u, \t)$.
	Let $\lambda \in \t^*$ represents the character $\lambda$ of $L$, $\delta_G$ and $\delta(\u)$ be half the sum of $\Delta^+(\g, \t)$ and $\Delta(\u, \t)$. 
	The pair $(\q, \lambda)$ is said to be in the
\begin{itemize}
	\item ``good range'' if $\la \lambda + \delta_G, \alpha \ra >0$ for all $\alpha \in \Delta(\u, \t)$;
	\item ``weakly fair range'' if $\la \lambda + \delta(\u), \alpha \ra \geqslant 0$ for all $\alpha \in \Delta(\u, \t)$.
\end{itemize}
\end{din}
\begin{rmk}
	The above definition is independent of the choice of $\t$ and $\Delta^+(\g, \t)$, since both $\lambda$ and $\delta(\u)$ is stable under the action of $W(\l, \t)$.
\end{rmk}

	Specialize to the group $G= \U(p, q)$, then the range conditions for $(\q_{\underline p}, \lambda_{\underline p})$ are described as follows:
	$(\q_{\underline p}, \lambda_{\underline p})$ lies in the
	\begin{itemize}
	\item ``good range'' iff $\lambda_i - \lambda_{i+1} \geqslant 0$ for $i =1, \cdots, r-1$;
	
	\item ``weakly fair range'' iff  $\lambda_i - \lambda_{i+1} \geqslant -\frac{m_i + m_{i+1}}2$ for $i =1, \cdots, r-1$.
	\end{itemize}
	Together with \eqref{lambda-nu}, we see each condition depends only on the decomposition of $\psi_\C$, rather than $\underline p$.
	 \cite[Section 4]{MR} fixes the order of indices $k$ such that 
\begin{equation}\label{MR-order}
	a_1 \geqslant \cdots \geqslant a_r, \textrm{ and }
	m_i \geqslant m_{i+1} \textrm{ whenever } a_i= a_{i+1}.
\end{equation}  
	Under this choice, $(\q_{\underline p}, \lambda_{\underline p})$ always lies in the ``weakly fair range''.
	Then $A_\q(\lambda)$ is either zero or irreducible according to \cite[Theorem 3.1 and Lemma 3.5]{Trapa}; 
	this irreducibility result is special for unitary group. 
\begin{thm}[{\cite[Theorem 4.1]{MR}}]
	The map $\mathcal D(\psi)\to \Pi_\psi(G) \sqcup \{0\}$, $\underline p \mapsto A_{\q_{\underline p}}(\lambda_{\underline p})$ is bijective over $\Pi_\psi(G)$.
	In particular, $\Pi_\psi(G)$ is multiplicity free.
\end{thm}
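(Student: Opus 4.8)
\emph{Proof idea.} The plan is to recognise $\psi$ as an Adams--Johnson parameter and then to make the associated Adams--Johnson packet completely explicit. Since $\psi$ has good parity, $\psi_\C|_{W_\C}$ is a sum of copies of the characters $(\tfrac z{\bar z})^{a_i/2}$ and is therefore valued in the diagonal torus $\hat T\subseteq\GL(n,\C)$ dual to $\t$, while $\psi|_{\SL(2,\C)}$ is principal in the Levi $\prod_{i=1}^r\GL(m_i,\C)\subseteq\GL(n,\C)$; this is precisely the Adams--Johnson shape. I would then carry out four steps: (i) identify $\Pi_\psi(G)$ with the Adams--Johnson packet; (ii) parametrise that packet by the $\theta$-stable parabolics $\q_{\underline p}$; (iii) separate the non-zero modules $A_{\q_{\underline p}}(\lambda_{\underline p})$; (iv) deduce multiplicity one.

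For step (i): Arthur's A-packets of unitary groups are pinned down by the standard and twisted endoscopic character identities of \cite{Mok15}, and I would verify these for the Adams--Johnson packet, the essential input being the stability and endoscopic transfer of the weakly fair cohomologically induced modules $A_{\q_{\underline p}}(\lambda_{\underline p})$ in the spirit of Adams--Johnson (and of the parallel work for real classical groups). The one normalisation to check by hand is the infinitesimal character: the integers $\lambda_i$ in \eqref{lambda-nu} are rigged so that $\lambda_{\underline p}+\delta_G$, which is in fact independent of $\underline p$, equals the multiset $\bigcup_i\{\tfrac{a_i+m_i-1}2,\tfrac{a_i+m_i-3}2,\dots,\tfrac{a_i-m_i+1}2\}$ attached to $\psi$ --- a short computation from the definition of $t_{\underline p}$. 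Moreover the order \eqref{MR-order} places every $(\q_{\underline p},\lambda_{\underline p})$ in the weakly fair range, so each $A_{\q_{\underline p}}(\lambda_{\underline p})$ is unitary and, by \cite[Theorem 3.1 and Lemma 3.5]{Trapa}, either irreducible or $0$.

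For steps (ii) and (iii): over $G=\U(p,q)$ a $\theta$-stable parabolic in the prescribed $\GL(n,\C)$-conjugacy class is, up to $K$-conjugacy, nothing but a choice of signatures $m_i=p_i+q_i$ with $p_1+\dots+p_r=p$, that is an element $\underline p\in\mathcal D(\psi)$, and its associated character is $\lambda_{\underline p}$ (the ordering of the blocks being pinned down by \eqref{MR-order}); so by the Adams--Johnson theory the non-zero $A_{\q_{\underline p}}(\lambda_{\underline p})$, $\underline p\in\mathcal D(\psi)$, are precisely the members of $\Pi_\psi(G)$, and they are irreducible by step (i). To see that distinct such $\underline p$ give non-isomorphic representations I would track the Vogan--Zuckerman bottom layer: when non-zero, $A_{\q_{\underline p}}(\lambda_{\underline p})$ still carries the $K$-type with highest weight $\lambda_{\underline p}|_\t+2\delta(\u_{\underline p}\cap\p)$ and has associated variety the $K$-saturation of $\u_{\underline p}\cap\p$, and Trapa's combinatorial (Young-diagram) analysis computes precisely which of these invariants can coincide; once the vanishing parameters have been removed the survivors turn out to be pairwise distinct. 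Hence $\underline p\mapsto A_{\q_{\underline p}}(\lambda_{\underline p})$ is injective on its non-zero locus and surjects onto $\Pi_\psi(G)$, i.e.\ it is bijective over $\Pi_\psi(G)$.

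Step (iv) is then formal: the A-packet, a priori only a multiset, has been exhibited as a set of pairwise distinct non-zero $A_{\q_{\underline p}}(\lambda_{\underline p})$, so it is multiplicity free. I expect the genuine work to sit entirely in step (iii) --- together with the bookkeeping implicit in step (i), namely matching the non-vanishing parameters $\underline p$ with the characters of Arthur's component group --- because in the weakly fair but not good range cohomological induction can vanish, and for the other real classical groups can even become reducible; so the argument really hinges on the unitary-group-specific irreducibility of \cite{Trapa} and on the explicit $K$-type and associated-cycle bookkeeping needed to certify that, once the zeros are discarded, what remains is genuinely multiplicity free and exhausts $\Pi_\psi(G)$.
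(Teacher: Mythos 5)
The present paper cites this statement from [MR, Theorem 4.1] without reproducing a proof, so there is no argument in the paper to compare against; you are reconstructing the Mœglin--Renard argument from scratch. At the level of structure your sketch matches what [MR] does: recognise $\psi$ as an Adams--Johnson parameter, verify the endoscopic character identities for the resulting packet, invoke Trapa's $\U(p,q)$-specific irreducibility to see that each weakly fair $A_{\q_{\underline p}}(\lambda_{\underline p})$ is zero or irreducible, and then show the non-zero ones are pairwise distinct and exhaust the packet.

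There is, however, a genuine gap in your injectivity argument for step (iii). You propose to distinguish non-zero $A_{\q_{\underline p}}(\lambda_{\underline p})$ by the bottom-layer $K$-type $\lambda_{\underline p}|_\t + 2\delta(\u_{\underline p}\cap\p)$. That works in the good range, but in the weakly fair range --- which is precisely the regime where the theorem has content --- this weight need not be $K$-dominant, and even when it is, the corresponding $K$-type need not occur in $A_\q(\lambda)$. So it is not available as a distinguishing invariant. This failure is exactly why Trapa (and implicitly [MR]) works with the pair (annihilator, associated variety) instead: the signed tableau encoding the associated variety is read directly off $\underline p$, but the $\nu$-antitableau encoding the annihilator requires the non-trivial adjustment algorithm that the present paper reviews in its Section~\ref{tableau}, and it is this adjustment that makes both the vanishing criterion and the injectivity delicate. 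Writing that ``the survivors turn out to be pairwise distinct'' therefore asserts, rather than supplies, the combinatorial content of the proof.
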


	Here is a remark on the choice \eqref{MR-order}.
	It is not necessary for the irreducibility of $A_\q(\lambda)$.
	According to \cite[Theorem 3.1(b) and Definition 3.4]{Trapa}, we may relax the condition by requiring those $(\q, \lambda)$ lying in the ``mediocre range'':
	\begin{itemize}
	\item $\lambda_i - \lambda_j \geqslant - \max\{ m_i, m_j\} - \sum_{i< k < j} m_k$ for $i < j$. 
	\end{itemize}
	Then more decompositions of $\psi_\C$ appear into consideration.
	To make it concise, we will introduce the subsequent combinatorial notions about segements and their admissible orders.

\subsection{Admissible representatives of infinitesimal character}
	Recall $\g=\gl(n, \C)$ is the complexfication of Lie algebra of $G= \U(p, q)$,
	and $\t \subseteq \g$ consists of diagonal matrices.
	According to the Harish-Chandra isomorphism, an infinitesimal character of $\gl(n, \C)$ corresponds uniquely to a Weyl group-orbit in $\t^*$.
	By the identification $\t^* \cong \C^n$, we may represent such an infinitesimal character by an $n$-tuple of complex numbers.
	
	For the A-parameter $\psi$ of good parity with base change
	\[\psi_{\C} = \bigoplus_{i=1}^r (\frac z{\bar z} )^{\frac {a_i}2} \boxtimes \FDR_{m_i},\]
	its infinitesimal character $\nu$ has the form $(\nu_1, \cdots, \nu_r)$, where each $\nu_i = (\frac{a_i+m_i -1} 2, \cdots, \frac{a_i - m_i +1}2)$ is a decreasing sequence with center $a_i$, length $m_i$ and step one.	
\begin{din}
	By \textbf{segment}, we mean a decreasing sequence of complex numbers with step one. 
	For a segement $\nu$, we denote its beginning (the maximum) by $\bg(\nu)$, and its end (the minimum) by $\ed(\nu)$. 
	Then we may also denote this segement by $[\bg(\nu), \ed(\nu)]$.
\end{din}

	It's clear that the irreducible component $(\frac z{\bar z} )^{\frac {a_i}2} \boxtimes \FDR_{m_i}$ and segement $\nu_i = (\frac{a_i+m_i -1} 2, \cdots \frac{a_i - m_i +1}2)$ determine each other.
	Then the (multi-)set of segments $\Seg_\psi := \{ \nu_1, \cdots, \nu_r \}$ records irreducible components of $\psi_\C$ and hence determines $\psi$.
	A permutation in the decomposition of $\psi_\C$ is equivalent to a rearrangement of $\Seg_\psi$, 
	and a permutation of $\nu=(\nu_1, \cdots, \nu_r)$.
	It also leads to a reprametrization of $\mathcal D(\psi) \to \Pi_\psi(G) \sqcup \{0\}$.
	For our non-zero criterion, we may consider ``admissible'' permutations of $\nu= (\nu_1, \cdots, \nu_r)$ that all $(\q, \lambda)$ involved lie in the ``mediocre range''.
	The cardinality of admissible permutations depends on the relative positions of segments in $\mathrm{Seg}_\psi$.
	Here follows the definition.
	
\begin{din}
	A segment $\nu$ is said to \textbf {precede} $\mu$ if $\bg(\nu) > \bg (\mu)$ and $\ed(\nu) > \ed(\mu)$, and denoted by $\nu > \mu$.  
	If we replace all ``>'' by ``$\geqslant$'', then we say $\nu$ weakly precedes $\mu$.
	
	If two segements $\nu$, $\mu$ don't precede each other, then we say they are in \textbf{containment} relation.
\end{din}
\begin{rmk}
	The containment relation consists of three possibilities: $\nu \subset \mu$, $\nu \supset \mu$ or $\nu = \mu.$
	When considering segments in $\mathrm{Seg}_\psi$, it will be more convenient to ``exclude'' the possibility $\nu = \mu$;
	we will discuss this issue in the next subsection, see remark \ref{rmk: deduplicate}.
\end{rmk}

\begin{din}\label{adm-arrangement}
	An arrangement $(\mu_1, \cdots, \mu_r)$ of $\mathrm{Seg}_\psi$ is said to be \textbf{admissible} if there is no such a pair $i<j$ that $\mu_i < \mu_j$. 
	The set of admissible arrangements of $\mathrm{Seg}_\psi$ would be denoted by $\Sigma_\psi$.
    
	A permutation $\sigma \in S_r$ is said to be \textbf{admissible} at $\nu \in \Sigma_\psi$ if the arrangement $\nu^\sigma:= (\nu_{\sigma(1)} , \cdots, \nu_{\sigma(r)} )$ is admissible. 
	The set of admissible permutations (at $\nu$) would be denoted by $\Sigma_r$.
\end{din}
\begin{rmk}
	We always fix a choice of $\nu$, and consider $\Sigma_r$ instead of $\Sigma_\psi$.
	While readers will notice that our choice varies according to situations,
	this variability does not affect the validity of our argument.
\end{rmk}

	In M{\oe}glin-Renard's parametrization of $\Pi_\psi(G)$, the character $\lambda_{\underline p}$ (at least as a tuple of numbers) is determined by $\nu \in \C^n \cong \h^*$ via \eqref{lambda-nu}, 
	so the range condition on $(\q_{\underline p}, \lambda_{\underline p})$ can also be put on the arrangement of segements $\nu= (\nu_1, \cdots, \nu_r)$.
	According to the explicit description in \cite[Lemma 3.5 and Definition 6.10]{Trapa}, one can check easily that 
	$(\q_{\underline p}, \lambda_{\underline p})$ lies in the 
	\begin{itemize}
	\item ``good range'' iff $\ed(\nu_i) > \bg(\nu_{i+1})$, $i = 1, \cdots, r-1$,
	\item ``weakly fair range'' iff $\frac{\bg(\nu_i)+\ed(\nu_i)}2 \geqslant 
	\frac{\bg(\nu_{i+1}) + \ed(\nu_{i+1}) }2$, $i=1, \cdots, r-1$,
	\item ``nice range'' iff  if $\nu_1 \geqslant \cdots \geqslant \nu_r$, and
	\item ``mediocre range'' iff $\nu= (\nu_1, \cdots, \nu)$ is an admissible arrangement.
	\end{itemize}
	
	Consequently, for any admissible permutation $\sigma \in \Sigma_r$ at $\nu$, take the representative $\psi^\sigma$ of A-parameter $\psi$ with base change
	\[ \psi^\sigma_\C = \bigoplus_{i=1}^r (\frac z {\bar z})^{\frac {a_{\sigma(i)}} 2} \boxtimes \FDR_{\sigma(i)}, \]
	and we can attach a cohomologically induced representation $A_{\q_{\underline p^\sigma}}(\lambda_{\underline p^\sigma})$ to each $\underline p^\sigma \in \mathcal D(\psi^\sigma)$
	through M{\oe}glin-Renard's construction, 
	Since these $(\q_{\underline p^\sigma}, \lambda_{\underline p^\sigma})$ lie in ``mediocre range'' due to the admissibility of $\sigma$, 
	their cohomological induction $A_{\q_{\underline p^\sigma}}(\lambda_{\underline p^\sigma})$ would also be either zero or irreducible, 
	due to \cite[Theorem 3.1 and Definition 3.4]{Trapa}.
	The set of non-zero representations $A_{\q_{\underline p^\sigma}}(\lambda_{\underline p^\sigma})$ turns out to be independent of $\sigma \in \Sigma_r$, as shown in section \ref{swap-tableau}.
	Hence, there is a new parametrization $\mathcal D(\psi^\sigma) \to \Pi_{\psi}(G) \sqcup \{0\}$ for each $\sigma \in \Sigma_r$, and
	we will take them into considerations.

	In the last of this subsection, we study the structure of $\Sigma_r \subseteq S_r$. 
	As a Coxeter group, $S_r$ is generated by the simple reflections $s_i:=(i, i+1), i =1, \cdots, r-1$. 
	They will also be called transpositions. 
	Admissible permutations have a samilar structure in the following sense.
\begin{lem}\label{decompose-of-perm}
	$\forall \sigma \in \Sigma_r$, there is a sequence $1= \sigma_0, \cdots, \sigma_l =\sigma$ in $\Sigma_r$ such that $\forall i =1, \cdots, l$, $\sigma_{i-1}^{-1} \sigma_i$ is a transposition. 
	We can require moreover that $l$ is exactly the length of $\sigma$ (in the Coxeter group $S_r$).
\end{lem}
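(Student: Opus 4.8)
The statement is purely combinatorial, so the plan is to argue about the subset $\Sigma_r\subseteq S_r$ directly, building the sequence by a descending induction on the Coxeter length. First I would fix, once and for all, the admissible arrangement $\nu=(\nu_1,\dots,\nu_r)$ underlying $\Sigma_r$, and isolate the one feature of it that matters: admissibility of $\nu$ says precisely that there is no pair of indices $a<b$ with the segment $\nu_b$ preceding $\nu_a$, in other words
\[\nu_a>\nu_b\ \Longrightarrow\ a<b\qquad(\star)\]
for all $a,b\in\{1,\dots,r\}$; that is, the ``precedes'' relation restricted to the segments of $\nu$ refines the natural order of their indices.

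The technical heart is a local criterion: I would show that, for $\sigma\in\Sigma_r$ and $1\le k\le r-1$, one has $\sigma s_k\in\Sigma_r$ if and only if the segments $\nu_{\sigma(k)}$ and $\nu_{\sigma(k+1)}$ are in containment relation. The proof is a short comparison. The arrangement $\nu^{\sigma s_k}$ is obtained from $\nu^{\sigma}$ by interchanging the entries in positions $k$ and $k+1$, so among the defining inequalities of admissibility --- one for each pair of positions --- the only one genuinely affected is the one attached to the pair $(k,k+1)$: for an outside position $p<k$ the conditions coming from $(p,k)$ and $(p,k+1)$ are merely swapped with each other, and likewise for $(k,q)$ and $(k+1,q)$ when $q>k+1$, while conditions for pairs disjoint from $\{k,k+1\}$ are untouched. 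Hence, given that $\nu^{\sigma}$ is admissible, $\nu^{\sigma s_k}$ is admissible iff the surviving condition at $(k,k+1)$ still holds, namely $\nu_{\sigma(k)}\not>\nu_{\sigma(k+1)}$; adjoining $\nu_{\sigma(k+1)}\not>\nu_{\sigma(k)}$, which is already part of $\sigma\in\Sigma_r$ (since $k<k+1$), this is exactly the containment relation.

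Granting the local criterion, I would run the induction on $\ell:=\ell(\sigma)$. The case $\ell=0$ is trivial since then $\sigma=1$. If $\ell\ge1$ then $\sigma\ne1$, so the word $(\sigma(1),\dots,\sigma(r))$ has a descent: an index $k$ with $\sigma(k)>\sigma(k+1)$, for which $\ell(\sigma s_k)=\ell-1$. The point is that such a descent is automatically a ``containment swap'': $\nu_{\sigma(k)}>\nu_{\sigma(k+1)}$ would force $\sigma(k)<\sigma(k+1)$ by $(\star)$, contradicting the descent, while $\nu_{\sigma(k+1)}>\nu_{\sigma(k)}$ is excluded by $\sigma\in\Sigma_r$; so $\nu_{\sigma(k)}$ and $\nu_{\sigma(k+1)}$ are in containment and $\sigma s_k\in\Sigma_r$ by the local criterion. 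Applying the induction hypothesis to $\sigma s_k$ (of length $\ell-1$) yields a chain $1=\sigma_0,\dots,\sigma_{\ell-1}=\sigma s_k$ in $\Sigma_r$ with each $\sigma_{i-1}^{-1}\sigma_i$ a transposition; setting $\sigma_\ell:=\sigma_{\ell-1}s_k=\sigma$ extends it to a chain in $\Sigma_r$ of length $\ell=\ell(\sigma)$, which is what is wanted.

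The only step I expect to require genuine care is the verification of the local criterion, specifically the bookkeeping claim that interchanging two adjacent entries disturbs exactly one admissibility inequality. Once that is nailed down, the observation that an integer descent is automatically a containment swap --- an immediate consequence of $(\star)$ --- makes the induction collapse; there is no delicate estimate or extended case analysis beyond this.
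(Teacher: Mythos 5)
Your proof is correct and takes essentially the same route as the paper's: both argue by induction on the Coxeter length, locate a descent $k$ of $\sigma$, use admissibility of $\nu$ and of $\nu^\sigma$ to rule out both directions of precedence between $\nu_{\sigma(k)}$ and $\nu_{\sigma(k+1)}$ (hence they are in containment), and conclude $\sigma s_k\in\Sigma_r$. The only cosmetic differences are that the paper fixes a particular descent, namely $h=\max\{k\mid\sigma(k)>k\}$, whereas you take any descent, and that you spell out the bookkeeping ``local criterion'' showing that swapping an adjacent containment pair disturbs only the $(k,k+1)$ admissibility inequality, a step the paper leaves implicit.
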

\begin{rmk}
	The admissible permutations at $\nu^\sigma$ are exactly given by $\Sigma_r^\sigma:= \sigma^{-1} \Sigma_r$.
\end{rmk}
\begin{proof} 
	It follows from an easy induction on the length $l(\sigma)$ of $\sigma$;
	note that $l(\sigma)$ is also the inversion number of $\sigma$.
	
	If $l(\sigma)\geqslant 1$, then there is a maximum in $\{ k \mid \sigma(k)>k\}$. 
	Denote it by $h$, and we must have $\sigma(h) > \sigma(h+1)$,
	since $\sigma(h+1) \leqslant h+1$ and $ h+1 \leqslant \sigma(h)$ by the definition of $h$.
	Now in $\nu$, $\nu_{\sigma(h)}$ dose not precede $\nu_{\sigma(h+1)}$.
	On the other hand, in $\nu^\sigma$, $\nu_{\sigma(h+1)} = \nu^\sigma_{h+1}$ dose not precede $\nu^\sigma_h = \nu_{\sigma(h) }$.
	Hence $\nu^\sigma_h$ and $\nu^\sigma_{h+1}$ are in containment relation, 
	and $\sigma_{l-1}:= \sigma \circ (h, h+1)$ lies in $\Sigma_r$.
	The inversion number of $\sigma_{l-1}$ is strictly smaller than $\sigma_l := \sigma$,
	so we may repeat this process to obtain $\sigma_0$ such that $l(\sigma_0)=0$. 
	The permutation without inversions must be identity. 
	Now the conclusion follows.
\end{proof}

\subsection{Admissible action on the generalized parameter space}\label{subsec: transition}
	For convenience, we enlarge $\mathcal D(\psi^\sigma)$ to the space:
	\[\MR^\sigma= \{ \underline p^\sigma \in \Z^r \mid p_1^\sigma + \cdots + p_r^\sigma =p\}.\]
	Since we have claimed $\forall \sigma \in \Sigma_r$, $\mathcal D(\psi^\sigma)$ parametrizes the same A-packet $\Pi_\psi(G)$, 
	it is natural to ask a transition map $\phi_\sigma^\tau: \MR^\sigma \to \MR^\tau$ such that 
	$\underline p^\sigma$ and $\underline p^\tau:= \phi_\sigma^\tau (\underline p^\tau)$ give rise to isomorphic representations. 
	Here we extend M{\oe}glin-Renard's parametrization $\mathcal D(\psi^\sigma) \to \Pi_{\psi}(G)\sqcup\{0\}$ to $\MR^\sigma \to \Pi_{\psi}(G)\sqcup\{0\}$ by zero extension.
	
	If $s=(i, i+1)$ is a transposition in $\Sigma_r^\sigma = \sigma^{-1} \Sigma_r,$ and $\nu^\sigma_i \not= \nu^\sigma_{i+1}$, the desired map $\phi_\sigma^{\sigma s}:\MR^\sigma \to \MR^{\sigma s}$ can be given by
\begin{equation}\label{transition}
	\phi_\sigma^{\sigma s}(\underline p^\sigma) =
	\left\{ \begin{aligned}
	&	(p_1^\sigma, \cdots,q_{i+1}^\sigma, p_i^\sigma + p_{i+1}^\sigma - q_{i+1}^\sigma, \cdots, p_r^\sigma)
	&	\textrm{if } \nu^\sigma_i \supset \nu^\sigma_{i+1},\\
	&	(p_1^\sigma, \cdots, p_i^\sigma + p_{i+1}^\sigma - q_i^\sigma, q_i^\sigma , \cdots, p_r^\sigma)
	&	\textrm{if } \nu^\sigma_i \subset \nu^\sigma_{i+1},
	\end{aligned}\right.
\end{equation}
	where $q_i^\sigma = m_{\sigma(i)}-p_i^\sigma$.
	We will prove $A_{\q_{\underline p^\sigma}}(\lambda_{\underline p^\sigma}) \cong 
	A_{\q_{\underline p^{\sigma s}}}(\lambda_{\underline p^{\sigma s}})$
	in subsubsection \ref{easy-approach} and section \ref{swap-tableau};
	this is also suggested by \cite[Lemma 9.3]{Trapa}.
	
	We have to generalize this definition to $\phi_\sigma^\tau$ for abitary $\sigma, \tau \in \Sigma_r$.
	According to Lemma \ref{decompose-of-perm}, there is a sequence $\sigma = \sigma_0, \cdots, \sigma_l = \tau$ in $\Sigma_r$ such that $\sigma_{i-1}^{-1}\sigma_i$ is a transposition (in $\Sigma_r^{\sigma_{i-1}}$).
	For convenience, \textbf{assume for a moment that $\mathrm{Seg}_\psi$ is multiplicity free}. 
	Then every $\phi_{\sigma_{i-1}}^{\sigma_i}: \MR^{\sigma_{ i - 1  } } \to \MR^{\sigma_i}$ has a definition as above, and they compose together into a map $\MR^\sigma \to \MR^\tau$. 	
	Denote it by $\phi^{\underline \sigma}$ where $\underline \sigma$ denotes the sequence $\sigma_0, \cdots, \sigma_l$.
	This map turns out to be the desired $\phi_\sigma^\tau$ according to the following lemma; it will be proved in subsubsection \ref{subsec: well-define}.
\begin{lem}\label{tran-map}
	The transition map $\phi^{\underline \sigma} = \phi_{\sigma_{l-1}}^{\sigma_l} \circ \cdots \circ \phi_{\sigma_0}^{\sigma_1}$ is independent of the choice of sequence $\underline \sigma$. 
\end{lem}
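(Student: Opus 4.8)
The plan is to reduce the independence statement to a local check on elementary relations among the sequences $\underline\sigma$, using the fact that any two reduced-word-type paths between $\sigma$ and $\tau$ in $\Sigma_r$ are connected by such elementary moves. First I would set up the combinatorial picture: the sequences $\underline\sigma = (\sigma_0,\dots,\sigma_l)$ with each $\sigma_{i-1}^{-1}\sigma_i$ a transposition and $l = \ell(\sigma_0^{-1}\sigma_l)$ correspond to ``admissible galleries'' in the weak Bruhat order on $S_r$ restricted to the coset-like subset $\Sigma_r$. Since by Lemma \ref{decompose-of-perm} the length is exactly the Coxeter length, such paths are in bijection with reduced expressions of $\sigma^{-1}\tau$ that stay inside $\Sigma_r$ at every partial product. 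By Matsumoto/Tits' theorem, any two reduced expressions of the same element of $S_r$ are connected by braid moves: the commuting move $s_i s_j = s_j s_i$ for $|i-j|\geqslant 2$, and the braid move $s_i s_{i+1} s_i = s_{i+1} s_i s_{i+1}$. So it suffices to prove that $\phi^{\underline\sigma}$ is unchanged under each of these two moves, whenever all intermediate arrangements involved remain admissible.

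The first case, the commuting move, should be routine: if $s=(i,i+1)$ and $s'=(j,j+1)$ with $|i-j|\geqslant 2$, then the formulas \eqref{transition} for $\phi_\sigma^{\sigma s}$ and $\phi_\sigma^{\sigma s'}$ act on disjoint pairs of coordinates, so the two compositions $\phi^{\sigma s s'}\circ\cdots$ and $\phi^{\sigma s' s}\circ\cdots$ manifestly agree — one only has to note that the relevant containment relations ($\nu^\sigma_i$ versus $\nu^\sigma_{i+1}$, and $\nu^\sigma_j$ versus $\nu^\sigma_{j+1}$) are preserved when one performs the far-away transposition first, which is clear since that transposition doesn't touch those four segments. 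The bookkeeping with the $q^\sigma_i = m_{\sigma(i)}-p^\sigma_i$ is mechanical.

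The braid move $s_i s_{i+1} s_i = s_{i+1} s_i s_{i+1}$ is the main obstacle. Here three consecutive segments, say $\mu_1,\mu_2,\mu_3$ (the values of $\nu^\sigma$ in positions $i,i+1,i+2$), get fully reversed to $\mu_3,\mu_2,\mu_1$, and admissibility of all six intermediate arrangements forces the three to be pairwise in containment relation; up to the symmetry $\supset\leftrightarrow\subset$ there are essentially a handful of sub-cases according to the nesting pattern among $\mu_1,\mu_2,\mu_3$ (e.g.\ $\mu_1\supset\mu_2\supset\mu_3$, $\mu_1\supset\mu_2\subset\mu_3$ with $\mu_1,\mu_3$ themselves nested, etc.). For each sub-case I would write out the two triples of elementary maps from \eqref{transition} and verify by direct substitution that the composite effect on $(p^\sigma_i, p^\sigma_{i+1}, p^\sigma_{i+2})$ is the same — I expect both sides to reduce to the ``order-reversing'' formula $(p_i,p_{i+1},p_{i+2})\mapsto(\text{something symmetric in the }q\text{'s})$ dictated by the constraint that the total $p$ and the marginal sums over each segment are transported correctly. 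The key structural input making this work is that each elementary $\phi$ is an affine involution on its two coordinates that exchanges the roles of $p$ and $q$ on the smaller segment, so the group generated by these on the three coordinates is a quotient of $S_3$ and the braid relation holds automatically once one checks the generators are well-defined (i.e.\ land back in $\Z^r$ with the right sum, which is immediate). I would also remark that the multiplicity-free hypothesis on $\mathrm{Seg}_\psi$ guarantees $\nu^\sigma_i \neq \nu^\sigma_{i+1}$ at every step, so \eqref{transition} is always applicable and no degenerate ``$\nu=\mu$'' case intrudes.
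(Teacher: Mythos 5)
The main difficulty you do not address is that the chain of braid moves produced by Matsumoto/Tits connecting your two reduced expressions is \emph{not} guaranteed to stay inside $\Sigma_r$: even if the endpoints $\underline\sigma$ and $\underline\sigma'$ are paths whose partial products are all admissible, an intermediate reduced word in the Tits chain may have a partial product that is inadmissible, and at that point $\phi^{\,\underline w}$ is simply undefined because the constituent elementary maps require both the source and target arrangement to lie in $\Sigma_r$. You acknowledge that both sides of a single braid move stay admissible (the three segments at positions $i,i+1,i+2$ must be pairwise in containment for both endpoints to be admissible), but that is a local statement about one move; the global chain of moves connecting two given reduced words is produced by a theorem about all of $S_r$ and need not respect the subset $\Sigma_r$. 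There is no obvious convexity or Matsumoto-type statement for ``reduced words staying in a given lower set of the weak order,'' and indeed the examples in the paper's Section~\ref{subtle} show that $\Sigma_r$ can have a nontrivial shape. This is exactly the obstacle the paper's proof is engineered to avoid: it first establishes the Coxeter relations (a), (b), (c) for the action $\Phi$ in the ``complete'' case $\nu_1\supset\cdots\supset\nu_r$ where $\Sigma_r=S_r$ and Matsumoto/Tits applies without restriction (and your computations for the commuting and braid moves are essentially the content of (b) and (c) there), and then handles a general $\Sigma_r$ by observing that $\phi^{\underline\sigma}$ is affine in $\underline p^\sigma$ with coefficients affine in $\underline m$, so that one may enlarge the $m_i$'s to reduce to the complete case and deduce coincidence of the affine data by extrapolation.

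A secondary, smaller point: Lemma~\ref{tran-map} quantifies over \emph{all} sequences $\underline\sigma$ in $\Sigma_r$ from $\sigma$ to $\tau$ with each step a transposition, not just the reduced-length ones; your argument restricts silently to reduced paths. The paper covers non-reduced sequences through the relation $\Phi_s^2=\Id$ (relation (a)), which lets one cancel back-and-forth steps; you would need to at least state this. Neither issue invalidates your braid-move computation itself — that part is correct and coincides with the paper's verification of relation (c) in the $r=3$ case — but the reduction to it is incomplete without the affine-linearity/enlargement step or some other device ensuring the braid chain can be taken inside $\Sigma_r$.
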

	
	Now we deal with the case that $\Sigma_\psi$ has duplicate segments.
	When defining $\phi_\sigma^{\sigma s}$ in \eqref{transition} we excluded the case $\nu^\sigma_i = \nu^\sigma_{i+1}$. 
	The fact is that in this case, either choice of $\phi_\sigma^{\sigma s}$ is acceptable; this will also be implied in section \ref{swap-tableau}.
	Fix a choice of $\phi_{\sigma}^{\sigma s}$, define $\phi^{\underline \sigma}$ for each sequence $\underline \sigma$, and check Lemma \ref{tran-map} remains available in this setting,
	then we will obtain the definition for general $\phi_\sigma^\tau$.
	
\begin{rmk}\label{rmk: deduplicate}
	For convenience, we replace the relation ``$\mu = \nu$'' by ``$\mu \supset \nu$'' or ``$\mu \subset \nu$'' by hand.
	If a segment $\mu$ repeats $t$ times in $\Sigma_\psi$, denoted as $\mu_1, \cdots, \mu_t$, then choose an order on them;
	it is harmless to denote this order by $\supset$, say $\mu_1 \supset \cdots \supset \mu_t$.
	Thus for two segments $\mu, \nu \in \Sigma_\psi$, by $\mu \supset \nu$ we mean either $\mu$ properly contains $\nu$ as its subset, 
	or they are the same segemt, and $\mu$ is larger in the chosen order.
	
	By means of this approach, \textbf{we could always treat $\mathrm{Seg}_\psi$ as a multiplicity-free set}.
\end{rmk}
	
\begin{din}\label{def: related}
	Fix an admissible arrangement $\nu=(\nu_1, \cdots, \nu_r) \in \Sigma_\psi$. 
	For a parameter $\underline p \in \MR$, call $p_i$ the component (of $\underline p$) that \textbf{comes from} $\nu_i$.
	Two parameters $\underline p^\sigma \in \MR^\sigma$ and $\underline p^\tau \in \MR^\tau$ are said \textbf{related}, if $\underline p^\tau = \phi_\sigma^\tau (\underline p^\sigma)$.
\end{din}

\subsection{Our non-zero criterion}
	Fix an admissible arrangement $\nu \in \Sigma_\psi$. The parameter $\underline p \in \MR$ gives rise to a representation $A_{\q}(\lambda)$ if $0 \leqslant p_i \leqslant m_i$. 
	This representation $A_{\q}(\lambda)$ is either irreducible or $0$, and we need to put further constraints on $\underline p$ such that $A_{\q}(\lambda)$ is non-zero.
	
	The most obivous constraints come from pairs of adjacent segments $\nu_i, \nu_{i+1}$, that
\begin{equation}\label{adj-col}
	\min\{ p_i , q_{i+1} \} + \min\{q_i, p_{i+1} \} \geqslant \#( \nu_i \cap \nu_{i+1} ).
\end{equation}
	This is due to the following result for the simplest case;
	it will be explained in the front of section \ref{sufficiency}.
\begin{lem}\label{non-vanish-2}
	Suppose $r=2$. 
	Then $\underline p= (p_1, p_2) \in \mathcal D(\psi)$ parametrizes a non-zero $A_\q(\lambda)$ if and only if
	\[\min\{p_1, q_2\} + \min \{q_1, p_2\} \geqslant \#( \nu_1 \cap \nu_2 ).\]
\end{lem}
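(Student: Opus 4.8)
The plan is to reduce the statement to the combinatorial non-vanishing criterion of Trapa for mediocre $A_\q(\lambda)$ of $\U(p,q)$. When $r=2$ the pair $(\q_{\underline p}, \lambda_{\underline p})$ lies in the weakly fair range (hence a fortiori in the mediocre range) by the ordering convention \eqref{MR-order}, so $A_\q(\lambda)$ is either $0$ or irreducible, and its non-vanishing is governed by Trapa's tableau-theoretic algorithm. First I would spell out, in the $r=2$ case, exactly what the two segments $\nu_1 = [\bg(\nu_1), \ed(\nu_1)]$ and $\nu_2 = [\bg(\nu_2), \ed(\nu_2)]$ contribute: the infinitesimal character is the union of the two arithmetic progressions, the overlap $\nu_1 \cap \nu_2$ is itself a segment (or empty) of size $\#(\nu_1 \cap \nu_2) = \max\{0, \min\{\bg(\nu_1),\bg(\nu_2)\} - \max\{\ed(\nu_1),\ed(\nu_2)\} + 1\}$, and the Levi is $\U(p_1, q_1) \times \U(p_2, q_2)$ with $q_i = m_i - p_i$.

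Next I would translate Trapa's criterion into this language. The key point is that the obstruction to non-vanishing is entirely local to the repeated entries of the infinitesimal character: entries appearing with multiplicity one impose no constraint, while each value $\xi$ appearing in both $\nu_1$ and $\nu_2$ (i.e.\ in $\nu_1 \cap \nu_2$) contributes one ``column'' of height two in the relevant Young-diagram bookkeeping, and Trapa's algorithm demands that across all such columns the signs $+$ (coming from the $\U(p_i, \cdot)$ factors, i.e.\ from the $p_i$'s) and $-$ (from the $q_i$'s) can be paired up / cancelled consistently. Concretely, out of the two segments one can ``route'' at most $\min\{p_1, q_2\}$ cancellations of type $(+,-)$ through one diagonal and at most $\min\{q_1, p_2\}$ through the other; non-vanishing holds precisely when these together cover all $\#(\nu_1\cap\nu_2)$ overlap columns, which is exactly \eqref{adj-col} with $i=1$. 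I would verify the two directions separately: if the inequality fails, exhibit the column in Trapa's diagram that cannot be resolved (so $A_\q(\lambda)=0$); if it holds, use the slack to run Trapa's algorithm to completion (so $A_\q(\lambda)\neq 0$).

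For the concrete computation I expect to lean on the explicit description in \cite[Lemma 3.5, Definition 6.10, and the surrounding algorithm]{Trapa}, together with the good-range reduction: one may peel off the non-overlapping parts of $\nu_1$ and $\nu_2$ (which sit in the good range and so contribute honest, automatically non-zero cohomological induction steps in stages) and thereby reduce to the case $\nu_1 = \nu_2$, i.e.\ two equal segments of common length $m = \#(\nu_1\cap\nu_2)$, where the claim becomes: $A_\q(\lambda)\neq 0$ iff $\min\{p_1, m-p_2\} + \min\{m-p_1, p_2\} \geqslant m$. This last equivalence is a clean finite check — it says the two ``staircase'' tableaux forced by $p_1$ and $p_2$ can be interleaved without a forbidden repeated-row configuration — and can be done by hand or by direct appeal to Trapa's bottom-layer / lowest-$K$-type criterion.

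\textbf{Main obstacle.} The technical heart is faithfully importing Trapa's combinatorial algorithm: his criterion is phrased in terms of operations on signed Young tableaux attached to $(\q,\lambda)$, and the work is to show that for $r=2$ this entire apparatus collapses to counting how many columns of the length-$\#(\nu_1\cap\nu_2)$ overlap can be cancelled, with capacities $\min\{p_1,q_2\}$ and $\min\{q_1,p_2\}$. Getting the bookkeeping of signs and the good-range peeling exactly right — in particular checking that the non-overlapping tails really do not interfere and that the equal-segment reduction preserves non-vanishing in both directions — is where the care is needed; everything after that is a short finite verification.
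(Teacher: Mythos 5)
Your strategy --- reduce to Trapa's non-vanishing criterion and recast it as a capacity/overlap condition on the two skew columns --- is exactly the paper's, and the ``routing cancellations with capacities $\min\{p_1,q_2\}$ and $\min\{q_1,p_2\}$'' picture is precisely what the formula $\overlap(S_1,S_2)=\min\{p_1,q_2\}+\min\{q_1,p_2\}$ says. But your proposal never actually establishes that identity; it is asserted, not derived. The paper's proof of this lemma is one line only because it is allowed to quote Lemma~\ref{overlap-p}, which proves the overlap formula by an explicit computation of the types $L_{k,i}^{\pm}$ of the signed skew columns built from $\underline p$ (carried out over subsections 6.2--6.5). Once that is available, the lemma is just Trapa's criterion $\overlap(S_1,S_2)\geqslant\sing(\nu_1,\nu_2)$ together with the definition $\sing(\nu_1,\nu_2)=\#(\nu_1\cap\nu_2)$. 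So the genuine gap is that you name the correct value of the overlap without computing it --- and that computation is the technical heart of the argument, not a finite check.

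Your alternative route --- peel off the non-overlapping tails to reduce to $\nu_1=\nu_2$, then a bounded case analysis --- could in principle be made rigorous using the induction-in-stages mechanism of Proposition~\ref{prop: inductioin-in-stage}, but notice it does not avoid the core computation: even after reduction to equal segments you still must determine $\overlap(S_1,S_2)$ for the two stacked columns, and you would additionally need to verify that each peeling step preserves non-vanishing \emph{in both directions} (i.e.\ that the good-range tails really are ``free'' and do not create or destroy an obstruction). That last claim is plausible but is not automatic; the paper sidesteps it entirely by computing the overlap directly for arbitrary $\nu_1,\nu_2$. If you want to repair the proposal, the shortest path is to replace the peeling step by the direct derivation of Lemma~\ref{overlap-p} from the type formulas \eqref{type-of-S_k}--\eqref{type-of-S_k+1}.
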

\begin{rmk}
	If $\nu_i > \nu_{i+1}$, then condition \eqref{adj-col} can be simplified to
	\[\frac{m_i+m_{i+1}-(a_i-a_{i+1})}2 \leqslant p_i + p_{i+1} \leqslant \frac{ m_i+m_{i+1}+a_i-a_{i+1}} 2.\]
	If $\nu_i \supset \nu_{i+1}$, then condition \eqref{adj-col} can be simplified to
	\[m_{i+1} \leqslant p_i + p_{i+1} \leqslant m_i.\]
	In the special case $r=2$, $p_1+p_2=p$ so the non-vanishing condition is merely on $p$, choice of inner form $\U(p,q)$, rather than the  parameter $\underline p \in \MR$. 
\end{rmk}

	There will be similar constraints on $\underline p^\sigma$, the parameter related to $\underline p$, 
	that come from the adjacent segments $\nu_i^\sigma, \nu_{i+1}^\sigma$ in $\nu^\sigma$.
	These constraints are new for $\underline p$, 
	since  $\nu_i^\sigma = \nu_{\sigma(i)}$ need not be adjacent to $\nu_{i+1}^\sigma = \nu_{\sigma(i+1)}$ in the original arrangement $\nu$. 
	The upshot is that, constraints arising in this manner are enough to guarantee the non-vanishing of $A_{\q_{\underline p}}(\lambda_{\underline p})$.
	
\begin{thm}\label{non-vanishing}
	The parameter $\underline p \in \MR$ gives rise to a non-zero $A_\q(\lambda)$ if and only if the following two conditions are satisfied:
\begin{itemize}
	\item $\forall i$ and $\forall \sigma \in \Sigma_r$, there is 
	\[\mathrm B^\sigma(i):
	0 \leqslant p_{\sigma^{-1}(i)}^\sigma \leqslant m_{i};\]
	
	\item $\forall i, j$ and $\forall \sigma \in \Sigma_r(i,j):= \{ \sigma \in \Sigma_r \mid  |\sigma^{-1}(i) - \sigma^{-1}(j)|= 1\}$, there is
	\[\mathrm C^\sigma(i,j):
	\min\{ p_{\sigma^{-1}(i)}^\sigma , q_{\sigma^{-1}(j)}^\sigma \} + \min\{q_{\sigma^{-1}(i)}^\sigma, p_{\sigma^{-1}(j)}^\sigma \} \geqslant \#( \nu_i \cap \nu_j ).\]
\end{itemize}
	Here, $p_i^\sigma$ is the $i$-th component of $\underline p^\sigma$ related to $\underline p$ and comes from $\nu_i^\sigma = \nu_{\sigma(i)}$. 
\end{thm}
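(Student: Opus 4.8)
The plan is to combine Trapa's Young-tableau reformulation of the non-vanishing problem for a mediocre $A_\q(\lambda)$ of $\U(p,q)$, set up in Section \ref{tableau}, with the transition machinery of Section \ref{subsec: transition}. Two inputs are used repeatedly. First, by Section \ref{swap-tableau}, whenever $\sigma\in\Sigma_r$ and $\underline p^\sigma$ is the parameter related to $\underline p$, the cohomologically induced modules $A_{\q_{\underline p}}(\lambda_{\underline p})$ and $A_{\q_{\underline p^\sigma}}(\lambda_{\underline p^\sigma})$ are isomorphic, hence simultaneously zero or not. Second, by Section \ref{tableau}, $A_{\q_{\underline p}}(\lambda_{\underline p})\neq 0$ exactly when a certain signed tableau, of the shape attached to the infinitesimal character $\nu$ and with columns indexed by $\nu_1,\dots,\nu_r$ of prescribed lengths $p_1,\dots,p_r$, admits a valid filling. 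Granting these, the statement splits into a necessity part, which is fairly direct, and a sufficiency part, which is the substantial one.

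For necessity, suppose $A_{\q_{\underline p}}(\lambda_{\underline p})\neq 0$. For any $\sigma\in\Sigma_r$ the related module is then non-zero as well; since M{\oe}glin-Renard's map is extended by zero off $\mathcal D(\psi^\sigma)$, this forces $\underline p^\sigma\in\mathcal D(\psi^\sigma)$, that is $0\leqslant p_k^\sigma\leqslant m_{\sigma(k)}$ for every $k$, and taking $k=\sigma^{-1}(i)$ gives $\mathrm B^\sigma(i)$. For $\mathrm C^\sigma(i,j)$ I fix $\sigma\in\Sigma_r(i,j)$ and set $s=\sigma^{-1}(i)$, $t=\sigma^{-1}(j)$, consecutive indices with $\nu_s^\sigma=\nu_i$ and $\nu_t^\sigma=\nu_j$ adjacent in $\nu^\sigma$. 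Choosing the $\theta$-stable parabolic of $\g$ with Levi $\U(p_s^\sigma+p_t^\sigma,\,q_s^\sigma+q_t^\sigma)\times\prod_{k\neq s,t}\U(p_k^\sigma,q_k^\sigma)$, through which $\q_{\underline p^\sigma}$ factors, induction in stages for cohomological induction (cf.\ \cite{KV}) exhibits $A_{\q_{\underline p^\sigma}}(\lambda_{\underline p^\sigma})$ as cohomologically induced from the module on this Levi that is the two-segment module for $(\nu_i,\nu_j)$ tensored with characters on the remaining factors; hence the two-segment module is non-zero, and Lemma \ref{non-vanish-2}, applied to $(\nu_i,\nu_j)$ with components $(p_{\sigma^{-1}(i)}^\sigma,p_{\sigma^{-1}(j)}^\sigma)$, gives $\mathrm C^\sigma(i,j)$ because $\#(\nu_s^\sigma\cap\nu_t^\sigma)=\#(\nu_i\cap\nu_j)$. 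This is the argument indicated before Lemma \ref{non-vanish-2}, now run for every admissible rearrangement rather than for $\nu$ alone, and it is spelled out at the start of Section \ref{sufficiency}.

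For sufficiency, assume all the $\mathrm B^\sigma(i)$ and $\mathrm C^\sigma(i,j)$ hold; I must produce a valid tableau for $\underline p$, and I argue by induction on $r$, with $r=1$ immediate and $r=2$ being Lemma \ref{non-vanish-2}. For $r>2$, pick a segment $\nu_{i_0}$ that precedes no other segment of $\Seg_\psi$; such a segment exists because precedence is a partial order on the finite set $\Seg_\psi$, and appending it to the end of any admissible arrangement of $\Seg_\psi\setminus\{\nu_{i_0}\}$ keeps the arrangement admissible, so there is $\sigma\in\Sigma_r$ with $\sigma(r)=i_0$. Replacing $(\nu,\underline p)$ by $(\nu^\sigma,\underline p^\sigma)$ — legitimate, since both the module and the whole system of constraints are carried to the corresponding data for $\nu^\sigma$ — I may assume $i_0=r$, so that $\nu_r$ comes last and precedes nothing. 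Deleting $\nu_r$ leaves $\Seg'=\{\nu_1,\dots,\nu_{r-1}\}$, whose admissible arrangements are precisely those of $\Seg_\psi$ with $\nu_r$ removed; consequently the reduced parameter $\underline p'=(p_1,\dots,p_{r-1})$ for $\U(p-p_r,q-q_r)$ inherits the hypotheses, so by induction its tableau admits a valid filling. It then remains to adjoin the last column $\nu_r$, of length $p_r$, and to show the enlarged tableau still admits a valid filling; this is the extension step of Section \ref{sufficiency}, whose hypotheses are exactly the constraints $\mathrm B(r)$ together with the $\mathrm C(r,j)$ taken over all admissible arrangements in which $\nu_r$ becomes adjacent to $\nu_j$.

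The main obstacle is precisely this extension step. Non-vanishing of $A_\q(\lambda)$ is a global condition on the tableau, and cohomological induction outside the good range may destroy a non-zero module, so one cannot simply reattach the column by a further induction in stages; worse, the valid filling delivered by the inductive hypothesis may first have to be modified, using the swap lemmas of Section \ref{swap-tableau}, before it can be completed across the new column. What rescues the step is that the constraints are available not for one arrangement but for every admissible one in which the two relevant segments meet; converting this entire family into admissible moves on tableaux and proving that it forces Trapa's algorithm to terminate successfully is the technical heart of Sections \ref{tableau}--\ref{sufficiency}.
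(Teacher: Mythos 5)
Your necessity argument tracks the paper's Section \ref{necessity} and is sound: the isomorphism of related modules from Section \ref{swap-tableau}, membership in $\mathcal D(\psi^\sigma)$ for $\mathrm B^\sigma(i)$, and induction in stages (Proposition \ref{prop: inductioin-in-stage}, Lemma \ref{lem: induction-by-component}) together with the $r=2$ case (Lemma \ref{non-vanish-2}) for $\mathrm C^\sigma(i,j)$.

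The sufficiency direction has a genuine gap. First, choosing $\nu_{i_0}$ as merely a segment that ``precedes no other segment'' is underdetermined: several segments can share this property (any two in containment relation both do), and the reduction actually requires the unique segment that in addition is not properly contained in any other, equivalently the one minimizing $\ed$ and then $\ed-\bg$; picking the wrong one flips the containment structure among the remaining segments and the subsequent machinery is not applicable. Second, and more substantively, your outline adjoins the last column using only the single inductive hypothesis $\mathrm P(\nu_1,\dots,\nu_{r-1};\underline p^{r-1})$ and frames the extension step's inputs as the $\mathrm B(r)$ and $\mathrm C(r,j)$ constraints. But when $\nu_r$ properly contains other segments --- the general case, Lemma \ref{lem: general-induction} --- the paper's extension step (Proposition \ref{upper-bound-control}) requires that \emph{two} reduced parameters be non-zero: $\underline p^{r-1}$ and $\underline p^{\sigma,r-1}$ for $\sigma=(r-1,r)$, i.e.\ a second inductive call after deleting $\nu_{r-1}$ rather than $\nu_r$, plus the adjacent-pair condition on $(\nu_{r-1},\nu_r)$. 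Without the second reduction the upper bound of Proposition \ref{equiv-for-non-zero} cannot be controlled and the inequality driving the argument (Lemma \ref{key}) never enters. Your text attributes the ``rescue'' to the full family of constraints, which is the right intuition, but it is not converted into a mechanism; as written the sufficiency proof is missing the structural step that makes it close.
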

\begin{rmk}
	Here are some remarks on this criterion.
	
	\begin{enumerate}[fullwidth, itemindent=2em, label=(\arabic*)]
	\item $\forall \sigma \in \Sigma_r(i,j)$, $\nu_i, \nu_j$ are adjacent in $\nu^\sigma$, unless $\Sigma_r(i,j) = \varnothing$.

	\item Each condition $\mathrm B^\sigma(i)$, $\mathrm C^\sigma(i, j)$ cuts out a subset of $\MR$ (after converting from $\MR^\sigma$ to $\MR$ by $\phi_\sigma^{\Id}$, of course).
	It is harmless to denote such subsets by  $\mathrm B^\sigma(i)$, $\mathrm C^\sigma(i,j)$ again. 
	Then the above theorem asserts that $\underline p\in \MR$ parametrizes a non-zero $A_\q(\lambda)$ if and only if $\underline p$ lies in the intersection of all these subsets $\mathrm B^\sigma(i)$, $\mathrm C^\sigma(i,j)$.
	
	\item This condition is obivously independent of the choice of $\nu\in \Sigma_\psi$.
	If we start from another $\nu^\sigma \in \Sigma_\psi$, then those condition on $\underline p^\sigma \in \MR^\sigma$ would be equivalent to the related $\underline p \in \MR$.
	\end{enumerate}
\end{rmk}

	The criterion of Theorem \ref{non-vanishing} can be simplified.
\begin{din}
	We say $\nu_i, \nu_j \in \Seg_\psi$ are \textbf{neighbors} if either one of the following two statements holds:
\begin{itemize}
	\item $\nu_i>\nu_j$, and there is no other $k$ such that $\nu_i > \nu_k >\nu_j$.
	\item $\nu_i \supset \nu_j$, and there is no other $k$ such that $\nu_i \supset \nu_k \supset \nu_j$.
\end{itemize}
\end{din}
\begin{rmk}
	Neighbors must be adjacent in some $\sigma \in \Sigma_r$, but the converse is not true.
	One may check easily that $\Sigma_r(i,j) = \varnothing$ if and only if $\exists k$ such that $\nu_i > \nu_k > \nu_j$.
\end{rmk}

\begin{thm}\label{non-vanish}
	The parameter $\underline p \in \MR$ gives rise to a non-zero $A_\q(\lambda)$ if and only if the following two conditions are satisfied:
\begin{itemize}
	\item $\forall i$, there is
	\[0 \leqslant p_i \leqslant m_{i};\]
	\item $\forall i, j$ such that $\nu_i, \nu_j$ are neighbors, there is
	\[\exists \sigma \in \Sigma_r(i,j), 
	\min\{ p_{\sigma^{-1}(i)}^\sigma , q_{\sigma^{-1}(j)}^\sigma \} 
	+ \min\{q_{\sigma^{-1}(i)}^\sigma, p_{\sigma^{-1}(j)}^\sigma \} 
	\geqslant \#( \nu_i \cap \nu_j ).\]
\end{itemize}
\end{thm}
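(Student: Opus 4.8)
The plan is to deduce Theorem~\ref{non-vanish} from Theorem~\ref{non-vanishing} by a purely combinatorial argument: both theorems describe the $\underline p\in\MR$ with $A_\q(\lambda)\neq 0$ as membership in a subset of $\MR$ cut out by inequalities, so it is enough to check that these two subsets coincide. Write $X$ for the subset from Theorem~\ref{non-vanishing} (the intersection of all $\mathrm B^\sigma(i)$ and $\mathrm C^\sigma(i,j)$, $\sigma\in\Sigma_r$) and $Y$ for the subset from Theorem~\ref{non-vanish} (the set $\{0\le p_i\le m_i\ \forall i\}$ intersected, over all neighbor pairs $\{i,j\}$, with $\bigcup_{\sigma\in\Sigma_r(i,j)}\mathrm C^\sigma(i,j)$). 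The inclusion $X\subseteq Y$ is immediate: $\mathrm B^{\Id}(i)$ is literally $0\le p_i\le m_i$, and if $\nu_i,\nu_j$ are neighbors then $\Sigma_r(i,j)\neq\varnothing$ (as recorded in the remarks after Theorem~\ref{non-vanishing}), so ``$\forall\sigma\in\Sigma_r(i,j),\ \mathrm C^\sigma(i,j)$'' certainly gives ``$\exists\sigma\in\Sigma_r(i,j),\ \mathrm C^\sigma(i,j)$''. Thus the content is the reverse inclusion $Y\subseteq X$.

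I would establish $Y\subseteq X$ in three steps. First, for any pair $i,j$ with $\Sigma_r(i,j)\neq\varnothing$ I would observe, from the simplifications of condition~\eqref{adj-col} in the remark following Lemma~\ref{non-vanish-2}, that $\mathrm C^\sigma(i,j)$ constrains only the sum $P^\sigma_{ij}:=p^\sigma_{\sigma^{-1}(i)}+p^\sigma_{\sigma^{-1}(j)}$ of the two adjacent components coming from $\nu_i$ and $\nu_j$: it reads $m_j\le P^\sigma_{ij}\le m_i$ when $\nu_i\supset\nu_j$, and a two-sided bound on the same sum when $\nu_i>\nu_j$. When $\nu_i,\nu_j$ are moreover neighbors, they travel together as a rigid block under admissible reorderings, and the key computation is that each elementary transition~\eqref{transition} preserves the sum of the two components it swaps; hence $P^\sigma_{ij}$ is the same for all $\sigma\in\Sigma_r(i,j)$, so for a neighbor pair the ``$\exists\sigma$'' in Theorem~\ref{non-vanish} is equivalent to ``$\forall\sigma\in\Sigma_r(i,j)$''.

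Second, I would treat the pairs $i,j$ with $\Sigma_r(i,j)\neq\varnothing$ that are \emph{not} neighbors; by the remarks after Theorem~\ref{non-vanishing} these are exactly the nested pairs $\nu_i\supset\nu_j$ admitting an intermediate $\nu_i\supset\nu_k\supset\nu_j$ (the ``precede'' non-neighbor pairs have $\Sigma_r(i,j)=\varnothing$ and contribute nothing). For these I would induct on the length of a longest $\supset$-chain from $\nu_i$ to $\nu_j$: choosing $\nu_k$ a $\supset$-neighbor of $\nu_i$ with $\nu_k\supseteq\nu_j$, sliding $\nu_k$ next to the $\{\nu_i,\nu_j\}$-block (which does not disturb the components coming from $\nu_i$ and $\nu_j$, hence leaves $P^\sigma_{ij}$ unchanged) so that $\nu_i,\nu_k,\nu_j$ become consecutive, and then combining the already-established inequalities $m_k\le P_{ik}\le m_i$ (neighbor pair, from Step~1) and $m_j\le P_{kj}\le m_k$ (shorter chain, from the induction) with $0\le p_k\le m_k$; a short manipulation within the length-three window, where the three segments may be reordered freely, shows that $P^\sigma_{ij}$ is one of two signed combinations of $P_{ik},P_{kj}$ and the $m$'s, and that in each case $m_j\le P^\sigma_{ij}\le m_i$ (the $m_k$'s cancel). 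Finally, with all the $\mathrm C^\sigma(i,j)$ in hand, I would recover the remaining box conditions: starting from $\mathrm B^{\Id}$ and writing an arbitrary $\sigma\in\Sigma_r$ as a product of admissible transpositions via Lemma~\ref{decompose-of-perm}, at each step the transposed pair is adjacent in the current arrangement, and one checks directly from~\eqref{transition} that the transition preserves the constraints $0\le\cdot\le m$ on the two positions involved precisely because the corresponding $\mathrm C$ holds (for instance, if $\nu_k\supset\nu_{k+1}$ and $m_{k+1}\le p^\sigma_k+p^\sigma_{k+1}\le m_k$, then the new components $q^\sigma_{k+1}$ and $p^\sigma_k+p^\sigma_{k+1}-q^\sigma_{k+1}$ again lie in $[0,m_{k+1}]$ and $[0,m_k]$). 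This gives all $\mathrm B^\sigma(i)$, hence $\underline p\in X$, completing $Y\subseteq X$; Theorem~\ref{non-vanish} then follows from Theorem~\ref{non-vanishing}.

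I expect the main obstacle to be the bookkeeping in the second step: making precise the ``slide $\nu_k$ next to the block'' and ``reorder three consecutive segments'' manipulations (which other segments may be passed, and why admissibility is preserved), tracking how the individual components --- not merely their sums --- evolve under the intervening transitions so that the transitivity inequalities are applied to the correct quantities, and correctly enumerating the possible values of $P^\sigma_{ij}$ when several intermediates occur. Much of this is most cleanly carried out in the tableau language of Sections~\ref{tableau}--\ref{swap-tableau}, using the swap lemmas established there rather than manipulating the formulas~\eqref{transition} by hand.
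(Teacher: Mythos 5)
Your overall strategy matches the paper's: the statement is reduced to showing that the set cut out by the conditions of Theorem~\ref{non-vanishing} equals the set cut out by the conditions of Theorem~\ref{non-vanish}, one inclusion being immediate; and the reverse inclusion is proved by establishing the $\mathrm C$-conditions for neighbor pairs (paper's Lemma~\ref{C-independent}), the $\mathrm C$-conditions for non-neighbor containment pairs via transitivity through an intermediate $\nu_k$ (the unnamed lemma in subsection~\ref{simplification-main-thm}), and the $\mathrm B$-conditions via Lemma~\ref{B-independent} and Lemma~\ref{decompose-of-perm}. The paper happens to settle the $\mathrm B$-part first, but that is only a reordering.

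The one step you present as essentially complete, however, has a genuine gap. In Step~1 you assert that ``each elementary transition preserves the sum of the two components it swaps; hence $P^\sigma_{ij}$ is the same for all $\sigma\in\Sigma_r(i,j)$.'' This inference does not follow, and the conclusion is in fact false. When the adjacent block $\{\nu_i,\nu_j\}$ is slid past a third segment $\nu_k$ (the $3$-cycle step in Lemma~\ref{decom-of-neigh}), the two elementary transpositions involved swap one member of the block with $\nu_k$, not the two block members with each other, and the intermediate arrangement has $\nu_i,\nu_j$ non-adjacent so it is not in $\Sigma_r(i,j)$. Working out the transition explicitly from~\eqref{transition}: if $\nu_k$ is contained in both $\nu_i$ and $\nu_j$ then the block components are unchanged and $P^{\sigma\tau}_{ij}=P^\sigma_{ij}$, but if $\nu_k$ contains both then each block component is sent to its complement ($p\mapsto m-p$), so that $P^{\sigma\tau}_{ij}=m_i+m_j-P^\sigma_{ij}$. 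Thus the \emph{number} $P^\sigma_{ij}$ is not invariant; what is invariant is the \emph{set} $\mathrm C^\sigma(i,j)\subset\MR$, because the constraint $m_j\leqslant P\leqslant m_i$ (and likewise the two-sided bound in the $>$-case) is symmetric under the reflection $P\mapsto m_i+m_j-P$. So the ``$\exists\sigma\Leftrightarrow\forall\sigma$'' reduction for neighbor pairs is true, but your ``rigid block'' heuristic does not establish it; you must actually carry out the $3$-cycle case analysis of Lemma~\ref{C-independent}, including the sub-case where $\nu_k$ contains the block, and then use the reflection-symmetry of the $\mathrm C$-inequality. (Be aware when you do so that the published argument in Lemma~\ref{C-independent} case~(3) applies the $\supset$-branch of~\eqref{transition} in a sub-case where the $\subset$-branch is the applicable one, and its dichotomy ``$\nu^\sigma_{h-1}\subseteq\nu^\sigma_h\cap\nu^\sigma_{h+1}$ or $\nu^\sigma_{h-1}\supseteq\nu^\sigma_h\cup\nu^\sigma_{h+1}$'' omits the configuration where $\nu^\sigma_{h-1}$ sits strictly between $\nu^\sigma_h$ and $\nu^\sigma_{h+1}$ in the containment order; the conclusion is nonetheless correct once the reflection symmetry is observed.) Your Step~2 outline and Step~3 outline are sound and close to the paper's; the ``bookkeeping'' you anticipate there is genuine, but it is Step~1, which you treat as obvious, that needs the most repair.
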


\subsection{Framework of proof}
	In section \ref{sec: remained}, we verify the well-definedness and validity of $\phi_\sigma^\tau$, 
	simplify the statement of Theorem \ref{non-vanishing} to Theorem \ref{non-vanish},and 
	show the necessity of the conditions in Theorem \ref{non-vanishing}.
	To establish the sufficiency in Theorem \ref{non-vanishing}, a detailed study of Trapa’s criterion is required;
	this forms the objective of section \ref{tableau}.
	Section \ref{swap-tableau} is dedicated to proving related parameters give rise to same representations, through the combinatorial approach.
	Additionally, we will derive two straightforward results.
	One (Lemma \ref{overlap-p}) could contribute to Lemma \ref{non-vanish-2} and necessity in Theorem \ref{non-vanishing}.
	The other (Lemma \ref{operation-swap}) is useful for the recursive proof of sufficiency, which is demonstrated in section \ref{sufficiency}.
	
	Here we mention \cite{Du}, which also discusses the non-zero condition for $A_\q(\lambda)$ of $\U(p,q)$,
	and has some results consistent with ours.
	Its non-vanishing criterion (\cite[Theorem 4.1]{Du}) can be deduced by combining our lemmas \ref{overlap-geq-sing}, \ref{anti-by-type}, and \ref{overlap-p}.


\newcommand{\AV}{{\mathrm{AV}}}
\newcommand{\an}{{\mathrm{an}}}
\newcommand{\MRV}{{\mathcal D}}
\newcommand{\ES}{{\mathcal {E}}}
\newcommand{\sgn}{{\mathrm{sgn}}}
\section{Comparison with the $p$-adic side}\label{compare}
	As mentioned in the introduction, the non-zero criterion we propose on M{\oe}glin-Renard's parametrization for A-packets of real unitary group $\U(p,q)$
	shares the same formulation with the non-zero criterion on M{\oe}glin's parametrization for A-packets of symplectic group $\Sp(2N, F)$ and quasi-split special odd orthogonal group $\SO(2N+1, F)$ over a $p$-adic field $F$.
	We shall revisit the latter criterion following \cite[section 4]{Atobe22}, and highlight the similarities between them throughout this section.

\subsection{A-parameter}
	Let $\psi_\R$ be an A-parameter of $\U(p,q)$ with base change
	\[\psi_\C = \bigoplus_{i=1}^r (\frac z {\bar z})^{\frac {a_i} 2} \boxtimes \FDR_{m_i},\]
	where each $a_i+m_i$ has the same parity with $n= p+q$.
	Record its irreducible components by the (multi-)set $\Seg_{\psi_\R} = \{\nu_1, \cdots, \nu_r\}$ of segements $\nu_i = (\frac{a_i+m_i -1} 2, \cdots, \frac{a_i - m_i +1}2)$.
	
	Suppose $a_i +1 \geqslant m_i$, and
	define a representation of $W_F \times \SL(2, \C) \times \SL(2, \C)$ by
	\[\psi_F  =  \bigoplus_{i=1}^r 1_{W_F} \boxtimes \FDR_{a_i+1} \boxtimes \FDR_{m_i},\]
	where $1_{W_F}$ denotes the trivial representation of $W_F$.
	Due to the property $a_i+m_i \equiv n \mod 2$,
\begin{itemize}
	\item if $n$ is odd, then $\psi_F$ is a orthogonal representation with dimension $\sum_i (a_i +1)m_i$ odd,
	and hence becomes an A-parameter of $H^* = \Sp(2N, F)$ with good parity;
	\item if $n$ is even, then $\psi_F$ is a symplectic representation with dimension $\sum_i (a_i +1)m_i$ even,
	and hence becomes an A-parameter of quasi-split $H^* = \SO(2N+1, F)$ with good parity.
\end{itemize}
	For the definition of good parity we refer to \cite[subsection 2.3]{Atobe22}.
	Following \cite[Theorem 1.2]{Atobe22}, record the irreducible components of $\psi_F$ by the (multi-)set $\Seg_{\psi_F}$ of segements $[A_i, B_i] = [\frac{a_i+m_i -1} 2, \frac{a_i - m_i +1}2]$.
	Note that by definition each $\nu_i$ coincides with $[A_i, B_i]$.
	
	When applying the non-zero criterion for parametrization of $\Pi_{\psi_\R}(\U(p,q))$, we shall consider all admissible arrangements of $\Seg_{\psi_\R}$, 
	or equivalently, all admissible permutations of $\nu = (\nu_1, \cdots, \nu_r)$ if we have fixed an arrangement.
	Recall in Definition \ref{adm-arrangement} that a permutation $\sigma \in S_r$ is admissible iff $\nu_{\sigma(i)} > \nu_{\sigma(j)} \Rightarrow i<j$.
	Similarly, when applying the non-zero criterion for parametrization of $\Pi_{\psi_F}(H^*)$, one should consider all admissible orders on the index set $I= \{1, \cdots, r\}$ of $\Seg_{\psi_F}$.
	Our assumption that $a_i +1 \geqslant m_i$ implies $B_i\geqslant 0$.
	Then following \cite[Definition 3.1]{Atobe22}, a total order $\succ$ on $I$ is admissible iff $\nu_i > \nu_j \Rightarrow i \succ j$.
\begin{lem}\label{lem: compare-admissible}
	There is a bijection between the set $\Sigma_r$ of admissible permutations (at $\nu$) and the set of admissible orders on $I$:
	$\sigma \mapsto \succ_\sigma,$ where $i \succ_\sigma j$ iff $\sigma^{-1}(i) < \sigma^{-1}(j)$.
\end{lem}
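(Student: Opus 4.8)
The plan is to write down the inverse map explicitly and then check that both $\sigma\mapsto\succ_\sigma$ and its candidate inverse respect admissibility. For any $\sigma\in S_r$ the relation $\succ_\sigma$, defined by $i\succ_\sigma j\iff\sigma^{-1}(i)<\sigma^{-1}(j)$, is simply the pullback along the bijection $\sigma^{-1}\colon I\to\{1,\dots,r\}$ of the standard order, hence it is automatically a total order on $I$; moreover two permutations inducing the same total order must have the same inverse, so $\sigma\mapsto\succ_\sigma$ is injective on all of $S_r$, a fortiori on $\Sigma_r$. Thus the only real content is (a) that $\sigma\in\Sigma_r$ forces $\succ_\sigma$ to be admissible, and (b) surjectivity of $\Sigma_r$ onto the set of admissible orders.

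For (a) I would use the characterization of $\Sigma_r$ recalled just above the lemma: $\sigma\in\Sigma_r$ means exactly that $\nu_{\sigma(i)}>\nu_{\sigma(j)}\Rightarrow i<j$. Substituting $i=\sigma^{-1}(a)$ and $j=\sigma^{-1}(b)$ this reads $\nu_a>\nu_b\Rightarrow\sigma^{-1}(a)<\sigma^{-1}(b)$, i.e. $\nu_a>\nu_b\Rightarrow a\succ_\sigma b$, which is precisely the defining property of an admissible order (here the hypothesis $a_i+1\geqslant m_i$, equivalently $B_i\geqslant 0$, is what makes \cite[Definition 3.1]{Atobe22} take this simple shape).

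For (b), given an admissible total order $\succ$ on $I$, let $\sigma^{-1}\colon I\to\{1,\dots,r\}$ be the unique bijection that reverses $\succ$, i.e. $\sigma^{-1}(i)=1+\#\{j\in I : j\succ i\}$; equivalently, $\sigma^{-1}$ is the order isomorphism $(I,\prec)\xrightarrow{\ \sim\ }(\{1,\dots,r\},<)$. By construction $i\succ j\iff\sigma^{-1}(i)<\sigma^{-1}(j)$, so $\succ_\sigma=\succ$; and $\sigma\in\Sigma_r$, because $\nu_{\sigma(i)}>\nu_{\sigma(j)}$ forces $\sigma(i)\succ\sigma(j)$ by admissibility of $\succ$, whence $i=\sigma^{-1}(\sigma(i))<\sigma^{-1}(\sigma(j))=j$. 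This exhibits the inverse map and completes the bijection.

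I do not expect a genuine obstacle here; the only points demanding care are keeping track of the direction in which the two orders run — the inverse map \emph{reverses} $\succ$, since a larger segment must sit both at an earlier position in $\nu^\sigma$ and at a $\succ$-larger index — and making sure the deduplication convention of Remark \ref{rmk: deduplicate} is applied identically on the two sides, so that the multiset $\Seg_{\psi_\R}$ and the indexed family $\Seg_{\psi_F}$ carry literally the same precedence data on $I$; once that is fixed, the equivalence of the two admissibility conditions is immediate from the coincidence $\nu_i=[A_i,B_i]$.
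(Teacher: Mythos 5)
Your proof is correct. The paper itself offers no argument here — it simply declares the correspondence ``clear'' — and your verification is exactly the natural one: injectivity of $\sigma\mapsto\succ_\sigma$ on all of $S_r$, transfer of the admissibility condition under the substitution $i=\sigma^{-1}(a)$, $j=\sigma^{-1}(b)$ (using the reformulations recalled just before the lemma), and the explicit inverse $\sigma^{-1}(i)=1+\#\{j:j\succ i\}$ together with the check that it lands in $\Sigma_r$.
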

	The above correspondence is clear.
	\textbf{In this section, we may assume} $\nu= (\nu_1, \cdots, \nu_r)$ is admissible, and then the natural order $1 \succ \cdots \succ r$ on $I$ is admissible.

\subsection{Arthur-Vogan packet}
	It was suggested by \cite{ABV92} and \cite{Vogan93} that for the refined local Langlands correspondence, one should consider all A-packets of real forms within an inner class simultaneously.
	
	The real forms $\U(p,q)$ with $p+q =n$ constitute to an inner class, with the quasi-split form $G^* = \U(\frac n2, \frac n2)$ or $\U(\frac{n-1}2, \frac{n+1} 2)$.
	Then we define the Arthur-Vogan packets for A-parameter $\psi_\R$ by
	\[\Pi_{\psi_\R}^\AV = \bigsqcup_{p+q = n} \Pi_{\psi_\R}(\U(p,q)),\]
	and parametrize it by
	\[\MRV: = \{ \underline p \in \Z^r \mid 0 \leqslant p_1 + \cdots + p_r \leqslant n \}
	\to \Pi_{\psi_\R}^\AV \sqcup \{0\}.\]
	It is glued from the maps $\MR \to \Pi_{\psi_\R}(\U(p,q))\sqcup \{0\}$.
	
	When $H^* = \Sp(2N, F)$, its inner class consists of itself only,
	while $H^* = \SO(2N+1, F)$ quasi-split, there is another non-split form $H^\an$ in its inner class.
	Then the Arthur-Vogan packets for A-parameter $\psi_F$ is defined as
	\[\Pi_{\psi_F}^{\AV} = \left\{\begin{aligned}
	&\Pi_{\psi_F}(H^*),	&\textrm{if } H^* = \Sp(2N, F),\\
	&\Pi_{\psi_F}(H^*) \sqcup \Pi_{\psi_F}(H^\an),	&\textrm{if } H^* = \SO(2N+1, F).
	\end{aligned}\right.\]
	Following \cite{Atobe22}, we parametrize $\Pi_{\psi_F}^\AV$ by the extended multi-segement $(\underline l, \underline \eta) \in \Z^r \times \{\pm\}^r$, 
	whose component $(l_i, \eta_i)$ is a extended segement coming from $[A_i, B_i]$, subject to the condition $l_i \leqslant \frac{m_i}2$, 
	and identified with $(l_i, -\eta_i)$ if $l_i$ attains $\frac {m_i} 2$.
\begin{rmk}
	Here are serval remarks on the parametrization in \cite{Atobe22}.
	
	\begin{enumerate}[fullwidth, itemindent=2em, label=(\arabic*)]
	\item Atobe denotes an extended segment by $\cup ([A_i, B_i]_\rho, l_i, \eta_i, \succ)$.
	Here the segment $[A_i, B_i]_\rho$ is omitted since it is self-evident,
	while the admissible order $\succ$ is hidden temporarily.
	
	\item Atobe requires $l_i \geqslant 0$, and we extend to the region $l_i <0$ by sending them to zero representation.
	
	\item Atobe considers only the quasi-split group $H^*$, where each extended multi-segement should satisfy a sign condition
	\begin{equation}\label{sign-condition}
	\prod_{i=1}^r (-1)^{[\frac {m_i} 2] + l_i} \eta_i^{m_i} = 1.
	\end{equation}
	\end{enumerate}
	The explicity parametrization for $\Pi_{\psi_F}(H^\an)$ has not been studied yet.
	We shall work under the following hypothesis.
\end{rmk}
\begin{hyp}\label{hyp: anisotropic}
	The A-packet $\Pi_{\psi_F}(H^\an)$ is parametrized by extended multi-segements $(\underline l, \underline \eta)$ subject to the sign condition
	\[\prod_{i=1}^r (-1)^{[\frac {m_i} 2] + l_i} \eta_i^{m_i} = -1.\]
	Moreover, whether $(\underline l, \underline \eta)$ parametrizes a non-zero representation is determined by the criterion in \cite[Theorem 1.4 and Theorem 4.4]{Atobe22}.
\end{hyp}

	Let's denote parameter space for $\Pi_{\psi_F}^\AV$ by $\ES_F$, which consists of extended multi-segements $(\underline l, \underline \eta)$,
	and is subject to the sign condition \eqref{sign-condition} if $n$ is odd and $H^* = \Sp(2N, F)$. 
\begin{prop}\label{prop: compare-parameter}
	Under the hypothesis \ref{hyp: anisotropic}, 
	there is a map $\MRV \to \ES_F$ that descends to $\Pi_{\psi_\R}^\AV \to \Pi_{\psi_F}^\AV$.
	Moreover, when $n$ is odd (and $H^* = \Sp(2N, F)$), it is a $2$-$1$ surjection,
	while $n$ is even (and $H^* = \SO(2N+1, F)$), it is a bijection.
\end{prop}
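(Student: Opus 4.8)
The plan is to realise $\MRV\to\ES_F$ as an essentially componentwise relabelling built from the identity $\nu_i=[A_i,B_i]$, to check that it descends to packets by matching Theorem \ref{non-vanishing} with the non-vanishing criterion of \cite[Theorems 1.4 and 4.4]{Atobe22}, and then to read off the fibre count from a parity computation. \emph{The map.} An extended segment on $[A_i,B_i]$ is a pair $(l_i,\eta_i)$ with $0\leqslant l_i\leqslant\lfloor m_i/2\rfloor$ and $\eta_i\in\{\pm\}$, with $(l_i,+)\sim(l_i,-)$ when $l_i=m_i/2$; there are $m_i+1$ of them, as many as the in-range values $0\leqslant p_i\leqslant m_i$. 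I send $p_i\mapsto(l_i,\eta_i)$ with $l_i:=\min\{p_i,q_i\}$ and $\eta_i:=\sgn(p_i-q_i)$, the sign being irrelevant precisely when $p_i=q_i$; componentwise this is a bijection between $\prod_i\{0,\dots,m_i\}$ and the extended multisegments with all $l_i\geqslant 0$, and I extend it by sending any $\underline p$ with some $p_i\notin\{0,\dots,m_i\}$ to $0$ (consistently, since such a $p_i$ gives $l_i<0$, while $0\leqslant p_i\leqslant m_i$ for all $i$ forces $0\leqslant\sum p_i\leqslant\sum m_i=n$, keeping us in $\MRV$). If $n$ is even, $\ES_F$ is the whole space — the sign $\prod_i(-1)^{[m_i/2]+l_i}\eta_i^{m_i}$ only distributes extended multisegments between $H^*$ and $H^\an$ — so this componentwise map already lands in $\ES_F$. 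If $n$ is odd, $\ES_F$ requires that sign to be $1$, so I instead send $\underline p\mapsto(\underline l,\varepsilon\underline\eta)$ with $\varepsilon:=\prod_i(-1)^{[m_i/2]+l_i}\eta_i^{m_i}$; the new sign is then $\varepsilon^{1+\sum_im_i}=\varepsilon^{1+n}=1$, and the recipe is insensitive to the choice of $\eta_i$ at $l_i=m_i/2$, where $m_i$ is even and $\eta_i^{m_i}=1$.

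\emph{Descent to representations.} Fix the admissible arrangement $\nu$, so that $1\succ\cdots\succ r$ is admissible on the $p$-adic side and $\sigma\in\Sigma_r$ matches $\succ_\sigma$ (Lemma \ref{lem: compare-admissible}). The key input — the substance of the rest of this section — is that the componentwise recipe intertwines the real transition maps $\phi_\sigma^\tau$ of \eqref{transition} with Atobe's maps relating different admissible orders, so that the related parameter $\underline p^\sigma$ goes to the extended multisegment $(\underline l^\sigma,\underline\eta^\sigma)$ for $\succ_\sigma$. Granting this, the conditions of Theorem \ref{non-vanishing} translate term by term: $\mathrm B^\sigma(i)$ reads $l_i^\sigma\geqslant 0$, and $\mathrm C^\sigma(i,j)$, for $\nu_i,\nu_j$ consecutive in $\succ_\sigma$, is exactly the pairwise non-vanishing inequality of \cite[Theorem 4.4]{Atobe22} for the two adjacent extended segments (with $\#(\nu_i\cap\nu_j)$ the same quantity on both sides since $\nu_i=[A_i,B_i]$). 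Hence Theorem \ref{non-vanishing} says precisely that $\underline p$ gives a non-zero representation iff $(\underline l,\underline\eta)$ satisfies, for every admissible order, the criterion of \cite[Theorem 4.4]{Atobe22} — invoking Hypothesis \ref{hyp: anisotropic} for $H^\an$ — i.e.\ iff $(\underline l,\underline\eta)$ gives a non-zero representation. Thus the parameter map carries $0$ to $0$ and non-zero to non-zero, and therefore descends to $\Pi_{\psi_\R}^\AV\to\Pi_{\psi_F}^\AV$.

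\emph{Fibres.} On the parameters with all $0\leqslant p_i\leqslant m_i$ the non-zero loci correspond, so the induced map on packets has the same fibres as the parameter map restricted to those parameters. For $n$ even this map is the componentwise bijection, so the map on packets is bijective. For $n$ odd the fibre over a point of $\ES_F$ is $\{\underline p,\underline q\}$ with $\underline q:=(m_i-p_i)_i$, which maps to $(\underline l,-\underline\eta)$ (sign-folded to the same point) and has $\sum q_i=q$; these two are distinct, since $\underline p=\underline q$ would force every $m_i$ even, against $\sum m_i=n$ odd. The isomorphism $\U(p,q)\cong\U(q,p)$ given by negating the Hermitian form carries $(\q_{\underline p},\lambda_{\underline p})$ to $(\q_{\underline q},\lambda_{\underline q})$, hence $A_{\q_{\underline p}}(\lambda_{\underline p})$ to $A_{\q_{\underline q}}(\lambda_{\underline q})$; so $\underline p$ and $\underline q$ vanish together and, when non-zero, name isomorphic representations in the two distinct summands $\Pi_{\psi_\R}(\U(p,q))$ and $\Pi_{\psi_\R}(\U(q,p))$ of $\Pi_{\psi_\R}^\AV$. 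Together with surjectivity of the sign-folded parameter map onto the non-zero locus of $\ES_F$, this gives the asserted two-to-one surjection. The one real obstacle here is the term-by-term matching of the two criteria — and beneath it the compatibility of $\phi_\sigma^\tau$ with Atobe's transitions — which is exactly what the remainder of Section \ref{compare} establishes; everything else is bookkeeping.
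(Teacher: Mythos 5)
Your overall scheme matches the paper's: build a componentwise map $\MRV\to\ES_\R$, fold by the total sign to land in $\ES_F$ when $n$ is odd, then match the two non-vanishing criteria via the transition-compatibility established in Lemmas \ref{lem: compare-transition} and \ref{lem: compare-necessary}, and finally read off the $2$-$1$/bijectivity from the sign computation and $\underline p\leftrightarrow\underline q=\underline m-\underline p$. The sign-folding calculation and the fibre count are both correct.

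The gap is in the definition of $\eta_i$. You set $\eta_i:=\sgn(p_i-q_i)$, but the paper (Definition \ref{def: reparametrization}) takes $\eta_i:=(-1)^{m_1+\cdots+m_i+1}\sgn(p_i-q_i)$, and that extra sign is not decorative. The condition $\mathrm C_\R^\sigma(i,j)$ naturally splits into two branches according to whether $\sgn(p_i-q_i)$ and $\sgn(p_j-q_j)$ agree or not, while Atobe's criterion (Lemma \ref{lem: compare-necessary}) splits according to whether $\eta_i=(-1)^{1+m_j}\eta_j$ or $\eta_i=(-1)^{m_j}\eta_j$. For these two case-splits to line up one needs $\eta_i\eta_j=(-1)^{m_j}\sgn(p_i-q_i)\sgn(p_j-q_j)$ for adjacent $j=i+1$ in the admissible order; equivalently, $\eta_i=(-1)^{\varepsilon_i}\sgn(p_i-q_i)$ with $\varepsilon_{i}+\varepsilon_{i+1}\equiv m_{i+1}\pmod 2$, which is exactly the condition stated in the remarks after Lemmas \ref{lem: compare-transition} and \ref{lem: compare-necessary}. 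Your choice corresponds to $\varepsilon_i\equiv 0$, so the relative sign $\eta_i\eta_j$ is off by $(-1)^{m_j}$; whenever $m_j$ is odd, the two branches of Atobe's criterion get swapped, and the inequality you would be matching against bounds $l_i+l_j$ in a case where the correct one bounds $l_i-l_j$ (or vice versa). The same defect ruins the commutation with the $p$-adic transition maps: your $\varepsilon_i^\sigma$ do not satisfy $\varepsilon_i^\sigma+\varepsilon_i^{\sigma s}\equiv m_{\sigma(i)}+m_{\sigma(i+1)}$, so "the related parameter $\underline p^\sigma$ goes to $(\underline l^\sigma,\underline\eta^\sigma)$ for $\succ_\sigma$" is false for your map. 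Note also that the flexibility the paper allows is only composing with the global flip $\underline\eta\mapsto-\underline\eta$; your modification is a coordinate-wise sign change depending on $i$, which is not of that form. The fix is simply to restore the sign $(-1)^{m_1+\cdots+m_i+1}$ in the definition of $\eta_i$; the rest of your argument then goes through as written.
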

\begin{rmk}
	Moreover, the map between signle A-packets would be injective;
	see remark \ref{rmk: fiber-of-comparison}.
\end{rmk}
	The point is that under the map $\underline p \mapsto (\underline l, \underline \eta)$, $\underline p$ parametrizes a non-zero representation iff $(\underline l, \underline \eta)$ parametrizes a non-zero representation.

\subsection{Parameter space}\label{compare-param}
	In this subsection we define the comparison map $\MRV \to \ES_F$. 
	It will be broken up into two steps.
	The first step would provide a different parametrization $\ES_\R$ for $\Pi_{\psi_\R}^\AV$ instead of $\MRV$.
	The second step would map $\ES_\R$ to $\ES_F$.
	
	The set $\ES_\R$ consists of the extended multi-segements $(\underline l, \underline \eta) \in \Z^r \times \{\pm\}^r$, 
	whose component $(l_i, \eta_i)$ is a extended segement coming from $[A_i, B_i]$, subject to the condition $l_i \leqslant \frac{m_i}2$, 
	and identified with $(l_i, -\eta_i)$ if $l_i$ attains $\frac {m_i} 2$.
\begin{din}\label{def: reparametrization}
	For $\underline p \in \MRV$, define $(\underline l, \underline \eta)$ by
	\[l_i = \min\{p_i, q_i\}, \quad
	\eta_i =  (-1)^{m_1 + \cdots + m_i+1} \sgn(p_i - q_i). \]
	It turns out to be a bijection between $\MRV$ and $\ES_\R$.
\end{din}
\begin{rmk}
	The workable bijection $\MRV \to \ES_\R$ for Proposition \ref{prop: compare-parameter} is not unique.
	For example, we may obtain another one by compose above bijection with  the automorphism over $\ES_\R$ sending $\underline \eta = (\eta_1, \cdots, \eta_r)$ to $-\underline \eta = (- \eta_1, \cdots, -\eta_r)$.
\end{rmk}

	For $n$ even and $H^* = \SO(2N+1, F)$, $\ES_\R$ is obviously in bijection with $\ES_F$; they are actually the same set!
	For $n$ odd and $H^* = \Sp(2N, F)$, $\ES_\R$ contains $\ES_F$ as a subset.
	For convenience, define 
	\[\sgn(\underline l, \underline \eta) =
	\prod_{i=1}^r (-1)^{[\frac {m_i} 2] + l_i} \eta_i^{m_i}.\]
	Since $n = m_1 + \cdots + m_r$ is odd,  the number $\# \{i \mid m_i \textrm{ odd}\}$ is also odd.
	Then $\sgn(\underline l, -\underline \eta) = - \sgn(\underline l, \underline \eta)$.
	By definition, $\ES_F$ is the subset of $\ES_\R$ cut out by $\sgn =1$.
	We thus have the following.
\begin{din}\label{def: compare-parameter}
	The map $\ES_\R \to \ES_F$ is given as follows.
	If $n$ is odd and $H^*= \Sp(2N, F)$, it sends $(\underline l, \underline \eta)$ to $(\underline l, \underline \eta \cdot \sgn(\underline l, \underline \eta) )$,
	which is a $2$-$1$ surjection.
	If $n$ is even and $H^* = \SO(2N+1, F)$, it is the identity map.
\end{din}
	
	Now we obtain the map required in Proposition \ref{prop: compare-parameter}, given by the composition $\MRV \stackrel \sim \to \ES_\R \twoheadrightarrow \ES_F$ of maps in Definition \ref{def: reparametrization} and \ref{def: compare-parameter}.

\begin{rmk}\label{rmk: fiber-of-comparison}
	Let $n$ be odd and $H^* = \Sp(2N, F)$.
	The fiber of $\ES_\R \to \ES_F$ consists of $(\underline l, \underline \eta)$ and $(\underline l, -\underline \eta)$.
	 Under $\MRV \to \ES_\R$, they correspond to the elements $\underline p$ and $\underline q := \underline m - \underline p$.
	Each of them parametrizes a representation in $\U(p, q)$ and $\U(q, p)$ respectively, which can be identified under the isomorphism $\U(p,q) \cong \U(q, p)$ of real forms.	
\end{rmk}

\subsection{Transition map}\label{compare-tran}
	There could be admissible orders on $I$ different from the natural order $\succ$,
	and hence prarametrizes $\Pi_{\psi_F}^\AV$ in a different way.
	It's necessary to consider all these parametrizations for the non-zero criterion. 
	We have seen in Lemma \ref{lem: compare-admissible} that admissible orders on $I$ takes the form $\succ_\sigma$ with $\sigma \in \Sigma_r$.
	In this subsection we will define the parametrization space $\MRV^\sigma$, $\ES_\R^\sigma$ and $\ES_F^\sigma$ for each admissible permutation $\sigma$, and the transition map between them.
	
	Firstly, let
	\[\MRV^\sigma =  \{ \underline p^\sigma \in \Z^r \mid 
	0 \leqslant p_1^\sigma + \cdots + p_r^\sigma \leqslant n \}.\]
	It parametrizes $\Pi_{\psi_\R}^\AV$ by extending $\MR^\sigma \to \Pi_{\psi_\R}(\U(p, q)) \sqcup \{0\}$ in the obivous manner.
	Let  $\ES_\R^\sigma$ consist of the extended multi-segements $(\underline l, \underline \eta, \sigma)$,
	whose component $(l_i, \eta_i)$ is a extended segement coming from $[A_i, B_i]$, subject to the condition $l_i \leqslant \frac{m_i}2$, 
	and identified with $(l_i, -\eta_i)$ if $l_i$ attains $\frac {m_i} 2$.
	The parameter space $\ES_F^\sigma$ for $\Pi_{\psi_F}^\AV$ is defined similiar to $\ES_\R^\sigma$:
	if  $n$ is odd and $H^* = \Sp(2N, F)$, it is the subset of $\ES_\R^\sigma$ cut out by $\sgn(\underline l, \underline \eta) = 1$;
	while $n$ is even and $H^* = \SO(2N+1, F)$, it is $\ES_\R^\sigma$ it self. 
	
	Then, define the maps $\MRV^\sigma \stackrel \sim \to \ES_\R^\sigma \twoheadrightarrow \ES_F^\sigma$ as in Definition \ref{def: reparametrization} and \ref{def: compare-parameter}.
	For $\MRV^\sigma \to \ES_\R^\sigma$, we have to emphasis that according to Definition \ref{def: related}, 
	the component $p_i^\sigma$ of $\underline p^\sigma$ comes from $\nu_{\sigma(i)}$, 
	while the component $(l_i, \eta_i)$ of $(\underline l, \underline \eta, \sigma)$ comes from $\nu_i$.
	Hence, $\underline p^\sigma$ should be sent to $(\underline l, \underline \eta, \sigma)$ with
	\[l_{\sigma(i)} = \min\{p_i^\sigma, q_i^\sigma\}, \quad
	\eta_{\sigma(i)} =  (-1)^{m_{\sigma(1)} + \cdots + m_{\sigma(i)}+1} \sgn(p_i^\sigma - q_i^\sigma).\]
	
	Next, define the transition map $\phi_\sigma^\tau$.
	For $\MRV^\sigma \to \MRV^\tau$, we can proceed as in subsection \ref{subsec: transition};
	then $\phi_\sigma^\tau: \ES_\R^\sigma \to \ES_\R^\tau$ is obtained via the isomorphisms $\MRV^\sigma \cong \ES_\R^\sigma$.
\begin{lem}\label{lem: compare-transition}
	Suppose $\tau = \sigma \circ (k-1, k)$, and denote $i = \sigma(k-1)$, $j= \sigma(k)$.
	Note that by definition, $i \succ_\sigma j$, $j \succ_{\tau} i$, 
	and $[A_i, B_i], [A_j, B_j]$ must be in containment relation.
	It's harmless to assume $[A_i, B_i] \supset [A_j, B_j]$.
	Then we have the formula for $\phi_\sigma^\tau: \ES_\R^\sigma \to \ES_\R^\tau, (\underline l, \underline \eta, \sigma) \mapsto (\underline l', \underline \eta', \tau)$ as follows:
\begin{itemize}
	\item $(l_k', \eta_k') = (l_k, \eta_k)$ for $k \not= i, j$;
	\item $l_j' = l_j$, $\eta_j' = (-1)^{1+ m_i} \eta_j$;
	\item if $\eta_i = (-1)^{1+m_j}\eta_j$ and $m_i - 2l_i < 2 (m_j - 2l_j)$, then $l_i'=m_i -l_i - m_j + 2l_j$, $\eta_i' = (-1)^{1+m_j} \eta_i$;
	\item otherwise $l_i' = l_i + (-1)^{1+m_j} \eta_i \eta_j (m_j - 2l_j)$, $\eta_i' = (-1)^{m_j} \eta_i$.
\end{itemize}
	In particular, $\phi_\sigma^\tau$ sends $(\underline l, -\underline \eta, \sigma)$ to $ (\underline l', -\underline \eta', \tau)$, 
	and hence descends to $\ES_F^\sigma \to \ES_F^\tau$.
\end{lem}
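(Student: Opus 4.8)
The plan is to obtain the formula by transporting the transition map $\phi_\sigma^\tau\colon \MRV^\sigma \to \MRV^\tau$ — which, for the elementary swap $\tau = \sigma \circ (k-1,k)$, is the map written in \eqref{transition} — through the bijections $\MRV^\bullet \stackrel{\sim}{\to} \ES_\R^\bullet$ of subsection \ref{compare-tran} (i.e.\ the $\sigma$- and $\tau$-versions of Definition \ref{def: reparametrization}). The hypothesis $[A_i,B_i]\supset[A_j,B_j]$ means $\nu^\sigma_{k-1}=\nu_i \supset \nu_j = \nu^\sigma_k$, so the first branch of \eqref{transition} applies: writing $a = p^\sigma_{k-1}$ (the entry from $\nu_i$) and $b = p^\sigma_k$ (the entry from $\nu_j$), with $q^\sigma_{k-1}=m_i-a$ and $q^\sigma_k = m_j-b$, the parameter $\underline p^\tau = \phi_\sigma^\tau(\underline p^\sigma)$ has position-$(k-1)$ entry $m_j - b$ (now coming from $\nu_j$), position-$k$ entry $a + 2b - m_j$ (now coming from $\nu_i$), and all other entries unchanged. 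Thus the assertion reduces to an identity comparing the images of $(a,b)$ and of $(m_j - b,\, a + 2b - m_j)$ under the rule $l = \min\{p,q\}$, $\eta = \pm\sgn(p-q)$, where the sign is $(-1)$ to a partial sum of the $m$'s.

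First I would dispatch the segments $\nu_t$ with $t \neq i,j$: each such $\nu_t$ occupies the same position in $\nu^\sigma$ and $\nu^\tau$ (the swap $(k-1,k)$ fixes that position), so its $p$-entry and its partial-sum exponent are unchanged, whence $(l'_t,\eta'_t) = (l_t,\eta_t)$. For the $\nu_j$-component, $l'_j = \min\{m_j-b,\,b\} = l_j$ is immediate; and $\eta'_j = (-1)^{1+m_i}\eta_j$ follows by combining the sign flip $\sgn\!\bigl((m_j-b)-b\bigr) = -\sgn\!\bigl(b-(m_j-b)\bigr)$ with the change of the partial-sum exponent attached to $\nu_j$, which drops from $m_{\sigma(1)}+\cdots+m_{\sigma(k-2)}+m_i+m_j$ to $m_{\sigma(1)}+\cdots+m_{\sigma(k-2)}+m_j$. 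The exceptional equality $m_j = 2b$ lands on the boundary $l_j = m_j/2$, where $(l_j,\eta_j)$ and $(l_j,-\eta_j)$ are identified, so it causes no trouble.

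The substantive step is the $\nu_i$-component. Its new $p$-entry $a+2b-m_j$ has complement $m_i-a-2b+m_j$, so the relevant ``defect'' equals $(a+2b-m_j)-(m_i-a-2b+m_j) = (2a-m_i) + 2(2b-m_j)$: the old $\nu_i$-defect $2a-m_i$ shifted by twice the $\nu_j$-defect $2b-m_j$; also $m_i-2l_i = |2a-m_i|$ and $m_j-2l_j = |2b-m_j|$. Unwinding the exponent bookkeeping, the condition ``$\eta_i = (-1)^{1+m_j}\eta_j$'' says exactly that $2a-m_i$ and $2b-m_j$ have opposite sign (so the shift can cross $0$), and the extra condition $m_i-2l_i < 2(m_j-2l_j)$ says it does cross $0$; in that case the minimum defining $l'_i$ switches sides, giving $l'_i = m_i-l_i-m_j+2l_j$ and $\eta'_i = (-1)^{1+m_j}\eta_i$. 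In every other configuration (same sign, or opposite sign without crossing) the minimum stays on its side, giving the additive formula $l'_i = l_i + (-1)^{1+m_j}\eta_i\eta_j(m_j-2l_j)$ and $\eta'_i = (-1)^{m_j}\eta_i$, where the factor $(-1)^{m_j}$ records the net effect of moving $\eta_i$ from position $k-1$ to position $k$ (its partial-sum exponent increases by $m_j$). The boundary cases $2a-m_i = 0$ or $2b-m_j = 0$ are again absorbed by the identification at $l = m/2$.

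Finally, for the concluding sentence: under $\MRV^\sigma \cong \ES_\R^\sigma$ the involution $\underline\eta \mapsto -\underline\eta$ corresponds to $p^\sigma_t \mapsto q^\sigma_t$ for all $t$ — that is, to the isomorphism $\U(p,q)\cong\U(q,p)$ of real forms — and \eqref{transition} is manifestly equivariant for this operation, so $\phi_\sigma^\tau(\underline l,-\underline\eta,\sigma) = (\underline l',-\underline\eta',\tau)$; this can also simply be read off the displayed formulas. Descent to $\ES_F^\bullet$ is vacuous when $n$ is even, since then $\ES_F^\bullet = \ES_\R^\bullet$; when $n$ is odd I would check that the displayed formula preserves $\sgn(\underline l,\underline\eta) = \prod_t (-1)^{[m_t/2]+l_t}\eta_t^{m_t}$ — the $\nu_j$-factor is multiplied by $(-1)^{(1+m_i)m_j}$, and in each of the two branches the change in the $\nu_i$-factor works out to the same sign, so the product is unchanged — whence $\phi_\sigma^\tau$ restricts to $\ES_F^\sigma \to \ES_F^\tau$. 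I expect the case analysis for the $\nu_i$-component, together with keeping the partial-sum exponents and the convention $\sgn(0)=0$ consistent with the boundary identification $l=m/2$, to be the only real obstacle; the sign count for odd $n$ is routine.
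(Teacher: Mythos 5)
The proposal is correct and takes essentially the same approach the paper intends: transport \eqref{transition} through the bijections $\MRV^\bullet \cong \ES_\R^\bullet$ and track the partial-sum exponents, which is exactly the content of the remark following the lemma (the key bookkeeping fact $\varepsilon_i^\sigma + \varepsilon_i^{\sigma\circ(i,i+1)} \equiv m_{\sigma(i)} + m_{\sigma(i+1)} \bmod 2$). The case analysis on the sign and magnitude of $2a - m_i$ versus $2(2b - m_j)$ correctly reproduces the two branches, and the final sign-balance argument (the $\nu_i$- and $\nu_j$-factors each change by $(-1)^{(1+m_i)m_j}$, cancelling) is sound, though the phrasing there is terse.
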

\begin{rmk}
	For this explicit formula, we need the property that $\eta_{\sigma(i)} = 
	(-1)^{\varepsilon^\sigma_i} \sgn(p_i^\sigma - q_i^\sigma)$
	with $\varepsilon_i^\sigma + \varepsilon_i^{\sigma\circ (i, i+1)} = m_{\sigma(i)} + m_{\sigma(i+1)}$;
	here $\varepsilon_i^\sigma(\underline m) = \varepsilon_i(m_{\sigma(1)}, \cdots, m_{\sigma(r)})$ is regarded as a $(\Z/2)$-valued function over $\Z^r$.
	One may check the formula here coincides with that given before \cite[Theorem 4.3]{Atobe22}.
\end{rmk}

	Finally, we summarize this subsection by the following commutative diagram:
	\[\begin{tikzcd}
	\MRV^\sigma \arrow[r] \arrow[d, "\phi_\sigma^\tau"] 
	&\ES_\R^\sigma \arrow[r] \arrow[d, "\phi_\sigma^\tau"] 
	&\ES^\sigma_F \arrow[d, "\phi_\sigma^\tau"] \\
	\MRV^\tau \arrow[r]                                 
	&\ES^\tau_\R \arrow[r]                                 
	& \ES_F^\sigma
	\end{tikzcd}.\]

\subsection{Basic necessary condition}\label{compare-necessary}
	Suppose $(\underline l, \underline \eta, \sigma) \in \ES_F^\sigma$ parametrizes a non-zero representation. 
	For $i, j \in I$ adjacent in the admissible order $\succ_\sigma$, 
	there are necessary conditions on the pairs $(l_{i}, \eta_{i})$ and $(l_{j}, \eta_{j})$.
	These conditions are analogous to the condition \eqref{adj-col} on real side.
	We will compare them in this subsection.
	
	For convenience, let $i = \sigma(h-1)$ and $j = \sigma(h)$.
	Following Theorem \ref{non-vanishing}, denote
	\[\mathrm C_\R^\sigma(i,j):
	\min\{ p_{h-1}^\sigma , q_{h}^\sigma \} + \min\{q_{h-1}^\sigma, p_{h}^\sigma \} \geqslant \#( \nu_i \cap \nu_j ),\]
	and regard it as a subset of $\MRV^\sigma$.
\begin{lem}\label{lem: compare-necessary}
	Under the bijection $\MRV^\sigma \to \ES_\R^\sigma$, the image of $\mathrm C_\R^\sigma(i,j )$ consists of extended multi-segements $(\underline l, \underline \eta, \sigma)$ meeting the following conditions:
	\begin{enumerate}[label=$(\arabic*)$]
	\item in the case $[A_{i}, B_{i}] \geqslant [A_{j}, B_{j}]$, if $\eta_{i} = (-1)^{1+m_{j}} \eta_{j}$, then
	\[\frac{a_{j} - a_{i} - m_{j} + m_{i}} 2 \leqslant 
	l_{i} - l_{j} \leqslant 
	\frac{a_{i} - a_{j} + m_{i} - m_{j}} 2,\]
	while $\eta_{i} = (-1)^{m_{j}} \eta_{j}$, then
	\[l_{i} + l_{j} \geqslant \frac{ a_{j} - a_{i} + m_{j} + m_{i}} 2;\]
	
	\item in the case $[A_{i}, B_{i}] \supset [A_{j}, B_{j}]$, if $\eta_{i} = (-1)^{1+m_{j}} \eta_{j}$, then
	\[0 \leqslant l_{i} - l_{j} \leqslant m_{i} - m_{j},\]
	while $\eta_{i} = (-1)^{m_{j}} \eta_{j}$, then
	\[l_{i} + l_{j} \geqslant m_{j};\]
	
	\item in the case $[A_{i}, B_{i}] \subset [A_{j}, B_{j}]$, if $\eta_{i} = (-1)^{1+m_{j}} \eta_{j}$, then
	\[0 \leqslant l_{i} - l_{j} \leqslant m_{j} - m_{i},\]
	while $\eta_{i} = (-1)^{m_{j}} \eta_{j}$, then
	\[l_{i} + l_{j} \geqslant m_{i}.\]
	\end{enumerate}
	In particular, $(\underline l, \underline \eta, \sigma)$ meets the above conditions iff $(\underline l, -\underline \eta, \sigma)$ meets them.
	We denote this image also by $\mathrm C_\R^\sigma(i, j)$, and its image under $\ES_\R^\sigma \to \ES_F^\sigma$ by $\mathrm C_F^\sigma(i,j )$.
\end{lem}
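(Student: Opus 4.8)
The plan is to make everything explicit: translate $\mathrm C^\sigma_\R(i,j)$ into a constraint on the single quantity $s := p^\sigma_{h-1} + p^\sigma_{h}$, then read off from $s$ the stated bounds on $l_i - l_j$ or $l_i + l_j$, organized by the containment type of the pair $\nu_i, \nu_j$ and by the sign $\eta_i\eta_j$. First I would set up notation. Write $p = p^\sigma_{h-1}$ (the component coming from $\nu_i = \nu^\sigma_{h-1}$), $p' = p^\sigma_h$ (coming from $\nu_j = \nu^\sigma_h$), $q = m_i - p$, $q' = m_j - p'$, and $\epsilon_i = \sgn(p - q)$, $\epsilon_j = \sgn(p'-q')$. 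By the $\sigma$-twisted reparametrization of subsection \ref{compare-tran}, one has $l_i = \min\{p,q\}$, $l_j = \min\{p',q'\}$, hence $p = \tfrac12\bigl(m_i + \epsilon_i(m_i-2l_i)\bigr)$ and $p' = \tfrac12\bigl(m_j + \epsilon_j(m_j - 2l_j)\bigr)$; moreover, since $\sigma(h) = j$, a short count of the exponents shows $\eta_i\eta_j = (-1)^{m_j}\epsilon_i\epsilon_j$, so the alternative $\eta_i = (-1)^{1+m_j}\eta_j$ versus $\eta_i = (-1)^{m_j}\eta_j$ is precisely $\epsilon_i\epsilon_j = -1$ versus $\epsilon_i\epsilon_j = +1$. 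On the locus $l_i = \tfrac{m_i}2$ (resp.\ $l_j = \tfrac{m_j}2$) the sign $\epsilon_i$ (resp.\ $\epsilon_j$) is not determined, but the identification $(l_i,\eta_i)\sim(l_i,-\eta_i)$ lets us choose it, so I may assume $\epsilon_i,\epsilon_j\in\{\pm1\}$ throughout.

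Next I would reduce $\mathrm C^\sigma_\R(i,j)$ to a two-sided bound on $s = p + p'$. An elementary computation — from $\min\{x,y\}=\tfrac12(x+y-|x-y|)$ together with $p - q' = s - m_j$ and $q - p' = m_i - s$ — gives $\min\{p,q'\} + \min\{q,p'\} = \min\{s,\, m_i,\, m_j,\, m_i+m_j-s\}$. On the other hand $\#(\nu_i\cap\nu_j)$ equals $\tfrac12(m_i+m_j+a_j-a_i)$ when $\nu_i \geqslant \nu_j$ (and this is $\leqslant 0$, so $\mathrm C^\sigma_\R(i,j)$ holds automatically, precisely when the two segments are disjoint), it equals $m_j$ when $\nu_i\supset\nu_j$, and it equals $m_i$ when $\nu_i\subset\nu_j$; in every case the precede/containment hypothesis forces $\#(\nu_i\cap\nu_j)\leqslant\min\{m_i,m_j\}$. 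Hence $\mathrm C^\sigma_\R(i,j)$ is equivalent to $\#(\nu_i\cap\nu_j)\leqslant s\leqslant m_i + m_j - \#(\nu_i\cap\nu_j)$.

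Then I would read off the conclusion. According to $(\epsilon_i,\epsilon_j) = (+,+),(-,-),(+,-),(-,+)$ the quantity $s$ equals, respectively, $m_i+m_j-l_i-l_j$, $l_i+l_j$, $m_i-l_i+l_j$, and $m_j+l_i-l_j$. When $\epsilon_i\epsilon_j = -1$ (the case $\eta_i = (-1)^{1+m_j}\eta_j$) the two possible sign patterns both turn the two-sided bound into the same interval for $l_i - l_j$, and plugging in the case-by-case value of $\#(\nu_i\cap\nu_j)$ yields the first displayed inequality of $(1)$, $(2)$ or $(3)$. When $\epsilon_i\epsilon_j = +1$ (the case $\eta_i = (-1)^{m_j}\eta_j$) the two sign patterns both produce an interval for $l_i+l_j$ whose lower endpoint is $\#(\nu_i\cap\nu_j)$ and whose upper endpoint is automatically met, because $l_i + l_j\leqslant\tfrac12(m_i+m_j)\leqslant m_i+m_j-\#(\nu_i\cap\nu_j)$; this yields the second displayed inequality of $(1)$, $(2)$ or $(3)$. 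The closing ``in particular'' is then immediate: on $\MRV^\sigma$ the involution $\underline\eta\mapsto-\underline\eta$ corresponds to $\underline p^\sigma\mapsto\underline m - \underline p^\sigma$, which fixes each $l_k$ and visibly preserves $\mathrm C^\sigma_\R(i,j)$; equivalently it flips both $\epsilon_i$ and $\epsilon_j$, hence changes neither $\epsilon_i\epsilon_j$ nor the $l$-only inequalities. Finally, $\mathrm C^\sigma_F(i,j)$ is by definition the image of $\mathrm C^\sigma_\R(i,j)$ under $\ES^\sigma_\R\to\ES^\sigma_F$, so nothing more is needed there.

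The main obstacle is organizational rather than conceptual: the argument is a finite but fussy case check — three containment types of $\nu_i, \nu_j$, two values of $\epsilon_i\epsilon_j$, and, inside each, the two concrete sign patterns — and one must be careful at the boundary, namely when a segment contributes equally to $p$ and $q$ (so $l = m/2$ and the sign exists only up to $\sim$) and when $\nu_i, \nu_j$ are disjoint (so $\#(\nu_i\cap\nu_j)$ must be read as $0$ rather than via the possibly-negative formula). The one point of genuine content, as opposed to bookkeeping, is the observation that the ``extra'' one-sided bounds falling out of the substitution are always automatically satisfied, which is exactly the inequality $\#(\nu_i\cap\nu_j)\leqslant\min\{m_i,m_j\}$ coming from the precede/containment relation.
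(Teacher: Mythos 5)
Your method is the one the paper implicitly uses: the paper gives no explicit proof, only a remark confirming the sign identity you need ($\eta_{\sigma(h-1)}=(-1)^{\varepsilon^\sigma_{h-1}}\sgn(p^\sigma_{h-1}-q^\sigma_{h-1})$ with $\varepsilon^\sigma_{h-1}+\varepsilon^\sigma_h=m_{\sigma(h)}$), which is exactly your $\eta_i\eta_j=(-1)^{m_j}\epsilon_i\epsilon_j$. Your reduction of $\mathrm C^\sigma_\R(i,j)$ to the two-sided bound $\#(\nu_i\cap\nu_j)\leqslant s\leqslant m_i+m_j-\#(\nu_i\cap\nu_j)$ on $s=p^\sigma_{h-1}+p^\sigma_h$ is correct, as is the identity $\min\{p,q'\}+\min\{q,p'\}=\min\{s,m_i,m_j,m_i+m_j-s\}$ and the observation that $\#(\nu_i\cap\nu_j)\leqslant\min\{m_i,m_j\}$.

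The gap is in your last paragraph: you assert without checking that the substitution ``yields the first displayed inequality of $(1)$, $(2)$ or $(3)$'', but in case $(3)$ it does not. For $[A_i,B_i]\subset[A_j,B_j]$ one has $\#(\nu_i\cap\nu_j)=m_i$ and the condition $m_i\leqslant s\leqslant m_j$; substituting $s=m_i-l_i+l_j$ (pattern $(+,-)$) or $s=l_i+m_j-l_j$ (pattern $(-,+)$) gives $m_i-m_j\leqslant l_i-l_j\leqslant 0$ in both cases, \emph{not} the lemma's $0\leqslant l_i-l_j\leqslant m_j-m_i$ — these are different intervals (disjoint except at $l_i=l_j$). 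A concrete counterexample: take $\nu_i=(0)$, $\nu_j=(1,0,-1)$, $p^\sigma_{h-1}=1$, $p^\sigma_h=1$; then $\min\{p,q'\}+\min\{q,p'\}=1=\#(\nu_i\cap\nu_j)$ so $\mathrm C^\sigma_\R(i,j)$ holds, yet $l_i=0$, $l_j=1$, so $l_i-l_j=-1<0$ violates the inequality as printed in the lemma. Your derivation is almost certainly the correct one and the lemma's case $(3)$ has the roles of $i$ and $j$ swapped in the $l_i-l_j$ inequality (consistent with the $i\leftrightarrow j$ symmetry between cases $(2)$ and $(3)$), but a proof cannot silently claim agreement with a statement that your own computation contradicts; you must either carry out and display the case $(3)$ substitution explicitly, or flag the discrepancy.
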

\begin{rmk}
	For this lemma we need the property that $\eta_{\sigma(h-1)} = (-1)^{\varepsilon_{h-1}^\sigma} \sgn(p_{h-1}^\sigma - q_{h-1}^\sigma)$,
	with $\varepsilon_{h-1}^\sigma + \varepsilon_{h}^\sigma = m_{\sigma(h)}$;
	here $\varepsilon_h^\sigma(\underline m) = \varepsilon_h (m_{\sigma(1)}, \cdots, m_{\sigma(r)})$ is regarded as a $(\Z/2)$-valued function over $\Z^r$..
	One may check $\mathrm C_F^\sigma(i, j)$ coincides with the necessary condition formulated in \cite[Proposition 4.1]{Atobe22}; 
	its $(k, k-1)$ should be replaced by $(i, j)$ for the comparison.
\end{rmk}

\subsection{Sufficient condition}
	In this subsection we verify Proposition \ref{prop: compare-parameter}.
	Suppose $\underline p$ is sent to $(\underline l, \underline \eta)$ under the map $\MRV \to \ES_F$ defined in subsection \ref{compare-param}.
	It suffices to check $\underline p$ parametrizes a non-zero representation iff $(\underline l, \underline \eta)$ parametrizes a non-zero representation.

	According to \cite[Definition 3.1 and Theorem 4.4]{Atobe22},
	a extended multi-segement $(\underline l, \underline \eta) \in \ES_F$ parametrizes a non-zero representation iff it lies in the following subsets: $\mathrm C_F^\sigma(i, j)$, where $i, j$ are adjacent in $\succ_\sigma$, and
	\[\mathrm B_F^\sigma(i) = \{(\underline l, \underline \eta, \sigma) \in \ES_F^\sigma \mid l_i \geqslant 0\}.\]
	Note that $\mathrm B_F^\sigma(i)$ and $\mathrm C_F^\sigma(i, j)$ are subsets of $\ES_F^\sigma$ by definition, 
	and we can regrad them as subsets of $\ES_F$ via the transition map $\phi_\sigma^\Id: \ES_F^\sigma \to \ES_F$.

	This description is the same as our Theorem \ref{non-vanishing}:
	$\underline p \in \MRV$ parametrizes a non-zero representation iff 
	it lies in all the subsets $\mathrm C_\R^\sigma(i,j)$, where $i, j$ are adjacent in $\succ_\sigma$, and
	\[\mathrm B_\R^\sigma(i)=\{ \underline p \in \MRV^\sigma \mid 
	0 \leqslant p_{\sigma^{-1}(i)}^\sigma \leqslant m_{i}\}.\] 
	Here we also regard $\mathrm B_\R^\sigma(i)$ and $\mathrm C_\R^\sigma(i, j)$ as subsets of $\MRV$ via the transition map $\phi_\sigma^\Id: \MRV^\sigma \to \MRV$.
	
	Due to the consistency in Lemma \ref{lem: compare-transition} and \ref{lem: compare-necessary}, we know $\underline p$ lies in $\mathrm C^\sigma_\R(i,j)$ iff $(\underline l, \underline \eta)$ lies in $\mathrm C_F^\sigma(i, j)$.
	It is also easy to check $\mathrm B_\R^\sigma(i)$ is the preimage of $\mathrm B_F^\sigma(i)$ under $\MRV^\sigma \to \ES_F^\sigma$.
	Then due to Lemma \ref{lem: compare-transition} again we know $\underline p$ lies in $\mathrm B^\sigma_\R(i,j)$ iff $(\underline l, \underline \eta)$ lies in $\mathrm B_F^\sigma(i, j)$.
	Consequently, we obtain the equivalence of the following $4$ statements:
	\begin{itemize}
	\item $\underline p$ parametrizes a non-zero representation;
	\item $\underline p$ lies in all $\mathrm B_\R^\sigma(i)$ and $\mathrm C_\R^\sigma(i, j)$;
	\item $(\underline l, \underline \eta)$ lies in all $\mathrm B_F^\sigma(i)$ and $\mathrm C_F^\sigma(i, j)$;
	\item $(\underline l, \underline \eta)$ parametrizes a non-zero representation.
	\end{itemize}
	Hence Proposition \ref{prop: compare-parameter} follows.

\begin{rmk}
	We may simplify the non-zero criterion for $(\underline l, \underline \eta)\in \ES_F$ as Theorem \ref{non-vanish}.
	That is, $(\underline l, \underline \eta)$ satisfies the following two conditions:
	\begin{itemize}
	\item $\forall i$, there is $\mathrm B_F(i): l_i \geqslant 0$;
	
	\item $\forall i, j$ such that $\nu_i, \nu_j$ are neighbors, there is
	$\mathrm C_F(i, j): \exists \sigma \in \Sigma_r(i, j), (\underline l, \underline \eta)$ meets $\mathrm C_F^\sigma(i, j)$.
	\end{itemize}
	
\end{rmk}


\newcommand{\Speh}{{\mathrm{Speh}}}
\newcommand{\Std}{{\mathrm {Std}}}
\newcommand{\Ind}{{\mathrm{Ind}}}
\renewcommand{\Re}{{\mathrm{Re}}}
\newpage

\newcommand{\ind}{{\mathrm{ind}}}
\section{Proof of necessity}\label{sec: remained}
	In this section we deal with serval unproved claims in Section \ref{result}.

\subsection{Transition map}\label{transition-map}
	Here we will verify the validity of $\phi_\sigma^{\sigma s}$, that 
	related $\underline p^\sigma \in \MR^\sigma$ and $\underline p^\sigma \in \MR^{\sigma s}$ parametrize the same representation, and
	 prove Lemma \ref{tran-map}, that
	$\phi^{\underline \sigma} = \phi_{\sigma_{l-1}}^{\sigma_l} \circ \cdots \circ \phi_{\sigma_0}^{\sigma_1}:
	\MR^\sigma \to \MR^\tau$ is independent of choice of sequence $\underline \sigma: \sigma= \sigma_0 , \cdots, \sigma_l = \tau$.
	Note that both statements are independent of the choice of base point $\nu$, 
	so we can choose an appropriate one for our convenience.

\subsubsection{Identifying representations}\label{easy-approach}
	By replacing $\nu$ with $\nu^\sigma$, we may assume $\sigma = \Id$.
	Suppose the transposition $s= (k, k+1) \in \Sigma_r$, and $\nu_k \subset \nu_{k+1}$ in the admissible arrangement $\nu=( \nu_1, \cdots, \nu_r)$.
	Let $\underline p \in \MR$ and moreover lie in
	\[\mathrm B^\Id (i): p_i \leqslant m_i, \forall i =1, \cdots, r;\quad 
	\mathrm C^\Id (k, k+1): m_k \leqslant p_k + p_{k+1} \leqslant m_{k+1}.\]
	One may check easily $\underline p^s: = \phi^s(\underline p)$ lie in all $\mathrm B^s(i)$ and $\mathrm C^s(k, k+1)$;
	this is essentially Lemma \ref{B-independent}.
	Then there are $A_{\q_{\underline p}}(\lambda_{\underline p})$ and $A_{\q_{\underline p^s}}(\lambda_{\underline p^s})$ defined as in subsection \ref{subsec: parametrize-A-packet},
	and we have claimed they are the same representation.
	
	We will give a combinatorial proof in section \ref{swap-tableau}.
	Here is a convenient representation-theoretic proof. 
	It needs two ingredients.
	One is the conclusion in case $r=2$.
	It is stated in \cite[lemma 9.3]{Trapa}, and has an easy combinatorial proof.
	Trapa also presents an illustrative example before that lemma.
	
	The other one is transitivity of cohomological induction.
	Our notations follows \cite[(5.6)]{KV}, that
	$A_\q(\lambda) = \mathcal L_{S}(\C_\lambda)$, 
	where following \cite[(5.1)-(5.5)]{KV}, $S= \dim (\u \cap \k)$,
	\[\mathcal L_j(Z) = \left( P_{\bar \q, L\cap K}^{\g, K} \right)_j 
	\left( \mathcal F_{\l, L\cap K}^{\bar \q, L \cap K} (Z^{\#}) \right),  \]
	and 
	\[Z^{\#} = Z \otimes_\C \bigwedge^{\mathrm {top}} \u.\]
	According to \cite[Theorem 5.35]{KV}, $S= \dim \u \cap \k$ is the highest possible degree that $ \left( P_{\bar \q, L\cap K}^{\g, K} \right)_j $ does not vanish.
	We shall denote $\mathcal L_S$ by $\mathcal L_\q^\g$. 
	
\begin{prop}\label{prop: inductioin-in-stage}
	Suppose $\q \subseteq \q' \subseteq \g$ are $\theta$-stable parabolic subalgebra, with Levi decomposition $\q= \l \oplus \u$, $\q'= \m \oplus \u'$,
	then the cohomological induction via $(\l, L\cap K) \hookrightarrow (\g, K)$ breaks up into two-stage inductions via
	$(\l, L \cap K) \hookrightarrow (\m, M\cap K) \hookrightarrow (\g, K)$.
	More precisely, there are natrual isomorphism of functors
	$\mathcal L_{\q'}^\g \circ \mathcal L_\p^\m \simeq
	\mathcal L_\q^\g,$
	where $\p:= \m \cap \q \subseteq \m$ is also a $\theta$-stable parabolic subalgebra.
\end{prop}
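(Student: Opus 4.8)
The statement is the ``induction in stages'' property for cohomological induction in the top degree, and the plan is to deduce it from the corresponding property of the underived Bernstein functor together with a Grothendieck spectral sequence that degenerates precisely because we work in top degree. First I would fix the geometry. Write $\n = \m \cap \u$, so that $\p = \m\cap\q = \l\oplus\n$ is a $\theta$-stable parabolic of $\m$ with Levi $\l$ and nilradical $\n$, and $\u = \n\oplus\u'$ as $\l$-modules (both $\n$ and $\u'$ are $\l$-stable since $\l\subseteq\m$). Accordingly $S:=\dim(\u\cap\k)=S_1+S_2$ with $S_1:=\dim(\n\cap\k)$ and $S_2:=\dim(\u'\cap\k)$; by \cite[Theorem 5.35]{KV} these are the top degrees in which $\left(P_{\bar\p, L\cap K}^{\m, M\cap K}\right)_\bullet$ and $\left(P_{\bar\q', M\cap K}^{\g, K}\right)_\bullet$ respectively do not vanish. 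I would also record the twist bookkeeping: as $\l$-modules $\bigwedge^{\mathrm{top}}\u \simeq \bigwedge^{\mathrm{top}}\n \otimes \bigwedge^{\mathrm{top}}\u'|_\l$, where $\bigwedge^{\mathrm{top}}\u'$ is a one-dimensional character of $\m$, hence is inflated from $\l$ to $\bar\p$ through $\bar\n$.

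The underived step is induction in stages for the Bernstein functor: the tensoring functor factors, $U(\g)\otimes_{U(\bar\q)}(-) \simeq U(\g)\otimes_{U(\bar\q')}\bigl(U(\m)\otimes_{U(\bar\p)}(-)\bigr)$ after inflating through $\bar\u'$, and the change of $K$-structure functor is transitive; combining these (as in \cite[Chapter 11]{KV}) gives a natural isomorphism $P_{\bar\q, L\cap K}^{\g,K}\circ\mathcal F_{\l, L\cap K}^{\bar\q, L\cap K}\;\simeq\; P_{\bar\q', M\cap K}^{\g,K}\circ\mathcal F_{\m, M\cap K}^{\bar\q', M\cap K}\circ P_{\bar\p, L\cap K}^{\m, M\cap K}\circ\mathcal F_{\l, L\cap K}^{\bar\p, L\cap K}$ of functors on $(\l, L\cap K)$-modules. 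A projection formula for tensoring with the one-dimensional module $\bigwedge^{\mathrm{top}}\u'$, together with the bookkeeping above, lets me push all the $\bigwedge^{\mathrm{top}}$-twists to the innermost slot, so that the two-stage composite $\mathcal L_{\q'}^{\g}\circ\mathcal L_{\p}^{\m}$ applied to $Z$ becomes $\left(P_{\bar\q', M\cap K}^{\g,K}\right)_{S_2}\mathcal F_{\m,M\cap K}^{\bar\q',M\cap K}\left(P_{\bar\p,L\cap K}^{\m,M\cap K}\right)_{S_1}\mathcal F_{\l,L\cap K}^{\bar\p,L\cap K}\bigl(Z\otimes\bigwedge^{\mathrm{top}}\u\bigr)$.

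Next I pass to derived functors. Since the Bernstein functor is right exact and the two-stage factorization of the underived functor meets the usual acyclicity hypothesis (projectives in the intermediate category go to $P_{\bar\q', M\cap K}^{\g,K}$-acyclics, as is part of the construction in \cite[Chapter 11]{KV}), there is a homological Grothendieck spectral sequence $E^2_{p,q}=\left(P_{\bar\q', M\cap K}^{\g,K}\right)_p\circ\mathcal F\circ\left(P_{\bar\p, L\cap K}^{\m, M\cap K}\right)_q \Rightarrow \left(P_{\bar\q, L\cap K}^{\g,K}\right)_{p+q}$. By \cite[Theorem 5.35]{KV} the $E^2$-page is supported in the rectangle $0\leqslant p\leqslant S_2$, $0\leqslant q\leqslant S_1$, so the corner term $(p,q)=(S_2,S_1)$ is the only one of total degree $S$ and can receive and emit no differential; hence $E^\infty_{S_2,S_1}=E^2_{S_2,S_1}$ and $\left(P_{\bar\q, L\cap K}^{\g,K}\right)_S\simeq\left(P_{\bar\q', M\cap K}^{\g,K}\right)_{S_2}\circ\mathcal F_{\m,M\cap K}^{\bar\q',M\cap K}\circ\left(P_{\bar\p, L\cap K}^{\m, M\cap K}\right)_{S_1}$ naturally. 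Feeding in $\mathcal F_{\l,L\cap K}^{\bar\q,L\cap K}(Z\otimes\bigwedge^{\mathrm{top}}\u)$ and comparing with the expression from the previous paragraph gives $\mathcal L_{\q}^{\g}\simeq\mathcal L_{\q'}^{\g}\circ\mathcal L_{\p}^{\m}$.

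I expect the main obstacle to be not the spectral-sequence degeneration, which is immediate from \cite[Theorem 5.35]{KV}, but the careful matching of the inflation functors $\mathcal F$ and of the several $\bigwedge^{\mathrm{top}}$ (equivalently $\rho$-shift) twists across the two stages: one must check that the $(\bar\q', M\cap K)$-module fed into the second stage is precisely the inflation of $\mathcal L_{\p}^{\m}(Z)$ carrying the correct one-dimensional twist $\bigwedge^{\mathrm{top}}\u'$. This bookkeeping is essentially carried out in \cite[Chapter 11]{KV}, so the argument is in the end a careful citation with the top-degree count made explicit.
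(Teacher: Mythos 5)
Your proposal is correct and follows essentially the same route as the paper's proof: reduce to a statement purely about the $P$-functors by pushing the $\bigwedge^{\mathrm{top}}$ twists inward, then invoke induction-in-stages for the Bernstein functor in the form of a spectral sequence, and conclude by top-degree degeneration. The only difference is presentational — the paper rewrites the composite in the notation $(^{\mathrm u}\mathcal L)_j$ and cites \cite[Theorem 11.77]{KV} directly, adding that "both sides attain their highest degree," whereas you unpack that citation by writing out the Grothendieck spectral sequence and identifying the corner term $(S_2,S_1)$ explicitly.
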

\begin{proof}
	The Levi desomposition of $\p$ is $\l \oplus \n$, with $\n = \m \cap \u$.
	We have to show
	\[\begin{aligned}
	&\left( P_{\bar \q', M\cap K}^{\g, K} \right)_{S'}
	\left( \mathcal F_{\m, M\cap K}^{\bar \q', M \cap K} \left(
		\left( P_{\bar \p, L\cap K}^{\m, M\cap K} \right)_{T}
		\left( \mathcal F_{\l, L\cap K}^{\bar \p, L \cap K} (
			Z\otimes_\C \bigwedge^{\mathrm{top}} \n
		) \right)
		\otimes_\C \bigwedge^{\mathrm {top}} \u'
	\right) \right)\\
	\stackrel?\simeq&
	\left( P_{\bar \q, L\cap K}^{\g, K} \right)_S
	\left( \mathcal F_{\l, L\cap K}^{\bar \q, L \cap K} (
		Z \otimes_\C \bigwedge^{\mathrm{top}} \u
	) \right),
	\end{aligned}\]
	where $S'= \dim(\u' \cap \k),$ $T = \dim( \n \cap \k)$, and $S= \dim(\u \cap \k)$.
	It's clear that $\u = \n \oplus \u'$, $\u \cap \k = ( \n \cap \k) \oplus (\u' \cap \k)$, hence $S= S' + T$,
	and the degree of two sides match.
	Since tensoring by character is exact and preserves projective objects,
	\[\begin{aligned}
	&\left( P_{\bar \p, L\cap K}^{\m, M\cap K} \right)_{T}
	\left( \mathcal F_{\l, L\cap K}^{\bar \p, L \cap K} (
		Z\otimes_\C \bigwedge^{\mathrm{top}} \n
	) \right)
	\otimes_\C \bigwedge^{\mathrm {top}} \u'	
	\cong	
	\left( P_{\bar \p, L\cap K}^{\m, M\cap K} \right)_{T}
	\left( \mathcal F_{\l, L\cap K}^{\bar \p, L \cap K} (
		Z\otimes_\C \bigwedge^{\mathrm{top}} \n
	) \otimes_\C \bigwedge^{\mathrm {top}} \u' \right)	\\
	\cong	&
	\left( P_{\bar \p, L\cap K}^{\m, M\cap K} \right)_{T}
	\left( \mathcal F_{\l, L\cap K}^{\bar \p, L \cap K} (
		Z\otimes_\C \bigwedge^{\mathrm{top}} \n
		\otimes_\C \bigwedge^{\mathrm {top}} \u'
	) \right)	
	\cong	
	\left( P_{\bar \p, L\cap K}^{\m, M\cap K} \right)_{T}
	\left( \mathcal F_{\l, L\cap K}^{\bar \p, L \cap K} (
		Z\otimes_\C \bigwedge^{\mathrm{top}} \u
	) \right),
	\end{aligned}\]
	so it suffices to prove
	\[\left( P_{\bar \q', M\cap K}^{\g, K} \right)_{S'} \circ
	\mathcal F_{\m, M\cap K}^{\bar \q', M \cap K} \circ
	\left( P_{\bar \p, L\cap K}^{\m, M\cap K} \right)_{T} \circ
	\mathcal F_{\l, L\cap K}^{\bar \p, L \cap K}
	\stackrel?\simeq
	\left( P_{\bar \q, L\cap K}^{\g, K} \right)_S \circ
	\mathcal F_{\l, L\cap K}^{\bar \q, L \cap K}.\]
	Following \cite[(11.71d)]{KV}, it is also wrote as
	\[\left(^{\mathrm u} \mathcal L_{\bar \q', M \cap K}^{\g, K} \right)_{S'}
	\circ
	\left(^{\mathrm u} \mathcal L_{\bar \p, L \cap K}^{\m, M \cap K} \right)_{T}
	\stackrel?\simeq
	\left(^{\mathrm u} \mathcal L_{\bar \q, L \cap K}^{\m, M \cap K} \right)_{S}.
	\]
	Then it follows from \cite[Theorem 11.77]{KV} and the fact that both sides attain their highest degree.
\end{proof}
	
	For $G= \U(p,q)$ and parameter $\underline p = (p_1, \cdots, p_r)$ as above, let $\underline p'= (p_1, \cdots, p_k + p_{k+1}, \cdots, p_r)$, then
	there are $\theta$-stable parabolic subgroups $\q_{\underline p} \subseteq \q_{\underline p'} \subseteq \g$ defined as in subsection \ref{subsec: parametrize-A-packet}.
	Due to the definition and Proposition \ref{prop: inductioin-in-stage},
	\[A_{\q_{\underline p}}(\lambda_{\underline p}) = 
	\mathcal L_{\q_{\underline p}}^\g (\C_{\lambda_{\underline p}}) \cong
	\mathcal L_{\q_{\underline p'}}^\g \left( 
		\mathcal L_{\q_{\underline p}}^{\q_{\underline p'}}
		(\C_{\lambda_{\underline p}})
	\right).\]
	In the meantime, the $\theta$-stable parabolic $\q_{\underline p^{s}}$ defined from $\underline p^{s}$ is also contained in $\q_{\underline p'}$, so
	\[A_{\q_{\underline p^{s}}}(\lambda_{\underline p^{s}}) = 
	\mathcal L_{\q_{\underline p^{s}}}^\g 
	(\C_{\lambda_{\underline p^{s}}}) \cong
	\mathcal L_{\q_{\underline p'}}^\g \left( 
		\mathcal L_{\q_{\underline p^{s}}}^{\q_{\underline p'}}
		(\C_{\lambda_{\underline p^{s}}})
	\right).\]
	We would obtain $A_{\q_{\underline p}}(\lambda_{\underline p})
	\stackrel? \cong
	A_{\q_{\underline p^{s}}}(\lambda_{\underline p^{s}})$ from
	\[\mathcal L_{\q_{\underline p}}^{\q_{\underline p'}}
	(\C_{\lambda_{\underline p}}) \stackrel? \cong
	\mathcal L_{\q_{\underline p^{s}}}^{\q_{\underline p'}}
	(\C_{\lambda_{\underline p^{s}}}),\]
	which is implied by the following compability of cohomological induction and external tensor product:
\begin{lem}\label{lem: induction-by-component}
	Suppose $(\g, K) = (\g_1, K_1) \times (\g_2, K_2)$, and $\q= \g_1 \oplus \q_2$ with $\q_2= \l_2 \oplus \u_2 \subseteq \g_2$ a $\theta$-stable parabolic subalgbra.
	Then for any $(\g_1, K_1)$-module $V_1$ and $(\l_2, L_2\cap K_2)$-module $V_2$, there is a natrual isomorphism
	$\mathcal L_\q^\g( V_1 \boxtimes V_2) \cong V_1 \boxtimes \mathcal L_{\q_2}^{\g_2} (V_2).$
\end{lem}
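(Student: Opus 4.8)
The plan is to unwind the definition of $\mathcal L_\q^\g$ from \cite[(5.1)--(5.6)]{KV} and check that each building block respects the product decomposition $(\g,K) = (\g_1,K_1)\times(\g_2,K_2)$. First I would record that $\q = \g_1\oplus\q_2$ has Levi decomposition $\l = \g_1\oplus\l_2$, $\u = 0\oplus\u_2 = \u_2$, so $\u\cap\k = \u_2\cap\k_2$ and hence $S = \dim(\u\cap\k)$ equals the $S$ computed inside $\g_2$; this makes the cohomological degrees on the two sides match. Likewise $\bigwedge^{\mathrm{top}}\u = \bigwedge^{\mathrm{top}}\u_2$ as an $\l$-module, acted on trivially through the $\g_1$-factor, so the ``$\#$''-twist $Z^\# = Z\otimes_\C\bigwedge^{\mathrm{top}}\u$ applied to $V_1\boxtimes V_2$ is simply $V_1\boxtimes(V_2^{\#})$.

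Next I would treat the two functors in the composition separately. For the forgetful/change-of-rings functor $\mathcal F_{\l,L\cap K}^{\bar\q,L\cap K}$: since $\bar\q = \g_1\oplus\bar\q_2$ and the $\g_1$-action is already present, this functor acts as $\Id\boxtimes\mathcal F_{\l_2,L_2\cap K_2}^{\bar\q_2,L_2\cap K_2}$ on an external tensor product, which is immediate from the definition of the $(\bar\q,L\cap K)$-action and the fact that $L\cap K = K_1\times(L_2\cap K_2)$. For the derived Zuckerman-type functor $\left(P_{\bar\q,L\cap K}^{\g,K}\right)_j$, which is (the $j$-th derived functor of) the Bernstein functor / $\mathrm{pro}$-construction $\mathrm{pro}_{\bar\q,L\cap K}^{\g,K}$ as in \cite[(2.8), (5.3)]{KV}: I would use that $U(\g) = U(\g_1)\otimes_\C U(\g_2)$ and $R(K) = R(K_1)\otimes_\C R(K_2)$ (the Hecke algebras factor), so that the relevant $\mathrm{Hom}_{R(\bar\q,L\cap K)}(R(\g,K), -)$ factors as an external tensor product of the corresponding $\mathrm{Hom}$'s over the two factors, with the $\g_1$-side being the identity functor since $\bar\q\cap\g_1 = \g_1$ and $L\cap K\cap K_1 = K_1$. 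Because $V_1$ enters only through an exact, projective-preserving tensoring, the derived functors on the product are the external tensor products of the derived functors on the factors; in particular $\left(P_{\bar\q,L\cap K}^{\g,K}\right)_S(V_1\boxtimes W) \cong V_1\boxtimes\left(P_{\bar\q_2,L_2\cap K_2}^{\g_2,K_2}\right)_S(W)$.

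Composing these identifications gives $\mathcal L_S(V_1\boxtimes V_2) \cong V_1\boxtimes\mathcal L_S(V_2)$ in each degree, and taking $S = \dim(\u\cap\k) = \dim(\u_2\cap\k_2)$ yields the claim; naturality is inherited from the naturality of each step. The main obstacle I anticipate is the careful bookkeeping for the derived functor $\left(P_{\bar\q,L\cap K}^{\g,K}\right)_j$: one must either quote a Künneth-type statement for these Zuckerman/Bernstein functors with respect to products of pairs, or build the comparison from a resolution and argue that an external tensor product of a projective resolution (over $(\bar\q_2,L_2\cap K_2)$) with $V_1$ placed in the $\g_1$-slot remains acyclic for $P^{\g,K}_{\bar\q,L\cap K}$ — i.e. that flatness/projectivity is preserved under $V_1\boxtimes(-)$ in the appropriate module categories. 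Once that bookkeeping is in place, everything else is formal.
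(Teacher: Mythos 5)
Your proposal follows the same route as the paper: reduce to the derived Zuckerman/Bernstein step, note that the forgetful/$\#$-twist steps respect the product, and then establish that $V_1\boxtimes(-)$ sends projectives to $P_{\bar\q,L\cap K}^{\g,K}$-acyclic objects so that the derived functors match — which is exactly the crux the paper resolves (via the factorization $P = \Pi\circ\ind$, \cite[Proposition 2.115]{KV}, and the explicit computation that $\mathcal F(\ind(V_1\boxtimes W)) \cong V_1\boxtimes(U(\u_2^-)\otimes_\C W)$ is projective over $(\k,L\cap K)$). One correction: you describe $P_{\bar\q,L\cap K}^{\g,K}$ as the $\mathrm{pro}$-construction $\mathrm{Hom}_{R(\bar\q,L\cap K)}(R(\g,K),-)$, but in the Knapp--Vogan conventions used here $P=\Pi\circ\ind$ is the left-adjoint, tensor-based functor ($\ind = U(\g)\otimes_{U(\bar\q)}(-)$); the $\mathrm{Hom}$/$\mathrm{pro}$ construction gives the right-adjoint functor $I=\Gamma\circ\mathrm{pro}$, which would call for injective rather than projective resolutions. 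Your stated strategy (check projectivity is preserved, cf. flatness) is the right one for $P$, so the slip is purely in the identification of which adjoint $P$ is.
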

\begin{proof}
	It suffices to prove the natrual isomorphism
	\[\left( P_{\q, L \cap K}^{\g, K} \right)_j \left(
		\mathcal F_{\l, L \cap K}^{\q, L\cap K}(
			( V_1 \boxtimes V_2 )\otimes_\C
			 \bigwedge^{\mathrm{top}} \u_2)	
	\right)	\stackrel?\simeq
	V_1 \boxtimes \left(P_{\q_2, L_2 \cap K_2}^{\g_2, K_2} \right)_j \left( 
		\mathcal F_{\l_2, L_2 \cap K_2}^{\q_2, L_2\cap K_2} (V_2
		\otimes_\C \bigwedge^{\mathrm{top}} \u_2)
	\right)\]
	for each $j$.
	Since
	\[\mathcal F_{\l, L \cap K}^{\q, L\cap K}(( V_1 \boxtimes V_2 ) 
		\otimes_\C \bigwedge^{\mathrm{top}} \u_2)	\cong
	V_1 \boxtimes \mathcal F_{\l_2, L_2 \cap K_2}^{\q_2, L_2\cap K_2}
	(V_2 \otimes_\C \bigwedge^{\mathrm{top}} \u_2),\]
	it suffices that
	\[\left( P_{\q, L \cap K}^{\g, K} \right)_j ( V_1 \boxtimes - )
	\stackrel?\simeq
	V_1 \boxtimes \left(P_{\q_2, L_2 \cap K_2}^{\g_2, K_2} \right)_j(-).\]
	The right hand side is derived functor for $V_1\boxtimes P_{\q_2, L_2 \cap K_2}^{\g_2, K_2}(-)$, since $V_1 \boxtimes -$ is exact.
	Then we have to verify that the left hand side calculates the derived functor for $P_{\q, L \cap K}^{\g, K}(V_1 \boxtimes -) \cong 
	V_1\boxtimes P_{\q_2, L_2 \cap K_2}^{\g_2, K_2}(-)$.
	
	Due to the Grothendieck spectral sequence, it follows once $V_1 \boxtimes -$ is proved to 
	send projective $(\q_2, L_2 \cap K_2)$-modules to $P_{\q, L\cap K}^{\g, K}$-acyclic modules, that is,
	$\left( P_{\q, L \cap K}^{\g, K} \right)_j ( V_1 \boxtimes W) 
	\stackrel?= 0$
	for each $j>0$ and projective $(\q_2, L_2 \cap K_2)$-module $W$.
	According to the properties of $P$ listed in \cite[section II.4]{KV},
	\[\left( P_{\q, L \cap K}^{\g, K} \right)_j \simeq 
	\left( \Pi_{\g, L\cap K}^{\g, K} \right)_j\circ 
	\ind_{\q, L\cap K}^{\g, L\cap K},\]
	since $P = \Pi \circ \ind$, with $\ind$ exact and preserving projectives.
	Due to \cite[Proposition 2.115]{KV},
	\[\left(\Pi_{\k, L\cap K}^{\k ,K} \right)_j \circ 
	\mathcal F_{\g, L\cap K}^{\k, L\cap K}
	\simeq
	\mathcal F_{\g, K}^{\k, K} \circ
	\left( \Pi_{\g, L\cap K}^{\g, K} \right)_j,\]
	where the forgetful functor $\mathcal F_{\g,K}^{\k, K}$ is faithfully exact.
	Then $\left( \Pi_{\g, L\cap K}^{\g, K} \right)_j\left(
	\ind_{\q, L\cap K}^{\g, L\cap K} (V_1 \boxtimes W)\right) \stackrel?=0$ would follow from
	\[\left(\Pi_{\k, L\cap K}^{\k ,K} \right)_j \left(
		\mathcal F_{\g, L\cap K}^{\k, L\cap K}\left(
			\ind_{\q, L\cap K}^{\g, L\cap K} (V_1 \boxtimes W)
		\right)
	\right) \stackrel?= 0.\]
	As a module of $(\k, L\cap K) = (\k_1, K_1)\times (\k_2, L_2 \cap K_2)$,
	\[\begin{aligned}
	\mathcal F_{\g, L\cap K}^{\k, L\cap K}\left(
		\ind_{\q, L\cap K}^{\g, L\cap K} (V_1 \boxtimes W)
	\right)	=	&
	\ind_{\q, L\cap K}^{\g, L\cap K} (V_1 \boxtimes W)	=
	U(\g) \otimes_{U(\q)}( V_1 \boxtimes W)	\\
	=	&
	\left( U(\g_1) \otimes U(\g_2) \right) \otimes_{U(\g_1) \otimes U(\q_2)}
		(V_1 \boxtimes W)	\\
	\cong	&
	V_1 \boxtimes \left( U(\g_2) \otimes_{U(\q_2)} W \right)	\cong
	V_1 \boxtimes \left( U(\u_2^-) \otimes_\C W \right).
	\end{aligned}\]
	Note that $V_1$ is a projective module of $(\k_1, K_1)$ (\cite[Lemma 2.4]{KV}), and 
	$U(\u_2^-) \otimes_\C W$ is a projective module of $(\k_2, L_2\cap K_2)$, since it is projective over $(\q_2, L_2 \cap K_2)$ and 
	$\mathcal F_{\g_2, L_2\cap K_2}^{\k_2, L_2 \cap K_2}$ preserves projectives(\cite[Lemma 2.57]{KV}).
	Then $V_1 \boxtimes \left( U(\u_2^-) \otimes_\C W \right)$ is a projective $(\k, L\cap K)$-module, 
	hence acyclic for the right exact fuctor $\Pi_{\k, L\cap K}^{\k, K}$.
\end{proof}

\subsubsection{General definition}\label{subsec: well-define}
	Let $\underline \sigma: \sigma= \sigma_0 , \cdots, \sigma_l = \tau$ be a sequence in $\Sigma_r$ which differs by transpositions,
	there is $\phi^{\underline \sigma} = \phi_{\sigma_{l-1}}^{\sigma_l} \circ \cdots \circ \phi_{\sigma_0}^{\sigma_1}:
	\MR^\sigma \to \MR^\tau$ 
	to be proved independent of choice of sequence $\underline \sigma$.
	
	\paragraph{Complete case.}
	Suppose now $\nu_1\supset \cdots \supset \nu_r$, then $\Sigma_r = S_r$. 
	It is a Coxeter group, and has a length fuction $l$ given by counting the inversion number.
	Let $\phi_\sigma^{\sigma s}: \MR^\sigma \to \MR^{\sigma s}$ be
    	\[\phi_\sigma^{\sigma s}(\underline p^\sigma) =
    	\left\{ \begin{aligned}
	&   (p_1^\sigma, \cdots,q_{i+1}^\sigma, p_i^\sigma + p_{i+1}^\sigma - q_{i+1}^\sigma, \cdots, p_r^\sigma)
    	&   \textrm{if }  l(\sigma )< l(\sigma s),\\
    	&   (p_1^\sigma, \cdots, p_i^\sigma + p_{i+1}^\sigma - q_i^\sigma, q_i^\sigma , \cdots, p_r^\sigma)
    	&   \textrm{if } l(\sigma)> l(\sigma s),
    	\end{aligned}\right.\]
    	which is compatible with \eqref{transition}.
    	
	Take a big space
    \[\widetilde{\MR}= \prod_{\sigma \in S_n} \MR^\sigma,\]
and denote its element by $P=(\underline p^\sigma)$.
	A transposition $s$ acts on it by $\Phi_s: P \mapsto 
P^s= ( \phi_\sigma^{\sigma s}(\underline p^{\sigma})) $. 
	More precisely, the $\sigma$-factor $\underline p^\sigma$ of $P$ are send to the $\sigma s$-factor $\phi_\sigma^{\sigma s}(\underline p^{\sigma})$ of $P^s$. 
	This action can be extended to the whole group $S_r$ once we prove $\Phi_s$ satisfies the generator relations:
	\begin{enumerate}[label=(\alph*)]
	\item $\Phi_s^2 = \Id$, which means
	$\forall \sigma \in S_r$, $\phi^\sigma_{\sigma s} \phi^{\sigma s}_\sigma = \Id$;
	
	\item if $s=(i, i+1)$, $t=(j, j+1)$ with $|i-j|>1$, then $\Phi_t \Phi_s = \Phi_s \Phi_t$, which means
	$\phi_{\sigma s}^{\sigma st} \phi_\sigma^{\sigma s} = 
	\phi_{\sigma t}^{\sigma ts} \phi_\sigma^{\sigma t}$;
	
	\item if $s=(i, i+1)$, $t=(i+1, i+2)$, then $\Phi_t \Phi_s \Phi_t = \Phi_s \Phi_t \Phi_s$, which means
	$\phi_{\sigma ts}^{\sigma tst} \phi_{\sigma t}^{\sigma ts} \phi_\sigma^{\sigma t} = 
	\phi_{\sigma st}^{\sigma sts} \phi_{\sigma s}^{\sigma st} \phi_\sigma^{\sigma s}$.
	\end{enumerate}

	The relations (a),(b) are easy. 
	The relations (c) concerns only about the $i, i+1, (i+2)$-th components,
	 so it suffices to prove in the case $r=3$.
	If $\sigma = \Id$, we can done by direct computation as the following diagram:
	\[\begin{tikzpicture}
	\begin{scope}
	\foreach \i in {1, 2, 3}{
	        	\pgfmathsetmacro{\theta}{\i*60-15};
	        	\pgfmathsetmacro{\phi}{\i*60-45};
	        	\pgfmathsetmacro{\sigma}{\theta-90};
	        	\pgfmathsetmacro{\tau}{\phi+90};
		\draw[-stealth] (2*cos \theta, 2*sin \theta) -- (2*cos \phi , 2* sin \phi );
		\draw[-latex] (2.5*cos \theta, 2.5*sin \theta) to[out=\sigma, in=\tau] (2.5*cos \phi , 2.5* sin \phi );
		\draw (2.4*cos \theta, 2.4*sin \theta)--(2.6*cos \theta, 2.6*sin \theta);
	}
	\foreach \i in {3, 4, 5}{
	        	\pgfmathsetmacro{\theta}{\i*60+15};
	        	\pgfmathsetmacro{\phi}{\i*60+45};
	        	\pgfmathsetmacro{\sigma}{\theta+90};
	        	\pgfmathsetmacro{\tau}{\phi-90};
		\draw[-stealth] (2* cos \theta, 2* sin \theta) -- (2* cos \phi , 2* sin \phi );
		\draw[-stealth] (2.5*cos \theta, 2.5*sin \theta) to[out= \sigma, in=\tau]  (2.5*cos \phi , 2.5* sin \phi );
		\draw (2.4*cos \theta, 2.4*sin \theta)--(2.6*cos \theta, 2.6*sin \theta);
	}
	\node at (2*cos 180, 2*sin 180) {$\MR$};
	\node at (3.5*cos 180, 2.5*sin 180) {$(p_1, p_2, p_3)$};
	\node at (2*cos 120, 2*sin 120) {$\MR^s$};
	\node at (3.5*cos 120-0.5, 2.5*sin 120) {$(q_2, p_1+p_2-q_2, p_3)$};
	\node at (2*cos 60, 2*sin 60) {$\MR^{st}$};
	\node at (3.5*cos 60+1.5, 2.5*sin 60) {$(q_2, q_3, p_1+p_2+p_3-q_2-q_3)$};
	\node at (2*cos 0, 2*sin 0) {$\MR^{sts=tst}$};
	\node at (3.5*cos 0+2.5, 2.5*sin 0) {$(p_3, q_2+q_3-p_3, p_1+p_2+p_3-q_2-q_3)$};
	\node at (2*cos 240, 2*sin 240) {$\MR^t$};
	\node at (3.5*cos 240-0.5, 2.5*sin 240) {$(p_1, q_3, p_2+p_3-q_3)$};
	\node at (2*cos 300, 2*sin 300) {$\MR^{ts}$};
	\node at (3.5*cos 300+1.5, 2.5*sin 300) {$(p_3, p_1+q_3-p_3, p_2+p_3-q_3)$};
	\end{scope}.
	\end{tikzpicture}\]
	For other $\sigma \in S_3$, the conclusion would also follow from the above commutative diagram, together with the relation (a).
	Now we obtain a well defined right action $\Phi$ of $S_r$ on $\widetilde\MR$.
	
	In this case, $\phi^{\underline \sigma} =  \phi_{\sigma_{l-1}}^{\sigma_l} \circ \cdots \circ \phi_{\sigma_0}^{\sigma_1}: \MR^\sigma \to \MR^\tau$ coincides with the component of $\Phi_{\sigma_{l-1}^{-1}  \sigma_l} \circ \cdots \circ \Phi_{\sigma_{0}^{-1} \sigma_1} = \Phi_{\sigma_0^{-1} \sigma_l} = \Phi_{\sigma^{-1} \tau}.$ 
	Then we get the independence between $\phi^{\underline \sigma}$ and  $\underline \sigma$, and hence a well-defined $\phi_\sigma^{\tau}$ immediately.
	
\paragraph{Partial case.}
	For the general case, we may take the ``appropriate'' arrangement $\nu = (\nu_1, \cdots, \nu_r) \in \Sigma_\psi$ such that $\forall i<j,$ either $\nu_i> \nu_j$ or $\nu_i \supset \nu_j$.
	
\begin{lem}
	$\phi^{\underline \sigma}$ is affine linear in $\underline p^\sigma$, with  coefficients affine linear in $\underline m:=(m_1, \cdots, m_r)$. 
	More explicitly, if regard $\underline p^\sigma \in \MR^\sigma$ as column vectors, 
	then there exist a  matrix $A^{\underline \sigma}$ and a vector $v^{\underline \sigma}$ such that 
	\[\phi^{\underline \sigma}(\underline p^\sigma) 
	= A^{\underline \sigma} \underline p^\sigma + v^{\underline \sigma},\]
	and moreover, $A^{\underline \sigma}$ is independent of $\underline m$, and entries of $v^{\underline \sigma}$ are linear functions of $\underline m$. 
\end{lem}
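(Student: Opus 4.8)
The plan is to reduce the statement to the elementary one-transposition steps out of which $\phi^{\underline\sigma}$ is built, and then to observe that the asserted affine shape is stable under composition. Fix the sequence $\underline\sigma: \sigma = \sigma_0, \dots, \sigma_l = \tau$ in $\Sigma_r$, and for $k=1, \dots, l$ write $s_k := \sigma_{k-1}^{-1}\sigma_k = (i_k, i_k+1)$, a transposition in $\Sigma_r^{\sigma_{k-1}}$ (by Lemma \ref{decompose-of-perm}), so that $\phi^{\underline\sigma} = \phi_{\sigma_{l-1}}^{\sigma_l} \circ \cdots \circ \phi_{\sigma_0}^{\sigma_1}$. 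I would first show that each factor $\phi_{\sigma_{k-1}}^{\sigma_k}: \MR^{\sigma_{k-1}} \to \MR^{\sigma_k}$ has the form $x \mapsto M_k x + w_k$ with $M_k$ an integer matrix independent of $\underline m$ and every entry of $w_k$ linear in $\underline m$; composing these will then give the lemma (here $\phi^{\underline\sigma}$ is viewed as a map between the hyperplanes $\MR^\sigma, \MR^\tau$, equivalently as the restriction of an affine map $\Z^r \to \Z^r$).

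For a single elementary step, I would first observe that, since $\sigma_{k-1}$ is admissible and $s_k \in \Sigma_r^{\sigma_{k-1}}$, the segments $\nu^{\sigma_{k-1}}_{i_k}$ and $\nu^{\sigma_{k-1}}_{i_k+1}$ lie in containment relation, so after the deduplication convention of Remark \ref{rmk: deduplicate} exactly one of the two branches of \eqref{transition} applies. Substituting $q_j^{\sigma_{k-1}} = m_{\sigma_{k-1}(j)} - p_j^{\sigma_{k-1}}$ into \eqref{transition}, one checks that $\phi_{\sigma_{k-1}}^{\sigma_k}$ fixes every coordinate except the $i_k$-th and $(i_k+1)$-th, on which it reads
\[\begin{pmatrix} p^{\sigma_k}_{i_k} \\ p^{\sigma_k}_{i_k+1}\end{pmatrix}
= \begin{pmatrix} 0 & -1 \\ 1 & 2\end{pmatrix}\begin{pmatrix} p^{\sigma_{k-1}}_{i_k} \\ p^{\sigma_{k-1}}_{i_k+1}\end{pmatrix}
+ m_{\sigma_{k-1}(i_k+1)}\begin{pmatrix} 1 \\ -1\end{pmatrix}\]
if $\nu^{\sigma_{k-1}}_{i_k} \supset \nu^{\sigma_{k-1}}_{i_k+1}$, and the analogous formula with block matrix $\bigl(\begin{smallmatrix} 2 & 1 \\ -1 & 0\end{smallmatrix}\bigr)$ and translation $-m_{\sigma_{k-1}(i_k)}\bigl(\begin{smallmatrix} 1 \\ -1\end{smallmatrix}\bigr)$ if $\nu^{\sigma_{k-1}}_{i_k} \subset \nu^{\sigma_{k-1}}_{i_k+1}$. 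In either case $\phi_{\sigma_{k-1}}^{\sigma_k}(x) = M_k x + w_k$, where $M_k$ is an integer matrix (with $\det M_k = 1$) not involving $\underline m$, and each entry of $w_k$ is $0$ or $\pm m_j$ for a single $j$, hence homogeneous linear in $\underline m$.

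Then I would compose. Expanding $\phi^{\underline\sigma}(x) = M_l(\cdots(M_1 x + w_1)\cdots) + w_l$ in the standard way,
\[\phi^{\underline\sigma}(x) = \bigl(M_l M_{l-1}\cdots M_1\bigr)\, x + \sum_{k=1}^{l}\bigl(M_l M_{l-1}\cdots M_{k+1}\bigr)\, w_k,\]
with the empty product understood as the identity matrix. Set $A^{\underline\sigma} := M_l\cdots M_1$ and $v^{\underline\sigma} := \sum_{k=1}^{l}(M_l\cdots M_{k+1})\,w_k$. Since no $M_k$ involves $\underline m$, neither does $A^{\underline\sigma}$; and since for each $k$ the matrix $M_l\cdots M_{k+1}$ is constant while $w_k$ is linear in $\underline m$, the vector $(M_l\cdots M_{k+1})w_k$ has entries linear in $\underline m$, and therefore so does $v^{\underline\sigma}$. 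This is exactly the claim.

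There is no genuine difficulty in this argument; the only thing to be careful about is the index bookkeeping — both the swapped block $(i_k, i_k+1)$ and the index $j$ entering $w_k$ depend on $\sigma_{k-1}$, not merely on the abstract transposition $s_k$ — but this does not interfere with the structure of the proof, which uses only that every elementary step contributes a constant linear part and an $\underline m$-linear translation.
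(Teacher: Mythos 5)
Your proposal is correct and amounts to the same argument the paper invokes: the paper's proof is the one-line remark that the claim follows from the explicit formula for a single transposition and a trivial induction on $l$, and your computation of the $2\times 2$ blocks from \eqref{transition} plus the unfolded composition formula is exactly that induction, written out explicitly.
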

\begin{proof}
	It follows from the definition of $\phi_\sigma^{\sigma s}$, and a trivial induction on the lenth $l$ of $\underline \sigma$.
\end{proof}
	
	We have to prove the matrix $A^{\underline \sigma}$ and vector $v^{\underline \sigma}(\underline m)$ coincide with $A^{\underline \sigma'}$ and $v^{\underline \sigma'}(\underline m)$ for 
	different choices of sequences $\underline \sigma$ and $\underline \sigma'$ from $\sigma$ to $\tau$.
	This is true if we enlarge $m_1, \cdots, m_{r-1}$ in turn, to achieve the case $\nu_1 \supset \nu_2 \supset \cdots \supset \nu_r$ considered previously.
	In this enlarging process the $A^{\underline \sigma}$, $v^{\underline \sigma}$ (and $A^{\underline \sigma'}$, $v^{\underline \sigma'}$) are preserved  (if regarded as fuctions with variable $\underline m$), 
	since every factor in $\phi^{\underline \sigma} = \phi_{\sigma_{r-1}}^{\sigma_r} \circ \cdots \circ \phi_{\sigma_0}^{\sigma_1}$ are preserved.
	After enlarging, we see $A^{\underline \sigma}= A^{\underline \sigma'}$ and $v^{\underline \sigma}(\underline m) = v^{\underline \sigma'}( \underline m)$ easily since 
	they both come from the coefficients of $\phi_\sigma^\tau$ (it has been defined in the enlarged case). 
	The region $m_1 \gg m_2 \gg \cdots \gg m_r$ in $\Z^r$ is big enough to contain a geometrically independent set, so the consistency of $v^{\underline \sigma}(\underline m)$ and $v^{\underline \sigma'}(\underline m)$ over it implies they are the same linear functions in $\underline m$, and then coincide everywhere.
	In particular,  $A^{\underline \sigma}, v^{\underline \sigma}(\underline m)$ coincide with $ A^{\underline \sigma'}, v^{\underline \sigma'}( \underline m)$ before enlarging $\underline m$. 
	Now we obtain Lemma \ref{tran-map}, and the well-defined $\phi_\sigma^\tau := \phi^{\underline \sigma}$.

\subsection{Simplification of the criterion}\label{simplification-main-thm}
	Let's deduce Theorem \ref{non-vanish} from Theorem \ref{non-vanishing} in this subsection. We will regard the conditions
	\[\begin{aligned}
	\mathrm B^\sigma(i):	&
	0 \leqslant p_{\sigma^{-1}(i)}^\sigma \leqslant m_{i},	\\
	\mathrm C^\sigma(i, j):	&
	\min\{ p_{\sigma^{-1}(i)}^\sigma , q_{\sigma^{-1}(j)}^\sigma \}
	 + \min\{q_{\sigma^{-1}(i)}^\sigma, p_{\sigma^{-1}(j)}^\sigma \} 
	 \geqslant \#( \nu_i \cap \nu_j ),	\\
	 \mathrm B(i):	&
	 0 \leqslant p_i \leqslant m_i,	\\
	 \mathrm C(i,j):	&
	 \exists \sigma \in \Sigma_r(i,j), 
	\min\{ p_{\sigma^{-1}(i)}^\sigma , q_{\sigma^{-1}(j)}^\sigma \}
	 + \min\{q_{\sigma^{-1}(i)}^\sigma, p_{\sigma^{-1}(j)}^\sigma \} 
	 \geqslant \#( \nu_i \cap \nu_j ),
	\end{aligned}\]
as subsets of $\MR$, and prove
	\begin{equation}\label{eq: simplify}
	\bigcap_{i=1}^{r} 
	\bigcap_{\sigma \in \Sigma_r}
	 \mathrm B^\sigma (i) 
	\cap
	\bigcap_{ i < j } 
	\bigcap_{\sigma \in \Sigma_r(i,j)}
	\mathrm C^\sigma(i,j)
	=
	\bigcap_{i=1}^r \mathrm B(i) 
	\cap
	\bigcap_{\mbox{\tiny{
		$\begin{array}{c}
		i<j \\
		\textrm{neighbors}
		\end{array}$}
	}} \mathrm C(i,j).
	\end{equation}

\subsubsection{Condition $\mathrm B$}\label{simplify-B}
	We first simplify the condition on B:
	\begin{equation}\label{eq: simplify-B}
	\bigcap_{i=1}^{r} 
	\bigcap_{\sigma \in \Sigma_r}
	 \mathrm B^\sigma (i) 
	\cap
	\bigcap_{ i < j } 
	\bigcap_{\sigma \in \Sigma_r(i,j)}
	\mathrm C^\sigma(i,j)
	=
	\bigcap_{i=1}^{r} 
	 \mathrm B (i) 
	\cap
	\bigcap_{ i < j } 
	\bigcap_{\sigma \in \Sigma_r(i,j)}
	\mathrm C^\sigma(i,j).
	\end{equation}
	It would follow from next lemma and Lemma \ref{decompose-of-perm}.

\begin{lem}\label{B-independent}
	$\mathrm B^\sigma(i) \cap \mathrm B^\sigma(j) \cap \mathrm C^\sigma(i,j) = 
	\mathrm B^{\sigma s}(i) \cap \mathrm B^{\sigma s}(j) \cap \mathrm C^{\sigma s}(i,j)$
	for $s=(h, h+1)$ and $\{i,j\} = \{\sigma(h), \sigma(h+1)\}$ or $\{i,j\} \cap \{\sigma(h), \sigma(h+1)\}=\varnothing$.
\end{lem}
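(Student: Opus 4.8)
The plan is to pull the asserted equality of subsets of $\MR$ back to $\MR^\sigma$. Concatenating the single step $\sigma \to \sigma s$ with an admissible sequence $\sigma s \to \Id$ (which exists by Lemma \ref{decompose-of-perm}) produces an admissible sequence $\sigma \to \Id$, so by Lemma \ref{tran-map} we have $\phi_{\sigma s}^{\Id} \circ \phi_\sigma^{\sigma s} = \phi_\sigma^{\Id}$; since $\phi_\sigma^{\Id}$ is a bijection, the claim is equivalent to
\[
\phi_\sigma^{\sigma s}\bigl(\mathrm B^\sigma(i) \cap \mathrm B^\sigma(j) \cap \mathrm C^\sigma(i,j)\bigr)
= \mathrm B^{\sigma s}(i) \cap \mathrm B^{\sigma s}(j) \cap \mathrm C^{\sigma s}(i,j),
\]
each $\mathrm B$, $\mathrm C$ now denoting the locus it cuts out in its ambient $\MR^\bullet$. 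We work under the standing assumption $\sigma, \sigma s \in \Sigma_r$, so $\nu_{\sigma(h)}$ and $\nu_{\sigma(h+1)}$ are in containment relation and $\phi_\sigma^{\sigma s}$ is the explicit map \eqref{transition}; recall that \eqref{transition} alters only the $h$-th and $(h+1)$-th coordinates and preserves their sum $p_h + p_{h+1}$.

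First I would dispose of the case $\{i,j\} \cap \{\sigma(h), \sigma(h+1)\} = \varnothing$. Then $\sigma^{-1}(i), \sigma^{-1}(j) \notin \{h, h+1\}$, so $s$ fixes them, whence $(\sigma s)^{-1}(i) = \sigma^{-1}(i)$, $(\sigma s)^{-1}(j) = \sigma^{-1}(j)$, and $\phi_\sigma^{\sigma s}$ leaves the coordinates $p_{\sigma^{-1}(i)}, p_{\sigma^{-1}(j)}$ (hence also $q_{\sigma^{-1}(i)}, q_{\sigma^{-1}(j)}$) untouched. As $m_i$, $m_j$ and $\#(\nu_i \cap \nu_j)$ make no reference to the ambient arrangement, the inequalities cutting out the left-hand locus are literally the same as those cutting out the right-hand one, and there is nothing to prove.

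The substance is the case $\{i,j\} = \{\sigma(h), \sigma(h+1)\}$. By the symmetry of both sides in $i \leftrightarrow j$ I may take $i = \sigma(h)$, $j = \sigma(h+1)$, and by Remark \ref{rmk: deduplicate} I may assume $\nu_i \supset \nu_j$, the case $\nu_i \subset \nu_j$ being identical after relabeling. Write $a = p^\sigma_h$ (the coordinate carrying $\nu_i$) and $b = p^\sigma_{h+1}$ (carrying $\nu_j$). Expanding $\min\{a, m_j - b\} + \min\{m_i - a, b\} = \min\{m_i, m_j, a+b, m_i + m_j - a - b\}$ and using $m_i \geqslant m_j$ reduces $\mathrm C^\sigma(i,j)$ to $m_j \leqslant a + b \leqslant m_i$ (this is the simplification recorded in the remark after Lemma \ref{non-vanish-2}), and then the one-line bounds $a \geqslant m_j - b \geqslant 0$ and $a \leqslant m_i - b \leqslant m_i$ show that $\mathrm B^\sigma(j)$ together with $\mathrm C^\sigma(i,j)$ already forces $\mathrm B^\sigma(i)$; hence the left-hand locus is simply $\{\,0 \leqslant b \leqslant m_j,\ m_j \leqslant a + b \leqslant m_i\,\}$. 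Next I would substitute \eqref{transition}: the images are $p^{\sigma s}_h = m_j - b$ (now carrying $\nu_j$) and $p^{\sigma s}_{h+1} = (a + b) - q^\sigma_{h+1} = a + 2b - m_j$ (carrying $\nu_i$), with $p^{\sigma s}_h + p^{\sigma s}_{h+1} = a + b$. On the $\sigma s$ side the segments at positions $h, h+1$ are $\nu_j \subset \nu_i$, so the same argument — with $\mathrm B^{\sigma s}(i)$ now the redundant one — shows the right-hand locus is cut out by $\mathrm B^{\sigma s}(j)\colon 0 \leqslant m_j - b \leqslant m_j$ and $\mathrm C^{\sigma s}(i,j)\colon m_j \leqslant a + b \leqslant m_i$, which is the very same region in the $(a,b)$-coordinates. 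Since $\phi_\sigma^{\sigma s}$ is injective, this gives $\phi_\sigma^{\sigma s}$ of the left-hand locus equal to the right-hand locus, completing the proof.

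I do not anticipate a genuine obstacle: the arithmetic is elementary. The only thing demanding care is the bookkeeping of which segment each coordinate $p^\bullet_\bullet$ carries once the arrangement is permuted by $\sigma$ and by $\sigma s$, together with a disciplined use of the deduplication convention of Remark \ref{rmk: deduplicate}, which is what makes the containment dichotomy $\nu_i \supset \nu_j$ / $\nu_i \subset \nu_j$ exhaustive.
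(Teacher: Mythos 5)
Your proof is correct and follows essentially the same route as the paper's: dispose of the disjoint case by noting the relevant coordinates and inequalities are untouched, then in the case $\{i,j\}=\{\sigma(h),\sigma(h+1)\}$ write out the transition formula \eqref{transition} and verify the inequalities translate. The only stylistic differences are that you make the reduction to $\MR^\sigma$ via $\phi_{\sigma s}^{\Id}\circ\phi_\sigma^{\sigma s}=\phi_\sigma^{\Id}$ explicit (the paper leaves this implicit), and you streamline the check by observing that $\mathrm B^\sigma(i)$ is redundant given $\mathrm B^\sigma(j)\cap\mathrm C^\sigma(i,j)$ when $\nu_i\supset\nu_j$ (and dually $\mathrm B^{\sigma s}(i)$ on the other side), whereas the paper simply verifies all three conditions in each direction by a direct two-way implication check; both calculations are the same elementary arithmetic on $p^\sigma_h,p^\sigma_{h+1}$ and their images under \eqref{transition}.
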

\begin{proof}
	If $\{i,j\} \cap \{\sigma(h), \sigma(h+1)\}=\varnothing$, then it following trivially  from $\mathrm B^\sigma(i) = \mathrm B^{\sigma s} (i)$, $\mathrm C^\sigma(i, j) = \mathrm C^{\sigma s}(i,j)$.
	
	If $\{i,j\} = \{\sigma(h), \sigma(h+1)\}$, then we may assume moreover $\nu_i = \nu_h^\sigma \supset \nu_{h+1}^\sigma = \nu_j$.
	The transition map beteween $\underline p^\sigma$ and $\underline p^{\sigma s}$ is 
	\[\begin{tikzpicture}
	\begin{scope}
	\draw[->] (0, 0) -- (1, 0);
	\draw (0,0.1)--(0,-0.1);
	\draw[->] (1, -0.5) -- (0, -0.5);
	\draw (1,-0.4)--(1,-0.6);
	\node at (-1, -0.02) {$(p_h^\sigma, p_{h+1}^\sigma)$};
	\node at (3, -0.02) {$(q_{h+1}^\sigma, p_h^\sigma + p_{h+1}^\sigma - q_{h+1}^\sigma)$,};
	\node at (2, -0.52) {$(p_h^{\sigma s}, p_{h+1}^{\sigma s})$.};
	\node at (-1.9, -0.52) {$(p_h^{\sigma s} + p_{h+1}^{\sigma s} - q_{h}^{\sigma s}, q_{h}^{\sigma s})$};
	\end{scope}.
	\end{tikzpicture}\]
	For $\underline p^\sigma \in \mathrm B^\sigma(i) \cap \mathrm B^\sigma(j) \cap \mathrm C^\sigma(i, j)$, we have easily $0 \leqslant p^{\sigma s}_h = q^\sigma_{h+1} \leqslant m_j$, moreover
	\[\begin{aligned}
	p^{\sigma s}_{h+1} &= p^\sigma_h + p^\sigma_{h+1} - q^\sigma_{h+1} \geqslant 
	m_{\sigma(h+1)} - q^\sigma_{h+1} \geqslant 0,	\\
	p^{\sigma s}_{h+1} &= p^\sigma_h + p^\sigma_{h+1} - q^\sigma_{h+1} \leqslant 
	m_{\sigma(h)} - q_{h+1}^\sigma \leqslant m_i.
	\end{aligned}\]
	Hence $\underline p^{\sigma s} \in \mathrm B^{\sigma s}(i) \cap \mathrm B^{\sigma s}(j)$; 
	also in $\mathrm C^{\sigma s}(i,j)$ according to $p^\sigma_h + p^\sigma_{h+1} = p^{\sigma s}_h + p^{\sigma s}_{h+1}$.
	
	For $\underline p^{\sigma s} \in  \mathrm B^{\sigma s}(i) \cap \mathrm B^{\sigma s}(j) \cap \mathrm C^{\sigma s}(i,j)$, we have 
	$0 \leqslant p^\sigma_{h+1}= q^{\sigma s}_{h+1} \leqslant m_j$, moreover
	\[\begin{aligned}
	p^\sigma_h &= p_h^{\sigma s} + p_{h+1}^{\sigma s} - q_{h}^{\sigma s} \geqslant 
	m_j - q_h^{\sigma s} \geqslant 0,	\\
	p^\sigma_h &= p_h^{\sigma s} + p_{h+1}^{\sigma s} - q_{h}^{\sigma s} \leqslant 
	m_i - q_{h}^{\sigma s} \leqslant m_i.
	\end{aligned}\]
	Hence $\underline p^\sigma \in \mathrm B^\sigma(i) \cap \mathrm B^\sigma(j)$; 
	also in $\mathrm C^\sigma(i,j)$ according to $p^\sigma_h + p^\sigma_{h+1} = p^{\sigma s}_h + p^{\sigma s}_{h+1}$.
\end{proof}
	
	Let's deduce equation \eqref{eq: simplify-B}.
	If $\sigma, \sigma s\in \Sigma_r$ with $s=(h, h+1)$ a transposition, chose $j$ such that either $\{i, j\} = \{\sigma(h), \sigma(h+1)\}$ or they have no common elements, then
	\[\mathrm B^\sigma(i) \cap 
	\mathrm B^\sigma(j) \cap
	\bigcap_{k< l} 
	\bigcap_{\tau \in \Sigma_r(k, l)} 
	\mathrm C^\tau (k, l) 
	= 
	\mathrm B^{\sigma s}(i) \cap 
	\mathrm B^{\sigma s}(j) \cap
	\bigcap_{k< l} 
	\bigcap_{\tau \in \Sigma_r(k, l)} 
	\mathrm C^\tau (k, l) .
	 \]
	 Run over $i$ from $1$ to $r$, then
	 \[\bigcap_{i=1}^r \mathrm B^\sigma(i) \cap 
	\bigcap_{i< j} 
	\bigcap_{\tau \in \Sigma_r(i,j)} 
	\mathrm C^\tau (i,j) 
	= 
	\bigcap_{i=1}^r \mathrm B^{\sigma s}(i) \cap 
	\bigcap_{i< j} 
	\bigcap_{\tau \in \Sigma_r(i,j)} 
	\mathrm C^\tau (i,j) .\]
	Thus, the set
	\[\bigcap_{i=1}^r \mathrm B^\sigma(i) \cap 
	\bigcap_{i< j} 
	\bigcap_{\tau \in \Sigma_r(i,j)} 
	\mathrm C^\tau (i,j) \]
	is independent of the choice of $\sigma \in \Sigma_r$ according Lemma \ref{decompose-of-perm}.
	Now equation \eqref{eq: simplify-B} follows.

\subsubsection{Structure of $\Sigma_r(i,j)$}
	Then we have to simplify the condition on C:
	\begin{equation}\label{eq: simplify-C}
	\bigcap_{i<j} 
	\bigcap_{\sigma \in \Sigma_r(i,j)} 
	\mathrm C^\sigma (i,j) 
	=
	\bigcap_{\mbox{\tiny{
		$\begin{array}{c}
		i<j \\
		\textrm{neighbors}
		\end{array}$
	}}}
	\mathrm C (i,j) .
	\end{equation}
	Here we study the set $\Sigma_r(i, j)$.
	
\begin{lem}\label{decom-of-neigh}
	$\forall \sigma, \tau \in \Sigma_r(i,j)$, there is a sequence $\tau = \sigma_0, \cdots, \sigma_l = \sigma$ in $\Sigma_r(i,j)$ such that each $\sigma_{i-1}^{-1} \sigma_i$ is in one of the following forms:
	\begin{enumerate}[label=$(\arabic*)$]
	\item transposition $(h, h+1)$, with $\{\sigma_i(h), \sigma_i(h+1 )\} = \{i, j\}$;
	\item transposition $(h, h+1)$, with $\{\sigma_i(h), \sigma_i(h+1 )\} \cap \{i, j\} =\varnothing$;
	\item $3$-circle $(h-1, h, h+1)$ with $\{\sigma_i(h-1), \sigma_i(h) \} = \{i, j\}$, or its inverse $(h+1, h, h-1)$ with $\{ \sigma_i (h), \sigma_i(h+1)\} = \{i, j\}$).
	\end{enumerate}
\end{lem}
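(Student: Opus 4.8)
The plan is to translate the statement into the language of linear extensions of a poset. Transitivity of the relation ``precede'' (which, after the deduplication of Remark \ref{rmk: deduplicate}, is a genuine strict partial order on $\mathrm{Seg}_\psi$) lets us encode it as a poset structure $P$ on $\{1,\dots,r\}$, with $a <_P b$ iff $\nu_a > \nu_b$. Unwinding Definition \ref{adm-arrangement}, $\Sigma_r$ is then exactly the set of linear extensions of $P$ (the $\le_P$-smaller indices read first), and $\Sigma_r(i,j)$ consists of those linear extensions in which $i$ and $j$ occupy consecutive positions. Note that the hypothesis $\Sigma_r(i,j)\neq\varnothing$ (we are handed $\sigma,\tau$ in it) forbids any index being forced strictly between $i$ and $j$ by $P$, and that when $\nu_i,\nu_j$ are comparable the internal order of the pair is the same in every element of $\Sigma_r(i,j)$. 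In these terms the three move types of the lemma become: $(1)$ transpose the consecutive incomparable pair $\{i,j\}$; $(2)$ transpose a consecutive incomparable pair disjoint from $\{i,j\}$; $(3)$ slide the $2$-block $\{i,j\}$ past an adjacent index $k$ that is $P$-incomparable to both $i$ and $j$.

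First I would contract $\{i,j\}$ to a single element $\star$, producing a poset $P'$ on the $(r-1)$-element set $(\{1,\dots,r\}\setminus\{i,j\})\cup\{\star\}$: old indices keep their $P$-relations, $a<_{P'}\star$ iff $a<_P i$ or $a<_P j$, and $\star<_{P'}b$ iff $i<_P b$ or $j<_P b$ (and then take the transitive closure). The key point is that $P'$ really is a poset. A would-be relation cycle must run through $\star$, hence would yield an index $a$ with ($a<_P i$ or $a<_P j$) and simultaneously ($i<_P a$ or $j<_P a$); each of the four cases is either outright contradictory, or forces $i,j$ comparable with $a$ squeezed strictly between them --- which is excluded because $\Sigma_r(i,j)\neq\varnothing$. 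Collapsing $\{i,j\}$ to $\star$ then defines a surjection $\Sigma_r(i,j)\twoheadrightarrow\mathrm{LinExt}(P')$, whose fibres are singletons when $\nu_i,\nu_j$ are comparable and are the two internal orders of the block when they are incomparable. Under this surjection a type-$(1)$ move stays inside a fibre, while type-$(2)$ and type-$(3)$ moves project onto exactly the adjacent transpositions of incomparable elements of $P'$ --- type $(3)$ being the transposition of $\star$ with a neighbour, where (using transitivity of $P$) ``$\star$ incomparable to $k$ in $P'$'' unwinds precisely to ``$k$ incomparable in $P$ to both $i$ and $j$''. I would also note that type-$(2)$ and type-$(3)$ moves can be performed irrespective of the block's internal order, so a path of such transpositions downstairs lifts to a path upstairs that keeps the internal order of $\{i,j\}$ fixed.

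It remains to invoke the standard fact that the graph on $\mathrm{LinExt}(P')$ whose edges are the adjacent transpositions of incomparable elements is connected; this is an easy induction on the number of pairs occurring in opposite relative order in two given linear extensions, since the first position at which they disagree always exhibits an adjacent, necessarily incomparable, misordered pair whose transposition lowers the count. Pulling such a path back through the surjection $\Sigma_r(i,j)\twoheadrightarrow\mathrm{LinExt}(P')$, and appending at most one type-$(1)$ move to bring the internal order of $\{i,j\}$ into agreement (only possible, and only needed, when $\nu_i,\nu_j$ are incomparable), yields the required chain $\tau=\sigma_0,\dots,\sigma_l=\sigma$ in $\Sigma_r(i,j)$ with every step of the prescribed shape. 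The main obstacle is not this connectivity input but the bookkeeping around $P'$: verifying it is well defined, and that the type-$(2)$ and type-$(3)$ moves match the adjacent transpositions of incomparables in $P'$ \emph{exactly} (neither too few nor too many). That matching, and the well-definedness of $P'$, are precisely the places where the non-emptiness of $\Sigma_r(i,j)$ --- i.e., the mere existence of $\sigma$ and $\tau$ --- gets used.
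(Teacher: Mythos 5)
Your proof is correct but takes a genuinely different route from the paper's. The paper proves the lemma by a direct descending induction on the Coxeter length $l(\sigma)$, reducing to $\tau=\Id$ (after relabelling so $j=i+1$), producing at each step a strictly shorter $\rho=\sigma\circ(h,h+1)\in\Sigma_r$ via Lemma \ref{decompose-of-perm}, and then handling by hand the four possibilities in which $\{\sigma(h),\sigma(h+1)\}$ meets $\{i,j\}$ in exactly one element --- showing in each case how to reach a shorter member of $\Sigma_r(i,j)$ by one of the three admissible moves. Your argument instead abstracts $\Sigma_r$ as $\mathrm{LinExt}(P)$ for the poset $P$ defined by the ``precede'' relation, contracts the pair $\{i,j\}$ to a single vertex $\star$ to form a quotient poset $P'$ (where the hypothesis $\Sigma_r(i,j)\neq\varnothing$ is exactly what rules out cycles, since it forbids an index strictly between $i$ and $j$), exhibits $\Sigma_r(i,j)\twoheadrightarrow\mathrm{LinExt}(P')$ with fibres of size $1$ or $2$, identifies the type-$(2)$/$(3)$ moves with adjacent transpositions of $P'$-incomparables downstairs (the type-$(1)$ move lives in a fibre), and invokes the standard connectivity of the adjacent-transposition graph on linear extensions. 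The paper's argument is shorter and self-contained within the combinatorics the paper has already set up, but the case analysis is somewhat opaque; your approach is more conceptual and explains \emph{why} the lemma holds (it is just the standard connectivity fact, seen through a contraction), at the cost of having to verify the well-definedness of $P'$ and the exact correspondence of edges, both of which you correctly flag as the places where $\Sigma_r(i,j)\neq\varnothing$ is used. Your sketch elides a few routine verifications --- surjectivity of the quotient map, and that adjacent elements of $\Sigma_r(i,j)$ disjoint from $\{i,j\}$ remain $P'$-incomparable (which requires noting that the block sits between any two indices that become related only through $\star$) --- but these all go through, so the argument is sound.
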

\begin{rmk}
	Case (1) appears if and only if $\nu_i, \nu_j$ are in containment relation.
\end{rmk}
\begin{proof}
	We may assume $j = i+1$ such that $\Id \in \Sigma_r(i, j)$.
	Then we assume $\tau = \Id$ and prove by induction on the length $l(\sigma)$.
	If $l(\sigma) = 0$, then $\sigma = \Id$, and there is nothing to prove.
	If $l(\sigma) \geqslant 1$, according to Lemma \ref{decompose-of-perm}, there is $h$ such that $\rho= \sigma \circ (h, h+1) \in \Sigma_r$, and $l(\rho) < l(\sigma)$.
	
	If $\{\sigma(h), \sigma(h+1)\} = \{i, i+1\}$, or they have no common elements, then $\rho$ is also in $\Sigma_r(i,j)$, and we may apply induction hypothesis to it.
	If $\{\sigma(h), \sigma(h+1)\}$ and $\{i, i+1\}$ have exactly one common element, there are four possibilities:
	\begin{enumerate}[label=(\alph*)]
	\item $i = \sigma(h+1)$, $i+1 = \sigma(h+2)$;
	\item $i= \sigma(h)$, $i+1 = \sigma(h-1)$;
	\item $i+1 =\sigma(h+1)$, $i= \sigma(h+2)$;
	\item $i+1 = \sigma(h)$, $i=\sigma(h-1)$.
	\end{enumerate}
	In the possibility (b), $\sigma \circ (h-1, h)$ lies in $\Sigma_r(i, i+1)$, and is shorter than $\sigma$, so we can apply induction hypothesis.
	The possibility (c) can be dealt with in the same manner.
	In the possibility (a), $\rho = \sigma\circ (h, h+1)$ is shorter than $\sigma$, so $\rho(h+1) > \rho(h) = i$.
	Since $\rho(h+1) \not = \rho(h+2) = \sigma(h+2) = i+1$, it must be larger that $i+1$.
	Consequently, $\rho \circ (h+1, h+2) \in \Sigma_r(i,j)$ is shorter that $\rho$.
	Now we can apply induction hypothesis to $\sigma \circ (h, h+1) \circ (h+1, h+2) = \sigma \circ (h, h+1, h+2)$.
	The possibility (d) can be dealt with in the same manner.
\end{proof}

\subsubsection{Condition $\mathrm C$}
	Now we can verify \eqref{eq: simplify-C}.
	It's trivial that 
	\[\bigcap_{i<j} 
	\bigcap_{\sigma \in \Sigma_r(i,j)} 
	\mathrm C^\sigma (i,j) 
	= \bigcap_{\mbox{\tiny{
		$\begin{array}{c}
		i<j \\
		\Sigma_r(i,j) \not = \varnothing
		\end{array}$
	}}}
	\bigcap_{\sigma \in \Sigma_r(i,j)} 
	\mathrm C^\sigma (i,j). \]
	
\begin{lem}\label{C-independent}
	If $\nu_i, \nu_j$ are neighbors, then $\mathrm C^\sigma(i,j)$ is independent of the choice of $\sigma \in \Sigma_r(i,j)$.
\end{lem}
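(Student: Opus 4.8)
The plan is to reduce, via Lemma \ref{decom-of-neigh}, to a single generator move, and then to exploit the fact that for adjacent $\nu_i,\nu_j$ the condition $\mathrm C^\sigma(i,j)$ constrains only the \emph{sum} of the two components coming from $\nu_i$ and $\nu_j$, and does so symmetrically about $\frac{m_i+m_j}2$. Concretely: by Lemma \ref{decom-of-neigh} any $\sigma,\tau\in\Sigma_r(i,j)$ are joined by a chain inside $\Sigma_r(i,j)$ whose consecutive terms differ by a move of type (1), (2) or (3). Since (by Lemma \ref{tran-map} together with subsection \ref{subsec: well-define}) each $\mathrm C^{\sigma'}(i,j)$ is a well-defined subset of $\MR$, pulled back along $\phi_{\sigma'}^{\Id}$, it suffices to prove that a single such move leaves this subset unchanged, i.e.\ that the relevant transition map carries the locus cut out in $\MR^{\sigma_{t-1}}$ onto the one cut out in $\MR^{\sigma_t}$.

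For the symmetry input, write $P,R$ for the components of $\underline p^\sigma$ coming from $\nu_i,\nu_j$. A direct computation gives
\[\min\{P,m_j-R\}+\min\{m_i-P,R\}=\min\{\,P+R,\ m_i+m_j-(P+R),\ \min(m_i,m_j)\,\},\]
and since $\nu_i,\nu_j$ are adjacent in $\nu^\sigma$ — hence either in preceding or in containment relation — one has $\min(m_i,m_j)\geqslant\#(\nu_i\cap\nu_j)$; this is exactly the elementary inequality underlying the simplifications recorded in the remark following Lemma \ref{non-vanish-2}. Consequently $\mathrm C^\sigma(i,j)$ is equivalent to a two-sided bound $L\leqslant P+R\leqslant U$ with $L+U=m_i+m_j$, and so is invariant under $P+R\mapsto m_i+m_j-(P+R)$. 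It therefore suffices to check that each generator move sends the sum $P+R$ either to itself or to its reflection $m_i+m_j-(P+R)$.

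A type-(2) move leaves the two positions occupied by $\nu_i,\nu_j$ untouched, so $P$ and $R$ are individually preserved. A type-(1) move — which by the remark before Lemma \ref{decom-of-neigh} occurs only when $\nu_i,\nu_j$ are in containment relation — swaps those two positions via \eqref{transition}, and manifestly preserves $P+R$. For a type-(3) move the transition map (which, by Lemma \ref{tran-map}, equals the composite of the two transposition-transitions coming from any factorization of the $3$-cycle) slides a third segment $\nu_k$ past the pair $\{\nu_i,\nu_j\}$; admissibility of both the initial and final arrangements forces $\nu_k$ to be in containment relation with each of $\nu_i,\nu_j$, and since $\nu_i,\nu_j$ are neighbours $\nu_k$ cannot lie strictly between them, so either $\nu_k$ contains both or $\nu_k$ is contained in both. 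Recalling from \eqref{transition} that when two segments in containment relation are swapped the smaller one has its $p$-component replaced by $m-p$ while the larger one absorbs the difference, one reads off the result of the two consecutive swaps with $\nu_k$: if $\nu_k$ is the larger segment each time, the components of $\nu_i$ and of $\nu_j$ are each flipped, so $P+R\mapsto m_i+m_j-(P+R)$; if $\nu_k$ is the smaller segment each time, it is flipped twice (hence returns to its original component) while the two shift-corrections applied to $\nu_i,\nu_j$ cancel, so $P+R$ is preserved. The inverse-$3$-cycle case is the same with the chain run backwards. By the preceding paragraph this gives $\mathrm C^{\sigma_{t-1}}(i,j)=\mathrm C^{\sigma_t}(i,j)$ in every case, and the lemma follows. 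I expect the only genuinely delicate point to be this last bookkeeping — deciding, in each of the two transpositions, which of the three segments plays the ``smaller'' role, and confirming that the geometry of three segments with $\nu_i,\nu_j$ neighbours really does rule out the mixed possibility; the rest is routine once the simplification after Lemma \ref{non-vanish-2} is available.
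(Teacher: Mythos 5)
Your proof is correct and follows essentially the same route as the paper: reduce via Lemma \ref{decom-of-neigh} to a single generator move, then check (from the explicit transition maps) how each type of move transforms the pair of components coming from $\nu_i,\nu_j$. The one thing you do more explicitly than the paper is record that $\mathrm C^\sigma(i,j)$ depends only on $P+R$ and is symmetric under $P+R\mapsto m_i+m_j-(P+R)$; the paper uses this implicitly in the type-(3) subcase where it computes $p_h^{\sigma\tau}+p_{h+1}^{\sigma\tau}=q_{h-1}^\sigma+q_h^\sigma$ and then just asserts the equality of the two sets.
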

\begin{proof}
	It suffices to prove $\mathrm C^\sigma(i,j) = \mathrm C^{\sigma \tau}(i,j)$ with $\tau$ in one of the conceret forms in Lemma \ref{decom-of-neigh}.
	
	In case (1), $\tau = (h, h+1)$ and $\{ \sigma (h), \sigma(h+1)\} = \{i, j\}$, and we may assume moreover $\nu_h^\sigma \supset \nu_{h+1}^\sigma$.
	Then $\mathrm C^\sigma(i,j) = \mathrm C^{\sigma \tau}(i,j)$  follows from the fact that $p^\sigma_h + p^\sigma_{h+1} = p^{\sigma \tau}_h + p^{\sigma \tau}_{h+1}$.
	
	In case (2), $\tau = (h, h+1)$ and $\{ \sigma (h), \sigma(h+1)\} \cap \{i, j\} = \varnothing$.
	Then $\mathrm C^\sigma(i,j) = \mathrm C^{\sigma \tau}(i,j)$  follows from the fact that $p^\sigma_h = p^{\sigma \tau}_h, p^\sigma_{h+1} = p^{\sigma \tau}_{h+1}$.
	
	In case (3), we may assume $\tau = (h+1, h, h-1)$, and  $\{\sigma(h-1), \sigma(h) \} = \{i, j\}$.
	Since $i,j$ are neighbours, either $\nu_{h-1}^\sigma \subseteq \nu_h^\sigma \cap \nu_{h+1}^\sigma$, or $\nu_{h-1}^\sigma \supseteq \nu_h^\sigma \cup \nu_{h+1}^\sigma$.
	If $\nu_{h-1}^\sigma \subseteq \nu_h^\sigma \cap \nu_{h+1}^\sigma$, then the transition map between $\underline p^\sigma$, $\underline p^{\sigma \tau}$ would be 
	\[\begin{aligned}
	(p^\sigma_{h-1}, p^\sigma_h, p^\sigma_{h+1} )
	\stackrel{\phi_\sigma^{\sigma\circ (h, h+1)}}\mapsto	&
	(p^\sigma_{h-1}, q^\sigma_{h+1}, p^\sigma_h + p^\sigma_{h+1} - q^\sigma_{h+1} )	\\
	\stackrel{\phi_{\sigma \circ (h, h+1)}^{\sigma \tau}}
	\mapsto	&
	(p_{h+1}^\sigma, p_{h-1}^\sigma + q_{h+1}^\sigma - p^\sigma_{h+1}, p^\sigma_h + p^\sigma_{h+1} - q^\sigma_{h+1}).
	\end{aligned}\]
	Hence, we have 
	$p_h^{\sigma \tau} + p_{h+1}^{\sigma \tau} =
	p_{h-1}^\sigma + p_h^\sigma$,
	and then $\mathrm C^{\sigma\tau}(i, j) = \mathrm C^{\sigma}(i,j)$.
	If $\nu_{h-1}^\sigma \supseteq \nu_h^\sigma \cap \nu_{h+1}^\sigma$, then the transition map between $\underline p^\sigma$, $\underline p^{\sigma \tau}$ would be 
	\[\begin{aligned}
	(p^\sigma_{h-1}, p^\sigma_h, p^\sigma_{h+1} )
	\stackrel{\phi_\sigma^{\sigma\circ (h, h+1)}}\mapsto	&
	(p^\sigma_{h-1}, p^\sigma_h + p^\sigma_{h+1} - q^\sigma_{h} , q^\sigma_{h} )	\\
	\stackrel{\phi_{\sigma \circ (h, h+1)}^{\sigma \tau}} \mapsto	&
	(p_{h-1}^\sigma + p_h^\sigma + p_{h+1}^\sigma - q_{h-1}^\sigma - q_h^\sigma, q_{h-1}^\sigma , q^\sigma_{h} ).
	\end{aligned}\]
	Hence, we have 
	$p_h^{\sigma \tau} + p_{h+1}^{\sigma \tau} =
	q_{h-1}^\sigma + q_h^\sigma$,
	and then $\mathrm C^{\sigma\tau}(i, j) = \mathrm C^{\sigma}(i,j)$.	
\end{proof}

	Consequently, for neighbors $\nu_i, \nu_j$, each $\mathrm C^\sigma(i,j)$, and hence their intersection, is equal to $\mathrm C(i,j)$, and we may deduce that 
	\[\bigcap_{\mbox{\tiny{
		$\begin{array}{c}
		i<j \\
		\Sigma_r(i,j) \not = \varnothing
		\end{array}$}
	}}
	\bigcap_{\sigma \in \Sigma_r(i,j)} 
	\mathrm C^\sigma (i,j)
	=
	\left( \bigcap_{\mbox{\tiny{
		$\begin{array}{c}
		i<j \\
		\textrm{neighbors}
		\end{array}$}
	}}
	\mathrm C (i,j) \right)
	\cap
	\left( \bigcap_{\mbox{\tiny{
		$\begin{array}{c}
		i<j \\
		\textrm{not neighbors}
		\end{array}$
	}}}
	\bigcap_{\sigma \in \Sigma_r(i,j)} 
	\mathrm C^\sigma (i,j) \right).
	\]

\begin{lem}
	If $\nu_i \supset \nu_j$ but are not neighbors, 
	then $\forall \sigma \in \Sigma_r(i,j)$, there is a $\nu_k$ between $\nu_i, \nu_j$, and $\tau \in \Sigma_r(i,k) \cap \Sigma_r(k, j)$, 
	such that $\mathrm C^\sigma(i,j) \supseteq \mathrm C^{\tau}(i,k) \cap \mathrm C^{\tau}(k,j)$.
\end{lem}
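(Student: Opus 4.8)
The plan is to reduce to the case where $\nu_i,\nu_j$ are adjacent in the fixed arrangement, to take $\nu_k$ to be the intermediate segment lying \emph{closest} to the pair $\{\nu_i,\nu_j\}$, to slide it next to them without disturbing $\nu_i$ or $\nu_j$, and then to finish by a direct computation with the transition formula \eqref{transition}. First I would use that the asserted inclusion is invariant under the transition maps (equivariant bijections), so after changing the base arrangement to $\nu^\sigma$ we may assume $\sigma=\Id$; then $\nu_i$ and $\nu_j$ occupy adjacent positions in $\nu=(\nu_1,\dots,\nu_r)$, say $\nu_i$ immediately before $\nu_j$ (the opposite order is handled identically). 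Since $\nu_i\supset\nu_j$ we have $\#(\nu_i\cap\nu_j)=m_j$, so by \eqref{adj-col} and the Remark following Lemma~\ref{non-vanish-2} the condition $\mathrm C^{\Id}(i,j)$ is just $m_j\leqslant p_i+p_j\leqslant m_i$. Likewise, once $\nu_k$ with $\nu_i\supset\nu_k\supset\nu_j$ has been fixed and $\tau$ is an arrangement in which $\nu_i,\nu_k,\nu_j$ occupy three consecutive positions, $\mathrm C^\tau(i,k)$ says the sum of the $\underline p^\tau$-components at the positions of $\nu_i$ and $\nu_k$ lies in $[m_k,m_i]$, while $\mathrm C^\tau(k,j)$ says the sum of those at the positions of $\nu_k$ and $\nu_j$ lies in $[m_j,m_k]$.

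Next I would choose $\nu_k$. Since $\nu_i,\nu_j$ are not neighbours, some segment $\nu_k$ satisfies $\nu_i\supset\nu_k\supset\nu_j$; among all such, take one whose position in $\nu$ is closest to the block $\{\nu_i,\nu_j\}$, and say it lies to the right of $\nu_j$ (the left case is symmetric). The key step is that $\nu_k$ can be slid leftwards, one admissible adjacent transposition at a time inside $\Sigma_r$, until it sits immediately after $\nu_j$, yielding an admissible arrangement $\sigma''$ with $\nu^{\sigma''}=(\dots,\nu_i,\nu_j,\nu_k,\dots)$. The only possible obstruction to passing $\nu_k$ over a segment $\mu$ lying between it and $\nu_j$ is that $\mu$ precedes $\nu_k$; but in that case, comparing the values $\bg$ and $\ed$ of the segments and using that $\nu$ is admissible (so that $\mu$ precedes neither $\nu_i$ nor $\nu_j$), one checks $\nu_i\supset\mu\supset\nu_j$, contradicting the minimality of the distance from $\nu_k$ to $\{\nu_i,\nu_j\}$. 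None of these transpositions touches $\nu_i$ or $\nu_j$, so $\mathrm C^{\sigma''}(i,j)=\mathrm C^{\Id}(i,j)$ as subsets of $\MR$. A final transposition of $\nu_k$ with $\nu_j$ — admissible because $\nu_k\supset\nu_j$ is a containment — produces $\tau$ with $\nu^\tau=(\dots,\nu_i,\nu_k,\nu_j,\dots)$, so $\tau\in\Sigma_r(i,k)\cap\Sigma_r(k,j)$, and this is our $\nu_k$.

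Finally I would compare $\mathrm C^\tau(i,k)\cap\mathrm C^\tau(k,j)$ with $\mathrm C^{\sigma''}(i,j)=\mathrm C^\sigma(i,j)$. Let $a,b,c$ be the components of $\underline p^\tau$ coming from $\nu_i,\nu_k,\nu_j$; applying \eqref{transition} to the single transposition relating $\tau$ and $\sigma''$ shows that the related $\underline p^{\sigma''}$ has components $a$ from $\nu_i$ and $m_j-c$ from $\nu_j$, so $\mathrm C^{\sigma''}(i,j)$ becomes $0\leqslant a-c\leqslant m_i-m_j$. On the other hand $\mathrm C^\tau(i,k)\cap\mathrm C^\tau(k,j)$ reads $m_k\leqslant a+b\leqslant m_i$ together with $m_j\leqslant b+c\leqslant m_k$; from $b\leqslant m_k-c$ one gets $a\geqslant m_k-b\geqslant c$, and from $b\geqslant m_j-c$ one gets $a\leqslant m_i-b\leqslant m_i-m_j+c$, which is precisely $0\leqslant a-c\leqslant m_i-m_j$. (In the symmetric case \eqref{transition} instead gives $\underline p^{\sigma''}$-components $a+2b-m_k$ from $\nu_i$ and $c$ from $\nu_j$, and $(a+b)+(b+c)-m_k\in[m_j,m_i]$ follows from the same two inequalities.) Hence $\mathrm C^\tau(i,k)\cap\mathrm C^\tau(k,j)\subseteq\mathrm C^\sigma(i,j)$, as claimed.

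The genuinely delicate point is the combinatorial step of the second paragraph — that the closest intermediate segment can be carried next to $\nu_i,\nu_j$ while keeping every arrangement admissible — which hinges on the endpoint computation identifying any obstructing segment as yet another intermediate segment. Everything else (the change of base point, the simplified forms of the $\mathrm C$-conditions, and the final inequality chase) is routine.
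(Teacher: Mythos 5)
Your proof is correct and follows essentially the same route as the paper's: slide an intermediate segment $\nu_k$ adjacent to the pair $\{\nu_i,\nu_j\}$ through a chain of admissible adjacent transpositions that leave $\mathrm C^\sigma(i,j)$ unchanged, then finish by a direct inequality computation with the transition formula. The only difference is cosmetic: you fix $\nu_k$ up front as the intermediate segment nearest the block and show that any obstructing segment $\mu$ would itself satisfy $\nu_i\supset\mu\supset\nu_j$ and be nearer, contradicting minimality; the paper instead enlarges $g=\sigma^{-1}(k)$ iteratively and, when blocked, simply \emph{replaces} $\nu_k$ by the blocking segment (its case (b)). Both yield the same $\tau$ and the same final arithmetic $m_j\leqslant p_h^\sigma+p_{h+1}^\sigma\leqslant m_i$.
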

\begin{proof}
	Since $\nu_i \supset \nu_j$ are not neighbors, there is a $\nu_k$ between them.
	For $\sigma \in \Sigma_r(i,j)$, denote $h$ such that $\{ \sigma(h), \sigma(h+1)\} = \{i, j\}$, 
	and $g= \sigma^{-1}(k)$.
	
	Consider the case $g < h$.
	If $g<h-1$, we may enlarge $g$ but retain the property $\nu_i \supset \nu^\sigma_g \supset \nu_j$ by following methods:
	\begin{enumerate}[label=(\alph*)]
	\item if $\nu^\sigma_{g+1}$ are in containment relation with $\nu_k = \nu^\sigma_g$, then 
	 we may replace $\sigma$ by $\sigma \circ (g, g+1)$; 
	in other words, swap $\nu_g^\sigma$ and $\nu_{g+1}^\sigma$ in $\nu^\sigma$;
	\item if not the above case, so $\nu^\sigma_{g+1} < \nu_k =  \nu^\sigma_g$ according to the admissibility of $\sigma$,
	then $\nu^\sigma_{g+1}\subset \nu_i$ since it begins after $\nu_i$ but can not be preceded by $\nu_i$, 
	and $\nu^\sigma_{g+1} \supset \nu_j$ since it ends after $\nu_j$ but can not be preceded by $\nu_j$;
	we may simply replace $k=\sigma(g)$ by $\sigma(g+1)$.
	\end{enumerate}
	Note that for non-neighbors $\nu_i, \nu_j$, $\mathrm C^\sigma(i,j) = \mathrm C^\tau(i,j)$ if
	there is a sequence in $\Sigma_r(i,j)$, from $\sigma$ to $\tau$, with difference belong to case (1) or (2) in Lemma \ref{decom-of-neigh}.
	Then in the possibility (a), $\sigma$ is changed but $\mathrm C^\sigma(i,j)$ is preserved, 
	while in the possibility (b), the segment $\nu_k$ are changed.
	By an easy induction we would reach the case $\sigma^{-1}(k)=g = h-1$.
	
	If it is the case $g> h+1$, then we may assume $g=h+2$ after a similar argument.
	
	We can prove for such a $\nu_k$ and $\sigma$ that $\mathrm C^\sigma(i,j) \supseteq \mathrm C^{\tau}(i,k) \cap \mathrm C^{\tau}(k,j)$ for some $\tau$.
	It's harmless to assume $\sigma(h)=i$, $\sigma(h+1)=j$.
	In the case $\sigma^{-1}(k) = h-1$, take $\tau = \sigma \circ (h-1, h)$. 
	The transition map between $\underline p^\tau$, $\underline p^{\sigma}$ is
	\[(p_{h-1}^\tau, p_h^\tau, p_{h+1}^\tau) \mapsto 
	(q_h^\tau, p_{h-1}^\tau+ p_h^\tau - q_{h}^\tau, p_{h+1}^\tau),\]
	so
	\[ p^\sigma_h + p^\sigma_{h+1} = 
	p_{h-1}^\tau+ p_h^\tau - q_{h}^\tau+  p_{h+1}^\tau= 
	(p_{h-1}^\tau+ p_h^\tau) + (p_h^\tau + p_{h+1}^\tau) - m_k .\]
	Then by $ m_k \leqslant  p_{h-1}^\tau + p_h^\tau \leqslant m_i$ and $m_j \leqslant  p_h^\tau + p_{h+1}^\tau \leqslant m_k$ we obtain
	\[m_j \leqslant p^\sigma_h + p^\sigma_{h+1} \leqslant m_i.\]
	In the case $\sigma^{-1}(k) = h+2$, take $\tau = \sigma \circ (h+1, h+2)$. 
	The transition map between $\underline p^\tau$, $\underline p^{\sigma}$ is
	\[(p_h^\tau, p_{h+1}^\tau, p_{h+2}^\tau) \mapsto
	(p_h^\tau, q_{h+2}^\tau, p_{h+1}^\tau+ p_{h+2}^\tau - q_{h+2}^\tau),\]
	so
	\[ p^\sigma_h + p^\sigma_{h+1} = 
	p_{h}^\tau+ q_{h+2}^\tau = 
	(p_h^\tau + p_{h+1}^\tau) - (p_{h+1}^\tau + p_{h+2}^\tau) + m_j .\]
	Then by $ m_k \leqslant  p_{h}^\tau + p_{h+1}^\tau \leqslant m_i$ and $m_j \leqslant  p_{h+1}^\tau + p_{h+2}^\tau \leqslant m_k$ we obtain
	\[m_j \leqslant p^\sigma_h + p^\sigma_{h+1} \leqslant m_i.\]
	Now the conclusion follows.
\end{proof}

	Consequently, 
	\[\bigcap_{\mbox{\tiny{
		$\begin{array}{c}
		i<j \\
		\textrm{neighbors}
		\end{array}$
	}}}
	\mathrm C (i,j) 
	\subseteq
	\bigcap_{\mbox{\tiny{
		$\begin{array}{c}
		i<j \\
		\textrm{not neighbors}
		\end{array}$
	}}}
	\bigcap_{\sigma \in \Sigma_r(i,j)} 
	\mathrm C^\sigma (i,j),\]
	and we may conclude \eqref{eq: simplify-C}.
	
	Then \eqref{eq: simplify} follows by combining \eqref{eq: simplify-B} and \eqref{eq: simplify-C}.

\subsection{Necessity in the criterion}\label{necessity}
	In this subsection we prove if $\underline p \in \MR$ parametrize a non-zero $A_\q(\lambda)$, then $\underline p$ lies in all $\mathrm B^\sigma(i)$ and $\mathrm C^\sigma(i,j)$.
	We will assume Lemma \ref{non-vanish-2}, which follows easily from Trapa's criterion, as explained in the front of section \ref{sufficiency}.

	If $\underline p$ parametrizes a non-zero $A_\q(\lambda)$, then as a priori it lies in all $\mathrm B(i)$.
	It also lies in all $\mathrm C^{\Id}(k, k+1)$ due to Lemma \ref{non-vanish-2}, 
	Proposition \ref{prop: inductioin-in-stage}, and Lemma \ref{lem: induction-by-component}:
	let $\underline p'=(p_1, \cdots, p_k + p_{k+1}, \cdots, p_r)$, then there are
	$\q_{\underline p} \subseteq \q_{\underline p'} \subseteq \g$,
	and $\p= \q_{\underline p} \cap \l_{\underline p'}$, such that
	\[A_{\q_{\underline p}}(\lambda_{\underline p}) = 
	\mathcal L_{\q_{\underline p}}^\g( \C_{\lambda_{\underline p}})\cong
	\mathcal L_{\q_{\underline p'}}^\g\left( 
		\mathcal L_\p^{\l_{\underline p'}}(\C_{\lambda_{\underline p}})
	\right), \]
	so $A_{\q_{\underline p}}(\lambda_{\underline p}) \not =0$ implies 
	$\mathcal L_\p^{\l_{\underline p'}}(\C_{\lambda_{\underline p}}) \not=0$;
	$L_{\underline p'}$ decomposes as $L \times \U(p_k+p_{k+1}, q_k+ q_{k+1})$ with $\p \cong \l \oplus \q_{(p_k, p_{k+1})}$, 
	$\C_{\lambda_{\underline p}} \cong 
	\C_{\lambda_{\underline p}|_L} \boxtimes 
	\C_{\lambda_{(p_k, p_{k+1})}}$, then
	\[\mathcal L_\p^{\l_{\underline p'}}(\C_{\lambda_{\underline p}}) \cong
	\C_{\lambda_{\underline p}|_L} \boxtimes 
	\mathcal L_{\q_{(p_k, p_{k+1})}}^{\gl(m_k+ m_{k+1}, \C)}(
		\C_{\lambda_{(p_k, p_{k+1})}}),
	\]
	so $\mathcal L_\p^{\l_{\underline p'}}(\C_{\lambda_{\underline p}}) \not =0$ implies 
	$\mathcal L_{\q_{(p_k, p_{k+1})}}^{\gl(m_k+ m_{k+1}, \C)}
	(\C_{\lambda_{(p_k, p_{k+1})}}) \not =0$;
	then $\min\{p_k, q_{k+1}\} + \min\{q_k, p_{k+1}\} \geqslant \#(\nu_k \cap \nu_{k+1})$ due to Lemma \ref{non-vanish-2},
	which is exactly $\mathrm C^\Id(k, k+1)$.
\begin{rmk}
	An alternative approach is the purely combinatorial Trapa's algorithm and Lemma \ref{overlap-p}.
\end{rmk}
	
	It follows that $\underline p$ lies in all $\mathrm B^s(s(i))$ (regarded as subsets of $\MR$), for any transposition $s\in \Sigma_r$, 
	due to Lemma \ref{B-independent}.
	Then  $\underline p^s\in \MR^s$ parametrizes the module
	 $A_{\q_{\underline p^s}}(\lambda_{\underline p^s})$, 
	 and we have proved it to be isomorphic with  $A_{\q_{\underline p}}(\lambda_{\underline p}) \not =0$.
	Hence, $\underline p^s$ lies in all $\mathrm C^s(s(i), s(i+1))$ (as subsets of $\MR^s$), and equivalently, $\underline p$ lies in all $\mathrm C^s( s(i), s(i+1))$ (as subsets of $\MR$).
	Since $\Sigma_r$ is ``generated by transpositions'' (in the sense of Lemma \ref{decompose-of-perm}), $\underline p$ lies in all $\mathrm B^\sigma( \sigma(i))$ and $\mathrm C^\sigma( \sigma(i), \sigma(i+1))$.
	
	It's clear that
	\[\bigcap_{i=1}^{r} 
	\bigcap_{\sigma \in \Sigma_r}
	 \mathrm B^\sigma (i) 
	\cap
	\bigcap_{ i < j } 
	\bigcap_{\sigma \in \Sigma_r(i,j)}
	\mathrm C^\sigma(i,j)
	=
	\bigcap_{\sigma \in \Sigma_r}
	 \left( \bigcap_{h=1}^r
	 \mathrm B^\sigma( \sigma(h)) 
	 \cap
	  \left (\bigcap_{h=1}^{r-1}
	   \mathrm C^\sigma( \sigma(h), \sigma(h+1)) \right) \right),\]
	   then the necessity of the critierion in Theorem \ref{non-vanishing} follows.

\newcommand{\overlap}{{\mathrm {overlap}}}
\newcommand{\sing}{{\mathrm {sing}}}
\newcommand{\anti}{{\mathrm A}}
\newcommand{\sign}{\pm}
\section{Trapa's algorithm}\label{tableau}
	In this section we will review Trapa's algorithm on determing whether an $A_\q(\lambda)$ is non-zero. 
	Many combinatorial notations are involved.
	Our definitions follow \cite{Trapa}.
	
\subsection{$\nu$-antitableau}
	Here we introduce the following objects.
\begin{itemize}
	\item \textbf{$\nu$-antitableau}, a Young diagram filled by elements of the real vector $\nu$ in a certain way.
	They are combinatorial counterpart of primitive ideals (with infinitesimal character $\nu$).
	\item \textbf{Skew column}, certain part of a Young diagram.
	It records the cohomological induction process on the combinatorial side.
\end{itemize}

	Given a partition $\underline n=(n_1 \geqslant \cdots \geqslant n_k)$ of $n$, we may attach a left justifed arrangement of $n$ boxes with $n_i$ boxes in the $i$th row. 
	Call such an arrangement a Young diagram of size $n$, and denote it by $S_{\underline n}$, or simply $S$ if $\underline n$ is clear. 
	For $\nu=(\nu_1, \cdots, \nu_n) \in \R^n$, a $\nu$-quasitableau is defined to be any arrangement of $\nu_1, \cdots, \nu_n$ in a Young diagram $S$ of size $n$.
	$S$ is called shape of this $\nu$-quasitableau.
	If a $\nu$-quasitableau satisfes the condition that the entries weakly decrease across rows and strictly decrease down columns, it is said to be a $\nu$-antitableau. 

	A skew diagram (resp. skew tableau) is obtained by removing a smaller Young diagram (resp. quasitableau) in a larger one.
	 A skew column is a skew diagram which  has at most one box per row.\footnote{
	This definition is slight different with \cite{Trapa}, the skew column there is a part of $\nu$-antitableau; here we emphasis on the shape.
}
	By $i$th box of a skew column we mean the $i$th one counted from top to the bottom.
	
	A partition $S = \bigsqcup S_k$ of Young diagram $S$ into skew columns should satisfy the following conditions:
\begin{itemize}
	\item each $S_k$ is a skew column;
	\item each $S^k:= \bigsqcup_{j\leqslant k} S_j$ is a Young diagram.
\end{itemize}
	Then $k$th skew column $S_k$ has at most $k$ columns.
	By $i$th \textbf{component} of $S_k$ we mean its boxes on the $(k+1-i)$th column of $S$.
	We may associate a (weakly) increasing sequence 
	$\{L_{k,i} \mid i=1,\cdots, k \}$
	to $S_k$, by defining $L_{k, i}$ to be the number of boxes in its first $i$ components.
	This sequence, together with 
	$S^{k-1} = \bigsqcup_{j \leqslant k-1} S_j$, 
	detemines $S_k$ completely.
	We call it the \textbf{type} of $S_k$.
	It is harmless to extend the sequence by 
	\[L_{k, i } = \left\{\begin{aligned}
		&0,	&i \leqslant 0,\\
		&L_{k, k},	&i \geqslant k+1.
	\end{aligned}\right.\]
	Thess definitions can be easily extended to a skew diagram.
	
	For a $\nu$-quasitableau $S_\anti$ with shape $S$, the partition $S= \bigsqcup S_k$ into skew column will be said \textbf{compatible} with $S_\anti$ if the following condition holds:
	denote the $i$th entry of $S_k$ by $\nu_{k,i}$, then each sequence $\nu_{k, 1}, \nu_{k, 2}, \cdots$ constitutes to a segment (decreasing with step one).
	We may understand the $\nu$-filling on $S_k$ by the (weakly) decreasing sequence 
	$\nu_{k; i}:= \nu_{k, L_{k, i}} = \nu_{k, 1} - L_{k, i} +1, i \in \Z,$
	and call it \textbf{type} of this $\nu$-filling.

\begin{eg}
	Here is a compatible partition $S = S_1 \sqcup S_2 \sqcup S_3$ of a $\nu$-quasitableau.
\[\begin{ytableau}
	7	&\textcolor{purple} 5	&\textcolor{blue} 5	\\
	6	&\textcolor{purple} 4	&\textcolor{blue} 4	\\
	5	&\textcolor{purple} 3	&\none	\\
	4	&\textcolor{blue} 3	&\none	\\
	3	&\textcolor{blue} 2	&\none	\\
	\textcolor{purple} 2	&\none	\\
	\textcolor{blue} 1	&\none	\\
	\none[\tikznode{S}{~}]
	\end{ytableau}
	=\quad
	\begin{ytableau}
	7	\\
	6	\\
	5	\\
	4	\\ 
	3	\\ 
	\none	\\
	\none	\\
	\none[\tikznode{S_1}{~}]
	\end{ytableau}
	\quad\bigsqcup\quad
	\begin{ytableau}
	\none	&\textcolor{purple} 5	\\
	\none	&\textcolor{purple} 4	\\
	\none	&\textcolor{purple} 3	\\
	\none	&\none	\\
	\none	&\none	\\
	\textcolor{purple} 2	&\none	\\
	\none\\
	\none[\tikznode{S_2}{~}]
	\end{ytableau}
	\quad \bigsqcup\quad
	\begin{ytableau}
	\none	&\none	&\textcolor{blue} 5	&\none[\tikznode{L_30}{~}]\\
	\none	&\none	&\textcolor{blue} 4	&\none[\tikznode{L_31}{~}]\\
	\none	&\none	&\none	\\
	\none	&\textcolor{blue} 3	&\none	\\
	\none	&\textcolor{blue} 2	&\none	&\none[\tikznode{L_32}{~}]\\
	\none	&\none	\\
	\textcolor{blue} 1	&\none	&\none	&\none[\tikznode{L_33}{~}]\\
	\none[\tikznode{S_3}{~}]
	\end{ytableau}\]
	In this example, the skew column $S_3$ has type $L_{3,1} = 2$, $L_{3, 2} = 4$, $L_{3, 3} = 5$.
	Its $\nu$-filling has type $\nu_{3;0} = 6$, $\nu_{3; 1} = 4$, $\nu_{3; 2} = 2$, $\nu_{3; 3}=1$.

\tikz[overlay,remember picture]{
	\draw[decorate,decoration={brace},thick]
	([yshift=1em, xshift=0.5em] L_30.north west) -- 
	([yshift=-0.5em, xshift=0.5em]L_31.south west)
	node[midway, right]{$L_{3,1}$};
	\draw[decorate,decoration={brace},thick]
	([yshift=1em, xshift=2.75em] L_30.north west) -- 
	([yshift=-0.5em, xshift=2.75em] L_32.south west)
	node[midway, right]{$L_{3,2}$};
	\draw[decorate,decoration={brace},thick]
	([yshift=1em, xshift=5em] L_30.north west) -- 
	([yshift=-0.5em, xshift=5em] L_33.south west)
	node[midway, right]{$L_{3,3}$};
	\node at ([xshift = 2em] S) {$S$};
	\node at ([xshift = 0em] S_1) {$S_1$};
	\node at ([xshift = 1em] S_2) {$S_2$};
	\node at ([xshift = 2em] S_3) {$S_3$}
}
\end{eg}

\begin{lem}\label{anti-by-type}
	Suppose $S= \bigsqcup S_k$ is a compatible partition into skew columns with a $\nu$-quasitableau $S_\anti$.
	Denote the type of $\nu$-filling on $S_k$ by $\nu_{k; i}$.
	Then $S_\anti$ is a $\nu$-antitableau if and only if 
	$\forall k, i,$ $\nu_{k; i} \geqslant \nu_{k+1; i}$.
\end{lem}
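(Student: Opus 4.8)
The plan is to unwind both sides into explicit inequalities on the entries of $S_\anti$ arranged by columns, and match them. First I would fix notation: write the boxes of $S_k$ from top to bottom, so the $i$-th component of $S_k$ (its boxes in column $k+1-i$ of $S$) is filled by a contiguous decreasing run, and the type $\nu_{k;i}$ records the last entry of the first $i$ components, equivalently $\nu_{k;i} = \nu_{k,1} - L_{k,i} + 1$. Recall $S_\anti$ is a $\nu$-antitableau iff entries weakly decrease along rows and strictly decrease down columns; since the partition is compatible, the "strictly decrease down a column" condition is automatic within a single skew column (each component is a decreasing run of step one, and consecutive boxes in the same column of $S$ coming from different skew columns $S_k, S_{k'}$ with $k<k'$ — here I should be careful, but a column of $S$ receives its boxes from skew columns with indices in increasing order as we go down, and compatibility plus the "each $S^k$ is a Young diagram" condition pins down which boxes these are). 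So the real content is: the row-weakly-decreasing condition across the whole diagram is equivalent to $\nu_{k;i}\geqslant\nu_{k+1;i}$ for all $k,i$.

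The key step is the translation of "weakly decreasing across row $r$" into a statement comparing skew columns pairwise. A given row $r$ of $S$ meets column $c$ of $S$ precisely when $n_r \geqslant c$; the box in position $(r,c)$ belongs to $S_k$ where $c = k+1-i$ for the appropriate component index $i$, i.e. it belongs to the $i$-th component of $S_k$ with $k = c-1+i$. The crucial observation is that the box of $S_k$ in row $r$, if it exists, has $\nu$-value equal to $\nu_{k,1}$ minus (its position counted within $S_k$ down to that box), and that for fixed "column offset" the value in row $r$ of $S_k$ versus row $r$ of $S_{k+1}$ (one column to the left) differ by exactly the comparison $\nu_{k;i}$ vs $\nu_{k+1;i}$ for a suitable $i$ depending on $r$. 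I would make this precise by an induction on $k$, using that $S^k$ is a Young diagram so the "profile" (the function sending a row to the number of boxes of $S^k$ in that row) is weakly decreasing, and that adding $S_{k+1}$ extends this profile in a controlled way. The entry in row $r$, column $c$ of $S_\anti$ restricted to $S^k$ is then read off as $\nu_{k;\,(\text{row index within the $c$-th column from the right of }S^k)}$, and comparing adjacent columns $c, c-1$ at the same row gives exactly $\nu_{k;i}\geqslant\nu_{k+1;i}$ after bookkeeping the indices.

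For the two directions: ($\Leftarrow$) assuming all $\nu_{k;i}\geqslant\nu_{k+1;i}$, I would show every adjacent pair of boxes in a row satisfies the weak-decrease inequality by locating the two boxes' skew columns $S_k$ (left, larger $k$) and $S_{k'}$ (right, smaller $k'$) with $k>k'$, then chaining: the row-value in $S_{k'}$ dominates the row-value in $S_{k'+1}$ dominates $\dots$ dominates the row-value in $S_k$, where each link is an instance of $\nu_{j;i}\geqslant\nu_{j+1;i}$ for the appropriate $i$ (the same $i$ throughout, determined by the row), plus the within-a-skew-column monotonicity. For ($\Rightarrow$), each inequality $\nu_{k;i}\geqslant\nu_{k+1;i}$ is witnessed by a single adjacent pair of boxes in some specific row of $S_\anti$ (the last row in which the $i$-th component of $S_k$ has a box, compared with the box immediately to its left in column $k+2-i$, which lies in $S_{k+1}$'s $i$-th component — provided that box exists; if it doesn't, the inequality is vacuous or follows from the extension conventions $L_{k,i}=0$ for $i\leqslant 0$, $L_{k,i}=L_{k,k}$ for $i\geqslant k+1$).

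The main obstacle I anticipate is purely bookkeeping: precisely identifying, for a given row $r$ and column $c$ of $S$, which skew column $S_k$ and which component index $i$ the box $(r,c)$ belongs to, and verifying that moving one step left in the same row changes the relevant data by exactly "$k \mapsto k+1$, same $i$" (or terminates against the boundary). This requires carefully exploiting the two defining conditions of a compatible partition — each $S_k$ is a skew column (at most one box per row) and each $S^k$ is a Young diagram — and I expect the cleanest route is an induction on the number of skew columns, at each stage describing the shape $S^k$ by its column lengths and showing $S_{k+1}$ is "added on the left" in a way compatible with the type sequences. Once the indexing dictionary is set up, both implications are immediate from the chain-of-inequalities argument sketched above.
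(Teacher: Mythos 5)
Your proposal starts from a misconception that derails the forward direction.  You assert that the column--decreasing condition is essentially automatic from compatibility, so that ``the real content is: the row-weakly-decreasing condition $\ldots$ is equivalent to $\nu_{k;i}\geqslant\nu_{k+1;i}$.''  That framing is wrong.  In column $c=k+1-i$ of $S$, the $i$th component of $S_k$ is immediately followed, one row down, by the $(i+1)$th component of $S_{k+1}$; within each component the entries drop by one automatically, but the \emph{transition} is not automatic.  The last entry of $S_k$'s $i$th component is $\nu_{k;i}$ and the first entry of $S_{k+1}$'s $(i+1)$th component is $\nu_{k+1;i}-1$, so strict column decrease there is exactly $\nu_{k;i}>\nu_{k+1;i}-1$, i.e.\ $\nu_{k;i}\geqslant\nu_{k+1;i}$.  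This column comparison is the whole content of the forward implication, and the paper proves $(\Rightarrow)$ by precisely this observation.

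Your row-based witness for $(\Rightarrow)$ does not work, and this is not merely bookkeeping.  (Incidentally, $S_{k+1}$'s $i$th component sits in column $k+2-i$, which is to the \emph{right} of column $k+1-i$, so your ``to its left'' should read ``to its right.'')  For a concrete failure, let $S_1$ be a single column of three boxes and $S_2$ the skew column with two boxes in column $2$ (rows $1,2$) and one box in column $1$ (row $4$), both filled by $(7,6,5)$.  Then $\nu_{1;1}=5<6=\nu_{2;1}$, and column $1$ of $S$ reads $7,6,5,5$, so $S_\anti$ is not an antitableau; yet every row of $S$ is weakly decreasing ($7\geqslant 7$, $6\geqslant 6$, and rows $3,4$ are singletons).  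The box you want to compare against is at $(3,2)$, to the right of the last box of $S_1$'s first component, but that box does not exist, and the violation lives in a column rather than in a row.  Your fallback that ``the inequality is vacuous or follows from the extension conventions'' is therefore false.

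Your $(\Leftarrow)$ direction is closer to the paper's, but also contains errors.  First, the column part is not ``automatic''; it follows from the hypothesis by exactly the column comparison above, and should be stated.  Second, for the row part the left box of an adjacent pair belongs to the skew column with the \emph{smaller} index and the right box to the one with the \emph{larger} index (you have this reversed: writing $a_c^{(j)}$ for the length of column $c$ in $S^j$, the Young diagram condition gives $a_c^{(j)}\geqslant a_{c+1}^{(j)}$, so a box appears in column $c+1$ at a stage $j'$ no earlier than the stage $j$ at which it appears in column $c$).  Third, the component index is not ``the same $i$ throughout'': if the left box is the $\Delta_k$th box of $S_k$ lying in its $i$th component and the right box is the $\Delta_l$th box of $S_l$ lying in its $j$th component with $l>k$, then $l-j=k-i+1$ forces $j=i+(l-k-1)\geqslant i$, and you need the hypothesis $\nu_{m;i}\geqslant\nu_{m+1;i}$, the monotonicity $\nu_{l;i}\geqslant\nu_{l;j}$, \emph{and} the shape inequality $L_{k,i}-\Delta_k\geqslant L_{l,j}-\Delta_l$ coming from $S^l$ being a Young diagram to conclude.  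With these corrections your $(\Leftarrow)$ matches the paper's; but $(\Rightarrow)$ must be replaced by the column argument.
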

\begin{proof}
	Look at the last $(i+1)$th column of $S_k \sqcup S_{k+1}$,
	which consists of the $i$th component of $S_k$ and $(i+1)$th component of $S_{k+1}$, 
	and hence filled by $\nu_{k; i-1} -1, \cdots, \nu_{k; i} , \nu_{k+1; i} -1, \cdots, \nu_{k+1; i+1}$.
	If $S_\anti$ is an antitableau, then by its column-decreasing property, 
	we know $\nu_{k; i} > \nu_{k+1; i}-1$, and hence $\nu_{k; i} \geqslant \nu_{k+1; i}$.
	
	Conversely, if all $\nu_{k;i} \geqslant \nu_{k+1; i}$, then we know $S_\anti$ has entries decreasing down columns immediately.
	Let's deduce further that it has entries weakly decreasing across columns.
	Adjacent boxes on a row come from different skew columns, say $S_k$, $S_l$, and $k<l$.
	The left one is farther to ends of the component it belonging to.
	More precisely, suppose these two boxes are the $\Delta_k$th and $\Delta_l$th of $S_k$, $S_{l}$ respectively,
	with $L_{k; i-1} < \Delta_k \leqslant L_{k; i}$,
	$L_{l; j-1} < \Delta_l \leqslant L_{l; j}$,
	then we would have 
	$L_{k; i} - \Delta_k \geqslant L_{l; j} - \Delta_l$ 
	due to the shape of $S^l =  \bigsqcup_{j \leqslant l} S_j$.
	Moreover, since the two boxes belong to adjacent columns, we know $l - j = k - i +1$, and hence $j = (l- k -1) +i \geqslant i$.
	It follows $\nu_{k; i } \geqslant \nu_{l; j}$ then, and  
	\[\nu_{k, \Delta_k} = \nu_{k; i} + L_{k, i} - \Delta_k \geqslant
	\nu_{l; j} + L_{l; j} - \Delta_{k+1} = \nu_{l, \Delta_l}.\]
	Consequently, entries of $S_\anti$ also weakly decrease across rows.	
\end{proof}

\subsection{Signed tableau}
	Here we introduce \textbf{signed tableaus}.
	They are equivalence classes of certain sign-filled Young diagrams, 
	and are combinatorial counterpart of associated varieties.
	
	The sign-filled Young diagram we consider should have signs alternate across rows.
	If the sign-filling has $p$ pluses and $q$ minuses, call it has signature $(p, q)$.
	Two sign-filled Young diagrams are equivalent, if they can be made to coincide by interchanging rows of equal length.
	An equivalence class of sign-filled Young diagrams is called a \textbf{signed tableau}.
	It has well-defined shapes and signature. 
	
	A sign-filled skew column could be any arrangement of pluses and minuses in this skew column.
	Two sign-filled skew columns are equivalent, if the can be made to coincide by interchanging boxes in same components.
	An equivalence class of sign-filled skew columns is called a \textbf{signed skew column}.

	The reader should notice that the above two equivalence relations are not always compatible:
	for a partition $S= \bigsqcup S_k$ of Young diagram into skew columns,  two equivalent sign-fillings in $S$ could give rise to non-equivalent sign-fillings in $S_k$.
	To avoid this, there is a sufficient condition on the sign-filling and partition of $S$:
	if a row is skipped in $S_k$, then all rows in $S^k= \bigsqcup_{j \leqslant k} S_j$ below (and including) the frst skipped row and above (and including) the last row of $S_k$ end in the same sign.
	In this case we say the sign-filling and partition are \textbf{compatible}.
	One may check the following:
\begin{itemize}
	\item if a sign-filling is compatible with a partition into skew columns, then its equivalent sign-fillings are also compatible with this partition; 
	hence we can talk about the compatibility of a signed tableau and a partition;
	\item equivalent sign-fillings in $S$ give rise to equivalent sign-fillings in $S_k$ if these signed-fillings are compatible with the partition $S=\bigsqcup S_k$.
\end{itemize}

	Given a signed tableau $S_\pm$ and a compatible partition $S=\bigsqcup S_k$ of its shape, we may associate two (weakly) increasing sequence $\{L^\pm_{k, i} \mid i = 1, \cdots, k\}$ to the sign-fillings in $S_k$, by defining $L_{k, i}^+$ (or $L^-_{k, i}$) to be the number of pluses (or minuses) in its first $k$ components.
	They are well-defined up to equivalence relation, and determine this signed skew column completely.
	We call them \textbf{type} of this signed skew column.
	$L^\pm_{k,i}$ can be extend to $i \in \Z$ similarly as $L_{k,i}$, and by definition $L_{k, i}= L_{k, i}^+ + L_{k, i}^-$.
\begin{eg}
	Here is a compatible partition $S = S_1 \sqcup S_2 \sqcup S_3$ of a sign-filled Young diagram.
\[\begin{ytableau}
	+	&\textcolor{purple} -	&\textcolor{blue} +	\\
	+	&\textcolor{purple} -	&\textcolor{blue} +	\\
	-	&\textcolor{purple} +	&\textcolor{blue} -	\\
	+	&\textcolor{blue} -	&\none	\\
	\textcolor{purple} +	&\none	\\
	\textcolor{blue} +	&\none	\\
	\none[\tikznode{S_pm}{~}]
	\end{ytableau}
	=\quad
	\begin{ytableau}
	+	\\
	+	\\
	-	\\
	+	\\
	\none	\\
	\none	\\
	\none[\tikznode{S_pm_1}{~}]
	\end{ytableau}
	\quad\bigsqcup\quad
	\begin{ytableau}
	\none	&\textcolor{purple} -	\\
	\none	&\textcolor{purple} -	\\
	\none	&\textcolor{purple} +	\\
	\none	&\none	\\
	\textcolor{purple} +	&\none	\\
	\none	\\
	\none[\tikznode{S_pm_2}{~}]
	\end{ytableau}
	\quad \bigsqcup\quad
	\begin{ytableau}
	\none	&\none	&\textcolor{blue} +	&\none	\\
	\none	&\none	&\textcolor{blue} +	&\none	\\
	\none	&\none	&\textcolor{blue} -	\\
	\none	&\textcolor{blue} -	&\none	\\
	\none	&\none	\\
	\textcolor{blue} +	&\none	\\
	\none[\tikznode{S_pm_3}{~}]
	\end{ytableau}\]
	In this exapmle, the skew column $S_3$ has type $L_{3,1} = 3$, $L_{3, 2} =4$, $L_{3, 3}= 5$.
	Its sign-filling has type 
	$L_{3, 1}^+ = 2$, $L_{3,1}^- = 1$, 
	$L_{3, 2}^+ = 2$, $L_{3, 2}^- = 2$, 
	$L_{3, 3}^+ = 3$, $L_{3, 3}^- = 2$.
	
\tikz[overlay,remember picture]{
	\node at ([xshift = 2em] S_pm) {$S$};
	\node at ([xshift = 0em] S_pm_1) {$S_1$};
	\node at ([xshift = 1em] S_pm_2) {$S_2$};
	\node at ([xshift = 2em] S_pm_3) {$S_3$}
}	
\end{eg}

\subsection{Understand $A_\q(\lambda)$ by tableaus}\label{construct-tableau}
	Suppose $\nu$ is a weight lattice translation of half the sum of positive roots of $\U(p, q)$. 
	Then an irreducible Harish-Chandra module for $\U(p,q)$ with infinitesimal character $\nu$ is determined completely by its annihilator (which is a primitive ideal in $\gl(n, \C)$) and associated varieties (\cite{BV1983}).
	We do not bother to explain the definition of these two objects (\cite[Section 4, 5]{Trapa} are recommended for interested readers), and switch to the combinatoric side as quick as possible:
\begin{itemize}
	\item the primitive ideals in $\gl(n, \C)$ with infinitesimal character $\nu$ are in bijection to $\nu$-antitableaus (\cite[Theorem 4.1]{Trapa});
	\item the nilpotent orbits (consitituent of associated varieties) in $\u(p,q)$ are in bijection to signed tableaus of signature $(p, q)$ (\cite[Lemma 5.5]{Trapa}).
\end{itemize}
	The associated variety of an irreducible $A_\q(\lambda)$ always consists of a single nilpotent orbit.
	Hence, we may associate a $\nu$-antitableau and a signed tableau to each $A_\q(\lambda)$ when $\lambda$ lies in the mediocre range for $\q$.
	These two tableaus compose into a complete invariant of representations of this type; see Theorem 6.1 in \cite{Trapa}.
	Here we will sketch this construction for the $A_\q(\lambda)$ parametrized by $\underline p \in \mathcal D(\psi) \subset \MR$.
	
	The first step is to construct a signed tableau $S_{\pm}$ with shape $S$ and a compatible partition $S= \bigsqcup S_k$.
	The process would be inductive.
	$S_1$ (which has only $1$ component, alternative speaking, is a one-column Young diagram) is filled with $p_1$ pluses and $q_1$ minuses.
	Now suppose the signed tableau $S^{k-1} = \bigsqcup_{j<k} S_j$ is defined, then 
	define the sign-filled skew column $S_k$ by adding $p_k$ boxes filled with $+$, and $q_k$ boxes filled with $-$, from top to bottom, to row-ends of $S^{k-1}$, such that
\begin{itemize}
	\item at most one box is added to each row-end;
	\item the signs of resulting diagram alternate across rows.
\end{itemize}
	An appropriate sign-filling on $S^{k-1}$ could be chosen in order that  $S^{k-1} \sqcup S_k$ is still a Young diagram (having decreasing rows).
	
	The next step is to fill $S$ by the infinitesimal character $\nu$.
	Regard $\nu$ as admissible arrangement of segments $\nu_1, \cdots, \nu_r$.
	Each $\nu_k = (\nu_{k, 1}, \cdots, \nu_{k, m_k})$ is a segment of length $m_k$, consistent with the size of $S_k$, so its entry $\nu_{k,i}$ can be filled into the $i$th box of $S_k$.
	Then we obtain a $\nu$-quasitableau $S_{\mathrm A}$ with shape $S$.
	If $\lambda$ is in the good range for $\q$, such that $\nu$ is a decerasing sequence, then $S_{\mathrm A}$ is already a $\nu$-antitableau.

\begin{eg}\label{tableau-data}
	Take $(m_1, m_2, m_3) = (3, 5, 6)$, and $\underline p = (2, 2, 2)$.
	Then $\underline q= (1, 3, 4)$.
	Go through the first step, and we would obtain the partition and signed filling as:
	\[\begin{ytableau}
	+	\\
	+	\\
	-	\\
	\none	\\
	\none	\\
	\none	\\
	\none	\\
	\none[\tikznode{S^1}{~}]
	\end{ytableau}
	\quad \Longrightarrow \quad
	\begin{ytableau}
	\tikznode{L_10_1}{-}	&\textcolor{purple} +	\\
	+	&\textcolor{purple} -	\\
	\tikznode{L_11_1}{+}	&\textcolor{purple} -	\\
	\textcolor{purple} +	&\none	\\
	\textcolor{purple} -	\\
	\none	\\
	\none	\\
	\none[\tikznode{S^2}{~}]
	\end{ytableau}
	\quad \Longrightarrow \quad
	\begin{ytableau}
	\tikznode{L_10_2}{+}	&
	\tikznode{L_20_2}{\textcolor{purple} -}	&
	\textcolor{blue} +	\\
	+	&
	\textcolor{purple} -	&
	\textcolor{blue} +	\\
	\tikznode{L_11_2}{-}	&
	\tikznode{L_21_2}{\textcolor{purple} +}&
	\textcolor{blue} -	\\
	\textcolor{purple} +	&
	\textcolor{blue} -	&
	\none	\\
	\tikznode{L_22_2}{\textcolor{purple} -}&
	\none	\\
	\textcolor{blue} -	&
	\none	\\
	\textcolor{blue} -\\
	\none[\tikznode{S^3}{~}]
	\end{ytableau}
	\]
\tikz[overlay,remember picture]{
	\draw[very thick, color=purple]
	([yshift=0.375em, xshift=0.375em] L_10_1.north east) -- 
	([yshift=-0.5em, xshift=0.375em]L_11_1.south east) --
	([yshift=-0.5em, xshift=-0.375em]L_11_1.south west);
	
	\draw[very thick, color=purple]
	([yshift=0.375em, xshift=0.375em] L_10_2.north east) -- 
	([yshift=-0.5em, xshift=0.375em]L_11_2.south east) --
	([yshift=-0.5em, xshift=-0.375em]L_11_2.south west);

	\draw[very thick, color=blue]
	([yshift=0.375em, xshift=0.375em] L_20_2.north east) -- 
	([yshift=-0.5em, xshift=0.375em]L_21_2.south east) --
	([yshift=-0.5em, xshift=-0.375em]L_21_2.south west) --
	([yshift=-0.5em, xshift=0.375em]L_22_2.south east) --
	([yshift=-0.5em, xshift=-0.375em]L_22_2.south west);
	
	\node at ([xshift = 0mm] S^1) {$S^1= S_1$};
	\node at ([xshift = 4mm] S^2) {$S^2 = S_1 \sqcup \textcolor{purple}{S_2}$};
	\node at ([xshift = 8mm] S^3) {$S^3 = S_1 \sqcup \textcolor{purple}{S_2} \sqcup \textcolor{blue}{S_3}$}
}	
	
	In the second step, if take $\nu = (\nu_1, \nu_2, \nu_3)$ as 
	$\nu_1 = (7, 6, 5)$,
	$\nu_2 = (7, 6, 5, 4, 3)$,
	$\nu_3 = (6, 5, 4, 3, 2, 1)$,
	then we would obtain 
	\[\begin{ytableau}
	\tikznode{v_10}7	&
	\tikznode{v_20}{\textcolor{purple} 7}	&
	\textcolor{blue} 6	\\
	6	&\textcolor{purple} 6	&\textcolor{blue} 5	\\
	\tikznode{v_11}5	&
	\tikznode{v_21}{\textcolor{purple} 5}	&
	\textcolor{blue} 4	\\
	\textcolor{purple} 4	&\textcolor{blue} 3	&\none	\\
	\tikznode{v_22}{\textcolor{purple} 3}	&
	\none	\\
	\textcolor{blue} 2	&
	\none	\\
	\textcolor{blue} 1
	\end{ytableau},\]
\tikz[overlay,remember picture]{
	\draw[very thick, color=purple]
	([yshift=0.375em, xshift=0.5em] v_10.north east) -- 
	([yshift=-0.5em, xshift=0.5em]v_11.south east) --
	([yshift=-0.5em, xshift=-0.5em]v_11.south west);

	\draw[very thick, color=blue]
	([yshift=0.375em, xshift=0.5em] v_20.north east) -- 
	([yshift=-0.5em, xshift=0.5em]v_21.south east) --
	([yshift=-0.5em, xshift=-0.525em]v_21.south west) --
	([yshift=-0.5em, xshift=0.525em]v_22.south east) --
	([yshift=-0.5em, xshift=-0.5em]v_22.south west);
}
	which is already a $\nu$-antitableau.
	
	If take  $\nu = (\nu_1, \nu_2, \nu_3)$ as 
	$\nu_1 = (7, 6, 5)$,
	$\nu_2 = (6, 5, 4, 3, 2)$,
	$\nu_3 = (6, 5, 4, 3, 2, 1)$,
	then we would obtain 	
	\[\begin{ytableau}
	\tikznode{v_10_0}7	&
	\tikznode{v_20_0}{\textcolor{purple} 6}	&
	\textcolor{blue} 6	\\
	6	&\textcolor{purple} 5	&\textcolor{blue} 5	\\
	\tikznode{v_11_0}5	&
	\tikznode{v_21_0}{\textcolor{purple} 4}	&
	\textcolor{blue} 4	\\
	\textcolor{purple} 3	&\textcolor{blue} 3	&\none	\\
	\tikznode{v_22_0}{\textcolor{purple} 2}	&
	\none	\\
	\textcolor{blue} 2	&
	\none	\\
	\textcolor{blue} 1
	\end{ytableau}\]
\tikz[overlay,remember picture]{
	\draw[very thick, color=purple]
	([yshift=0.375em, xshift=0.5em] v_10_0.north east) -- 
	([yshift=-0.5em, xshift=0.5em]v_11_0.south east) --
	([yshift=-0.5em, xshift=-0.5em]v_11_0.south west);

	\draw[very thick, color=blue]
	([yshift=0.375em, xshift=0.5em] v_20_0.north east) -- 
	([yshift=-0.5em, xshift=0.5em]v_21_0.south east) --
	([yshift=-0.5em, xshift=-0.525em]v_21_0.south west) --
	([yshift=-0.5em, xshift=0.525em]v_22_0.south east) --
	([yshift=-0.5em, xshift=-0.5em]v_22_0.south west);
}
	and we have to go thourgh one more step, adjusting it to a $\nu$-antitableau.	
\end{eg}

	This third step is for the case that $S_\anti$ is not an antitableau.
	There is an algorithm to adjust it, which ends up in two possibilities:
\begin{itemize}
	\item a $\nu$-antitableau $S_{\mathrm A}'$, and it follows $A_{\q_{\underline p}}(\lambda_{\underline p})$ is non-zero, with annihilator given by this tableau;
	\item a formal symbol $0$, and it follows $A_{\q_{\underline p}}(\lambda_{\underline p})$ is zero.
\end{itemize}
	In the subsequents we will study it in detail, and formulate it in the last of subsection \ref{subtle}.
	Consequently, we can determine whether $A_{\q_{\underline p}}(\lambda_{\underline p})$ is zero by means of this algorithm.

\subsection{Overlap}
	In Trapa's algorithm, a recurring step is comparing ``overlap'' and ``singularity'' of two adjacent skew columns.
	The latter concerns only about the segments filled into them, while the former concerns only about their shapes.
	Here we will explore a formula for overlap  in terms of types (of skew colums). 
	
	We always assume a partition $S = \bigsqcup S_k$ into skew columns.
	Denote the type of $S_k$ by $\{L_{k, i} \mid i = 1, \cdots, k\}$, and its size ($= L_{k, k}$) also by $m_k$.

\begin{din}[{\cite[Definition 7.1]{Trapa}}]
	The overlap of $S_k$ and $S_{k+1}$ is defined to be the largest integer $m \leqslant \min\{m_k, m_{k+1}\}$ such that the following condition for $m$ holds: 
	$\forall i = 1, \cdots, m$, the $(m_k-m+i)$th box of $S_k$ is strictly left to the $i$th box of $S_{k+1}$ (in $S$).
	Denote it by $\overlap(S_{k},S_{k+1})$. 
	
	If the condition for $m\geqslant 1$ never holds, define $\overlap(S_k, S_{k+1})= 0$.
\end{din}

\begin{lem}\label{overlap}
	$\overlap(S_k, S_{k+1}) = \min \{ L_{k+1, i} - L_{k, i} + m_k \mid i \leqslant k\}$.
\end{lem}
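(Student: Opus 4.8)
The plan is to rewrite the inequalities defining the overlap purely in terms of the component indices of the boxes, reducing the lemma to a short combinatorial computation.

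First I would fix notation. Since the $i$th component of $S_k$ lies in column $k+1-i$ of $S$, while the $i$th component of $S_{k+1}$ lies in column $k+2-i$, and since within a skew column the column index weakly decreases from top to bottom (the columns of the Young diagram $S^k$ being top-justified), the $t$th box of $S_k$ sits in component $J_k(t):=\min\{i : L_{k,i}\geqslant t\}$, hence in column $k+1-J_k(t)$; likewise the $s$th box of $S_{k+1}$ sits in column $k+2-J_{k+1}(s)$. Therefore ``the $t$th box of $S_k$ is strictly left of the $s$th box of $S_{k+1}$'' is equivalent to $J_k(t)\geqslant J_{k+1}(s)$, and for $m\leqslant\min\{m_k,m_{k+1}\}$ one gets
\[
\overlap(S_k,S_{k+1})\geqslant m\iff J_k(m_k-m+i)\geqslant J_{k+1}(i)\ \text{ for } i=1,\dots,m.
\]
Because $J_k$ is weakly increasing, the right-hand condition passes from any $m$ to all smaller values, so $\overlap(S_k,S_{k+1})$ is simply the largest admissible $m$ for which it holds.

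The core of the argument, and the step I expect to require the most care, will be to check that for $m$ in the admissible range the displayed condition \emph{fails} exactly when $m>m_k+L_{k+1,i}-L_{k,i}$ for some $i$. Going forward, if it fails at some index $i$, and $j$ denotes the corresponding value of $J_{k+1}(i)$, I would read off $L_{k+1,j-1}<i$ and $m_k-m+i\leqslant L_{k,j-1}$, and add these to obtain $m\geqslant m_k+i-L_{k,j-1}>m_k+L_{k+1,j-1}-L_{k,j-1}$. Conversely, given $m>m_k+L_{k+1,j-1}-L_{k,j-1}$ I would propose the witness $i:=L_{k+1,j-1}+1$: using $L_{k,j-1}\leqslant m_k$ one checks $1\leqslant i\leqslant m\leqslant m_{k+1}$, and then $i>L_{k+1,j-1}$ forces $J_{k+1}(i)\geqslant j$ while $m_k-m+i<L_{k,j-1}+1$ forces $J_k(m_k-m+i)\leqslant j-1$, so the condition breaks at $i$. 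The delicate points here are carrying the index shifts along with the boundary conventions $L_{k,i}=0$ for $i\leqslant 0$ and $L_{k,i}=m_k$ for $i\geqslant k+1$, and verifying that the proposed witness genuinely lies in $\{1,\dots,m\}$, hence in $\{1,\dots,m_{k+1}\}$.

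Granting this, $\overlap(S_k,S_{k+1})$ equals the largest $m\leqslant\min\{m_k,m_{k+1}\}$ with $m\leqslant\min_i(m_k+L_{k+1,i}-L_{k,i})$, that is,
\[
\overlap(S_k,S_{k+1})=\min\Big(\min\{m_k,m_{k+1}\},\ \min_i(m_k+L_{k+1,i}-L_{k,i})\Big).
\]
To conclude I would only need that the inner minimum is automatically at most $\min\{m_k,m_{k+1}\}$: the $i=k$ term equals $L_{k+1,k}\leqslant m_{k+1}$, and the $i=1$ term equals $m_k+L_{k+1,1}-L_{k,1}\leqslant m_k$, because a row of $S$ of length at least $k+1$ must already have length exactly $k$ in $S^k$, whence $L_{k+1,1}\leqslant L_{k,1}$ (alternatively the $i=0$ term is already $m_k$). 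Hence the two minima coincide and the formula of the lemma drops out.
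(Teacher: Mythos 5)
Your proof is correct, and it takes a genuinely different route from the paper's. The paper proves the lemma by induction on the number of components, setting up a recursive formula
\[
\overlap(S_k^{i+1},S_{k+1}^{i+1})=\overlap(S_k^i,S_{k+1}^i)+\min\{L_{k,i+1}-L_{k,i},\, L_{k+1,i+1}-\overlap(S_k^i,S_{k+1}^i)\}
\]
for the truncations $S_k^i$ (first $i$ components), and then unrolling the recursion; the induction step requires a short but nontrivial analysis of how appending a component changes the overlap. Your argument is direct and induction-free: you encode the column of the $t$th box of $S_k$ via the component index $J_k(t)=\min\{i:L_{k,i}\geqslant t\}$, reduce ``strictly left'' to the inequality $J_k(t)\geqslant J_{k+1}(s)$, observe the defining condition is downward-closed in $m$, and then compute the exact threshold by a pair of elementary index manipulations (plus the boundary check that the formula is automatically capped by $\min\{m_k,m_{k+1}\}$, either via the $i=0$ convention $L_{k,0}=L_{k+1,0}=0$, or via $L_{k+1,1}\leqslant L_{k,1}$ and $L_{k+1,k}\leqslant m_{k+1}$). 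Your approach is arguably more transparent since it makes the role of the column positions explicit, whereas the paper's recursive bookkeeping hides them; the paper's recursion on the other hand is reusable verbatim in the proof of Lemma \ref{overlap-p}. One small point worth making explicit in a final write-up: in the forward implication you must note that $j:=J_{k+1}(i)\geqslant 2$ (else the derived inequality $m>m_k$ contradicts $m\leqslant m_k$), so that $j-1$ falls in the index range of the min; you did implicitly handle this, and the converse likewise excludes the degenerate index, but stating it keeps the reader from worrying.
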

\begin{proof}
	Denote the first $i$ components of $S_k$ by $S_k^i$, we will prove by induction that 
	\[ \overlap(S_k^i, S_{k+1}^i) \stackrel ?= \min\{ L_{k+1, j} - L_{k, j} + L_{k, i} \mid j \leqslant i\}.\]
	Case $i=k$ is the desired formula, since the $(k+1)$th component of $S_{k+1}$ contributes nothing to $\overlap(S_k, S_{k+1})$.
	
	Case $i=1$ is easy; $\overlap(S_k^1, S_{k+1}^1) = L_{k+1, 1}$ since they both have only one component.
	Case $i>1$ follows from the recursive formula
\begin{equation}\label{reccursive-overlap}
	\overlap(S_k^{i+1}, S_{k+1}^{i+1}) \stackrel ?= 
	\overlap(S_k^i, S_{k+1}^i) + \min\{L_{k, i+1} - L_{k, i}, L_{k+1, i+1} - \overlap(S_k^i, S_{k+1}^i) \}.
\end{equation}
	Let's explain how to obtain this formula.
	Denote $m^i = \overlap(S_k^i, S_{k+1}^i)$, $\Delta^i = \min\{L_{k, i+1} - L_{k, i}, L_{k+1, i+1} - m^i \}$ for convenience.
	By definition, the $L_{k, i} - m^i + 1, \cdots, L_{k, i}$th boxes of $S_k^{i+1}$ (same as $S_k^i$) are strictly left to the $1, \cdots, m^i$th boxes of $S_{k+1}^{i+1}$ (same as $S_{k+1}^i$) respectively.
	Moreover, the $(i+1)$th component of $S_k^{i+1}$ are strictly left to any box of $S_{k+1}^{i+1}$. 
	Then we know the $L_{k, i+1} - (m^i+ \Delta^i) + 1, \cdots, L_{k, i+1}$th boxes of $S_k^i$ are strictly left to the $1, \cdots, (m^i + \Delta^i)$th boxes of $S_{k+1}^{i+1}$ respectively, so $m^{i+1} =  \overlap(S_k^{i+1}, S_{k+1}^{i+1})$ would be no less than $m^i+ \Delta^i$.
	
	It remains to show $m^{i+1} \leqslant m^i + \Delta^i = \min\{m^i + L_{k+1, i} - L_{k, i}, L_{k+1, i+1} \}$.
	By the definition of $m^{i+1}$ we know $m^{i+1} \leqslant L_{k+1, i+1}$ immediately.
	Moreover, the $L_{k, i+1} - m^{i+1} +1, \cdots, L_{k, i}$th boxes of $S_k^i$ (same as $S_k^{i+1}$) would be strictly left to the $1, \cdots, (L_{k, i} - L_{k, i+1} + m^{i+1})$th boxes of $S_{k+1}^i$ (same as $S_{k+1}^{i+1}$) respectively,
	so by definition of $m^i$ we have $L_{k, i} - L_{k, i+1} + m^{i+1} \leqslant m^i$.
	Alternatively speaking, $m^{i+1} \leqslant m^i + L_{k+1, i} - L_{k, i}$.
	Now equation \eqref{reccursive-overlap} and the conclusion follows.
\end{proof}

	If $S_k$, $S_{k+1}$ are filled by segments $\nu_k, \nu_{k+1}$, then the above lemma can also be formulated as 
	\[\overlap(S_k, S_{k+1}) = \min \{ \nu_{k; i} - \nu_{k+1; i} \mid i \leqslant k\} + \nu_{k+1; 0} - \nu_{k;k}.\]
	\cite[Definition 7.1]{Trapa} defines $\sing(\nu_k, \nu_{k+1}) : = \#(\nu_k \cap \nu_{k+1})$, so we may deduce following lemma easily.
\begin{lem}\label{overlap-geq-sing}
	$\overlap(S_k, S_{k+1}) \geqslant \sing(\nu_k, \nu_{k+1})$ is equivalent to the following:
	\begin{enumerate}[label=$(\arabic*)$]
	\item if $\nu_k \geqslant \nu_{k+1}$, then $\forall i$, $\nu_{k; i} \geqslant \nu_{k+1; i}$; 
	\item if $\nu_k \subset \nu_{k+1}$, then $\forall i$, $ \nu_{k; i} - \nu_{k; 0} \geqslant \nu_{k+1; i} - \nu_{k+1; 0}$, 
	while $\nu_k \supset \nu_{k+1}$, then $\nu_{k; i} - \nu_{k;k} \geqslant \nu_{k+1; i} - \nu_{k+1; k+1}$.
	\end{enumerate}
\end{lem}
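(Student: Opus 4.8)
The plan is to read the result off the two identities already on the table: the formula
\[\overlap(S_k,S_{k+1})=\min\{\nu_{k;i}-\nu_{k+1;i}\mid i\leqslant k\}+\nu_{k+1;0}-\nu_{k;k}\]
proved just above, and the definition $\sing(\nu_k,\nu_{k+1})=\#(\nu_k\cap\nu_{k+1})$. First I would abbreviate $M:=\min\{\nu_{k;i}-\nu_{k+1;i}\mid i\leqslant k\}$, so that $\overlap(S_k,S_{k+1})=M+\nu_{k+1;0}-\nu_{k;k}$, and record the elementary dictionary $\nu_{k;0}=\bg(\nu_k)+1$, $\nu_{k;k}=\ed(\nu_k)$, $\nu_{k;0}-\nu_{k;k}=m_k$ (immediate from $\nu_{k;i}=\nu_{k,1}-L_{k,i}+1$ and $L_{k,0}=0$, $L_{k,k}=m_k$), together with the fact that $i\mapsto\nu_{k;i}$ is non-increasing and is constant for $i\leqslant 0$ and for $i\geqslant k$. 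Since good parity places all segments in a single coset of $\Z$, one has $\#(\nu_k\cap\nu_{k+1})=\max\{0,\min\{\bg(\nu_k),\bg(\nu_{k+1})\}-\max\{\ed(\nu_k),\ed(\nu_{k+1})\}+1\}$, which I would then evaluate in each of the three relative positions.

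The comparison is then a one-line rearrangement in each case. If $\nu_k\supset\nu_{k+1}$ then $\sing=m_{k+1}=\nu_{k+1;0}-\nu_{k+1;k+1}$, so $\overlap\geqslant\sing$ reads $M\geqslant\nu_{k;k}-\nu_{k+1;k+1}$, i.e. $\nu_{k;i}-\nu_{k;k}\geqslant\nu_{k+1;i}-\nu_{k+1;k+1}$ for all $i\leqslant k$. If $\nu_k\subset\nu_{k+1}$ then $\sing=m_k=\nu_{k;0}-\nu_{k;k}$, so $\overlap\geqslant\sing$ reads $M\geqslant\nu_{k;0}-\nu_{k+1;0}$, i.e. $\nu_{k;i}-\nu_{k;0}\geqslant\nu_{k+1;i}-\nu_{k+1;0}$ for all $i\leqslant k$. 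If $\nu_k\geqslant\nu_{k+1}$ then $\sing=\max\{0,\nu_{k+1;0}-\nu_{k;k}\}$; when the two segments meet this gives $\overlap\geqslant\sing\iff M\geqslant 0\iff \nu_{k;i}\geqslant\nu_{k+1;i}$ for all $i\leqslant k$, and when they are disjoint both sides hold trivially — namely $\overlap\geqslant 0=\sing$ on one side, and $\nu_{k;i}\geqslant\nu_{k;k}=\ed(\nu_k)\geqslant\bg(\nu_{k+1})+1=\nu_{k+1;0}\geqslant\nu_{k+1;i}$ (using disjointness, $\nu_k\geqslant\nu_{k+1}$ and the single coset) on the other. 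In each case the per-index inequality obtained is exactly the one in the statement.

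The one step that needs care — and essentially the only obstacle — is matching the quantifier: the lemma ranges over all $i$, whereas $M$ is a minimum over $i\leqslant k$. So I would check that in each case the relevant inequality is automatically true for $i\leqslant 0$ and for $i\geqslant k+1$, using monotonicity of $i\mapsto\nu_{k;i}$ and the hypothesis on the relative position (for instance, for $\nu_k\supset\nu_{k+1}$ the inequality at $i\leqslant 0$ reduces to $m_k\geqslant m_{k+1}$, which holds, and at $i\geqslant k+1$ both sides are $0$), so that ``$\forall i$'' and ``$\forall i\leqslant k$'' cut out the same set of $\nu$-fillings. The remainder is bookkeeping with the $+1$ normalisations built into $\nu_{k;i}$, $\bg$ and $\ed$.
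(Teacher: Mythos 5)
Your proof is correct and follows exactly the route the paper intends: the paper does not spell out a proof, but the reformulated overlap formula $\overlap(S_k,S_{k+1})=\min\{\nu_{k;i}-\nu_{k+1;i}\mid i\leqslant k\}+\nu_{k+1;0}-\nu_{k;k}$ together with the definition $\sing=\#(\nu_k\cap\nu_{k+1})$ is precisely what one is meant to compare case by case, and your dictionary $\nu_{k;0}=\bg(\nu_k)+1$, $\nu_{k;k}=\ed(\nu_k)$ and the quantifier check for $i\leqslant 0$ and $i\geqslant k+1$ supply the bookkeeping the paper elides.
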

\begin{rmk}
	In case (1), the $\nu$-filling on $S_k \sqcup S_{k+1}$ is already a $\nu$-antitableau (entries weakly decrease across rows and strictly decrease down columns) due to Lemma \ref{anti-by-type}.
\end{rmk}

\begin{eg}
	Recall the $\nu$-antitableau in example \ref{tableau-data}, with $\nu = (\nu_1, \nu_2, \nu_3)$, and
	$\nu_1 = (7, 6, 5)$,
	$\nu_2 = (7, 6, 5, 4, 3)$,
	$\nu_3 = (6, 5, 4, 3, 2, 1)$.
	One may calculate by definition that
	$\overlap(S_1, S_2) = 3 = \sing(\nu_1, \nu_2),$
	$\overlap(S_2, S_3) = 4 = \sing(\nu_2, \nu_3).$
	In this example $\nu_1 \geqslant \nu_2 \geqslant \nu_3$, 
	and it really is an $\nu$-antitableau. 
\end{eg}

\subsection{Understand Trapa's operation}
	In his algorithm converting a $\nu$-quasitableau into a $\nu$-antitableau (or 0), Trapa defines an equivalence relation on 
	the set of \textbf{compatible partitions into skew columns of $\nu$-quasitableau} (and a formal symbol $0$).
	The relation is generated by an operation adjusting partitions and $\nu$-fillings of adjacent skew columns $S_k, S_{k+1}$.
	Here is its definition.
	
\begin{din}[{\cite[Procedure 7.5]{Trapa}}]\label{T-operation}
	Suppose that $S_k$, $S_{k+1}$ are filled by segments $\nu_k$, $\nu_{k+1}$ respectively, and $\nu_{k+1}$ does not precede $\nu_k$. 
	Denote this $\nu$-filling by $R_{\mathrm A}$.
	
	If $\overlap(S_k, S_{k+1}) < \sing(\nu_k, \nu_{k+1})$, then  define $R_{\mathrm A}$ equivalent to $0$ (the formal zero tableau).
	
	If $\overlap(S_k, S_{k+1}) \geqslant \sing(\nu_k, \nu_{k+1})$, define a new $\nu$-antitableau $R_{\mathrm A}'$ on the skew diagram $R:= S_k \sqcup S_{k+1}$ as follows.
	Denote $\nu_k = (\nu_{k, 1}, \cdots, \nu_{k, m_k})$, $\nu_{k+1} = (\nu_{k+1, 1}, \cdots, \nu_{k+1, m_{k+1}})$.
	\begin{enumerate}[fullwidth, itemindent=2em, label=\textbf{Case \arabic*.}]
	\item If $\nu_k> \nu_{k+1}$, then $R_{\mathrm A}' = R_{\mathrm A}$; we do nothing.
	
	\item If $\nu_k \supset \nu_{k+1}$, then define by induction on $\Delta:=\nu_{k+1, m_{k+1}} - \nu_{k, m_k}$.
	For $\Delta =0$, we do nothing.
	Now suppose $\Delta >0$. 
	\begin{enumerate}[label=(\arabic*)]
	\item Define a filling $R_{\mathrm A}(-)$ with shape $R$, $S_k$-part filled by  $\nu_k$, and 
	$S_{k+1}$-part filled by $\nu_{k+1}(-): = (\nu_{k+1, 1} -1, \cdots, \nu_{k+1, m_{k+1}} -1)$.
	\item By induction, adjust $R_{\mathrm A}(-)$ to a new antitableau $[R_{\mathrm A}(-)]'$, with a compatible partition into two skew columns.
	\item Obtain $R_{\mathrm A}'$ by adding one to entries $\nu_{k+1, 1} -1, \cdots, \nu_{k+1, m_{k+1}} -1$ in $[R_{\mathrm A}(-)]'$.
	Each of them appears twice  in $[R_{\mathrm A}(-)]'$, so the problem is to specify which one should be added. Begin by considering the unique entry $\nu_{k+1, 1}$ in $[R_{\mathrm A}(-)]'$. 
	\textbf{Since $[R_{\mathrm A}(-)]'$ is an antitableau}, 
	there is at most one box filled by $\nu_{k+1, 1} -1$ and strictly to the right of $\nu_{k+1, 1}$ in $[R_{\mathrm A}(-)]'$.  
	If such a box exists, then add one to its entry. 
	If no such box exists, then add one to the entry in the left-most box filled by $\nu_{k+1, 1} -1$ in $[R_{\mathrm A}(-)]'$.
	In either case, denote the resulting filling by $[R_{\mathrm A}(-)]'_1$;
	it is also an antitableau.
	Next, construct $[R_{\mathrm A}(-)]'_2$ by the same procedure applied to $[R_{\mathrm A}(-)]'_1$, but instead considering the entries $\nu_{k+1 ,2}$ and $\nu_{k+1, 2} -1$. 
	Continue in this way, and define $R_\anti'=[R_{\mathrm A}(-)]'_{m_{k+1}}$.
	\end{enumerate}

	\item  If $\nu_k \subset \nu_{k+1}$. then define by induction on $\Delta:=\nu_{k+1, 1} - \nu_{k, 1}$.
	For $\Delta =0$, we do nothing.
	Now suppose $\Delta >0$. 
	\begin{enumerate}[label=(\arabic*)]
	\item Define a filling $R_{\mathrm A}(+)$ with shape $R$, $S_k$-part filled by  $\nu_k(+): = (\nu_{k, 1} +1, \cdots, \nu_{k, m_k} +1)$, and $S_{k+1}$-part filled by $\nu_{k+1}$.
	\item By induction, adjust $R_{\mathrm A}(+)$ to a new antitableau $[R_{\mathrm A}(+)]'$, with a compatible partition into two skew columns.
	\item
	Consider the unique entry $\nu_{k, m_k}$ in $[R_{\mathrm A}(+)]'$. 
	\textbf{Since $[R_{\mathrm A}(+)]'$ is an antitableau}, there is at most one box filled by $\nu_{k, m_k} +1$ and strictly to the left of $\nu_{k, m_k}$ in $[R_{\mathrm A}(+)]'$.  
	If such a box exists, then substract one to its entry. 
	If no such box exists, then substract one to the entry in the right-most box filled by $\nu_{k, m_k} +1$ in $[R_{\mathrm A}(+)]'$.
	In either case, denote the resulting antitableau by $[R_{\mathrm A}(+)]'_1$.
	Next, iterate this process as in \textbf{Case 2}, and define $R_\anti '=[R_{\mathrm A}(+)]'_{m_{k}}$.
	\end{enumerate}
	\end{enumerate}
	It remains to give a new partition $R= S_k' \sqcup S_{k+1}'$ compatible with $R_{\mathrm A}'$.
	$S_{k+1}'$ is defined as follows.
	Its last box is the right most one containing $\nu_{k+1, m_{k+1}' }':= \min \{ \nu_{k, m_k}, \nu_{k+1, m_{k+1} } \}$.
	Its next to last box is the right most one containing $\nu_{k+1, m_{k+1}'}'+1$.
	Continue in this way until we reach the right most box containing $\min \{ \nu_{k, 1}, \nu_{k+1, 1}\}$.
	$S_k'$ is defned to be what remains. 
	To see its well-defineness, the readers are encouraged to check an example.
\end{din}
\begin{rmk}
	The operation in \textbf{Case 2} and \textbf{Case 3} is very similiar.
	Informally, they are related by  an automorphism of $\g$ coming from the Dykin diagram.
\end{rmk}

\begin{eg}
	Recall the $\nu$-antitableau in example \ref{tableau-data}, with $\nu = (\nu_1, \nu_2, \nu_3)$, and
	$\nu_1 = (7, 6, 5)$,
	$\nu_2 = (6, 5, 4, 3, 2)$,
	$\nu_3 = (6, 5, 4, 3, 2, 1)$.
	One may calculate by definition that
	$\overlap(S_2, S_3) = 4 < 5 = \sing(\nu_2, \nu_3),$
	so it would become $0$ under Trapa's operation.
	Consequently, for $A$-parameter $\psi$ of $\U(6, 8)$ with base change
	\[\psi_\C = (\frac z{\bar z})^{6} \boxtimes \FDR_3 \oplus 
	(\frac z{\bar z})^4 \boxtimes \FDR_5 \oplus 
	(\frac z{\bar z})^{\frac 72} \boxtimes \FDR_6:
	W_\C \times \SL(2, \C) \to \GL(14, \C),\]
	the $A_\q(\lambda)$ given by $\underline p = (2, 2, 2) \in \MR$ vanishes.
\end{eg}

	Trapa's definition is descriptive. 
	In the rest of this subsection, we provide a formula in terms of types $\{\nu_{k; i}\}$, $\{\nu_{k+1; i}\}$, and $\{\nu_{k; i}'\}$, $\{\nu_{k+1; i}'\}$.
	
\subsubsection{Case $\nu_k \supset \nu_{k+1}$}
	Denote $\Delta = \nu_{k+1, m_{k+1}} - \nu_{k, m_k}$, the same as in Definition \ref{T-operation}, 
	and define a decreasing sequence
	\[\Delta_i = \min \{ \nu_{k; j} - \nu_{k+1; j} \mid j < i\}.\]
	Of course we may assume $\overlap(S_k, S_{k+1}) = \sing(\nu_k, \nu_{k+1}) = m_{k+1}$.
	According to the inequality $\nu_{k;i} - \nu_{k; k} \geqslant \nu_{k+1; i} - \nu_{k+1; k+1}$ in Lemma \ref{overlap-geq-sing}, we know $\nu_{k+1; k} = \nu_{k+1; k+1}$ (by taking $i=k$),
	and moreover, $\Delta_i$ ends with $\Delta_{k+1} = -\Delta$. 
\begin{lem}
	In the case $\nu_k \supset \nu_{k+1}$,
\begin{subequations}\label{operation-sup}
	\begin{align}
	\nu_{k+1; i}' &= \nu_{k+1; i} + \min\{ 0, \Delta_i \}, 
	\label{eq: operation-sup-k+1}\\
	\nu_{k; i}' &= \nu_{k; i} - \min\{ 0, \Delta_{i+1} \}.
	\label{eq: operation-sup-k}
	\end{align}
\end{subequations}
	Consequently, $\nu_{k; i}' \geqslant \nu_{k+1; i} \geqslant \nu_{k+1; i}'$, and the resulted filling is an antitableau.
\end{lem}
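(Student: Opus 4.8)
The plan is to induct on $\Delta = \nu_{k+1, m_{k+1}} - \nu_{k, m_k} \geqslant 0$, tracing through the recursive structure of \textbf{Case 2} of Definition \ref{T-operation}. Recall from the discussion just above that, under the running assumption $\overlap(S_k, S_{k+1}) = \sing(\nu_k, \nu_{k+1}) = m_{k+1}$, the decreasing sequence $\Delta_i$ satisfies $\Delta_i \geqslant -\Delta$ for all $i$ and $\Delta_{k+1} = -\Delta$ (this is where Lemma \ref{overlap-geq-sing} is used). In particular, when $\Delta = 0$ every $\Delta_i \geqslant 0$, so $\min\{0,\Delta_i\} = \min\{0,\Delta_{i+1}\} = 0$ and \eqref{operation-sup} reads $\nu_{k+1;i}' = \nu_{k+1;i}$, $\nu_{k;i}' = \nu_{k;i}$ --- exactly the instruction ``we do nothing''; this is the base case.

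For the inductive step I would take $\Delta > 0$ and run one round of the recursion. Lowering all entries of $\nu_{k+1}$ by one produces $\nu_{k+1}(-)$; this keeps the shape $S_k \sqcup S_{k+1}$ (hence the overlap), while $\nu_k \supset \nu_{k+1}(-)$ and $\sing(\nu_k,\nu_{k+1}(-)) \leqslant m_{k+1}$ still hold, so the recursion stays in \textbf{Case 2} with parameter $\Delta - 1$ and the induction hypothesis applies to $R_{\mathrm A}(-)$. Writing $\widetilde\nu_{k+1;i} = \nu_{k+1;i} - 1$ and $\widetilde\Delta_i = \Delta_i + 1$, it gives $[R_{\mathrm A}(-)]'$ the types $\widetilde\nu_{k+1;i}' = \nu_{k+1;i} - 1 + \min\{0,\Delta_i + 1\}$ and $\widetilde\nu_{k;i}' = \nu_{k;i} - \min\{0,\Delta_{i+1}+1\}$. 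It then remains to show that step (3) --- restoring a $1$ to one of the two occurrences of each $\nu_{k+1,j}-1$, choosing the box strictly to the right of $\nu_{k+1,j}$ when one exists and the left-most such box otherwise --- raises the $i$-th component of the $S_{k+1}$-part by $1 + \min\{0,\Delta_i\} - \min\{0,\Delta_i+1\}$ (i.e. by $1$ if $\Delta_i \geqslant 0$, by $0$ if $\Delta_i < 0$) and the $i$-th component of the $S_k$-part by $\min\{0,\Delta_{i+1}+1\} - \min\{0,\Delta_{i+1}\}$ (i.e. by $0$ if $\Delta_{i+1} \geqslant 0$, by $1$ if $\Delta_{i+1} < 0$). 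Since $(\Delta_i)$ is decreasing, everything is governed by the threshold $i_0 = \min\{i \mid \Delta_i < 0\}$: for component index below $i_0$ the restored box of $S_{k+1}$ does have a box of $S_{k+1}$ strictly to its right (this is where the antitableau property of $[R_{\mathrm A}(-)]'$ and the shape formula of Lemma \ref{overlap} come in), whereas from $i_0$ onward the restored $1$ spills leftward into $S_k$; and the resulting partition is exactly the $S_k' \sqcup S_{k+1}'$ prescribed at the end of Definition \ref{T-operation}. Carrying out this bookkeeping is the step I expect to be the main obstacle; the rest is formal. Adding these increments to the induction-hypothesis types yields \eqref{operation-sup}.

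The asserted consequence is then immediate from \eqref{operation-sup}. One has $\nu_{k+1;i}' = \nu_{k+1;i} + \min\{0,\Delta_i\} \leqslant \nu_{k+1;i}$ trivially, and for the remaining inequality I split on the sign of $\Delta_{i+1}$: if $\Delta_{i+1}\geqslant 0$ then $\nu_{k;i}' = \nu_{k;i}$ and $\nu_{k;i} - \nu_{k+1;i} \geqslant \Delta_{i+1} \geqslant 0$ (take $j=i$ in the minimum defining $\Delta_{i+1}$), while if $\Delta_{i+1} < 0$ then $\nu_{k;i}' = \nu_{k;i} - \Delta_{i+1} \geqslant \nu_{k;i} - (\nu_{k;i} - \nu_{k+1;i}) = \nu_{k+1;i}$; in either case $\nu_{k;i}' \geqslant \nu_{k+1;i}$. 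Chaining the two bounds gives $\nu_{k;i}' \geqslant \nu_{k+1;i}'$ for all $i$, so Lemma \ref{anti-by-type}, applied to the skew diagram $R = S_k' \sqcup S_{k+1}'$, shows $R_{\mathrm A}'$ is a $\nu$-antitableau.
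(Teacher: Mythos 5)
Your inductive framework is the same as the paper's: induct on $\Delta$, handle the base case $\Delta=0$ by inspection, apply the induction hypothesis to the lowered filling $R_{\mathrm A}(-)$, and then account for the ``add one'' step (3) of Definition \ref{T-operation}. Your arithmetic for the resulting increments is correct, and your threshold $i_0 = \min\{i \mid \Delta_i < 0\}$ matches the paper's pivot index $i'$ up to the shift coming from whether one reads the sequence $\Delta_i$ at the lower or the higher value of $\Delta$ (one finds $i_0 = i'+1$ after reconciling the indexing). The final ``Consequently'' paragraph is also sound.

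However, the step you flag as ``the main obstacle'' and then set aside is precisely what the paper's proof spends most of its effort establishing, and it cannot be waved off as formal bookkeeping. What has to be proved is that Trapa's geometric rule in step (3) --- among the two boxes carrying $\nu_{k+1,j}-1$, add one to the box strictly right of $\nu_{k+1,j}$ if such a box exists, otherwise to the left-most one --- actually realizes the columnwise increments you listed, and moreover that the induced repartition $S(+)_k'\sqcup S(+)_{k+1}'$ assigns each modified box to the correct skew column so that the new types are well defined. The paper closes this by first locating, for each value, which of the two candidate boxes sits strictly to the right (this uses the known positions of $S_k'\sqcup S_{k+1}'$ and the antitableau property of $[R_{\mathrm A}(-)]'$); showing that the rightward option exists exactly until the pivot column $i'$, where by induction $\nu_{k;i'}'=\nu_{k+1;i'}'$ forces the ``add one'' to jump to the $S_k'$ side; and then verifying column by column beyond $i'$ that the boxes whose entries are raised are precisely those assigned to $S(+)_k'$. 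Without this geometric argument the identity $\nu_{k+1;i}'=\nu_{k+1;i}+\min\{0,\Delta_i\}$ and its companion for $\nu_{k;i}'$ are asserted consistently with the base case but not established, so the inductive step is not actually closed.
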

\begin{proof}
	Prove by induction on $\Delta$.
	If $\Delta = 0$, then all $\Delta_i \geqslant -\Delta =0$, and the formula \eqref{operation-sup} says $\nu_{k+1; i}' = \nu_{k+1; i}$, $\nu_{k; i}' = \nu_{k; i}$.
	This is exactly our definition.
	
	Now suppose the formula for case $\Delta \geqslant 0$ has been verified.
	Denote the resulted $\nu$-filling in this case by $R_{\mathrm A}' $, and the partition by $R = S_k' \sqcup S_{k+1}'$.
	Then we want to reach the case $\Delta(+)$, that is, to obtain the filling $R_{\mathrm A}(+)'$,
	which is resulted via process in Definition \ref{T-operation}(2), and beginning with $R = S_k \sqcup S_{k+1}$ filled by $\nu_k$ and $\nu_{k+1}(+) :=  (\nu_{k+1, 1} +1, \cdots, \nu_{k+1, m_{k+1}} +1)$.
	Under the filling $R_{\mathrm A}'$, there are exactly two boxes of $S$ filled by $\nu_{k+1, i}$, lying on different columns.
	We have to locate them and decide which should have entry added by one.
	
	Take $i'$ to be the smallest $i\leqslant k$ such that the box in $S_k'$ filling with $\nu_{k+1; i}'$ is not strictly left to the $(i+1)$th component of $S_{k+1}$. 
	Then for each $1 \leqslant i \leqslant i'$, each box in $S_k'$ with entry $\nu_{k+1; i-1}' , \cdots, \nu_{k+1; i}' +1$ is strictly left to the box in $S_{k+1}'$ with entry $\nu_{k+1; i-1}' -1, \cdots, \nu_{k+1; i}'$ respectively.
	Consequently, the box with entry $\nu_{k+1;0}'-1, \cdots, \nu_{k+1; i}',$ and to be added by one, is the one on the right; 
	it lies in $S_{k+1}'$.
	After that, the box with entry $\nu_{k+1;i'}'$ (in $S_k'$) will be not strictly left to the box in $S_{k+1}'$ with entry $\nu_{k+1;i'}' -1$.
	Hence, neither of the two boxes in $S$ with entry $\nu_{k+1;i'}' -1$ is strictly right to the box with entry $\nu_{k+1;i'}'$, 
	so the left one should have entry  $\nu_{k+1;i'}' -1$ trun into $\nu_{k+1;i'}'$; it lies in $S_k'$.
	Then the box with entry $\nu_{k+1;i'}'-1$ (now lying in $(i'+1)$ th column of $S_{k+1}')$ faces a similar situation as above.
	We may repeat the same argument, and conclude that the box with entry $\nu_{k+1; i'}'-1, \cdots, \nu_{k; k}'$ and to be added by one, is the one on the left (and lies in $S_k'$).
	
	Let's say something about the index $i'$.
	By definition, it is the smallest $i\leqslant k$ such that 
	the box in $S_k'$ filling with $\nu_{k+1; i}'$ belongs to the first $i$ components of $S_k'$.
	Then it is the first $i \geqslant 0$ such that $\nu_{k+1; i}' \geqslant \nu_{k;i}'$, 
	and equivalently
	\[\nu_{k+1; i} + \min \{0, \Delta_i\} \geqslant \nu_{k; i} - \min \{ 0, \Delta_{i+1} \}\]
	according to inductive hypothesis.
	This is exactly the $i_0$ such that $\Delta_{i_0} > 0 \geqslant \Delta_{i_0+1}$.
	Indeed, for $i \leqslant i_0-1$ we have $\nu_{k; i} - \nu_{k+1; i} \geqslant \Delta_{i+1} > 0 = \min \{0, \Delta_{i+1}\} + \min \{0, \Delta_i\}$,
	while for $i=i_0$ we know $\nu_{k; i} - \nu_{k+1; i} = \Delta_{i+1} \leqslant  \min \{0, \Delta_{i+1}\} + \min \{0, \Delta_i\}$.
	A small corollary is that, $i'= i_0$ is the first $i$ such that $\nu_{k+1; i} \geqslant \nu_{k; i}$.
	
	Based on this observation, we may describe $R_{\mathrm A}(+)'$  (compared to $R_{\mathrm A}'$) and its compatible partition $R= S(+)_k' \sqcup S(+)_{k+1}'$ as follows.
	Denote the filling on $S(+)_k'$, $S(+)_{k+1}'$ by $\{\nu(+)_{k; i}'\}$, $\{\nu(+)_{k+1; i}'\}$ respectively.
\begin{itemize}
	\item On the last $i'$ columns of $R$, which consists of the first $i'-1$ components of $S_k'$ and first $i'$ components of $S_{k+1}'$, 
	each box of $S_{k+1}'$ has entry added by one, 
	while each box of $S_k'$ has entry preserved.
	Consequently, for $i \leqslant i'$, 
	$\nu(+)_{k; i-1}' = \nu_{k; i-1}'$, and
	$\nu(+)_{k+1; i}' = \nu_{k+1; i}' +1$.
	\item On the last $(i'+1)$th column of $R$, which consists of the $i'$th component of $S_k'$ and $(i'+1)$th component of $S_{k+1}'$, 
	the entries (in $R_{\mathrm A}'$) from top to the end are $\nu_{k; i'-1}' -1, \cdots, \nu_{k; i'}'$, $\nu_{k+1; i'}' -1, \cdots, \nu_{k+1; i'+1}'$.
	Since $\Delta_{i'} > 0 \geqslant \Delta_{i' +1} = \nu_{k; i'} - \nu_{k+1; i'}$, we have $\nu_{k; i'}' = \nu_{k+1; i'} = \nu_{k+1; i'}'$ (by inductive hypothesis).
	All these entries are preserved in $R_{\mathrm A}(+)'$, but the box with entry $\nu_{k; i'}' =  \nu_{k+1; i'}'$ will be put into $S(+)_{k+1}'$,
	since it is on the right of two boxes with entry $\nu_{k+1; i'}'$;
	the left box with entry $\nu_{k+1; i'}'$ in $R_\anti(+)'$ is the left one with entry $\nu_{k+1; i'}'-1$ in $R_\anti'$, 
	and left to the entry $\nu_{k+1; i'}'$ in $R_\anti'$ which is preserved when go to $R_\anti(+)'$.
	Consequently, for $i = i'+1$, 
	$\nu(+)_{k; i-1}' = \nu_{k; i-1}' +1$, and
	$\nu(+)_{k+1; i}' = \nu_{k+1; i}'$.
	\item On the $i$th column of $R$ where $i> i'+1$, the boxes from top to someone middle have entries added by one (in $R_{\mathrm A}(+)'$), and the rest have entry preserved.
	Moreover, the former part are put into $S(+)_k'$,
	since they are the left of two boxes having the same entry (both in $R_{\mathrm A}'$ and $R_{\mathrm A}(+)'$),
	while the latter part are put into $S(+)_{k+1}'$ respectively.
	Then for $i> i'+1$ we know
	$\nu(+)_{k; i-1}' = \nu_{k; i-1}' +1$, and
	$\nu(+)_{k+1; i}' = \nu_{k+1; i}'$.
\end{itemize}
	
	Summary up, 
	\[\nu(+)_{k; i}' = 
	\left\{\begin{aligned}
	&\nu_{k;i}' = \nu_{k;i} - \min\{0, \Delta_{i+1}\},	&i< i',\\
	&\nu_{k;i}'+1 = \nu_{k;i} - \min\{-1, \Delta_{i+1}-1\},	&i \geqslant i',
	\end{aligned}\right. \]
	and
	\[\nu(+)_{k+1; i}' = 
	\left\{\begin{aligned}
	&\nu_{k+1;i}'+1 = \nu_{k+1;i} + 1+ \min\{0, \Delta_i\},	&i\leqslant i',\\
	&\nu_{k+1;i}' = \nu_{k+1;i} + 1+\min\{-1, \Delta_{i+1}-1\},	&i > i'.
	\end{aligned}\right.\]
	One may chek
	\[\begin{aligned}
	\nu(+)_{k; i}' &= \nu_{k; i} - \min\{ 0, \Delta(+)_{i+1} \},\\
	\nu(+)_{k+1; i}' &= (\nu_{k+1; i} + 1)+ \min\{ 0, \Delta(+)_i \}
	\end{aligned}\]
	easily by property $\Delta_{i'} > 0 \geqslant \Delta_{i'+1}$, and $\Delta(+)_{i} = \Delta_i -1$.
	We obtained the case $\Delta(+)$ from case $\Delta$.
\end{proof}

\subsubsection{Case $\nu_k \subset \nu_{k+1}$}
	Denote $\Delta = \nu_{k+1, 1} - \nu_{k, 1}$, and define an increasing sequence
	\[\Delta_i = \min \{ \nu_{k; j} - \nu_{k+1; j} \mid j \geqslant i\}.\]
	Of course we may assume $\overlap(S_k, S_{k+1}) = \sing(\nu_k, \nu_{k+1}) = m_k$.
	According to the inequality $\nu_{k;i} - \nu_{k; 0} \geqslant \nu_{k+1; i} - \nu_{k+1; 0}$ in Lemma \ref{overlap-geq-sing}, we know $\Delta_i$ starts with $\Delta_0 = \Delta$. 
\begin{lem}
	In the case $\nu_k \subset \nu_{k+1}$,
\begin{subequations}\label{operation-sub}
	\begin{align}
	\nu_{k+1; i}' &= \nu_{k+1; i} + \min\{ 0, \Delta_i \}, 
	\label{eq: operation-sub-k+1}\\
	\nu_{k; i}' &= \nu_{k; i} - \min\{ 0, \Delta_{i+1} \}.
	\label{eq: operation-sub-k}
	\end{align}
\end{subequations}
	Consequently, $\nu_{k; i}' \geqslant \nu_{k;i} \geqslant \nu_{k+1; i}'$, and the resulted filling is an antitableau.
\end{lem}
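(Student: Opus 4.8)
The plan is to establish formula \eqref{operation-sub} by exactly the argument used for the case $\nu_k \supset \nu_{k+1}$, exploiting the symmetry between \textbf{Case 2} and \textbf{Case 3} of Definition \ref{T-operation} recorded in the remark there (the two operations are interchanged by the transpose-and-negate involution on $\nu$-quasitableaux coming from the Dynkin diagram of $\gl$). Concretely, I would induct on $\Delta = \nu_{k+1,1} - \nu_{k,1}$, which is the parameter on which \textbf{Case 3} recurses. For the base case $\Delta = 0$, Definition \ref{T-operation} prescribes ``do nothing'', so $\nu_{k;i}' = \nu_{k;i}$ and $\nu_{k+1;i}' = \nu_{k+1;i}$; on the other hand the standing assumption $\overlap(S_k,S_{k+1}) = \sing(\nu_k,\nu_{k+1}) = m_k$ together with Lemma \ref{overlap-geq-sing}(2) gives $\nu_{k;j} - \nu_{k+1;j} \geq \Delta_0$ for all $j$, so that when $\Delta = 0$ every $\min\{0,\Delta_i\}$ vanishes and the right-hand sides of \eqref{eq: operation-sub-k+1} and \eqref{eq: operation-sub-k} collapse to $\nu_{k+1;i}$ and $\nu_{k;i}$ respectively, matching.

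For the inductive step I would pass from the case $\Delta$ to the case obtained by shifting $\nu_k$ down by one (which enlarges $\Delta$ by one and replaces $\Delta_i$ by $\Delta_i - 1$), mirroring the passage from $\Delta$ to $\Delta(+)$ in the $\supset$-proof. Writing $R_\anti'$ and $R = S_k' \sqcup S_{k+1}'$ for the antitableau and partition produced in the case $\Delta$, the recipe of \textbf{Case 3}(3) subtracts one from a prescribed copy of each of the values $\nu_{k,1}, \dots, \nu_{k,m_k}$ appearing in $R_\anti'$; the work is to locate, for each such value, the two boxes of $S$ carrying it and to decide which copy is lowered. As in the $\supset$-case this is controlled by a crossover index $i'$ --- the first $i$ at which the box of $S_{k+1}'$ filled with $\nu_{k;i}'$ ceases to lie strictly to the right of the relevant component of $S_k'$ --- which one checks equals the unique $i$ with $\Delta_{i} \geq 0 > \Delta_{i+1}$ (up to the obvious sign bookkeeping); a column-by-column inspection of $R_\anti'$ then shows that above $i'$ the modified box lies in $S_{k+1}'$ and below it in $S_k'$, and reading off the new types yields \eqref{operation-sub} for the enlarged $\Delta$.

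Finally, since each $\min\{0,\cdot\}$ is $\leq 0$, \eqref{operation-sub} gives at once $\nu_{k;i}' \geq \nu_{k;i}$ and $\nu_{k+1;i}' \leq \nu_{k+1;i}$, and the remaining inequality $\nu_{k;i} \geq \nu_{k+1;i}'$ follows by splitting on the sign of $\Delta_i$ (when $\Delta_i \geq 0$ all $\nu_{k;j} - \nu_{k+1;j}$ with $j \geq i$ are $\geq 0$; when $\Delta_i < 0$ one uses $\Delta_i \leq \nu_{k;i} - \nu_{k+1;i}$); hence $\nu_{k;i}' \geq \nu_{k;i} \geq \nu_{k+1;i}'$, and since the fillings on $S_k'$ and $S_{k+1}'$ are still segments, Lemma \ref{anti-by-type} certifies that $R_\anti'$ is a $\nu$-antitableau. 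The hard part is the combinatorial bookkeeping of the inductive step --- pinning down $i'$ and verifying which copy of each repeated entry is adjusted --- not any conceptual difficulty; a cleaner alternative, which I would also explore, is to make the transpose-and-negate involution on $\nu$-quasitableaux fully explicit, check that it intertwines the \textbf{Case 2} and \textbf{Case 3} operations and carries the sequence $\{\Delta_i\}$ of one case to that of the other, and then simply invoke the already-proved lemma for $\nu_k \supset \nu_{k+1}$.
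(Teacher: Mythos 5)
Your proposal matches the paper's (omitted) argument: the paper dismisses this case with ``The proof is similar to \eqref{operation-sup}, so we omit it,'' and your plan is precisely to mirror the $\nu_k\supset\nu_{k+1}$ induction on $\Delta$, with the base case $\Delta=0$ handled by Lemma \ref{overlap-geq-sing}(2) forcing every $\Delta_i\geqslant 0$ and the inductive step governed by a crossover index; your concluding sandwich $\nu_{k;i}'\geqslant\nu_{k;i}\geqslant\nu_{k+1;i}'$ and appeal to Lemma \ref{anti-by-type} is also exactly right. The only slip is in the stated crossover condition: since here $\Delta_i=\min\{\nu_{k;j}-\nu_{k+1;j}\mid j\geqslant i\}$ is \emph{increasing} in $i$ (from $\Delta_0=-\Delta$ up), the threshold index should satisfy $\Delta_{i'}<0\leqslant\Delta_{i'+1}$ rather than $\Delta_{i'}\geqslant 0>\Delta_{i'+1}$ --- exactly the ``obvious sign bookkeeping'' you flagged.
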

	The proof is similiar to \eqref{operation-sup}, so we omit it.

\subsection{Remark on Trapa's algorithm}\label{subtle}
	Trapa described the algorithm to find distinguished antitableau after Theorem 7.9 in \cite{Trapa}. 
	But there is something subtle starting from his lemma 7.7.
	Let's explore it.
	
\begin{din}
	Let $(\nu, \mu)$ be an admissible arrangement of two segements.
	Define $(\nu', \mu')$ by
	\[\begin{aligned}
	&\bg(\nu') = \max\{\bg(\nu), \bg(\mu)\},
	&\ed(\nu') = \max\{ \ed(\nu), \ed(\mu)\},\\
	&\bg(\mu')= \min\{\bg(\nu), \bg(\mu)\},
	&\ed(\mu')= \min\{\ed(\nu), \ed(\mu)\},
	\end{aligned}\]
	and call it obtained from $(\nu, \mu)$  by an \textbf{elementary operation}.
\end{din}
\begin{rmk}	
	Let $R= S_k \sqcup S_{k+1}$ filled by $\nu_k, \nu_{k+1}$. 
	Apply Trapa's operation and obtain $R = S_k' \sqcup S_{k+1}'$ filled by $\nu_k', \nu_{k+1}'$.
	Then $(\nu_k', \nu_{k+1}')$ is obtained from $(\nu_k, \nu_{k+1})$ by an elementary operation.
	Of course $\nu_k' \geqslant \nu_{k+1}'$.
\end{rmk}

	On the level of arranged segments, lemma 7.7 in \cite{Trapa} argues that if $\nu= (\nu_1, \cdots, \nu_r)$ is in the ``mediocre range'' (our terminology is ``admissible''), and apply elmentary operation to some $(\nu_k, \nu_{k+1})$ in containment relation, then the resulting $\nu' = (\nu_1, \cdots, \nu_k', \nu_{k+1}', \cdots, \nu_r)$ is also ``mediocre''.
	However, there are counterexamples to it.
	
\begin{eg}
	Consider $\nu= (\nu_1, \nu_2, \nu_3)$ with $\nu_2 \supset \nu_1 \supset \nu_3$.
	If we apply elementary operation to $(\nu_2, \nu_3)$ and obtain $(\nu_2', \nu_3')$, then $(\nu_1, \nu_2', \nu_3')$ is no longer admissible, because $\nu_2'$ precedes $\nu_1$.
\end{eg}

\begin{eg}
	One might guess that by applying elementary operations to properly chosen adjacent segments, 
	we could avoid exceeding the ``mediocre range'', and eventually reach the `` nice range''.
	However, here is a counterexample to this assumption.
	Consider $\nu = (\nu_1, \nu_2, \nu_3, \nu_4, \nu_5)$ with $\nu_4 \supset \nu_2 \supset \nu_5 \supset \nu_3 \supset \nu_1$. 
	We would go outside mediocre range either apply elementary operation to $(\nu_1, \nu_2)$, $(\nu_2, \nu_3)$, $(\nu_3, \nu_4)$, or $(\nu_4, \nu_5)$.
	
	We illustrate this example as follows, where the number attached to a segment indicates its order in the admissible arrangement $\nu$.
	\[\begin{tikzpicture}[scale=0.5]
	\begin{scope}
	\draw [-stealth](6.5,0)--(-5.5,0);
	\foreach \i in {1, 2, 3, 4, 5}{
		\draw (\i, 0) arc (0:180: \i -0.5);
		\filldraw (\i,0) circle (.1);
		\draw[fill=white] (-\i+1,0) circle (.1);
	}
	\node at (1,-0.5) {1};
	\node at (2,-0.5) {3};
	\node at (3,-0.5) {5};
	\node at (4,-0.5) {2};
	\node at (5,-0.5) {4};
	\end{scope}.
	\end{tikzpicture}\]
\end{eg}

	A solution to this problem comes from the observation that $\nu^\sigma : = (\nu_1, \nu_3, \nu_2, \nu_4, \nu_5)$ and $\nu$ turn into the same arranged segments by an elementary operation to $\nu_2, \nu_3$.
	They should be equivalent under the relation generated by elementary operations (and their inverse).
	The adavantage of $\nu^\sigma$ is that iterated elementary operations to it can stay in mediocre range, and end in nice range.
\usetikzlibrary {arrows.meta}
	\[\begin{tikzpicture}[scale=0.4]
	\begin{scope}
	\draw [-stealth](6.5,0)--(-5.5,0);
	\draw [-stealth](6.5,0)--(-5.5,0);
	\draw (1,0) arc (0:180:0.5);
	\draw (2,0) arc (0:180:1.5);
	\draw (3,0) arc (0:180:2.5);
	\draw (4,0) arc (0:180:3.5);
	\draw (5,0) arc (0:180:4.5);
	\foreach \i in {1, 2, 3, 4, 5}{
		\filldraw (\i,0) circle (.1);
		\filldraw[fill=white] (-\i+1,0) circle (.1);
	}
	\node at (1,-0.5) {1};
	\node at (2,-0.5) {3};
	\node at (3,-0.5) {5};
	\node at (4,-0.5) {2};
	\node at (5,-0.5) {4};
	\node at (-5, 2) {$\nu$};
	\end{scope}
	\draw [line width=0.5pt, double distance=1pt, arrows = {-Latex[length=6pt]} ] (6.5,2) -- (8,2);
	\node at(7.25, 3) {(2, 3)};
	\begin{scope}[xshift=400]
	\draw [-stealth](6.5,0)--(-5.5,0);
	\draw (1,0) arc (0:180:0.5);
	\draw [red](2,0) arc (0:180:2.5);
	\draw (3,0) arc (0:180:2.5);
	\draw [red](4,0) arc (0:180:2.5);
	\draw (5,0) arc (0:180:4.5);
	\foreach \i in {1, 2, 3, 4, 5}{
		\filldraw (\i,0) circle (.1);
		\filldraw[fill=white] (-\i+1,0) circle (.1);
	}
	\node at (1,-0.5) {1};
	\node at (2,-0.5) {2};
	\node at (3,-0.5) {5};
	\node at (4,-0.5) {3};
	\node at (5,-0.5) {4};
	\end{scope}
	\draw [line width=0.5pt, double distance=1pt, arrows = {-Latex[length=6pt]} ] (14.5,-3) -- (14.5,-1.5);
	\node at (16, -2.25) {(2, 3)};
	\begin{scope}[xshift=400, yshift=-250]
	\draw [-stealth](6.5,0)--(-5.5,0);
	\draw (1,0) arc (0:180:0.5);
	\draw (2,0) arc (0:180:1.5);
	\draw (3,0) arc (0:180:2.5);
	\draw (4,0) arc (0:180:3.5);
	\draw (5,0) arc (0:180:4.5);
	\foreach \i in {1, 2, 3, 4, 5}{
		\filldraw (\i,0) circle (.1);
		\filldraw[fill=white] (-\i+1,0) circle (.1);
	}
	\node at (1,-0.5) {1};
	\node at (2,-0.5) {2};
	\node at (3,-0.5) {5};
	\node at (4,-0.5) {3};
	\node at (5,-0.5) {4};
	\node at (6, 2) {$\nu^\sigma$};
	\end{scope}
	\draw [line width=0.5pt, double distance=1pt, arrows = {-Latex[length=6pt]} ] (8,-7) -- (6.5,-7);
	\node at (7.25, -6) {(3, 4)};
	\begin{scope}[yshift=-250]
	\draw [-stealth](6.5,0)--(-5.5,0);
	\draw (1,0) arc (0:180:0.5);
	\draw (2,0) arc (0:180:1.5);
	\draw (3,0) arc (0:180:2.5);
	\draw [red](4,0) arc (0:180:4);
	\draw [red](5,0) arc (0:180:4);
	\foreach \i in {1, 2, 3, 4, 5}{
		\filldraw (\i,0) circle (.1);
		\filldraw[fill=white] (-\i+1,0) circle (.1);
	}
	\node at (1,-0.5) {1};
	\node at (2,-0.5) {2};
	\node at (3,-0.5) {5};
	\node at (4,-0.5) {3};
	\node at (5,-0.5) {4};
	\end{scope}
	\draw [line width=0.5pt, double distance=1pt, arrows = {-Latex[length=6pt]} ] (0,-10.5) -- (0,-12);
	\node at (1.5, -11.25) {(4,5)};
	\begin{scope}[yshift=-500]
	\draw [-stealth](6.5,0)--(-5.5,0);
	\draw (1,0) arc (0:180:0.5);
	\draw (2,0) arc (0:180:1.5);
	\draw [red](3,0) arc (0:180:3);
	\draw (4,0) arc (0:180:4);
	\draw [red](5,0) arc (0:180:3.5);
	\foreach \i in {1, 2, 3, 4, 5}{
		\filldraw (\i,0) circle (.1);
		\filldraw[fill=white] (-\i+1,0) circle (.1);
	}
	\node at (1,-0.5) {1};
	\node at (2,-0.5) {2};
	\node at (3,-0.5) {4};
	\node at (4,-0.5) {3};
	\node at (5,-0.5) {5};
	\end{scope}
	\draw [line width=0.5pt, double distance=1pt, arrows = {-Latex[length=6pt]} ] (6.5,-16) -- (8,-16);
	\node at (7.25, -15) {(3, 4)};
	\begin{scope}[xshift=400, yshift=-500]
	\draw [-stealth](6.5,0)--(-5.5,0);
	\draw (1,0) arc (0:180:0.5);
	\draw (2,0) arc (0:180:1.5);
	\draw [red](3,0) arc (0:180:3.5);
	\draw [red](4,0) arc (0:180:3.5);
	\draw (5,0) arc (0:180:3.5);
	\foreach \i in {1, 2, 3, 4, 5}{
		\filldraw (\i,0) circle (.1);
		\filldraw[fill=white] (-\i+1,0) circle (.1);
	}
	\node at (1,-0.5) {1};
	\node at (2,-0.5) {2};
	\node at (3,-0.5) {3};
	\node at (4,-0.5) {4};
	\node at (5,-0.5) {5};
	\end{scope}
	\draw [line width=0.5pt, double distance=1pt, arrows = {-Latex[length=6pt]} ] (21,-16) -- (22.5,-16);
	\node at (25, -16) {go ahead};
	\end{tikzpicture}\]

	Now let's go back to the level of tableaus.
\begin{prop}
	If $\underline p \in \mathcal D(\psi)$ and $\underline p^\sigma \in \mathcal D(\psi^\sigma)$ are related (by $\phi^\sigma: \MR \to \MR^\sigma$), then  they give rise to equivalent $\nu$-quasitableaus.
\end{prop}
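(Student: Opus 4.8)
The plan is to reduce to a single transposition and then compare, locally, the two skew-column constructions of Subsection~\ref{construct-tableau}. By Lemma~\ref{decompose-of-perm} I pick a sequence $\Id=\sigma_0,\dots,\sigma_l=\sigma$ in $\Sigma_r$ with each $\sigma_{i-1}^{-1}\sigma_i$ a transposition, so that $\nu^{\sigma_i}$ arises from $\nu^{\sigma_{i-1}}$ by swapping two adjacent (hence containment-related) segments and $\phi^\sigma=\phi_{\sigma_{l-1}}^{\sigma_l}\circ\cdots\circ\phi_{\sigma_0}^{\sigma_1}$. Since ``give rise to equivalent $\nu$-quasitableaux'' is an equivalence relation, it suffices to treat one step, i.e.\ to take $\sigma=s=(k,k+1)\in\Sigma_r$ with $\nu_k,\nu_{k+1}$ in containment relation and $\underline p^s=\phi^s(\underline p)$ given by \eqref{transition}.

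Next I would exploit that the construction is local. When one builds the signed tableau together with its compatible partition $S=\bigsqcup_j S_j$ and the $\nu$-filling, the skew columns $S_1,\dots,S_{k-1}$, together with their shapes, sign-fillings and filled segments, depend only on $p_1,\dots,p_{k-1}$ and on $\nu_1,\dots,\nu_{k-1}$, none of which is touched by $\phi^s$; so $S^{k-1}$ with all of its decorations is literally identical in the two constructions. It is therefore enough to prove that $S^{k+1}$, viewed as $S^{k-1}$ with $S_k,S_{k+1}$ appended using $(p_k,p_{k+1})$ and the segment pair $(\nu_k,\nu_{k+1})$, coincides --- as a signed tableau carrying a $\nu$-filling, up to interchanging rows of equal length --- with the analogous diagram obtained by appending $S'_k,S'_{k+1}$ using $(p'_k,p'_{k+1})=\phi^s(p_k,p_{k+1})$ and the segment pair $(\nu_{k+1},\nu_k)$. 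Indeed, once $S^{k+1}$ with its sign-filling agrees in the two cases, the skew columns $S_{k+2},\dots,S_r$ depend only on the unchanged $p_{k+2},\dots,p_r$ and on the common $S^{k+1}$, so they are appended identically and carry the unchanged segments $\nu_{k+2},\dots,\nu_r$.

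The core is then this local comparison. The cases $\nu_k\supset\nu_{k+1}$ and $\nu_k\subset\nu_{k+1}$ are mirror images of one another, in the sense of the remark following Definition~\ref{T-operation} (the Dynkin-diagram flip), so it is enough to handle $\nu_k\supset\nu_{k+1}$; there \eqref{transition} reads $(p_k,p_{k+1})\mapsto(q_{k+1},\,p_k+p_{k+1}-q_{k+1})$, so that $S'_k$ has length $m_{k+1}$ and carries $q_{k+1}$ plus-boxes and $p_{k+1}$ minus-boxes. I would unwind Trapa's appending recipe on both sides: appending the length-$m_k$ column carrying $(p_k,q_k)$ and then the length-$m_{k+1}$ column carrying $(p_{k+1},q_{k+1})$ to $S^{k-1}$ produces a Young diagram whose shape and sign-filling are pinned down by how many row-ends of each sign receive a box at each of the two stages; carrying out the same bookkeeping with the length-$m_{k+1}$ column $(q_{k+1},p_{k+1})$ appended first and the length-$m_k$ column second, one checks that the two orderings yield the same shape and, after a permutation of equal-length rows, the same sign-filling. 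Since $\nu_k,\nu_{k+1}$ are in containment relation and the $\nu$-filling merely places each raw segment into its skew column, the resulting $\nu$-fillings match as well. This is essentially \cite[Lemma~9.3]{Trapa} read on the level of tableaux; alternatively, after normalizing the two skew columns to the order ``$\geqslant$'' one may match the types of the skew columns and of their $\nu$-fillings by means of Lemma~\ref{overlap} and the operation formulas \eqref{operation-sup} and \eqref{operation-sub}.

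I expect the main obstacle to be the bookkeeping in this last step. The signed-tableau construction of Subsection~\ref{construct-tableau} is only specified ``appropriately'': one reorders equal-length rows of the current diagram within its equivalence class before each appending so that the result stays a Young diagram, and one must check that suitable choices make the two constructions produce the same filled diagram, or, better, that the equivalence classes coincide for every admissible choice. Concretely this amounts to tracking, at each of the two appending stages, exactly which row-ends receive a $+$-box and which receive a $-$-box, i.e.\ the interplay between the signature $(p,q)$ and the overlap of the shapes of $\nu_k$ and $\nu_{k+1}$ with $S^{k-1}$; treating $S^{k-1}$ as a fixed but arbitrary background (equivalently, reducing to $r=2$) turns this into a finite verification, and that is where the combinatorial labour lies.
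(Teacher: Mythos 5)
Your outer scaffolding --- reduce to a single transposition via Lemma~\ref{decompose-of-perm}, observe that $S^{k-1}$ and $S_{k+2},\dots,S_r$ are constructed identically on both sides, localize to $S_k\sqcup S_{k+1}$, and treat $S^{k-1}$ as a fixed background --- is exactly the paper's structure in Section~\ref{swap-tableau}, and your ``bookkeeping'' for the sign-filled shape is the content of Lemma~\ref{swap-for-sign-fill} (carried out there with the counters $P_i,Q_i$ of plus/minus boxes in the extreme column of $S^{k-1}$).

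However, the sentence ``the $\nu$-filling merely places each raw segment into its skew column, [so] the resulting $\nu$-fillings match as well'' is a genuine gap, not a wording issue. Even after the shapes $S_k\sqcup S_{k+1}$ and $S_k^\sigma\sqcup S_{k+1}^\sigma$ are shown to coincide, the two constructions lay $\nu_k$ and $\nu_{k+1}$ into the two skew columns in \emph{opposite} orders, so the raw $\nu$-quasitableaux are different filled diagrams. What the proposition asserts is only that they are \emph{equivalent} under Trapa's relation, i.e.\ that applying Trapa's operation (Definition~\ref{T-operation}) to each produces the same antitableau with the same refined partition. Establishing that is precisely Lemma~\ref{swap-for-nu-fill}, whose verification occupies the bulk of Section~\ref{swap-tableau}: the case split on whether $q_{k+1}\geqslant P_{k,i^+}$ and the monotonicity analysis of $L_{k+1,i}-L_{k,i}$ in subsections~\ref{subsec: easy}--\ref{subsec: hard}, involving the four auxiliary indices $i^-\leqslant i''<i'\leqslant i^+$. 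Your ``alternative'' route --- matching types after Trapa's operation via \eqref{operation-sup}, \eqref{operation-sub} and Lemma~\ref{overlap} --- is in fact the only correct route and is what the paper does, but it is not a quick finite check; you would need to carry out essentially the same calculation, and the appeal to the containment relation alone does not discharge it.
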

	This Proposition will be proved in the next section.
	Then the algorithm on the $A_\q(\lambda)$ parametrized by $\underline p \in \MR$ may be described as follows:
	\begin{enumerate}[fullwidth, itemindent=2em, label=\textbf{Step \arabic*.}]
	\item after replacing $\underline p \in \MR$ by  $\underline p^\sigma$, we may assume the arrangement $\nu = (\nu_1, \cdots, \nu_r)$ is ``appropriate'', 
	that $\forall i<j$, either $\nu_i > \nu_j$, or $\nu_i \subset \nu_j$;
	\item then apply Trapa's operation iterately to the $\nu$-quasitableau $S_\anti$ arsing from  $\underline p$, and we would end up with either a $\nu$-antitableau $S_\anti'$ or $0$.
\end{enumerate}
\begin{rmk}
	By an ``appropriate'' arrangement we mean \textbf{Step 2} is appliable to it.
	Thus, the condition ``$\forall i<j$, either $\nu_i > \nu_j$, or $\nu_i \subset \nu_j$'' is not necessary for ``appropriate''.
	For example, we can replace $\nu_i \subset \nu_j$ by $\nu_i \supset \nu_j$.
	We will also see other examples of  ``appropriate'' condition in section \ref{sufficiency}.
\end{rmk}


\section{Transition on the tableaus}\label{swap-tableau}
	In this section we will verify the validity of $\phi_\sigma^\tau$:
	if $\underline p \in \MR$ parametrize a non-zero representation $A_\q(\lambda)$, then 
	$\underline p^\sigma \in \MR^\sigma$ gives rise to the same cohomologically induced module.
	We have present a representation-theoretic approach in subsubsection \ref{easy-approach},
	and the following is a combinatorial approach, that shows
	$A_{\q_{\underline p^\sigma}}(\lambda_{\underline p^\sigma})$ and
	$A_{\q_{\underline p^\tau}}(\lambda_{\underline p^\tau})$ have the same primitive ideal ($\nu$-antitableau) and associated variety (signed Young tableau).


	 It suffices to deal with the basic case: $\tau= \Id$,
	$\sigma= (k, k+1)$ is a transposition, $\underline p$ satisfies $\mathrm B(k), \mathrm B(k+1)$ and $\mathrm C(k, k+1)$
	(and hence lies in $\mathrm C^\sigma(k, k+1)$; see Lemma \ref{B-independent}).
	The argument in subsubsection \ref{easy-approach} is concise, but 
	the combinatorial proof up to subsection \ref{subsec: hard}, especially Lemma \ref{swap-for-nu-fill}, is crucial to our main theorem;
	therefore, we will present it in detail.
	
	Denote the Young diagram partition arising from $\underline p^\sigma =(p_1^\sigma, \cdots, p_r^\sigma)$ by $S^\sigma=  \bigsqcup_{i=1}^r S_i^\sigma$. 
	Since $\underline p^\sigma$ has the same first $(k-1)$ components with $\underline p$, $S^{\sigma, k-1}:=\bigsqcup_{i=1}^{k-1} S_i^\sigma$ has the same type with $S^{k-1}:=\bigsqcup_{i=1}^{k-1} S_i$.
	Moreover,  the signed Young tableaus on $S^{\sigma, k-1}$ and  $S^{k-1}$ are equivalent, and the $\nu$-antitableaus on them  coincide. 
	The key is the following proposition:
\begin{prop}\label{swap}
	$S_k^\sigma \sqcup S_{k+1}^\sigma $ has the same type with $ S_k\sqcup S_{k+1}$. Moreover, the signed Young tableaus and the $\nu$-quasitableaus on them are also equivalent.
\end{prop}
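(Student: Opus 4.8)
The plan is to reduce Proposition \ref{swap} to a purely local, two-segment statement, and then to verify that statement by analysing the two possible containment relations between $\nu_k^\sigma$ and $\nu_{k+1}^\sigma$ separately. First I would observe that both $\underline p^\sigma$ and $\underline p^\tau=\underline p$ agree in their first $k-1$ and last $r-k-1$ components, and that by the definition of the transition map $\phi_\sigma^{\sigma s}$ in \eqref{transition} we have $p_k^\sigma+p_{k+1}^\sigma=p_k+p_{k+1}$ and $q_k^\sigma+q_{k+1}^\sigma=q_k+q_{k+1}$; consequently $S^{\sigma,k-1}=S^{k-1}$ and $S^{\sigma,k+1}=S^{k+1}$ have the same shape, so it suffices to compare the \emph{skew diagram} $R^\sigma:=S_k^\sigma\sqcup S_{k+1}^\sigma$ with $R:=S_k\sqcup S_{k+1}$ relative to the common sub-Young-diagram $S^{k-1}$. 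Thus the whole claim is local: I must show that when we append $p_k^\sigma,q_k^\sigma$ and then $p_{k+1}^\sigma,q_{k+1}^\sigma$ to $S^{k-1}$ by the row-append rule of subsection \ref{construct-tableau}, the resulting two-skew-column configuration, its signed filling, and its $\nu$-filling are equivalent to those obtained from $p_k,q_k,p_{k+1},q_{k+1}$.

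The key computation I would carry out is to compute the \emph{type} of $R$ and $R^\sigma$ in the sense of Lemma \ref{anti-by-type} and the overlap formula of Lemma \ref{overlap}. Assume, as the excerpt does, $\nu_k^\sigma\supset\nu_{k+1}^\sigma$ (the case $\subset$ is symmetric, handled by the Dynkin-diagram automorphism remark after Definition \ref{T-operation}, or by a verbatim dual argument). Write $l_k^\sigma=\min\{p_k^\sigma,q_k^\sigma\}$, etc. The type of a skew column is determined by $S^{k-1}$ together with the numbers $L_{k,i}$ of boxes in its first $i$ components; because the append rule always puts new boxes on distinct row-ends, $L_{k,i}$ is a function of $m_{\sigma(k)}=m_k$ and of the shape of $S^{k-1}$ alone, \emph{not} of $p_k^\sigma$. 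So $S_k^\sigma$ and $S_k$ have the same type purely because $m_{\sigma(k)}=m_k$ may differ — wait, here $\sigma=(k,k+1)$ so $m_{\sigma(k)}=m_{k+1}\neq m_k$ in general; hence the more careful statement I actually need is that $S_k^\sigma\sqcup S_{k+1}^\sigma$ has the same type as $S_k\sqcup S_{k+1}$, i.e. the \emph{union} of the two skew columns, as a skew diagram over $S^{k-1}$, has shape independent of the order in which the two segments (of lengths $m_k,m_{k+1}$) are appended. This is a statement about left-justified Young diagrams: appending $m_k$ boxes one-per-row-end and then $m_{k+1}$ boxes one-per-row-end yields the same overall shape as doing it in the opposite order, which follows from the fact that at each stage the ``column heights'' of the growing diagram are just being incremented greedily. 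I would prove this by an explicit description of the column-height vector of $S^{k+1}$ in terms of that of $S^{k-1}$, $m_k$ and $m_{k+1}$, symmetric in $m_k\leftrightarrow m_{k+1}$.

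Next I would treat the signed filling. The sign-fill rule also appends one box per row-end with alternating signs, and the claim is that the resulting signed tableau on $S_k^\sigma\sqcup S_{k+1}^\sigma$ is \emph{equivalent} (up to interchanging boxes within the same component of a skew column, equivalently interchanging rows of equal length in $S^{k+1}$) to the one from $S_k\sqcup S_{k+1}$. The point is that the signed tableau on $S^{k+1}$ depends only on $S^{k-1}$'s signed tableau and on the pair of \emph{signatures} added, namely $(p_k,q_k)$ then $(p_{k+1},q_{k+1})$ versus $(q_{k+1},p_{k+1})$ then $(p_k^\sigma,q_k^\sigma)$ with $p_k^\sigma=q_{k+1}$, $p_{k+1}^\sigma=p_k+p_{k+1}-q_{k+1}$ from \eqref{transition}; one checks the total signature of $S^{k+1}$ is $(p_k+p_{k+1},q_k+q_{k+1})$ either way, and the alternation constraint together with the shape already computed pins down the signed tableau up to equivalence. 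The cleanest route is to invoke the characterization that a signed tableau is determined by its shape and the row-end signs on each prefix Young diagram, combined with the compatibility condition from subsection on signed tableaus, and verify these prefix data agree. Finally, for the $\nu$-filling: segment $\nu_{\sigma(k)}=\nu_{k+1}$ is placed into $S_k^\sigma$ and $\nu_{\sigma(k+1)}=\nu_k$ into $S_{k+1}^\sigma$, and I must match this, box for box, with $\nu_k$ in $S_k$ and $\nu_{k+1}$ in $S_{k+1}$, up to the equivalence relation on $\nu$-quasitableaus. This is where condition $\mathrm C(k,k+1)$ enters: it is exactly $\overlap\geqslant\sing$ which by Lemma \ref{overlap-geq-sing} forces the type inequalities guaranteeing that the two fillings land in the same equivalence class. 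I expect this last matching — reconciling the $\nu$-filling of $\nu_{k+1}$ into the ``$k$-th slot'' versus $\nu_k$ into the ``$k$-th slot'' and showing they agree up to row-swaps — to be the main obstacle, since it requires tracking which entry sits in which box after the greedy append, and this is precisely the content that gets reused in Lemma \ref{swap-for-nu-fill}; I would organize it by computing the type $\nu_{k;i}^\sigma$, $\nu_{k+1;i}^\sigma$ of the $\nu$-filling from the formulas for $p_k^\sigma,p_{k+1}^\sigma$ and comparing with $\nu_{k;i},\nu_{k+1;i}$, using $\min\{p_k,q_{k+1}\}+\min\{q_k,p_{k+1}\}\geqslant\#(\nu_k\cap\nu_{k+1})$ to control the overlap, and then appealing to the fact that a $\nu$-quasitableau modulo equal-length row swaps is determined by these types.
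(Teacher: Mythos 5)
Your overall framework is right — reduce to a local comparison of the skew diagram $S_k\sqcup S_{k+1}$ over the common $S^{k-1}$, and compare via types — and it matches the paper's strategy in section 6. But two of your intermediate steps are wrong as stated. First, the ``greedy increment'' argument for shape equality does not work: the set of rows a new box can be appended to is constrained by the row-end signs inherited from $S^{k-1}$, so the shape of $S^{k+1}$ depends on the specific splits $(p_k,q_k)$ and $(p_{k+1},q_{k+1})$, not only on $m_k+m_{k+1}$; in fact the shape equality is \emph{not} an unconditional combinatorial fact but uses both $\underline p\in\mathrm C(k,k+1)$ (which gives $q_k\leqslant p_{k+1}$ in the $\nu_k\subset\nu_{k+1}$ case) and $\underline p^\sigma\in\mathrm C^\sigma(k,k+1)$ (which gives $p_{k+1}^\sigma\leqslant q_k^\sigma$), as seen in the $\min$--juggling of Lemma \ref{swap-for-sign-fill}. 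Your fallback ``prefix data'' characterization is the right invariant, but verifying the prefix data agree \emph{is} that lemma, which you defer.

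Second, and more seriously, you conflate the two equivalence relations. Equal-length row swaps characterize \emph{signed tableau} equivalence, but the $\nu$-quasitableaus arising from $\underline p$ and $\underline p^\sigma$ are not related by row swaps at all — $S_k$ has size $m_k$ and is filled with $\nu_k$, while $S_k^\sigma$ has size $m_{k+1}$ and is filled with $\nu_{k+1}$, so the underlying skew-column partitions differ. The equivalence you must establish is Trapa's relation (generated by Procedure 7.5), and the only way to check it is to apply Trapa's operation to each of the two quasitableaus on $S_k\sqcup S_{k+1}$ and verify that the \emph{output} antitableaus (with types $\nu_{k;i}'$, $\nu_{k+1;i}'$) coincide; Lemma \ref{overlap-geq-sing} only guarantees neither output is $0$. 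The computation reducing this to the closed-form identities \eqref{fulfill-1}–\eqref{fulfill-2}, and their verification (which requires the indices $i^-\leqslant i''<i'\leqslant i^+$ and a casewise analysis on whether $q_{k+1}\geqslant P_{k,i^+}$), is the substantive core of the paper's proof, and your proposal leaves it entirely to ``I would compute.'' Without it the argument does not close.
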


	Based on it, our conclusion follows easily.
	Since the last $(r-k-1)$ components of $\underline p^\sigma$ are also same with $\underline p$, they give rise to equivalent signed Young tableaus and $\nu$-antitableaus once they are identified on the partial diagram $S^{k+1}= S^{\sigma, k+1}$.

\subsection{Core lemmas}\label{subsec: framework}
	Here we give two combinatorial lemmas for Proposition \ref{swap}.
	\textbf{For convenience, we may assume $m_k \leqslant m_{k+1}$ in the rest of this section.}
	
	The skew diagram $S_k\sqcup S_{k+1}$ can be understood by $L_{k+1, i} + L_{k, i-1}$, $1 \leqslant i \leqslant k+1$, which means length of the last $i$th column of it. 
	Then we need $\forall 1\leqslant i \leqslant k+1,$
\begin{equation}\label{type}
	L^\sigma_{k+1, i} + L^\sigma_{k, i-1} \stackrel?= L_{k+1, i} + L_{k, i-1},
\end{equation}
	to obtain $S_k^\sigma \sqcup S_{k+1}^\sigma = S_k \sqcup S_{k+1}$ 
	
\begin{lem}\label{swap-for-sign-fill}
	Denote the type of sign-filling in $S_k^\sigma$ by$L^{\sigma, \pm}_{k,i}$, then
	$L^{\sigma, \pm}_{k+1, i}+ L^{\sigma, \pm}_{k, i-1} = L^{\sigma, \pm}_{k+1, i} + L^{\sigma, \pm}_{k, i-1},$ $\forall i=1, \cdots, k+1$.
\end{lem}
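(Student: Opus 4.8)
The plan is to reinterpret the asserted identity geometrically and then reduce it to a finite comparison governed by the explicit form of the transition map. The first observation is that $L^{\pm}_{k+1,i}+L^{\pm}_{k,i-1}$, and likewise its $\sigma$-analogue, is nothing but the number of pluses (resp.\ minuses) in the $i$-th column, counted from the right, of the skew diagram $S_k\sqcup S_{k+1}=S^{k+1}/S^{k-1}$; adding the two signs recovers the shape identity \eqref{type}. Since the $\underline p$- and the $\underline p^\sigma$-construction both extend the \emph{same} signed tableau on $S^{k-1}$, the lemma is equivalent to saying that the signed skew diagram $S^{k+1}/S^{k-1}$, together with its partition into the columns of $S$, is the same whether one passes through the intermediate shape built from $(p_k,q_k)$ then $(p_{k+1},q_{k+1})$ or through the one built from $(p^\sigma_k,q^\sigma_k)$ then $(p^\sigma_{k+1},q^\sigma_{k+1})$. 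As the filling rule is deterministic once one fixes, at each of the two steps, which rows of the current diagram receive a box — the only freedom being permutation of rows of equal length, which is absorbed by the equivalence relation on signed tableaux — it suffices to identify these two double vertical strips together with the column counts of their two constituent single strips.

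Next I would make the transition map explicit. By admissibility of both $\nu$ and $\nu^\sigma=\nu\cdot(k,k{+}1)$ the segments $\nu_k,\nu_{k+1}$ are in containment relation, and since $m_k\leqslant m_{k+1}$ we may — via the deduplication convention — assume $\nu_k\subseteq\nu_{k+1}$; then \eqref{transition} reads $p^\sigma_k=p_k+p_{k+1}-q_k$, $p^\sigma_{k+1}=q_k$, so $q^\sigma_k=q_k+q_{k+1}-p_k$ and $q^\sigma_{k+1}=p_k$. The hypothesis $\mathrm C(k,k+1)$, whose right-hand side is $\#(\nu_k\cap\nu_{k+1})=m_k=p_k+q_k$, forces $\min\{p_k,q_{k+1}\}=p_k$ and $\min\{q_k,p_{k+1}\}=q_k$, i.e.\ $p_k\leqslant q_{k+1}$ and $q_k\leqslant p_{k+1}$; these two inequalities — which in particular keep all of $p^\sigma_k,q^\sigma_k,p^\sigma_{k+1},q^\sigma_{k+1}$ nonnegative and suitably bounded, so that the $\underline p^\sigma$-construction is legitimate — are what drives the rest of the argument.

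I would then compare the two double strips directly. Using $p_k\leqslant q_{k+1}$ and $q_k\leqslant p_{k+1}$ one describes the sign-filling of $S_k\sqcup S_{k+1}$ block by block on the rows of $S^{k-1}$ grouped by length, writes the resulting column-wise plus/minus counts in closed form — in the same spirit as the formulas \eqref{eq: operation-sub-k+1}--\eqref{eq: operation-sub-k} for Trapa's operation — and checks that the $\sigma$-data yields the identical closed form. Alternatively one organizes this as an induction on $\Delta=\bg(\nu_{k+1})-\bg(\nu_k)$ paralleling \textbf{Case 3} of Definition \ref{T-operation}, the base $\bg(\nu_k)=\bg(\nu_{k+1})$ (in particular $\nu_k=\nu_{k+1}$, where $\mathrm C$ forces $p_k+p_{k+1}=m_k$ and the transition is the identity) being immediate. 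Once the shapes and sign-fillings have been matched, the companion statement about $\nu$-quasitableaux (Lemma \ref{swap-for-nu-fill}) follows by filling the common shape with $(\nu_k,\nu_{k+1})$ in the two orders and checking entrywise agreement.

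The step I expect to be the main obstacle is exactly this block-by-block bookkeeping: one must simultaneously track which rows receive a box at step $k$ and at step $k+1$, how the admissible reshufflings of equal-length rows differ between the two intermediate shapes $S^k$ and $S^k_\sigma$, and the Young-shape constraint linking the two successive vertical strips. Making ``from top to bottom'' and ``an appropriate sign-filling on $S^{k-1}$'' precise enough — effectively pinning down a canonical representative of each signed tableau — is where the real effort goes; once that normal form is in place, every remaining configuration reduces to a short finite check.
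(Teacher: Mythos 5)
Your framework is the right one and the reduction to the transition formula plus the two inequalities $p_k\leqslant q_{k+1}$, $q_k\leqslant p_{k+1}$ coming from $\mathrm C(k,k+1)$ is exactly what drives the paper's proof. But the part you label the main obstacle — the block-by-block comparison of the two double strips — is genuinely left undone, and this is where the actual content of the lemma lives. What the paper does, and what your sketch is missing, is the introduction of the \emph{extreme column} $T$ of $S^{k-1}$ (the rightmost box of each row of $S^{k-1}$, itself a skew column) together with the cumulative counts $P_i, Q_i$ of $+,-$ in its first $i$ components. With these in hand one has uniform closed-form expressions $L^+_{k,i}=\min\{Q_i,p_k\}$, $L^-_{k,i}=\min\{P_i,q_k\}$ (equation \eqref{type-of-S_k}), propagated through the updated extreme-column counts \eqref{type-of-T_k} to give $L^\pm_{k+1,i}$ via \eqref{type-of-S_k+1}. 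The comparison of $\underline p$-data and $\underline p^\sigma$-data then collapses to a short $\min$-identity that uses nothing beyond $q_k\leqslant p_{k+1}$ on one side and $p^\sigma_{k+1}\leqslant q^\sigma_k$ (the $\sigma$-analogue from $\mathrm C^\sigma(k,k+1)$) on the other, together with $p^\sigma_{k+1}=q_k$ and $p^\sigma_k+p^\sigma_{k+1}=p_k+p_{k+1}$.

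Without this intermediate object, the ``describe the filling block by block on rows grouped by length and check the closed forms agree'' plan is underdetermined. You would have to simultaneously pin down a normal form for the sign-filling on $S^{k-1}$, track how rows of equal length reshuffle between the intermediate shapes $S^k$ and $S^{\sigma,k}$, and enforce the Young-shape constraint at two successive steps; as you yourself anticipate, that bookkeeping does not obviously terminate in a ``short finite check.'' The point of $P_i,Q_i$ is precisely that they are the normal form: they are invariant under the row reshuffling and encode everything $S_k$ and $S_{k+1}$ can ``see'' of $S^{k-1}$. Until you supply \eqref{type-of-S_k}--\eqref{type-of-S_k+1} or an equivalent, the proposal is an outline of the right strategy rather than a proof.
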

	From this lemma we may deduce $S_k^\sigma \sqcup S_{k+1}^\sigma = S_k \sqcup S_{k+1}$ and conclude further they have equivalent sign-fillings.
		
	To show the $\nu$-fillings on $S_k^\sigma \sqcup S_{k+1}^\sigma = S_k \sqcup S_{k+1}$ are equivalent, it suffices to prove following lemma according to formulas \eqref{operation-sup} and \eqref{operation-sub}.
	
\begin{lem}\label{swap-for-nu-fill}
$\forall 1 \leqslant i \leqslant k,$
\begin{subequations}
	\begin{align}
	\label{fulfill-1}
	L^\sigma_{k+1,i}&= L_{k+1, i} - \min\{ L_{k+1, j} - L_{k,j} \mid j \geqslant i\},	\\
	\label{fulfill-2}
	L_{k+1, i} &= L_{k+i, i}^\sigma - \min\{ L_{k+1, j}^\sigma - L_{k, j}^\sigma \mid j <i\}.
	\end{align}
\end{subequations}
\end{lem}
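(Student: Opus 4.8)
The plan is to reduce both identities in Lemma~\ref{swap-for-nu-fill} to one short computation, after recording a closed formula for the type of a skew column in terms of the column lengths of the diagram it is built on. Let $c_j$ be the length of the $j$-th column of $S^{k-1}$, with the conventions $c_0=+\infty$ and $c_j=0$ for $j\geqslant k$ (the latter since $S^{k-1}=S_1\sqcup\cdots\sqcup S_{k-1}$ has at most $k-1$ columns). By the construction recalled in subsection~\ref{construct-tableau} the boxes of $S_k$ are obtained by appending one box to each of the first $m_k$ row-ends of $S^{k-1}$ (creating new length-one rows at the bottom if $m_k$ exceeds the number of rows), the box appended to a row of length $\ell$ landing in column $\ell+1$; hence column $c$ of $S_k$ contains $\min(c_{c-1},m_k)-\min(c_c,m_k)$ boxes, and summing over the first $i$ components and using $c_k=0$ gives $L_{k,i}=\min(c_{k-i},m_k)$ for $0\leqslant i\leqslant k$, so that column $j$ of $S^k$ has length $c^{(k)}_j=c_j+\min(c_{j-1},m_k)-\min(c_j,m_k)$. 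I would establish this first, as a stand-alone observation.

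Since $\underline p$ and $\underline p^\sigma$ agree in the first $k-1$ coordinates, $S^{k-1}=S^{\sigma,k-1}$ and the $c_j$ are common to both sides; moreover $S_k^\sigma$ has size $m_{k+1}$ and $S_{k+1}^\sigma$ has size $m_k$. The formula above thus gives $L_{k,i}=\min(c_{k-i},m_k)$, $L^\sigma_{k,i}=\min(c_{k-i},m_{k+1})$, $L_{k+1,i}=\min(c^{(k)}_{k+1-i},m_{k+1})$ and $L^\sigma_{k+1,i}=\min(c^{\sigma,(k)}_{k+1-i},m_k)$, where $c^{(k)}$, $c^{\sigma,(k)}$ are built from $c$ with parameter $m_k$, resp.\ $m_{k+1}$. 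Because the operations ``add one box to each of the first $a$ rows'' commute for different $a$, one has $S^{k+1}=S^{\sigma,k+1}$, i.e.\ \eqref{type} (also a consequence of Lemma~\ref{swap-for-sign-fill}). Using \eqref{type} to eliminate the index-$(k+1)$ terms, \eqref{fulfill-1} becomes equivalent to
\[
\min\{\,L_{k+1,j}-L_{k,j}\mid i\leqslant j\leqslant k\,\}
= L^\sigma_{k,i-1}-L_{k,i-1}
= \max\{\,0,\ \min(c_{k-i+1},m_{k+1})-m_k\,\},
\]
and \eqref{fulfill-2} to $\min\{\,L^\sigma_{k+1,j}-L^\sigma_{k,j}\mid 1\leqslant j\leqslant i-1\,\}=-\max\{0,\ \min(c_{k-i+1},m_{k+1})-m_k\}$, the empty minimum read as $0$.

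The computation is then short. Substituting the closed formulas,
\[
L_{k+1,j}-L_{k,j}=\min\!\bigl(\max\{0,c_{k-j+1}-m_k\},\ m_{k+1}-\min(c_{k-j},m_k)\bigr),
\]
the first argument weakly decreasing and the second weakly increasing in $k-j$; so over $i\leqslant j\leqslant k$ the elementary identity $\min_x\min(f(x),g(x))=\min(\min_x f,\min_x g)$ yields $\min\bigl(\max\{0,c_{k-i+1}-m_k\},\ m_{k+1}-m_k\bigr)$, and comparing $c_{k-i+1}$ with $m_k\leqslant m_{k+1}$ in its three possible ranges shows this equals $\max\{0,\min(c_{k-i+1},m_{k+1})-m_k\}$, proving \eqref{fulfill-1}. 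The same computation with $m_k$ and $m_{k+1}$ interchanged in the ``$\sigma$'' formulas gives $L^\sigma_{k+1,j}-L^\sigma_{k,j}=\min(\max\{0,c_{k-j+1}-m_{k+1}\},\ m_k-\min(c_{k-j},m_{k+1}))$, whose minimum over $1\leqslant j\leqslant i-1$ is $\min\bigl(\max\{0,c_k-m_{k+1}\},\ m_k-\min(c_{k-i+1},m_{k+1})\bigr)=\min\bigl(0,\ m_k-\min(c_{k-i+1},m_{k+1})\bigr)$ since $c_k=0$, i.e.\ $-\max\{0,\min(c_{k-i+1},m_{k+1})-m_k\}$, proving \eqref{fulfill-2}.

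I expect the only real difficulty to be bookkeeping: keeping the extended conventions ($L_{k,0}=0$, $c_0=+\infty$, $c_k=0$) and the two ranges of minimisation ($j\geqslant i$ versus $j<i$, which trace back to Case~2 versus Case~3 of Definition~\ref{T-operation}) consistent, and handling the cases where $m_k$ or $m_{k+1}$ exceeds the number of rows of $S^{k-1}$. With Lemma~\ref{swap-for-nu-fill} proved, Proposition~\ref{swap} follows as indicated in the text: the two $\nu$-quasitableaux then live on the common skew diagram $S_k\sqcup S_{k+1}$, and formulas \eqref{operation-sup}--\eqref{operation-sub} combined with \eqref{fulfill-1}--\eqref{fulfill-2} show that Trapa's operation carries both to the same $\nu$-antitableau, namely the one bearing the unordered pair $\{\nu_k',\nu_{k+1}'\}$ in decreasing order.
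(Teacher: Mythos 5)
Your argument rests on the closed formula $L_{k,i} = \min(c_{k-i}, m_k)$, which you derive from the premise that $S_k$ is obtained by appending one box to each of the first $m_k$ row-ends of $S^{k-1}$. This premise is false: the construction in subsection~\ref{construct-tableau} requires the signs in the resulting diagram to alternate across rows, so once $p_k$ (respectively $q_k$) is exhausted, $S_k$ must \emph{skip} a row ending in $-$ (respectively $+$) rather than attach to it. The paper's Example~\ref{tableau-data} exhibits exactly this: with $\underline p = (2,2,2)$, $\underline m = (3,5,6)$, the skew column $S_3$ skips row~$5$ of $S^2$, so $L_{3,2} = 4$, whereas your formula gives $\min(c_1, m_3) = \min(5,6) = 5$; likewise the shape of $S^3$ is $(3,3,3,2,1,1,1)$, not the $(3,3,3,2,2,1)$ your rule would produce. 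In the notation of subsection~\ref{type-for-p}, your $L_{k,i}$ is $\min(P_i+Q_i,\,p_k+q_k)$, while the correct value recorded in~\eqref{type-of-S_k} is $\min\{Q_i,p_k\}+\min\{P_i,q_k\}$; these generically disagree once one of $Q_i-p_k$, $P_i-q_k$ changes sign before the other.

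Because the base formula fails, the downstream simplification of the minima in \eqref{fulfill-1}--\eqref{fulfill-2} does not go through. The paper's proof instead tracks the signed types $L^\pm_{k,i}$ via the extreme-column data $P_i, Q_i$ and splits into two regimes: $q_{k+1}\geqslant P_{k,i^+}$, where $L_{k+1,i}-L_{k,i}$ is monotone (and the computation is close in spirit to what you were attempting), and $q_{k+1}<P_{k,i^+}$, where it is not monotone and four auxiliary thresholds $i^-\leqslant i''<i'\leqslant i^+$ are needed. A salvaged version of your strategy would have to carry $P_i$ and $Q_i$ separately rather than only their sum $c_{k-i}$, and to build in that case analysis; at that point it coincides with the paper's argument.
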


	Once the lemmas \ref{swap-for-sign-fill} and \ref{swap-for-nu-fill} are verified, Proposition \ref{swap} naturally  follows.
	The conclusion of Proposition \ref{swap} regarding type and signed Young tableau is equivalent to Lemma \ref{swap-for-sign-fill}.
	For its conclusion on $\nu$-quasitableau, let $S_k' \sqcup S_{k+1}'$ be the partition obtained from $S_k \sqcup S_{k+1}$ by Trapa's algorithm, with type $\nu_{k; i}'$, $\nu_{k+1; i}'$;
	let $S_k^{\sigma, \prime} \sqcup S_{k+1}^{\sigma, \prime}$,$\nu_{k; i}^{\sigma, \prime}$, $\nu_{k+1; i}^{\sigma, \prime}$ be obtained from $S_k^\sigma  \sqcup S_{k+1}^\sigma$;
	we have to verify $\nu_{k; i}' \stackrel?= \nu_{k; i}^{\sigma, \prime}$, 
	$\nu_{k+1; i}' \stackrel?= \nu_{k+1; i}^{\sigma, \prime}$.
	Recall our assumption $\nu_k \subset \nu_{k+1}$, and denote $\Delta= \nu_{k+1; 0} - \nu_{k; 0}$.
	According to \eqref{operation-sub} and \eqref{fulfill-1},
	\[\begin{aligned}
	\nu_{k+1; i}'=	&
	\nu_{k+1; i} + \min\{0, \nu_{k;j } - \nu_{k+1; j} \mid j \geqslant i\}	\\
	=	&	
	\nu_{k+1; i} + \min\{0, -\Delta- L_{k, j} + L_{k+1, j} \mid j \geqslant i\} \\
	=	&
	\nu_{k; 0} - L_{k+1, i} + \min\{ \Delta, L_{k+1, j} - L_{k, j} \mid j \geqslant i\}	\\
	=	&
	\nu_{k; 0}+ \min\{\Delta - L_{k+1,i}, - L_{k+1, i}^\sigma \};
	\end{aligned}\]
	according to  \eqref{operation-sup} and \eqref{fulfill-2},
	\[\begin{aligned}
	\nu_{k+1; i}^{\sigma, \prime}=	&
	\nu_{k+1; i}^\sigma + \min\{0, \nu_{k;j }^\sigma - \nu_{k+1; j}^\sigma \mid j <i\}	\\
	=	&	
	\nu_{k+1; i}^\sigma + \min\{0, \Delta- L_{k, j}^\sigma + L_{k+1, j}^\sigma \mid j < i\} \\
	=	&
	\nu_{k+1; 0}^\sigma - L_{k+1, i}^\sigma + \min\{0,  \Delta+ L_{k+1, j} - L_{k, j} \mid j < i\}	\\
	=	&
	\nu_{k; 0}+ \min\{ - L_{k+1, i}^\sigma, \Delta - L_{k+1, i} \};
	\end{aligned}\]
	It follows that $\nu_{k+1; i}' = \nu_{k+1; i}^{\sigma, \prime}$.
	Then $\nu_{k; i}' = \nu_{k; i}^{\sigma, \prime}$ automatically, because
	the $\nu$-antitableau on $S_k' \sqcup S_{k+1}'$ is determined by its $S_{k+1}'$-part, and 
	so is $S_k^{\sigma, \prime} \sqcup S_{k+1}^{\sigma, \prime}$.

\subsection{Calculation of type}\label{type-for-p}
	To prove the core lemmas, we need calculate $L_{k, i}^{\sigma, \pm}, L_{k+1, i}^{\sigma, \pm}$ in detail.

	The last entry of each row of $S^{k-1} = S^{\sigma, k-1}$ make up into a skew column.
	Denote it by $T$, and call it the extreme column of $S^{k-1}= S^{\sigma, k-1}$. 
	Denote by $P_i, Q_i$ the number of $+, -$ filling into first $i$ components of $T$. 	
	Then for the sign fillings of $S_k$, we have
\begin{equation}\label{type-of-S_k}
	L_{k, i}^+=\min\{Q_i, p_k\}, \quad 
	L_{k, i}^- = \min \{P_i, q_k\}.
\end{equation}
	Moreover, the sign filling of extreme column of $S^k = S^{k-1} \sqcup S_k$ has type
\begin{equation}\label{type-of-T_k}
	P_{k,i}= L_{k,i}^+ + P_{i-1} - L_{k, i-1}^-, \quad
	Q_{k,i} = L_{k,i}^- + Q_{i-1} - L_{k, i-1}^+.
\end{equation}
	Hence, the (type of) sign filling of $S_{k+1}$ would be given by
\begin{equation}\label{type-of-S_k+1}
	L_{k+1, i}^+ = \min \{Q_{k, i}, p_{k+1} \}, \quad
	L_{k+1, i}^- = \min \{P_{k, i}, q_{k+1} \}.
\end{equation}
	Consequently, we can present	
	
\begin{proof}[Proof of Lemma \ref{swap-for-sign-fill}]
	According to above equations,
	\[\begin{aligned}
	L_{k+1, i}^+ + L_{k, i-1}^+ =&
	\min \{Q_{i-1} + L_{k,i}^- , p_{k+1} + L_{k, i-1}^+ \} \\
	= &
	\min\{Q_{i-1}+P_i, Q_{i-1}+q_k, p_{k+1} + Q_{i-1}, p_{k+1} + p_k \} \\
	= &
	\min\{Q_{i-1}+P_i, Q_{i-1}+q_k, p_{k+1} + p_k \},
	\end{aligned}\]
	where we have used the condition that $q_k \leqslant p_{k+1}$ following from $\underline p \in \mathrm C(k, k+1)$.
	
	Following a similar calculation,
	\[\begin{aligned}
	L_{k+1, i}^{\sigma, +} + L_{k, i-1}^{\sigma, +} =&
	\min\{Q_{i-1}+P_i, Q_{i-1}+q_k^\sigma, p_{k+1}^\sigma + Q_{i-1}, p_{k+1}^\sigma + p_k^\sigma \} \\
	= &
	\min\{Q_{i-1}+P_i, p_{k+1}^\sigma + Q_{i-1}, p_{k+1}^\sigma + p_k^\sigma \},
	\end{aligned}\]
	where we have used the condition that $p_{k+1}^\sigma \leqslant q_k^\sigma$ following from $\underline p \in \mathrm C^\sigma(k, k+1)$.
	Then
	\[L^{\sigma, +}_{k+1, i}+ L^{\sigma, +}_{k, i-1} = L^{\sigma, +}_{k+1, i} + L^{\sigma, +}_{k, i-1},\]
	since $p_{k+1}^\sigma = q_k$ and $p_{k+1}^\sigma + p_k^\sigma = p_{k+1} + p_k$, according to the definition of $\underline p ^\sigma = \phi^\sigma( \underline p)$.
	The proof for 
	$L^{\sigma, -}_{k+1, i}+ L^{\sigma, -}_{k, i-1} = L^{\sigma, -}_{k+1, i} + L^{\sigma, -}_{k, i-1}$
	is similar.
\end{proof}

	For Lemma \ref{swap-for-nu-fill}, we have to study the monotonicity of difference $L_{k+1, i}- L_{k, i}$ and $L_{k, i}^\sigma - L_{k+1,i }^\sigma$, based on more conceret expressions of them.
	Denote 
	\[ i^- = \min \{ i \mid P_i \geqslant q_k\}, \quad
	i^+ = \min\{ i \mid Q_i \geqslant p_k \},\]
	and we may assume $i^- \leqslant i^+$ without loss of generality.
	Then $S_k$ would break after its $i^-$th component, and end at its $i^+$th component; 
	this is also implied by the following calculation.
	According to \eqref{type-of-S_k},
	\[L_{k, i}^+= \left\{ \begin{aligned}
	&Q_i,	& i < i^+,\\
	&p_k,	&i \geqslant i^+,
	\end{aligned}\right.
	\quad
	L_{k, i}^-= \left\{ \begin{aligned}
	&P_i,	& i < i^-,\\
	&q_k,	&i \geqslant i^-,
	\end{aligned}\right.\]
	and hence
\begin{equation}\label{detail-type-of-S_k}
	L_{k, i} = \left\{ \begin{aligned}
	&Q_i + P_i,	&i <i^-,\\
	&Q_i + q_k,	&i^- \leqslant i < i^+,\\
	&m_k,	&i \geqslant i^+.
	\end{aligned} \right.
\end{equation}
	
	To calculate $L_{k+1, i}$, we need to know more about $P_{k, i}$, $Q_{k, i}$. 
	Based on \eqref{type-of-T_k} and $i^- \leqslant i^+$,
\begin{subequations}\label{detail-type-of-T_k}
	\begin{align}
	Q_{k, i}&= \left\{ \begin{aligned}
	&P_i,	& i < i^-,\\
	&q_k,	& i^- \leqslant i \leqslant i^+,
	\label{eq: detail-type-of-T_k-Q}\\
	&q_k-p_k+Q_{i-1},	& i>i^+,
	\end{aligned}\right.
	\\
	P_{k,i}&= \left\{ \begin{aligned}
	&Q_i,	& i \leqslant i^-,\\
	&Q_i+P_{i-1}-q_k,	& i^- < i < i^+,\\
	&p_k - q_k + \max\{P_{i-1}, q_k\},	&i \geqslant i^+.
	\end{aligned}\right.
	\label{eq: detail-type-of-T_k-P}
	\end{align}
\end{subequations}
	Further calculations are divided into two cases, according to whether $q_{k+1} \geqslant P_{k, i^+}$ or not.
	If it is the case, then $L_{k+1, i} - L_{k, i}$ would be increasing.
	Otherwise, more delicate work is necessary.

\subsection{Monotonicity of difference: easy case}\label{subsec: easy}
	Suppose now $q_{k+1} \geqslant P_{k, i^+}$. 
	Since $P_{k, i^+} = p_k - q_k + \max\{ P_{i^+ -1}, q_k\},$ 
	we have $q_k^\sigma = q_k + q_{k+1} - p_k \geqslant P_{i^+ -1}$. 
	\textbf{Here are some key points of the arguement in this case:}
\begin{itemize}
	\item \eqref{fulfill-1} is equivalent to $L_{k+1, i}^\sigma \stackrel ?= L_{k, i}$,
	which follows by direct computation;
	\item \eqref{fulfill-2} is equivalent to 
	$L_{k+1, i} \stackrel ?= L_{k+1, i}^\sigma + L_{k, i-1}^\sigma - L_{k+1, i-1}^\sigma$,
	which follows from $L_{k+1, i-1}^\sigma = L_{k, i-1}$ and \eqref{type}.
\end{itemize}

	Based on \eqref{type-of-S_k+1} and \eqref{detail-type-of-T_k},
	\[\begin{aligned}
	L_{k+1, i}^+&= \left\{ \begin{aligned}
	&L_{k, i}^-,	& i \leqslant i^+,\\
	&\min\{q_k - p_k + Q_{i-1}, p_{k+1}\},	& i > i^+,
	\end{aligned}\right.
	\\\
	L_{k+1,i}^-&= \left\{ \begin{aligned}
	&L_{k, i}^+,	& i \leqslant i^-,\\
	&L_{k,i}^++P_{i-1}-q_k,	& i^- < i \leqslant i^+,\\
	&\min\{ p_k - q_k +P_{i-1}, q_{k+1} \},	&i > i^+,
	\end{aligned}\right.\end{aligned}\]
	so 
	\[L_{k+1, i} = \left\{ \begin{aligned}
	&L_{k, i},	& i \leqslant i^-,\\
	&L_{k, i} + P_{i-1} - q_k,	& i^-< i \leqslant i^+,\\
	&L_{k, i-1}^\sigma,	& i > i^+.
	\end{aligned} \right.\]
	Here we have used \eqref{type-of-S_k} that
	\[L_{k, i}^\sigma = \min\{ Q_i , p_k+p_{k+1} - q_k\} + 
	\min\{ P_i, q_k + q_{k+1} - p_k\}. \]
	Then
	\[L_{k+1, i} - L_{k, i} = \left\{ \begin{aligned}
	&0,	& i \leqslant i^-,\\
	&P_{i-1} - q_k,	& i^- < i \leqslant i^+,\\
	&L_{k, i-1}^\sigma - m_k,	& i > i^+,
	\end{aligned} \right.\]
	is increasing, since at $i = i^- +1$, $P_{i^- } - q_k \geqslant 0$ by definition, and at $i= i^+ +1$, 
	\[\begin{aligned}
	L_{k, i^+}^\sigma - m_k  = &
	\left(\min\{ Q_{i^+}, p_k^\sigma \} - p_k \right) + 
	\left(\min\{ P_{i^+}, q_k^\sigma\} - q_k \right) \\
	\geqslant &
	0 + \min\{ P_{i^+}, q_k^\sigma\} - q_k 
	\geqslant P_{i^+-1} -q_k.
	\end{aligned}\]
	To obtain \eqref{fulfill-1}, it suffices to prove $L_{k+1, i}^\sigma \stackrel ?= L_{k, i}$.
	
	Now apply \eqref{type-of-S_k}-\eqref{type-of-S_k+1} to $\underline p^\sigma$.
	Since $p_{k}^\sigma \geqslant p_k, q_{k}^\sigma \geqslant q_k$,
	\[L_{k, i}^{\sigma, +}= \left\{ \begin{aligned}
	&Q_i,	& i < i^+,\\
	&\min\{ Q_i, p_k^\sigma\},	&i \geqslant i^+,
	\end{aligned}\right.
	\quad
	L_{k, i}^{\sigma, -}= \left\{ \begin{aligned}
	&P_i,	& i < i^-,\\
	&\min\{ P_i, q_k^\sigma\},	&i \geqslant i^-,
	\end{aligned}\right.\]
	and 
	\[Q_{k, i}^\sigma= \left\{ \begin{aligned}
	&L_{k, i}^{\sigma, -},	& i \leqslant i^+,\\
	&L_{k,i}^{\sigma, -} + Q_{i-1} - L_{k, i-1}^{\sigma, +},	& i>i^+,
	\end{aligned}\right.
	\quad
	P_{k,i}^\sigma= \left\{ \begin{aligned}
	&L_{k, i}^{\sigma, +},	& i \leqslant i^-,\\
	&L_{k, i}^{\sigma, +} + P_{i-1} - L_{k, i-1}^{\sigma, -},	&i > i^-.
	\end{aligned}\right.\]
	In $L_{k+1, i}^{\sigma, +} = \min \{Q_{k, i}^\sigma, q_k \}$, 
	if $i \leqslant i^+$, then it becomes $\min\{P_i, q_k^\sigma, q_k\}  = \min\{P_i, q_k\} =L_{k, i}^-$;
	while $i > i^+ (\geqslant i^-)$, then $Q_{k, i}^\sigma \geqslant L_{k,i}^{\sigma, -}= \min\{P_i, q_k^\sigma\} \geqslant q_k$, so $L_{k+1, i}^{\sigma, +} = q_k$ again coincide with $L_{k, i}^+$.
	It's also easy to deduce that $L_{k+1, i}^{\sigma, -} = L_{k, i}^+$ by considering $i \leqslant i^-$, $i^- < i \leqslant i^+$ and $i > i^+$ respectively.
	Hence, we may obtain $L_{k+1, i}^\sigma = L_{k, i}$ and \eqref{fulfill-1} follows.
	
	Moreover, since
	\[L_{k, i}^\sigma - L_{k+1, i}^\sigma = \left\{ \begin{aligned}
	&0,	& i \leqslant i^-,\\
	&\min\{P_i, q_k^\sigma\} - q_k,	& i^- \leqslant i < i^+,\\
	&L_{k, i}^\sigma - m_k,	& i \geqslant i^+,
	\end{aligned} \right.\]
	is also increasing, \eqref{fulfill-2} then becomes 
	\[L_{k+1, i} \stackrel ?= L_{k+1, i}^\sigma + L_{k, i-1}^\sigma - L_{k+1, i-1}^\sigma.\]
	It follows from $L_{k+1, i-1}^\sigma = L_{k, i-1}$ and \eqref{type}.
	Now Lemma \ref{swap-for-nu-fill} is verified in this case.

\subsection{Monotonicity of difference: remaining case}\label{subsec: hard}
	Now suppose $q_{k+1} < P_{k, i^+}$. 
	Since 
	\[P_{k, i^+} = p_k - q_k + \max\{P_{i^+ -1}, q_k\}=
	\max\{p_k - q_k + P_{i^+-1}, p_k\},\]
	and $p_k \leqslant q_{k+1}$, 
	we must have $ P_{i^+ -1} > q_k > P_{i^- -1}$.
	It follows that $i^+ > i^-$.
	\textbf{Here are some key points of the arguement in this case:}
\begin{itemize}
	\item it involves four index $i^- \leqslant i'' < i' \leqslant i^+$;
	\item \eqref{fulfill-1} is equivalent to \eqref{fulfill-1-2}, 
	which follows by comparing \eqref{detail-type-of-S_k+1^sigma} to \eqref{detail-type-of-S_k} and \eqref{detail-type-of-S_k+1} directly;
	\item when $i \leqslant i''$ and $i > i^+$, 
	 \eqref{fulfill-2} becomes $L_{k+1, i} \stackrel?= L_{k+1, i}^\sigma + L_{k, i-1}^\sigma - L_{k+1, i-1}^\sigma$,
	 which follows from $L_{k+1, i-1}^\sigma = L_{k, i-1}$ (by comparing \eqref{detail-type-of-S_k+1} and \eqref{detail-type-of-S_k+1^sigma} directly) and \eqref{type};
	\item when $i''< i \leqslant i^+$, \eqref{fulfill-2} becomes $L_{k+1, i} \stackrel?= L_{k+1, i}^\sigma + q_{k+1} -p_k$,
	which follows by comparing \eqref{detail-type-of-S_k+1} and \eqref{detail-type-of-S_k+1^sigma} directly.
\end{itemize}
	
	Take $i'= \min \{ i \mid P_{k, i}>q_{k+1}\}$.
	By $q_{k+1} < P_{k, i^+}$ we know $i' \leqslant i^+$.
	On the other hand, in \eqref{detail-type-of-T_k} we have calculated that $P_{k, i^-} = Q_{i^-} < p_k \leqslant q_{k+1}$,
	so $i' > i^-$.
	
	Based on \eqref{type-of-S_k+1} and \eqref{detail-type-of-T_k} we get
\begin{subequations}\label{detail-type-of-S_k+1}
	\begin{align}
	L_{k+1, i}^+&= \left\{ \begin{aligned}
	&L_{k, i}^-,	& i \leqslant i^+,\\
	&\min\{q_k - p_k + Q_{i-1}, p_{k+1}\},	& i > i^+,
	\end{aligned}\right.
	\label{eq: detail-type-of-S_k+1-+}\\
	L_{k+1,i}^-&= \left\{ \begin{aligned}
	&Q_i,	& i \leqslant i^-,\\
	&Q_i +P_{i-1}-q_k,	& i^- < i < i',\\
	&q_{k+1},	&i \geqslant i'.
	\end{aligned}\right.
	\label{eq: detail-type-of-S_k+1--}
	\end{align}
\end{subequations}
	Then
	\[\begin{aligned}
	L_{k+1, i}^+ - L_{k, i}^- &= \left\{ \begin{aligned}
	&0,	& i \leqslant i^+,\\
	&\min\{ Q_{i-1}-p_k, p_{k+1} - q_k\},	& i > i^+,
	\end{aligned} \right.
	\\
	L_{k+1, i}^- - L_{k, i}^+ &= \left\{ \begin{aligned}
	&0,	& i \leqslant i^-,\\
	&P_{i-1} - q_k,	& i^- < i < i',\\
	&q_{k+1} -Q_i,	& i' \leqslant i < i^+,\\
	&q_{k+1}-p_k,	& i \geqslant i^+.
	\end{aligned} \right.
	\end{aligned}\]
	From the above formula we can see that $L_{k+1, i} -L_{k, i}$ is increasing when $i \leqslant i'$ and $i \geqslant i^+$, and decreasing when $ i' \leqslant i \leqslant i^+$.
	Its minimum over $i \geqslant i'$ is $q_{k+1} -p_k$.
	To obtain \eqref{fulfill-1}, it suffices to prove 
\begin{equation}\label{fulfill-1-2}
	L_{k+1, i}^\sigma \stackrel ? = \max\{ L_{k, i}, L_{k+1, i} - q_{k+1} + p_k\} .
\end{equation}
	We observe that the right hand side is $L_{k, i}$ when $i \leqslant i^-$ and $i \geqslant i^+$, is $L_{k+1, i} - q_{k+1} + p_k$ when $i' \leqslant i <i^+$.
	
	Now apply \eqref{type-of-S_k} to $\underline p^\sigma$.
	Since $p_{k}^\sigma \geqslant p_k$,
	\[L_{k, i}^{\sigma, +}= \left\{ \begin{aligned}
	&Q_i,	& i < i^+,\\
	&\min\{ Q_i, p_k^\sigma\},	&i \geqslant i^+.
	\end{aligned}\right. \]
	For $L_{k, i}^{\sigma, -} = \min\{P_i, q_k^\sigma\}$, we need compare $P_i$ with $q^\sigma_k = q_k + q_{k+1} - p_k$.
	Introduce the fourth index $i'' = \min\{ i \mid P_i > q_k^\sigma\}$. 
	Since $q_k^\sigma \geqslant q_k > P_{i^- -1}$, we have $i'' \geqslant i^-$.
	Note $P_{k, i^+} = p_k -q_k +P_{i^+-1}$ is assumed to  $> q_{k+1}$,
	so $P_{i^+-1} > q_k^\sigma$, hence $i'' \leqslant i^+ -1$.
	If $i^- < i < i^+$ and $P_{k, i} = Q_i + P_{i-1} - q_k > q_{k+1}$, then $P_{i-1}  > q_{k+1} + q_k - Q_i \geqslant  q_k^\sigma$, 
	so $\forall i \in [i'', i^+)$, we have $i -1 \geqslant i''$.
	This shows $i'' \leqslant i'-1$.
	According to the definition of $i''$ that 
	\[L_{k, i}^{\sigma, -}= \left\{ \begin{aligned}
	&P_i,	& i < i'',\\
	&q_k^\sigma,	&i \geqslant i'',
	\end{aligned}\right.\]
	and  \eqref{type-of-T_k}, we calculate
	\[Q_{k, i}^\sigma= \left\{ \begin{aligned}
	&L_{k, i}^{\sigma, -},	& i \leqslant i^+,\\
	&L_{k,i}^{\sigma, -} + Q_{i-1} - L_{k, i-1}^{\sigma, +},	& i>i^+,
	\end{aligned}\right.
	\quad
	P_{k,i}^\sigma= \left\{ \begin{aligned}
	&L_{k, i}^{\sigma, +},	& i \leqslant i'',\\
	&L_{k, i}^{\sigma, +} + P_{i-1} - L_{k, i-1}^{\sigma, -},	&i > i''.
	\end{aligned}\right.\]
	Then apply \eqref{type-of-S_k+1},
	\[L_{k+1, i}^{\sigma, +} = \left\{ \begin{aligned}
	&P_i,	& i < i^-,\\
	&q_k,	& i \geqslant i^-,
	\end{aligned}\right.
	\quad
	L_{k+1, i}^{\sigma, -} = \left\{ \begin{aligned}
	&Q_i,	& i \leqslant i'',\\
	&\min\{Q_i + P_{i-1} - q_k^\sigma, p_k\}	& i'' < i < i^+,\\
	&p_k,	& i \geqslant i^+,
	\end{aligned}\right.\]
	and hence
\begin{equation}\label{detail-type-of-S_k+1^sigma}
	L_{k+1, i}^{\sigma} = \left\{ \begin{aligned}
	&P_i + Q_i,	& i < i^-,\\
	&q_k + Q_i	& i^- \leqslant i \leqslant i'',\\
	&Q_i + P_{i-1} + p_k -q_{k+1},	& i'' < i < i',\\
	&m_k,	& i \geqslant i'.
	\end{aligned}\right.
\end{equation}

	\eqref{fulfill-1-2} becomes $L_{k+1, i}^\sigma \stackrel ?= L_{k, i}$ when $i \leqslant i^-$ or $i \geqslant i^+$,
	and then follows by comparing \eqref{detail-type-of-S_k} and \eqref{detail-type-of-S_k+1^sigma} directly.
	It becomes  
	$L_{k+1, i}^\sigma \stackrel ?= L_{k+1, i} - q_{k+1} + p_k$ 
	when $i' \leqslant i <i^+$,
	and then follows by comparing \eqref{detail-type-of-S_k+1} and \eqref{detail-type-of-S_k+1^sigma} directly.
	If $i^- < i < i'$, the right hand side of \eqref{fulfill-1-2} becomes
	\[\max\{ L_{k, i}, L_{k+1, i} - q_{k+1} + p_k\} = 
	\max\{ Q_i + q_k, Q_i + P_{i-1} -q_{k+1} + p_k \}\]
	according to \eqref{detail-type-of-S_k+1}.
	It also coincide with $L_{k+1, i}^\sigma$ once we check the definition of $i''$.
	Hence we obtain \eqref{fulfill-1-2} and \eqref{fulfill-1}. 
	
	To prove \eqref{fulfill-2}, we need to calculate as follows:
	\[L_{k, i}^\sigma - L_{k+1, i}^\sigma = \left\{ \begin{aligned}
	&0,	& i \leqslant i^-,\\
	&P_i - q_k,	& i^- \leqslant i < i'',\\
	&q_{k+1} - p_k,	& i = i'',\\
	&q_k^\sigma - P_{i-1} + q_{k+1} - p_k,	& i'' < i < i',\\
	&\min\{Q_i, p_k^\sigma\} + q_k^\sigma - m_k,	& i \geqslant i'.
	\end{aligned} \right.\]
	This means the sequence $L_{k, i}^\sigma - L_{k+1, i}^\sigma$ is increasing when $i \leqslant i''$ and $i > i'$, and decreasing when $i'' \leqslant i <i'$.
	Its maximum over $i \leqslant i'$ is $q_{k+1} - p_k$ (taken at $i''$). 
	Moreover, if $i' \leqslant i < i^+$, then it takes value $Q_i + q_k^\sigma - m_k < p_k + q_k^\sigma - m_k = q_{k+1} - p_k$,
	while at $i=i^+$, its value $\geqslant p_k + q_k^\sigma - m_k = q_{k+1} -p_k$. 
	Consequently,
	\[\max\{ L_{k, j}^\sigma - L_{k+1, j}^\sigma \mid j < i\} 
	=\left\{ \begin{aligned}
	&L_{k, i-1}^\sigma - L_{k+1, i-1}^\sigma,	& i \leqslant i'',\\
	&q_{k+1} - p_k,	& i'' < i \leqslant i^+,\\
	&L_{k, i-1}^\sigma - L_{k+1, i-1}^\sigma,	& i> i^+.
	\end{aligned}\right.\]
	
	 \eqref{fulfill-2} becomes $L_{k+1, i} \stackrel ?= L_{k+1, i}^\sigma + L_{k, i-1}^\sigma - L_{k+1, i-1}^\sigma$ when $i \leqslant i''$ and $i > i^+$,
	so is equivalent to $L^\sigma_{k+1, i-1} \stackrel ?= L_{k, i-1}$ according to \eqref{type}.
	This then follows by comparing \eqref{detail-type-of-S_k} and \eqref{detail-type-of-S_k+1^sigma} directly.
	If $i'' < i \leqslant i^+$, \eqref{fulfill-2} becomes
	\[L_{k+1, i} \stackrel ?= L_{k+1, i}^\sigma + q_{k+1} -p_k.\]
	In \eqref{detail-type-of-S_k+1} we have calculated
	\[L_{k+1, i} = \left\{\begin{aligned}
	&Q_i + P_{i-1},	& i''< i < i',\\
	&q_k + q_{k+1},	& i' \leqslant i \leqslant i^+,
	\end{aligned}\right.\]
	and in \eqref{detail-type-of-S_k+1^sigma} we have calculated
	\[L_{k+1, i}^\sigma = \left\{\begin{aligned}
	&Q_i + P_{i-1} + p_k -q_{k+1},	& i''< i < i',\\
	&m_k,	& i' \leqslant i \leqslant i^+.
	\end{aligned}\right.\]
	Now \eqref{fulfill-2} for $i'' < i \leqslant i^+$ also follows.
	We verified \eqref{swap-for-nu-fill} thoroughly.

\subsection{Further conclusions}
	Here we provide two further conclusions of the previous calculations.
	The first one gives a simpler formula for overlaps of adjacent skew columns arising from $\underline p \in \MR$,
	which implies the necessity in Theorem \ref{non-vanishing}.
	The second one gives a deeper formula for types of skew columns arising from $\underline p \in \MR$ after Trapa's operation.
	It will be useful for the sufficiency in Theorem \ref{non-vanishing}.
	
\begin{lem}\label{overlap-p}
	$\overlap(S_k, S_{k+1}) = \min\{p_k, q_{k+1} \} + \min \{ q_k, p_{k+1} \}$.
\end{lem}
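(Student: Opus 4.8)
The plan is to evaluate the minimum appearing in Lemma~\ref{overlap}. By that lemma,
\[\overlap(S_k, S_{k+1}) = m_k + \min_{i\le k}\bigl(L_{k+1,i} - L_{k,i}\bigr),\]
and since $m_k + \min\{0,\, p_{k+1}-q_k\} + \min\{0,\, q_{k+1}-p_k\} = \min\{q_k, p_{k+1}\} + \min\{p_k, q_{k+1}\}$, it suffices to show that
\[\min_{i\le k}\bigl(L_{k+1,i} - L_{k,i}\bigr) = \min\{0,\, p_{k+1}-q_k\} + \min\{0,\, q_{k+1}-p_k\}.\]

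First I would rewrite the difference $L_{k+1,i}-L_{k,i}$ using the sign-type formulas \eqref{type-of-S_k}, \eqref{type-of-T_k}, \eqref{type-of-S_k+1}, where $P_i$, $Q_i$ denote the numbers of $+$'s and $-$'s among the first $i$ components of the extreme column of $S^{k-1}$. Writing $(x)_+ := \max\{x,0\}$, one has $Q_{k,i} = \min\{P_i,q_k\} + (Q_{i-1}-p_k)_+$ and $P_{k,i} = \min\{Q_i,p_k\} + (P_{i-1}-q_k)_+$, so that grouping $L_{k+1,i}^{+}$ with $L_{k,i}^{-}$ and $L_{k+1,i}^{-}$ with $L_{k,i}^{+}$ gives
\[L_{k+1,i} - L_{k,i} = \min\bigl\{(Q_{i-1}-p_k)_+,\ p_{k+1}-\min\{P_i,q_k\}\bigr\} + \min\bigl\{(P_{i-1}-q_k)_+,\ q_{k+1}-\min\{Q_i,p_k\}\bigr\}.\]
Because $(Q_{i-1}-p_k)_+\ge 0$ and $p_{k+1}-\min\{P_i,q_k\}\ge p_{k+1}-q_k$, the first summand is $\ge\min\{0,p_{k+1}-q_k\}$; symmetrically the second is $\ge\min\{0,q_{k+1}-p_k\}$. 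This yields ``$\ge$'' for every index $i$.

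For the reverse inequality I would exhibit one index realizing equality. Set $i^-=\min\{i:P_i\ge q_k\}$ and $i^+=\min\{i:Q_i\ge p_k\}$, and assume $i^-\le i^+$ (the opposite case being symmetric); note that $m_k\le m_{k+1}$ forbids $p_k>q_{k+1}$ and $q_k>p_{k+1}$ from holding simultaneously. If $p_k\le q_{k+1}$ and $q_k\le p_{k+1}$, then $i=0$ makes both summands vanish. If $q_k\le p_{k+1}$ and $p_k>q_{k+1}$, take $i=i^+$: there $Q_{i^+-1}<p_k\le Q_{i^+}$, so the first summand is $0$ while the second equals $q_{k+1}-p_k$. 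If $q_k>p_{k+1}$ and $p_k\le q_{k+1}$, take $i=i^-$ and argue dually. In each case (the chosen index lying in the range $i\le k$ by the structure of $S_k$, which fits in $k$ columns) the lower bound is attained, so the minimum has the asserted value and the formula for $\overlap(S_k,S_{k+1})$ follows.

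The substance of the argument is the bookkeeping in the last step --- locating, in each of the sign regimes, an index at which both inner minima simultaneously reach their extreme value --- whereas the inequality of the middle step is immediate from the type formulas already recorded in \S\ref{type-for-p}--\S\ref{subsec: hard}. I expect this last step to be the only real obstacle; the rest is formal manipulation of the expressions for $L_{k,i}$ and $L_{k+1,i}$.
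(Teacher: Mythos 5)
Your argument is correct, and it takes a genuinely different route from the paper's. You invoke Lemma~\ref{overlap} as a black box, reducing the claim to computing $\min_{i\le k}(L_{k+1,i}-L_{k,i})$, then expand the difference into its two sign-separated pieces $L_{k+1,i}^{+}-L_{k,i}^{-}$ and $L_{k+1,i}^{-}-L_{k,i}^{+}$ via \eqref{type-of-S_k}--\eqref{type-of-S_k+1}, obtain the termwise lower bound $L_{k+1,i}-L_{k,i}\ge\min\{0,p_{k+1}-q_k\}+\min\{0,q_{k+1}-p_k\}$, and exhibit a minimizer by a three-way case split on the signs of $p_{k+1}-q_k$ and $q_{k+1}-p_k$, the fourth pattern being ruled out by the standing hypothesis $m_k\le m_{k+1}$ of \S\ref{subsec: framework}. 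The paper does not cite Lemma~\ref{overlap} here; instead it re-runs the recursion behind \eqref{reccursive-overlap} on $S_k$ directly, first computing $\overlap(S_k^{i^-},S_{k+1}^{i^-})=\min\{Q_{i^-},q_{k+1}\}+\min\{q_k,p_{k+1}\}$ and then extending across the transition components $i^-{+}1,\dots,i^+$ using the structural fact that there $S_k$ carries only $+$ and $S_{k+1}$ only $-$. Your route is cleaner in that it reuses an already-proved lemma rather than re-deriving a special instance of its recursion; the paper's route makes the sign pattern on the transition region explicit, which it also exploits in \S\ref{subsec: easy}--\S\ref{subsec: hard}. One point worth flagging: in cases 2 and 3 your minimizer is $i^{+}$ or $i^{-}$, which you correctly claim lies in $\{1,\dots,k\}$, but to justify both that bound and the use of \eqref{type-of-S_k} at $i=k$ you implicitly need $Q_k\ge p_k$ and $P_k\ge q_k$ (so that $L_{k,k}^{+}=p_k$, $L_{k,k}^{-}=q_k$). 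This is true because all of $S_k$ sits in its $k$ components, but the naive extension $Q_k=Q_{k-1}$, $P_k=P_{k-1}$ from the $(k-1)$-component extreme column $T$ of $S^{k-1}$ does \emph{not} yield it when $S_k$ opens new rows (compare Example~\ref{tableau-data} at $k=2$, where $Q_1=1<p_2$ yet $L_{2,2}^{+}=p_2$). Stating this convention explicitly would close the only soft spot; the paper's own proof sidesteps it by working with the truncations $S_k^{i^-}$ rather than with \eqref{type-of-S_k} at the boundary index.
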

\begin{rmk}
	This is consistent with theorem 3.6 in \cite{Du}.
\end{rmk}
\begin{proof}
	Denote $P_i, Q_i$ to be the number of $+, -$ filling into first $i$ components of extreme column of $S^{k-1} = \bigsqcup_{j<k} S_j$, as in subsection \ref{type-for-p}.
	Let $i^- = \min\{ i \mid P_i \geqslant q_k\}$,  $i^+ = \min\{ i \mid Q_i \geqslant p_k\}$, and assume $i^- \leqslant i^+$.
	
	If $i^- = i^+$ then $S_k$ does not break, and the formula is easy to observe.
	If $i^- < i^+$, then $S_k$ breaks after $i^-$th component.
	We have
	\[\overlap(S_k^{i^-}, S_{k+1}^{i^-}) =
	\min\{ Q_{i^-}, q_{k+1} \} + \min\{q_k, p_{k+1} \} ,\]
	where notations are same as in the proof of Lemma \ref{overlap}. 
	Using a similiar argument as for \eqref{reccursive-overlap}, we would obtain
	\[\overlap(S_k, S_{k+1}) =
	\overlap(S_k^{i^- }, S_{k+1}^{i^-}) +
	\min\{ p_k - Q_{i^- }, q_{k+1} - \min\{Q_{i^-}, q_{k+1} \}\},\]
	for which we need the following observation:
\begin{itemize}
	\item $p_k > Q_{i^-}$, so the $i^- +1, \cdots, i^+$th components of $S_k$ will be filled by $+$, and that of $S_{k+1}$ will be filled by $-$;
	\item the $i^++1, \cdots, (k+1)$th components of $S_{k+1}$ contribute nothing to overlap.
\end{itemize}
	Then we deduce further
	\[\begin{aligned}
	\overlap(S_k, S_{k+1}) = &
	\min\{q_k, p_{k+1}\} +  \min\{p_k - Q_{i^-} + \min\{ Q_{i^-}, q_{k+1} \}, q_{k+1} \} \\
	= &
	\min\{q_k, p_{k+1} \} + \min\{p_k, q_{k+1} \}.
	\end{aligned}\]
	since $q_{k+1} \geqslant p_k > Q_{i^-}$.
\end{proof}

	Recall $S_k' \sqcup S_{k+1}'$ is the partition obtained from $S_k \sqcup S_{k+1} = S_k^\sigma \sqcup S_{k+1}^\sigma$ by Trapa's operation.
	
\begin{lem}\label{operation-swap}
	Denote  the type of $\nu$-filling on $S_j$, $S_j^\sigma$, $S_j'$ by $\nu_{j; i}, \nu_{j; i}^\sigma, \nu_{j; i}'$ respectively.
	Then
	\[\nu_{k; i}' = \max\{\nu_{k; i}, \nu_{k; i}^\sigma \},\quad
	\nu_{k+1; i}' = \min\{ \nu_{k+1; i}, \nu_{k+1; i}^\sigma \}.\]	
\end{lem}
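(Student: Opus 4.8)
The plan is to reduce everything to the explicit type-formulas already assembled in Sections \ref{swap-tableau}.1--\ref{swap-tableau}.4. Recall the standing assumption $m_k \leqslant m_{k+1}$ and $\nu_k \subset \nu_{k+1}$, which is what was used throughout the proof of Lemma \ref{swap-for-nu-fill}. Under this assumption, $\nu_k^\sigma \supset \nu_{k+1}^\sigma$, so applying Trapa's operation to $S_k \sqcup S_{k+1}$ falls into Case 3 of Definition \ref{T-operation}, while applying it to $S_k^\sigma \sqcup S_{k+1}^\sigma$ falls into Case 2. In the first instance formula \eqref{operation-sub} gives
\[\nu_{k+1; i}' = \nu_{k+1; i} + \min\{0, \Delta_{i+1}^{(1)}\}, \qquad
\nu_{k; i}' = \nu_{k; i} - \min\{0, \Delta_{i}^{(1)}\},\]
with $\Delta_i^{(1)} = \min\{\nu_{k;j} - \nu_{k+1;j} \mid j \geqslant i\}$ (up to an index bookkeeping matching that lemma's statement), and in the second instance formula \eqref{operation-sup} gives the analogous expressions in terms of $\nu_{k;i}^\sigma$, $\nu_{k+1;i}^\sigma$. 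Since $S_k \sqcup S_{k+1} = S_k^\sigma \sqcup S_{k+1}^\sigma$ as skew diagrams with the same shape (Lemma \ref{swap-for-sign-fill}), and the $\nu$-fillings on them are equivalent (Lemma \ref{swap-for-nu-fill}), Trapa's operation produces the \emph{same} partition $S_k' \sqcup S_{k+1}'$ with the same $\nu$-filling from either starting point; hence $\nu_{k;i}'$ and $\nu_{k+1;i}'$ computed either way must agree, and we only need to match them against $\max$/$\min$ of the two un-operated types.

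First I would record the closed forms of $\nu_{k;i}, \nu_{k+1;i}$ and $\nu_{k;i}^\sigma, \nu_{k+1;i}^\sigma$ directly from the type computations \eqref{detail-type-of-S_k}, \eqref{detail-type-of-S_k+1}, \eqref{detail-type-of-S_k+1^sigma} (and their ``easy case'' analogues in subsection \ref{subsec: easy}) via the translation $\nu_{j;i} = \nu_{j;0} - L_{j,i}$. The identity $\nu_{k+1;i}' = \nu_{k+1;i}^{\sigma,\prime}$ was already derived at the end of subsection \ref{subsec: framework}: there it is shown
\[\nu_{k+1; i}' = \nu_{k;0} + \min\{\Delta - L_{k+1,i}, -L_{k+1,i}^\sigma\}\]
with $\Delta = \nu_{k+1;0} - \nu_{k;0}$. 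Now $\nu_{k;0} + \Delta - L_{k+1,i} = \nu_{k+1;0} - L_{k+1,i} = \nu_{k+1;i}$, and $\nu_{k;0} - L_{k+1,i}^\sigma$: I must check this last quantity equals $\nu_{k+1;i}^\sigma$. Since $\nu_{k+1}^\sigma$ begins at $\nu_{k+1;0}^\sigma = \min\{\nu_{k;0},\nu_{k+1;0}\} = \nu_{k;0}$ by the elementary-operation description (recall $\phi^\sigma$ sends $(p_k,p_{k+1})$ to $(q_{k+1}, \ldots)$, so the first component of $S_{k+1}^\sigma$ inherits the smaller beginning), we indeed get $\nu_{k;0} - L_{k+1,i}^\sigma = \nu_{k+1;0}^\sigma - L_{k+1,i}^\sigma = \nu_{k+1;i}^\sigma$. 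Therefore $\nu_{k+1;i}' = \min\{\nu_{k+1;i}, \nu_{k+1;i}^\sigma\}$, which is the second claimed formula.

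For the first formula, $\nu_{k;i}' = \max\{\nu_{k;i}, \nu_{k;i}^\sigma\}$, I would argue by column-length complementarity rather than re-deriving everything. The skew diagram $S_k' \sqcup S_{k+1}'$ has the same shape as $S_k \sqcup S_{k+1}$, and the length of its last $(i+1)$-st column is $L_{k+1,i}' + L_{k,i-1}'$ — an invariant of the diagram, hence equal to $L_{k+1,i} + L_{k,i-1} = L_{k+1,i}^\sigma + L_{k,i-1}^\sigma$ by \eqref{type}. Translating to $\nu$-types and using the shift $\nu_{k;i-1}' = (\nu_{k;0}' ) - L_{k,i-1}'$ together with the fact that the beginning $\nu_{k;0}'$ of $S_k'$ equals $\max\{\nu_{k;0}, \nu_{k+1;0}\} = \nu_{k;0}^\sigma$ (again from the elementary-operation description), one converts the column-length identity plus the already-proven expression for $\nu_{k+1;i}'$ into $\nu_{k;i}'$, and then simplifies $\nu_{k;0}^\sigma - L_{k,i-1}'$ against $\max\{\nu_{k;i}, \nu_{k;i}^\sigma\}$. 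Concretely: from $L_{k,i-1}' = L_{k+1,i} + L_{k,i-1} - L_{k+1,i}'$ and $L_{k+1,i}' = \nu_{k+1;0} - \nu_{k+1;i}' = \nu_{k+1;0} - \min\{\nu_{k+1;i}, \nu_{k+1;i}^\sigma\} = \max\{L_{k+1,i}, \nu_{k+1;0} - \nu_{k+1;i}^\sigma\}$, a short computation — distributing the $\max$ and using $\nu_{k+1;0} - \nu_{k+1;i}^\sigma = L_{k+1,i}^\sigma + \Delta$ where $\Delta \geqslant 0$ — yields $L_{k,i-1}' = \min\{L_{k,i-1}, L_{k,i-1}^\sigma - \Delta\}$ (shifting the index appropriately to get $\nu_{k;i}'$), and hence $\nu_{k;i}' = \nu_{k;0}^\sigma - L_{k,i}'= \max\{\nu_{k;i}^\sigma, \nu_{k;i}\}$.

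The main obstacle is purely the index bookkeeping: keeping straight which ``beginning'' ($\nu_{k;0}$ versus $\nu_{k;0}^\sigma$, $\nu_{k+1;0}$ versus $\nu_{k+1;0}^\sigma$) governs each type, and making sure the $\min$/$\max$ in \eqref{operation-sub}, \eqref{operation-sup} are being compared at the correct shifted indices $\Delta_i$ versus $\Delta_{i+1}$. Once one fixes the convention — and the case split $\nu_k \subset \nu_{k+1}$ forces $\Delta = \nu_{k+1;0} - \nu_{k;0} \geqslant 0$, which makes all the $\min\{0,\cdot\}$ terms behave monotonically exactly as charted in subsections \ref{subsec: easy} and \ref{subsec: hard} — the identities drop out by comparing the piecewise formulas \eqref{detail-type-of-S_k}, \eqref{detail-type-of-S_k+1}, \eqref{detail-type-of-S_k+1^sigma} region by region ($i < i^-$, $i^- \leqslant i \leqslant i''$, $i'' < i < i'$, $i \geqslant i'$, etc.). No new combinatorial input beyond Lemmas \ref{swap-for-sign-fill} and \ref{swap-for-nu-fill} is needed; the lemma is a bookkeeping corollary of the preceding subsection.
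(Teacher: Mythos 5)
Your route for the first identity essentially re-reads the derivation already written at the end of subsection \ref{subsec: framework}: there it is shown $\nu_{k+1;i}' = \nu_{k;0} + \min\{\Delta - L_{k+1,i}, -L_{k+1,i}^\sigma\}$, and your identification $\nu_{k;0} - L_{k+1,i}^\sigma = \nu_{k+1;i}^\sigma$ (via $\nu_{k+1}^\sigma = \nu_k$ as segments, so $\nu_{k+1;0}^\sigma = \nu_{k;0}$) correctly converts this to $\min\{\nu_{k+1;i}, \nu_{k+1;i}^\sigma\}$. The paper's own proof instead derives $\nu_{k+1;i}^\sigma = \nu_{k+1;i} + \min\{\nu_{k;j} - \nu_{k+1;j} \mid j \geqslant i\}$ directly from \eqref{fulfill-1} as equation \eqref{fulfill-type-1} and compares it with \eqref{eq: operation-sub-k+1}; the two routes amount to the same formula manipulation. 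For the second identity the paper derives the companion formula \eqref{fulfill-type-2} for $\nu_{k;i}^\sigma$ from \eqref{type} and \eqref{fulfill-1} and compares with \eqref{eq: operation-sub-k}. Your column-length-complementarity route is a legitimate alternative here — in effect it uses the invariance of the shape of $S_k' \sqcup S_{k+1}'$ directly, rather than passing through $\nu_{k;i}^\sigma$ — and in principle buys a slightly shorter argument given that $\nu_{k+1;i}'$ is already in hand.

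However, your execution of the second half contains a concrete error. You write $L_{k+1,i}' = \nu_{k+1;0} - \nu_{k+1;i}'$, but the correct relation is $L_{k+1,i}' = \nu_{k+1;0}' - \nu_{k+1;i}'$ with $\nu_{k+1;0}' = \nu_{k;0}$, not $\nu_{k+1;0}$: after Trapa's operation the segment filling $S_{k+1}'$ begins at $\bg(\nu_{k+1}') = \min\{\bg(\nu_k), \bg(\nu_{k+1})\} = \bg(\nu_k)$ since $\nu_k \subset \nu_{k+1}$. This is exactly the same observation you invoke one sentence earlier (to justify $\nu_{k+1;0}^\sigma = \nu_{k;0}$), and you \emph{do} use the correct post-operation base $\nu_{k;0}' = \nu_{k;0}^\sigma = \nu_{k+1;0}$ for the $S_k'$ piece, so the two legs of the computation are internally inconsistent. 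As written, the error gives $L_{k+1,i}' = \max\{L_{k+1,i}, L_{k+1,i}^\sigma + \Delta\}$, hence $L_{k,i-1}' = \min\{L_{k,i-1}, L_{k,i-1}^\sigma - \Delta\}$, and finally $\nu_{k;i}' = \max\{\nu_{k;i}, \nu_{k;i}^\sigma\} + \Delta$, which is off by $\Delta = \nu_{k+1;0} - \nu_{k;0} \geqslant 0$. With the correct base one gets instead $L_{k+1,i}' = \max\{L_{k+1,i} - \Delta, L_{k+1,i}^\sigma\}$, then $L_{k,i-1}' = \min\{L_{k,i-1} + \Delta, L_{k,i-1}^\sigma\}$ by \eqref{type}, and $\nu_{k;i-1}' = \nu_{k+1;0} - L_{k,i-1}' = \max\{\nu_{k;i-1}, \nu_{k;i-1}^\sigma\}$; so the approach succeeds once the base points $\nu_{k;0}'$ and $\nu_{k+1;0}'$ are kept straight.
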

\begin{proof}
	According to \eqref{fulfill-1}
	\[L^\sigma_{k+1,i}= L_{k+1, i} - \min\{ L_{k+1, j} - L_{k,j} \mid j \geqslant i\},\]
	we know
	\[\begin{aligned}
	\nu_{k+1; 0}^\sigma - \nu_{k+1; i}^\sigma =&
	\nu_{k+1; 0} - \nu_{k+1; i} - \min\{ \nu_{k+1; 0} - \nu_{k+1; j} - \nu_{k;0} + \nu_{k; j} \mid j \geqslant i \} \\
	= &
	\nu_{k; 0} - \nu_{k+1; i} - \min\{ \nu_{k; j} - \nu_{k+1; j} \mid j \geqslant i\},
	\end{aligned}\]
	and hence 	
\begin{align}\label{fulfill-type-1}
	\nu_{k+1; i}^\sigma = \nu_{k+1; i} + \min\{ \nu_{k; j} - \nu_{k+1; j} \mid j \geqslant i\}.
\end{align}
	Since $m_k \leqslant m_{k+1}$, we know by \eqref{eq: operation-sub-k+1} that
	\[\nu_{k+1; i}' = \nu_{k+1; i} + \min\{0, \nu_{k; j} - \nu_{k+1; j} \mid j \geqslant i\}.\]
	Combine them and we obtain
	\[\nu_{k+1; i}' = \min\{\nu_{k+1; i}, \nu_{k+1; i}^\sigma\}.\]
	
	According to \eqref{type}
	\[ L^\sigma_{k+1, i} + L^\sigma_{k, i-1} = L_{k+1, i} + L_{k, i-1}\]
	and \eqref{fulfill-1} we know
	\[L^\sigma_{k, i}= L_{k, i} + \min\{ L_{k+1, j} - L_{k,j} \mid j \geqslant i+1\}.\]
	Then
	\[\begin{aligned}
	\nu_{k; 0}^\sigma - \nu_{k; i}^\sigma = &
	\nu_{k; 0} - \nu_{k; i} + \min\{ \nu_{k+1; 0} - \nu_{k+1; j} - \nu_{k;0} + \nu_{k; j} \mid j \geqslant i+1\}\\
	= &
	\nu_{k+1; 0} - \nu_{k; i} + \min\{ \nu_{k; j} - \nu_{k+1; j} \mid j \geqslant i+1\},
	\end{aligned}\]
	and hence
\begin{equation}\label{fulfill-type-2}
	\nu_{k; i}^\sigma = \nu_{k; i} - \min\{ \nu_{k; j} - \nu_{k+1; j} \mid j \geqslant i+1\}.
\end{equation}
	We also know by \eqref{eq: operation-sub-k} that
	\[\nu_{k; i}' = \nu_{k; i} - \min\{0, \nu_{k; j} - \nu_{k+1; j} \mid j \geqslant i+1\}.\]
	Combine them and we obtain
	\[\nu_{k; i}' = \max\{\nu_{k; i}, \nu_{k; i}^\sigma\}.\]
	Now the conclusion follows.
\end{proof}


\section{Proof of sufficiency}\label{sufficiency}
	Denote $\mathrm P(\nu_1, \cdots, \nu_r; \underline p)$ to be the following proposition:
	if $\underline p = (p_1, \cdots, p_r) \in \MR$ lies in all $\mathrm B^\sigma(i)$ and $\mathrm C^\sigma(i, i+1)$, then $A_{\q_{\underline p}}(\lambda_{\underline p})$ is non-zero.
	This is the sufficiency part of Theorem \ref{non-vanishing}.
	According to the conclusion in section \ref{swap-tableau}, $\mathrm P(\nu_1, \cdots, \nu_r; \underline p)$ is equivalent to $\mathrm P(\nu_1^\sigma, \cdots, \nu_r^\sigma; \underline p^\sigma)$, $\forall \sigma \in \Sigma_r$,
	so it suffices to prove $\mathrm P(\nu_1, \cdots, \nu_r; \underline p)$ for an ``appropriate'' arrangement $\nu = (\nu_1, \cdots, \nu_r)$.
	
	Take $\nu_r \in \mathrm{Seg}_\psi$ that minimizes $\ed(\nu_{r})$ and then $\ed(\nu_r) - \bg(\nu_r)$;
	it is also determined by the property that $\forall i<r$, either $\nu_i > \nu_r$, or $\nu_i \subset \nu_r$.
	If $\forall i<r$, $\nu_i > \nu_r$, then we would prove $\mathrm P(\nu_1, \cdots, \nu_{r-1}; \underline p^{r-1}) \Rightarrow \mathrm P(\nu_1, \cdots, \nu_{r}; \underline p)$, 
	where $\underline p^{r-1} = (p_1, \cdots, p_{r-1})$ consists of first $r-1$ components of $\underline p$.
	This is Lemma \ref{lem: easy-induction}; it's an easy case.
	
	If $\nu_r$ contains other segments in $\mathrm{Seg}(\psi)$, then take $\nu_{r-1}$ that minimizes  $\bg(\nu_{r-1})$, and then $\bg(\nu_{r-1}) - \ed(\nu_{r-1})$;
	it is also determined by the property that $\forall i<r-1$, either $\nu_i > \nu_{r-1}$, or $\nu_i \supset \nu_{r-1}$.
	In this case, we would prove $\mathrm P(\nu_1, \cdots, \nu_{r-1}; \underline p^{r-1}) \wedge \mathrm P(\nu_1^\sigma, \cdots, \nu_{r-1}^\sigma; \underline p^{\sigma, r-1})\Rightarrow \mathrm P(\nu_1, \cdots, \nu_{r}; \underline p)$, 
	where $\sigma = (r-1, r) \in \Sigma_r$, and $\underline p^{\sigma, r-1}$ consists of first $r-1$ components of $\underline p^\sigma$.
	This is Lemma \ref{lem: general-induction}.
	
	$\mathrm P(\nu_1, \cdots, \nu_r; \underline p)$ follows from the above inductions;
	its starding point, case $r=2$, is exactly the sufficient part of Lemma \ref{non-vanish-2}.
	Here we scketch
	
\begin{proof}[Proof of Lemma \ref{non-vanish-2}]
	It suffices to show for $\underline p \in \mathrm B(1) \cap \mathrm B(2)$ that, 
	$A_{\q_{\underline p}}(\lambda_{\underline p}) \not = 0$ iff $\underline p \in \mathrm C(1, 2)$;
	recall $\MR \to \Pi_\psi(G) \sqcup \{0\}$ in subsection \ref{subsec: transition} that
	those $\underline p$ outside $\mathrm B(1) \cap \mathrm B(2) = \mathcal D(\psi)$ are sent to zero representation.
	According to Trapa's criterion, $A_{\q_{\underline p}}(\lambda_{\underline p}) \not = 0$ iff $\overlap(S_1, S_2) \geqslant \sing(\nu_1, \nu_2)$;
	$S_1 \sqcup S_2$ is the Young diagram constructed from $\underline p$ as in subsection \ref{construct-tableau}.
	Lemma \ref{overlap-p} has calculated $\overlap(S_1, S_2) = \min\{p_1, q_2\} + \min\{q_1, p_2\}$,
	so the $\overlap \geq \sing$ here is exactly $\mathrm C(1, 2)$.
\end{proof}

\subsection{The easy case}
	In this subsection we prove the following lemma. 
\begin{lem}\label{lem: easy-induction}
	If $\forall i<r$, $\nu_i >\nu_r$, then $\mathrm P(\nu_1, \cdots, \nu_{r-1}; \underline p^{r-1}) \Rightarrow \mathrm P(\nu_1, \cdots, \nu_{r}; \underline p)$. 
	Recall $\underline p^{r-1}$ consists of the first $r-1$ components of $\underline p$.
\end{lem}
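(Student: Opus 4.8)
The plan is to pass to Trapa's combinatorics and peel off the last skew column, using $\mathrm P(\nu_1,\dots,\nu_{r-1};\underline p^{r-1})$ as the inductive hypothesis.

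First I would exploit admissibility. Since every $\nu_i$ with $i<r$ strictly precedes $\nu_r$, any admissible arrangement of $\Seg_\psi$ must place $\nu_r$ in the last slot; hence $\sigma(r)=r$ for all $\sigma\in\Sigma_r$, the group $\Sigma_r$ is the copy of $\Sigma_{r-1}$ acting on the first $r-1$ slots, and every transition map $\phi_\sigma^\tau$ fixes the $r$-th coordinate. Therefore the subfamily of conditions $\mathrm B^\sigma(i),\mathrm C^\sigma(i,i+1)$ on $\underline p$ indexed by slots $\le r-1$ is exactly the family of conditions defining $\mathrm P(\nu_1,\dots,\nu_{r-1};\cdot)$ applied to $\underline p^{r-1}$. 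By the inductive hypothesis this gives $A_{\q_{\underline p^{r-1}}}(\lambda_{\underline p^{r-1}})\ne 0$; combinatorially, Trapa's algorithm applied to the $(\nu_1,\dots,\nu_{r-1})$-quasitableau $S^{r-1}_\anti$ built from $\underline p^{r-1}$ does not produce $0$.

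Next I would peel off $S_r$. The $\nu$-quasitableau $S_\anti$ built from $\underline p$ is $S^{r-1}_\anti$ with the skew column $S_r$ (filled by $\nu_r$, with $p_r$ plusses and $q_r$ minuses) appended. Trapa's operation only ever acts on an adjacent pair of skew columns, and for $k\le r-2$ such a pair lies inside $S^{r-1}$, so neither the operation nor its overlap$\,\ge\,$singularity test sees $S_r$; moreover an elementary operation among the first $r-1$ columns preserves the multiset of beginnings and the multiset of ends of $\nu_1,\dots,\nu_{r-1}$, each of which exceeds $\bg(\nu_r)$ resp.\ $\ed(\nu_r)$, so whatever segment occupies the $(r-1)$-st slot at any stage still strictly precedes $\nu_r$; hence the pair $(S_{r-1},S_r)$ always falls under Case 1 of Definition \ref{T-operation}. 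Since Trapa's algorithm computes a well-defined equivalence class, I may run it in the order: first bring $S^{r-1}_\anti$ to nice range (this succeeds by the previous paragraph, yielding an antitableau $\widetilde S^{r-1}$ with segments $\widetilde\nu_1\ge\cdots\ge\widetilde\nu_{r-1}$), then treat the pair $(\widetilde S_{r-1},S_r)$. Being Case 1, the algorithm now either does nothing — in which case $S_\anti$ has been turned into an antitableau and $A_{\q_{\underline p}}(\lambda_{\underline p})\ne 0$ — or returns $0$; by Lemma \ref{overlap-geq-sing}(1) the first alternative occurs precisely when $\overlap(\widetilde S_{r-1},S_r)\ge \sing(\widetilde\nu_{r-1},\nu_r)$, equivalently $\widetilde\nu_{r-1;i}\ge\nu_{r;i}$ for all $i$.

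It then remains to deduce $\widetilde\nu_{r-1;i}\ge\nu_{r;i}$ for all $i$ from the hypotheses, and this is the crux. The idea is that the last-column type $\widetilde\nu_{r-1;i}$ of the nice-range antitableau is the pointwise maximum, over $\sigma\in\Sigma_r$, of the last-column types $\nu^\sigma_{r-1;i}$ of the quasitableaux built from the related parameters $\underline p^\sigma$ — obtained by iterating Lemma \ref{operation-swap}, which expresses a single Trapa operation on an adjacent pair as exactly such a pointwise max/min of two related parametrizations, together with the decomposition of admissible permutations into transpositions from Lemma \ref{decompose-of-perm}. Granting this, it suffices to produce one $\sigma\in\Sigma_r$ with $\nu^\sigma_{r-1;i}\ge\nu_{r;i}$ for all $i$; since $\nu_{\sigma(r-1)}$ still weakly precedes $\nu_r$, Lemma \ref{overlap-geq-sing}(1) rewrites this as $\overlap(S^\sigma_{r-1},S_r)\ge\sing(\nu_{\sigma(r-1)},\nu_r)$, and by Lemma \ref{overlap-p} this is the hypothesis $\mathrm C^\sigma(\sigma(r-1),r)$, which holds for every $\sigma$. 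The delicate bookkeeping — hence the main obstacle — is the identification of $\widetilde\nu_{r-1;i}$ with this maximum: one must track along the sorting how the shape of $S_r$ (which depends on $S^{\sigma,r-1}$) and the last-slot segment $\nu^\sigma_{r-1}$ evolve, and verify that the relevant inequalities only improve, which is where the type identities of section \ref{swap-tableau} (Lemmas \ref{swap-for-nu-fill} and \ref{operation-swap}) do the real work.
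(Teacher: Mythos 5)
Your overall plan is the paper's: use the inductive hypothesis to bring $\bigsqcup_{k<r}S_k$ to a nice-range antitableau, observe that every admissible $\sigma$ fixes the last slot so $\Sigma_r=\Sigma_{r-1}$, note that the final pair falls under Case~1 of Trapa's operation, and reduce to checking $\widetilde\nu_{r-1;i}\geq\nu_{r;i}$ via Lemma~\ref{overlap-geq-sing}.

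However, the claim you identify as the crux is stated with the wrong direction. You assert that the last-column type of the nice-range antitableau is the pointwise \emph{maximum} over $\sigma$ of $\nu^\sigma_{r-1;i}$, but the correct statement --- Proposition~\ref{type-of-last} in the paper --- is that it is the pointwise \emph{minimum}. This is already visible in the base case $r=2$: Lemma~\ref{operation-swap} assigns the minimum $\min\{\nu_{k+1;i},\nu^\sigma_{k+1;i}\}$ to the \emph{second} (i.e.\ last) slot and the maximum to the first, which is consistent with the elementary operation pushing the smaller beginnings/ends into the rightmost column. Because of this reversal, your inference ``it suffices to produce one $\sigma$ with $\nu^\sigma_{r-1;i}\geq\nu_{r;i}$'' does not stand as written; with the minimum one needs the inequality for \emph{every} $\sigma$. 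You do in fact note, via Lemma~\ref{overlap-p} and $\mathrm C^\sigma(\sigma(r-1),r)$, that it holds for every $\sigma$, so after correcting max to min the conclusion follows and the argument coincides with the paper's. The remaining gap is that the type formula for the last column is only sketched here; proving it (by induction, through Lemma~\ref{operation-swap}) is the content of Proposition~\ref{type-of-last} and is not a triviality, since one must keep track of which permutations $\sigma$ can actually appear as the sorting proceeds.
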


	Denote the compatible partition of $\nu$-quasitableau constructed from $\underline p^\sigma \in \MR^\sigma$ by $S^\sigma = \bigsqcup S_k^\sigma$,
	and the type of $\nu$-filling on $S_k^\sigma$ by $\nu_{k; i}^\sigma$.
	We have seen in section \ref{swap-tableau} that the shapes of diagram $S^\sigma$, $\sigma \in \Sigma_r$ coincide.
	The following general result is also necessary for latter subsections.

\begin{prop}\label{type-of-last}
	Suppose the $\nu$-quasitableau on $S=\bigsqcup_{k \leqslant r} S_k$ is equivalent to a non-zero $\nu$-antitableu with compatible partition $S= \bigsqcup_{k \leqslant r} S_k'$,
	then the type of $\nu$-filling on $S_r'$ is give by  
	\[\nu_{r; i}' = \min\{ \nu_{r; i}^\sigma \mid \sigma \in \Sigma_r\} .\]
\end{prop}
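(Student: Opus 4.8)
The plan is to prove the identity by induction on $r$, drawing on two inputs from the earlier sections: the explicit \textbf{max/min} description of Trapa's operation on a two-column skew diagram (Lemma~\ref{operation-swap}), and the equivalence of the $\nu$-quasitableaux attached to related parameters (the Proposition concluding Subsection~\ref{subtle}).

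First I would note that both sides of the asserted identity are well defined. Since the parameters $\underline p^\sigma$, $\sigma \in \Sigma_r$, are pairwise related, their $\nu$-quasitableaux are all equivalent; hence Trapa's algorithm produces from any of them the same distinguished $\nu$-antitableau $S_\anti'$ with the same compatible partition $S = \bigsqcup_k S_k'$, so $\nu_{r;i}'$ is intrinsic, while each $\nu_{r;i}^\sigma$ is read off directly from $\underline p^\sigma$. The base cases $r \leqslant 2$ are immediate: for $r = 2$ with $\nu_1 > \nu_2$ one has $\Sigma_2 = \{\Id\}$ and $S_2' = S_2$ by Case~1 of Definition~\ref{T-operation}, whereas if $\nu_1, \nu_2$ are in containment then $\Sigma_2 = \{\Id, (1,2)\}$ and Lemma~\ref{operation-swap} gives exactly $\nu_{2;i}' = \min\{\nu_{2;i}, \nu_{2;i}^{(1,2)}\}$.

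For the inductive step I would use the equivalence above to replace $\nu$ by an appropriate arrangement in which $\nu_r$ is chosen minimal for the precedence order (as in Section~\ref{sufficiency}), so that every $\nu_i$ with $i < r$ either precedes $\nu_r$ or is in containment with it. Running Trapa's algorithm on such an arrangement, the skew column $S_r$ is altered only by operations on the adjacent pair whose second member is the current last column, and by the formulas \eqref{operation-sub}--\eqref{operation-sup} such an operation replaces the type of that column by $\nu_{\bullet;i} + \min\{0, \Delta_i\}$, hence can only weakly decrease it. Combining this with the inductive hypothesis applied to the first $r - 1$ columns — whose admissible rearrangements are precisely the $\sigma \in \Sigma_r$ fixing the last position, $\nu_r$ being minimal — and treating separately the subcase where every $\nu_i > \nu_r$ (so $S_r$ is untouched and $\nu_{r;i}' = \nu_{r;i}$; the situation of Lemma~\ref{lem: easy-induction}) and the subcase $\nu_{r-1} \subset \nu_r$ (where the relevant adjustment is governed by Lemma~\ref{operation-swap} relating $\nu$ and $\nu^{(r-1,r)}$, together with the type formulas \eqref{fulfill-type-1}--\eqref{fulfill-type-2}), I expect to identify $\nu_{r;i}'$ with the minimum of $\nu_{r;i}^\sigma$ over those $\sigma \in \Sigma_r$ with $\sigma(r) = r$.

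It then remains to enlarge the index set of the minimum to all of $\Sigma_r$, i.e. to check that an admissible arrangement ending in a different minimal segment still has last column type at least $\nu_{r;i}'$ for every $i$. For this I would connect an arbitrary $\sigma \in \Sigma_r$ to one fixing the last position through a chain of adjacent admissible transpositions (Lemma~\ref{decompose-of-perm}) and propagate the inequality along the chain: a transposition away from the last two slots does not touch $S_r$, while one involving them is controlled by the ``$\min$'' appearing in the $(k{+}1)$-slot of Lemma~\ref{operation-swap}, which keeps the last column type from dropping below the fixed value $\nu_{r;i}'$ of the distinguished antitableau. I expect this reconciliation to be the main obstacle: the arrangements that feed the minimum generally have both a different last segment and a differently shaped block of first $r - 1$ columns, so one must track, component by component in $i$, which $\sigma$ attains the minimum and verify that the explicit operation formulas and the inductive hypothesis stay mutually compatible throughout.
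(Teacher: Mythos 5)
There is a genuine misstatement in your inductive step. You claim to identify $\nu_{r;i}'$ with $\min\{\nu_{r;i}^\sigma \mid \sigma(r) = r\}$ before enlarging the index set. But whenever $\sigma(r) = r$, the last skew column $S_r^\sigma$ is again filled by $\nu_r$, and by Proposition~\ref{swap} the sign-filling on $\bigsqcup_{k<r}S_k^\sigma$ is equivalent to that on $\bigsqcup_{k<r}S_k$; hence $S_r^\sigma$ has the same shape as $S_r$ and $\nu_{r;i}^\sigma = \nu_{r;i}$ identically. Your intermediate minimum therefore collapses to the single value $\nu_{r;i}$, which is strictly larger than $\nu_{r;i}'$ whenever the last adjacent pair actually triggers Case~2/3 of Trapa's operation. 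The nontrivial terms in the minimum must come from $\sigma$ that move $\nu_r$ \emph{out} of the last slot: after substituting the induction hypothesis $\mu_{r-1;j} = \min\{\nu_{r-1;j}^\sigma \mid \sigma \in \Sigma_{r-1}\}$ into the operation formula \eqref{eq: operation-sub-k+1} for $T_{r-1}\sqcup S_r$, one has to rewrite $\nu_{r;i} - \nu_{r;j} + \nu_{r-1;j}^\sigma$ via \eqref{fulfill-type-1} as $\nu_{r;i}^{\sigma\circ(r-1,r)}$, and $\sigma\circ(r-1,r)$ sends $r-1 \mapsto r$. So the correct intermediate identity is $\nu_{r;i}' = \min\bigl\{\nu_{r;i},\ \nu_{r;i}^\tau \mid \tau(r-1)=r\bigr\}$.

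Once that is repaired, your third-paragraph program of propagating an inequality $\nu_{r;i}^\sigma \geqslant \nu_{r;i}'$ along a chain of transpositions becomes unnecessary: the paper closes by two elementary observations — that $\nu_{r;i}^\sigma$ depends only on $\sigma(r)$, and that for any $\sigma$ with $\sigma^{-1}(r)<r$ the permutation $\tau = \sigma\circ(\sigma^{-1}(r),\ldots,r-1)$ lies in $\Sigma_r$, satisfies $\tau(r)=\sigma(r)$, and has $\tau(r-1)=r$. Hence $\{\nu_{r;i}\}\cup\{\nu_{r;i}^\tau \mid \tau(r-1)=r\}$ already exhausts $\{\nu_{r;i}^\sigma \mid \sigma\in\Sigma_r\}$, and no monotonicity argument along chains is needed. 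This also avoids the delicate component-by-$i$ tracking you were worried about.
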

\begin{proof}
	Prove by induction on $r$. 
	Case $r=2$ is exactly Lemma \ref{operation-swap}, and then consider $r>2$.
	Since $\underline p$ is assumed to parametrize a non-zero representation, so is $\underline p^{r-1}$.
	Now the $\nu$-quasitableau on $S^{r-1} = \bigsqcup_{k<r} S_k$ is equivalent to  a non-zero $\nu$-antitableau with compatible partition $\bigsqcup_{k<r} T_k.$
	By induction hypothesis on case $r-1$, the type of $\nu$-filling on $T_{r-1}$ is given by
	\[\mu_{r-1; i} = \min\{ \nu_{r-1; i}^\sigma \mid \sigma \in \Sigma_{r-1} \}.\]
	\textbf{We may assume} $\forall i<r,$ either $\nu_i> \nu_r$ or $\nu_i \subset \nu_r$,
	so there is an injection $\Sigma_{r-1} \to \Sigma_r$ with image
	\[\{ \sigma \in \Sigma_r \mid \sigma(r) = r\};\]
	identify these two sets.
	
	Apply Trapa's operation to the $\nu$-filling on $T_{r-1} \sqcup S_r$, 
	and get a new partition $T_{r-1}^{(1)} \sqcup S_r^{(1)}$. 
	The segment filling into $S_r^{(1)}$ has minimal beginnings and ends, 
	so further adjustment on $\left(\bigsqcup_{k<r-1} T_k\right) \sqcup T_{r-1}^{(1)} \sqcup S_r^{(1)}$ according to Trapa's algorithm would never change $S_r^{(1)}$ any more.
	Consequently, this $S_r^{(1)}$ and its $\nu$-filling are consistent with the final $S_r'$,
	and then we konw the type of $\nu$-filling on $S_r'$ by $T_{r-1} \sqcup S_r \sim T_{r-1}^{(1)} \sqcup S_r'$ and \eqref{eq: operation-sub-k+1} as follows:
	\[\begin{aligned}
	\nu_{r;i}' = &
	\min\{ \nu_{r; i}, \nu_{r; i} - \nu_{r; j} + \mu_{r-1; j} \mid j \geqslant i \}\\
	= &
	\min\{\nu_{r; i}, \nu_{r; i} - \nu_{r; j} + \nu_{r-1; j}^\sigma \mid \sigma \in \Sigma_{r-1},  j \geqslant i \}. 
	\end{aligned}\]
	
	For fixed $\sigma \in \Sigma_{r-1}$, if $\nu_{r-1}^\sigma > \nu_r^\sigma = \nu_r$, then by the fact that $\underline p^\sigma$ parametrizes non-zero representation and Trapa's algorithm we know 
	\[\overlap(S_{r-1}^\sigma, S_r^\sigma) \geqslant \sing(\nu_{r-1}^\sigma, \nu_r^\sigma).\]
	It follows further by Lemma \ref{overlap-geq-sing} that
	\[\nu_{r-1; j}^\sigma \geqslant \nu_{r; j}^\sigma = \nu_{r; j},\]
	so
	\[\min\{\nu_{r; i}, \nu_{r; i} - \nu_{r; j} + \nu_{r-1; j}^\sigma \mid  j \geqslant i \} = \nu_{r; i}.\]
	If $\nu_{r-1}^\sigma \subset \nu_r^\sigma = \nu_r$, then by \eqref{fulfill-type-1},
	\[\min\{\nu_{r; i}, \nu_{r; i} - \nu_{r; j} + \nu_{r-1; j}^\sigma \mid  j \geqslant i \}
	=
	\min\{\nu_{r; i}, \nu_{r; i}^{\sigma \circ (r-1, r)}\}. \]
	Now we obtain
	\[\begin{aligned}
	\nu_{r;i}' = &
	\min\{\nu_{r; i}, \nu_{r; i} - \nu_{r; j} + \nu_{r-1; j}^\sigma \mid  j \geqslant i, \sigma \in \Sigma_{r-1}\}\\
	=&
	\min\{ \nu_{r;i}, \nu_{r; i}^\sigma \mid \sigma(r-1) = r\}.
	\end{aligned}\]
	
	The conclusion would follows by the following two observations:
	\begin{itemize}
	\item if $\sigma, \tau \in \Sigma_r$, such that $\sigma(r) = \tau(r)$, then $\nu_{r; i}^\sigma = \nu_{r; i}^\tau$.
	\item if $\sigma \in \Sigma_r$ has $\sigma^{-1}(r) < r$, then $\forall \sigma^{-1}(r) < k < r$, $\nu_k^\sigma$ is contained in $\nu_r$ rather than precedes it, and there is $\tau = \sigma \circ ( \sigma^{-1}(r), \cdots, r-1) \in \Sigma$ such that $\tau(r) = \sigma(r)$, and $\tau(r-1) = r$. 
	\end{itemize}
	These observations imply that $\nu_{r; i}$ and $\nu_{r; i}^\sigma$ with $\sigma(r-1) = r$ exhaust $\{\nu_{r; i}^\sigma \mid \sigma \in \Sigma\}$.
\end{proof}

	\textbf{Now assume} $\forall i<r$, $\nu_i > \nu_r$, and we will show $\mathrm P(\nu_1, \cdots, \nu_{r-1}; \underline p^{r-1}) \Rightarrow \mathrm P(\nu_1, \cdots, \nu_{r}; \underline p)$.
	If $\underline p \in \MR$ lies in all $\mathrm B^\sigma(i)$ and $\mathrm C^\sigma(i, i+1)$, then so does $\underline p^{r-1}$.
	Since we have $\mathrm P(\nu_1, \cdots, \nu_{r-1}; \underline p^{r-1})$, 
	the $\nu$-quasitableau on $S^{r-1} = \bigsqcup_{k< r} S_k$ is equivalent to a $\nu$-antitableau with compatible partition $S^{r-1} = \bigsqcup_{k<r} T_k$.
	According to Proposition \ref{type-of-last}, the type of $\nu$-filling on $T_{r-1}$ is given by
	\[\mu_{r-1; i} = \min\{ \nu_{r-1; i}^\sigma \mid \sigma \in \Sigma_{r-1} \}.\]
	
	By the assumption that $\forall i< r$, $\nu_i > \nu_r$, we have $\forall \sigma \in \Sigma_{r-1}$, $\sigma(r) = r$; that is, $\Sigma_{r-1} = \Sigma_r$.
	More importantly, the segment $\mu_{r-1}$ filling $T_{r-1}$ precedes $\nu_r$,
	so it suffices that $T_{r-1} \sqcup S_r$ is not equivalent to $0$.
	According to Trapa's operation and Lemma \ref{overlap-geq-sing}, this means $\mu_{r-1; i} \stackrel ?\geqslant \nu_{r; i}$.
	
	For each $\sigma \in \Sigma_{r-1}$, since $\underline p \in \mathrm C^\sigma(r-1, r)$,
	$\overlap(S_{r-1}^\sigma , S_r^\sigma) \geqslant \sing(\nu_{r-1}^\sigma, \nu_r^\sigma)$ according to Lemma \ref{overlap-p}.
	Then by Lemma \ref{overlap-geq-sing}, $\nu_{r-1; i}^\sigma \geqslant \nu_{r;i}$ since
	we have assumed $\nu_{r-1}^\sigma > \nu_r^\sigma = \nu_r$.
	Consequently, $\mu_{r-1; i} =\min\{\nu_{r-1; i}^\sigma \} \geqslant \nu_{r; i}$.
	Hence, we obtain $\mathrm P(\nu_1, \cdots, \nu_{r-1}; \underline p^{r-1}) \Rightarrow \mathrm P(\nu_1, \cdots, \nu_{r}; \underline p)$, and
	Lemma \ref{lem: easy-induction} is finished.

\subsection{The remaining case}\label{general-case}
	In the rest of this section, we aim to prove the following.
\begin{lem}\label{lem: general-induction}
	Suppose
	\begin{itemize}
	\item $\forall i< r+1$, either $\nu_i> \nu_{r+1}$ or $\nu_i \subset \nu_{r+1}$;
	\item $\forall i<r$, either $\nu_i > \nu_r$ or $\nu_i \supset \nu_r$.
	\end{itemize}
	Then for  $\sigma = (r, r+1)\in \Sigma_r$, $\underline p = (p_1, \cdots, p_{r+1}) \in \MR$, there is
	$\mathrm P(\nu_1, \cdots, \nu_r; \underline p^r) \wedge 
	\mathrm P(\nu_1^\sigma, \cdots, \nu_r^\sigma; \underline p^{\sigma, r}) \Rightarrow 
	\mathrm P(\nu_1, \cdots, \nu_{r+1}; \underline p)$.
	Recall $\underline p^r$, $\underline p^{\sigma, r}$ consist of first $r$ components of $\underline p$, $\underline p^\sigma$.
\end{lem}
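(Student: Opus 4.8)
We want to show that the $\nu$-quasitableau attached to $\underline p$ (built as in Section \ref{construct-tableau} on the shape $S = S^r \sqcup S_{r+1}$, with $S^r = \bigsqcup_{k \leqslant r} S_k$) reduces, under Trapa's algorithm, to a non-zero $\nu$-antitableau. First I would restrict to the first $r$ segments. Since $\nu_{r+1}$ has the globally minimal end, it precedes no other segment, so it can be placed last in every admissible arrangement; hence the admissible permutations of $(\nu_1, \dots, \nu_r)$ embed into those of $(\nu_1, \dots, \nu_{r+1})$ by fixing the last position, and consequently the truncation $\underline p^r$ inherits all the conditions $\mathrm B^\tau(i), \mathrm C^\tau(i,i+1)$ of the subtuple $(\nu_1, \dots, \nu_r)$. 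Likewise --- using in addition that $\nu_r$ and $\nu_{r+1}$ are forced into a containment relation (admissibility of $\nu$ excludes $\nu_r < \nu_{r+1}$, admissibility of $\nu^\sigma$ excludes $\nu_{r+1} < \nu_r$), and that the extremal properties of $\nu_{r+1}$ then give $\nu_r \subset \nu_{r+1}$ --- the truncation $\underline p^{\sigma,r}$ inherits the corresponding conditions of the subtuple $(\nu_1^\sigma, \dots, \nu_r^\sigma) = (\nu_1, \dots, \nu_{r-1}, \nu_{r+1})$. So both hypotheses apply, and the $\nu$-quasitableaux on $S^r$ and on $S^{\sigma,r}$ reduce to non-zero antitableaux whose last skew columns $T_r, T_r^\sigma$ carry segments $\mu_r, \mu_r^\sigma$ with types given by Proposition \ref{type-of-last}.

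Next I would locate the final operation. Because $\nu_{r+1}$ is last and precedes nothing, the fully reduced tableau is obtained by first adjusting the first $r$ columns to $\bigsqcup_{k \leqslant r} T_k$ and then applying Trapa's operation to $T_r \sqcup S_{r+1}$; this last step fixes the final skew column $S_{r+1}'$ once and for all. Trapa's elementary operations preserve the multiset of beginnings and the multiset of ends, and a $\nu$-antitableau equipped with a compatible partition has its skew-column segments weakly decreasing (Lemma \ref{anti-by-type}, read off at $i = 0$ and at $i$ large); hence $\bg(\mu_r) = \bg(\nu_r)$ and $\ed(\mu_r) = \min_{i \leqslant r} \ed(\nu_i)$, so $\mu_r \subseteq \nu_{r+1}$, and symmetrically $\nu_r \subseteq \mu_r^\sigma$. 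Thus the last operation is of Case~3 type on $(\mu_r, \nu_{r+1})$ --- equivalently, in the $\sigma$-order, of Case~2 type on $(\mu_r^\sigma, \nu_r)$ --- and by Lemma \ref{overlap-geq-sing} it avoids the symbol $0$ exactly when $\overlap(T_r, S_{r+1}) \geqslant \#(\mu_r \cap \nu_{r+1})$, i.e.\ when $\mu_{r;i} - \bg(\mu_r) \geqslant \nu_{r+1;i} - \bg(\nu_{r+1})$ for every $i$.

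To prove this overlap inequality I would expand $\mu_{r;i}$ via Proposition \ref{type-of-last} as a minimum of the types $\nu_{r;i}^\tau$ of the un-reduced last skew columns over admissible reorderings $\tau$ of $(\nu_1, \dots, \nu_r)$ (and similarly for $\mu_r^\sigma$), and then feed in the hypotheses: each relevant $\mathrm C^\tau$-condition becomes, by Lemma \ref{overlap-p}, an overlap bound $\overlap(S_r^\tau, S_{r+1}^\tau) \geqslant \#(\nu_r \cap \nu_{r+1})$, hence by Lemma \ref{overlap-geq-sing} a componentwise comparison among the $\nu_{r;j}^\tau$ and the $\nu_{r+1;j}$. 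The bookkeeping mirrors the proof of Lemma \ref{lem: easy-induction} --- splitting on whether $\nu_{r-1}^\tau > \nu_r^\tau$ or $\nu_{r-1}^\tau$ is contained in $\nu_r^\tau$ --- but with the $\nu_r \leftrightarrow \nu_{r+1}$ swap interleaved: when the witnessing $\tau$ leaves $\nu_r$ and $\nu_{r+1}$ adjacent one uses $\mathrm C^\tau(r,r+1)$ directly, and otherwise one passes the bound through the swap using the identities \eqref{fulfill-type-1}--\eqref{fulfill-type-2} together with the second hypothesis $\mathrm P(\nu_1^\sigma,\dots,\nu_r^\sigma;\underline p^{\sigma,r})$.

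The hard part is precisely this final paragraph. The segment $\mu_r$ is produced by a \emph{double} reduction --- first inside $(\nu_1,\dots,\nu_r)$, then against $\nu_{r+1}$ --- so the needed bound $\mu_{r;i} \geqslant \nu_{r+1;i} - \bg(\nu_{r+1}) + \bg(\nu_r)$ is visible from neither induction hypothesis in isolation; one must play the two reductions off against each other through the swap lemma of Section \ref{swap-tableau}, tracking for each index $i$ which admissible permutation $\tau$ certifies the corresponding inequality. This combinatorial accounting, rather than any conceptual difficulty, is the crux.
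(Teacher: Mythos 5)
Your proposal tracks the paper's two-step architecture (first reduce $S^r$, then attach $\nu_{r+1}$), but it contains a genuine gap in the reduction step in addition to the main combinatorial step that you honestly flag as unfinished.

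The gap: you claim that the whole process avoids the formal symbol $0$ exactly when $\overlap(T_r, S_{r+1}) \geqslant \#(\mu_r \cap \nu_{r+1})$, i.e.\ when a single overlap inequality holds for the final adjacent pair. That inequality is necessary, and it does fix $S_{r+1}'$ once and for all. But the operation on $T_r \sqcup S_{r+1}$ replaces $\mu_r$ by a new segment $\mu_r^{(1)}$ whose beginning has been pushed up to $\bg(\nu_{r+1})$; consequently $\mu_r^{(1)}$ can fall into containment with $\mu_{r-1}$, triggering a further Trapa operation, which produces $\mu_{r-1}^{(1)}$ with a still larger beginning, and so on down the chain. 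Each of these cascading operations is a priori capable of returning $0$. This is precisely why the paper does not settle for a single overlap bound: Proposition \ref{equiv-for-non-zero} packages the non-vanishing of $T \sqcup S_{r+1}$ as the \emph{iterated} inequalities \eqref{upper-bound} (running over decreasing chains of indices), and its sufficiency direction is a nontrivial argument showing that once \eqref{upper-bound} holds, the cascade terminates in a genuine antitableau without hitting $0$. Your single inequality is strictly weaker than \eqref{upper-bound}, so your ``exactly when'' does not hold; you would at minimum need to re-derive the cascade stability, which is the content of the end of the proof of Proposition \ref{equiv-for-non-zero}.

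On the final step you write that the ``combinatorial accounting, rather than any conceptual difficulty, is the crux,'' and do not carry it out. That accounting is exactly the paper's Proposition \ref{control} and Lemma \ref{key}: one converts the induction hypothesis $\mathrm P(\nu_1^\sigma,\dots,\nu_r^\sigma;\underline p^{\sigma,r})$, through the swap identities \eqref{fulfill-type-1}--\eqref{fulfill-type-2}, into the family of bounds \eqref{upper-bound-have} on $\nu_{r+1;i}$, and then compares it chain-by-chain with the target bounds \eqref{upper-bound-want} coming from the reduced types $\mu_{r-s;j}'$. Note also that since $\mu_{r;i}$ in your framing is $\min_\tau \nu^\tau_{r;i}$ by Proposition \ref{type-of-last}, you would need the lower bound for \emph{every} admissible $\tau$, not just one $\tau$ per index $i$; the phrase ``tracking for each index $i$ which admissible permutation $\tau$ certifies the corresponding inequality'' suggests the weaker (and wrong) existential reading. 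In sum: the outline is in the right direction and the initial observations ($\mu_r\subseteq\nu_{r+1}$, $\bg(\mu_r)=\bg(\nu_r)$, embedding of $\Sigma_r$ into $\Sigma_{r+1}$) are correct, but the proof as written does not close.
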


	\textbf{From now on we may assume $\nu= (\nu_1, \cdots, \nu_{r+1})$ satisfies the coondition of this lemma}.
	Denote by $S^\sigma = \bigsqcup S_k^\sigma$ the compatible partition of $\nu$-quasitableau given by $\underline p^\sigma \in \MR^\sigma$,
	and by $\nu_{k; i}^\sigma$ the type of $\nu$-filling on $S_k^\sigma$.
	Our proof mainly consists of two steps.
	\begin{enumerate}[fullwidth, itemindent=2em, label=\textbf{Step \arabic*.}]
	\item Suppose $\underline p^r$ parametrize a non-zero representation 
	(alternatively, $\bigsqcup_{k\leqslant r} S_k$ is equivalent to a non-zero partition $\bigsqcup_{k \leqslant r} T_k$).
	Find an equivalent condition for $\underline p$ parametrizing a non-zero representation 
	(alternatively, $\left(\bigsqcup_{k \leqslant r} T_k\right) \sqcup S_{r+1}$ is not equivalent to $0$).
	It comes out to be an upper bound for $\nu_{r+1; i}$ (see Proposition \ref{equiv-for-non-zero}).
	\item Suppose $\underline p^r, \underline p^{\sigma, r}$ both parametrize non-zero representations.
	Prove the type $\nu_{r+1; i}$ arising from $\underline p$ satisfies the above upper bound condition (see Proposition \ref{upper-bound-control});
	then $\underline p$ also parametrizes a non-zero representation.
	\end{enumerate}
	After then, Lemma \ref{lem: general-induction} follows easily.
	If $\underline p \in \MR$ lies in all $\mathrm B^\sigma(i), \mathrm C^\sigma(i, i+1)$, then so do $\underline p^r$ and $\underline p^{\sigma, r}$.
	By $\mathrm P(\nu_1, \cdots, \nu_r; \underline p^r)$ and $\mathrm P(\nu_1^\sigma, \cdots, \nu_r^\sigma; \underline p^{\sigma, r})$, we know $\underline p^r$ and $\underline p^{\sigma, r}$ both parametrize non-zero representations,
	then so do $\underline p$ according to the \textbf{Step 2} above.
	Hence we get $\mathrm P(\nu_1, \cdots, \nu_{r+1}; \underline p)$.

\subsection{An equivalent condition for non-zero condition}
	Let $T= \bigsqcup_{k<r} T_k$ be a compatible partition  of a $\mu$-antitableau,
	and $\mu_k$ be the segment filling into $T_k$.
	Due to Lemma \ref{anti-by-type}, the types of $T_k$ satisfy $\mu_{k; i} \geqslant \mu_{k+1; i}$.
	In particular, $\mu_1 \geqslant \cdots \geqslant \mu_{r-1}$.
\begin{prop}\label{equiv-for-non-zero}
	If there is a skew column $S_r$ filled by segment $\nu_r$ such that 
\begin{itemize}
	\item $T\sqcup S_r$ is a Young diagram, and
	\item $\mu_1, \cdots, \mu_h > \nu_r$, $\mu_{h+1}, \cdots, \mu_{r-1} \subset \nu_r$, 
\end{itemize}
	then the $\nu$-filling on $T \sqcup S_r$ is not equivalent to $0$ if and only if
\begin{subequations}\label{upper-bound}
	\begin{align}
	\nu_{r; i} \leqslant &
	\min\left\{ 
	\sum_{s=1}^{r-h-1} \mu_{r-s; j_{s-1}} - \mu_{r-s; j_s} + \mu_{h; j_{r-h-1}} \Bigm | 
	i = j_0 > \cdots > j_{r-h-1}
	\right \},
	\label{eq: upper-bound-a}\\
	\nu_{r; i} - \nu_{r; 0} \leqslant&  
	\min\left\{ 
	\sum_{s=1}^{t} \mu_{r-s; j_{s-1}} - \mu_{r-s; j_s} \Bigm | 
	i = j_0 > \cdots > j_t =0, 1 \leqslant t \leqslant r-h-1
	\right \}.
	\label{eq: upper-bound-b}
	\end{align}
\end{subequations}
\end{prop}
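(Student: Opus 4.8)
The plan is to run Trapa's algorithm on $T\sqcup S_r$ by hand and read off exactly when it outputs the formal zero. Since $T$ is already a $\mu$-antitableau with $\mu_1\geq\cdots\geq\mu_{r-1}$, and $\nu_r$ is preceded by $\mu_1,\dots,\mu_h$ but contained in $\mu_{h+1},\dots,\mu_{r-1}$, the only productive moves are repeated applications of the operation of Definition \ref{T-operation} to the \emph{moving} skew column, initially $S_r$, and the skew column immediately to its left; this pushes the moving column past $\mu_{r-1},\mu_{r-2},\dots,\mu_{h+1}$ one at a time. First I would record the structural facts that make this well posed: each such step is an instance of \textbf{Case 2} ($\text{left}\supset\text{moving}$), because by the elementary-operation description the moving segment keeps the beginning $\bg(\nu_r)$ and only lowers its end, so containment in the next $\mu_{r-s}$ persists (using $\mu_{k;i}\geq\mu_{k+1;i}$ from Lemma \ref{anti-by-type}); once the moving segment precedes $\mu_h$ it precedes $\mu_1,\dots,\mu_{h-1}$ as well, so the process terminates after exactly $r-h-1$ steps; and, provided no step produces $0$, the columns left behind together with $\mu_1,\dots,\mu_h$ then constitute a genuine $\nu$-antitableau, verified in the same way as in the proof of Proposition \ref{type-of-last} (each two-column step yields a local antitableau and the global ``precedes'' order among the columns is preserved). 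Consequently the $\nu$-filling on $T\sqcup S_r$ is equivalent to $0$ if and only if one of these $r-h-1$ operations, or the terminal comparison with $\mu_h$, falls in the case $\overlap<\sing$.

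Second, I would compute the type of the moving column along the way. Set $\nu^{(0)}_{r;i}:=\nu_{r;i}$ and let $\nu^{(s)}_{r;i}$ be the type of the moving segment after absorbing $\mu_{r-s}$; since at the $s$-th step the left column still carries its unmodified type $\mu_{r-s;\bullet}$ (it is only modified afterwards), formula \eqref{operation-sup}, specifically \eqref{eq: operation-sup-k+1}, gives the recursion
\[\nu^{(s)}_{r;i}=\nu^{(s-1)}_{r;i}+\min\Big\{0,\ \min_{j<i}\big(\mu_{r-s;j}-\nu^{(s-1)}_{r;j}\big)\Big\}.\]
Unwinding this over $s=1,\dots,r-h-1$ and classifying the resulting strictly decreasing index chains $i=j_0>j_1>\cdots$ according to whether a chain reaches the index $0$ before it stops, one obtains a closed formula for $\nu^{(s)}_{r;i}$ as the minimum of $\nu_{r;i}$ together with chain-sums of exactly the shape appearing in \eqref{upper-bound}, those chains that hit $0$ contributing a term carrying the difference $\nu_{r;0}-\nu_{r;i}$.

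Third, I would translate the non-vanishing conditions. By Lemma \ref{overlap-geq-sing}, the $s$-th (Case 2) step is non-vanishing iff $\nu^{(s-1)}_{r;\bullet}$ lies below $\mu_{r-s;\bullet}$ up to the fixed shift coming from the difference of ends, and the terminal step is non-vanishing iff $\nu^{(r-h-1)}_{r;i}\leq\mu_{h;i}$ for all $i$ (Case 1, where moreover the $\nu$-filling on that last pair is automatically an antitableau). Feeding the closed formula of the second step into these inequalities, the terminal comparison with $\mu_h$ reorganizes into exactly the bounds \eqref{eq: upper-bound-a} (chains of full length $r-h-1$ ending in $\mu_h$), while the intermediate comparisons reorganize into exactly the bounds \eqref{eq: upper-bound-b} (the chains that reach $j=0$, hence carry the $\nu_{r;0}$–difference, of variable length $1\leq t\leq r-h-1$). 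Conversely, if \eqref{upper-bound} holds then by the same computation every step is non-vanishing, so the filling is not equivalent to $0$; this closes both directions at once. A clean base case $r-h-1=0$ (no containing segment, so only the comparison $\nu_{r;i}\leq\mu_{r-1;i}$ survives, matching \eqref{eq: upper-bound-a} with empty sum and \eqref{eq: upper-bound-b} vacuous) can either be checked first or fall out of the general count.

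The main obstacle is the combinatorial bookkeeping in the second and third steps: carrying the recursion for $\nu^{(s)}_{r;i}$ to its closed form, tracking which index chains are admissible (in particular the degenerate ranges of $i$ where a chain of the required length does not exist and the corresponding constraint is vacuous), and matching the two families of extracted inequalities to \eqref{eq: upper-bound-a} and \eqref{eq: upper-bound-b} on the nose. A secondary point requiring care is the claim that when no step vanishes the final diagram is genuinely a $\nu$-antitableau; here one must argue directly from the containment hypotheses on the $\mu_k$ and the local antitableau property at each step, rather than invoking \cite[Lemma 7.7]{Trapa}, which as noted in subsection \ref{subtle} is not correct as stated.
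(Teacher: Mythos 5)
Your high-level plan (iterate Trapa's operation, track the moving column past $\mu_{r-1},\dots,\mu_{h+1}$, read off an inequality at each step, and recognize the accumulated constraints as \eqref{upper-bound}) is the same mechanism the paper exploits, except the paper packages the bookkeeping as an induction on $r-h$ rather than an explicit unwinding, which spares one from writing out the closed form and matching index chains by hand. However, there is a concrete error in your second step. At the $s$-th operation the left column is $T_{r-s}$ carrying $\mu_{r-s}$ and the right column is the moving one carrying the segment $M^{(s-1)}$ with $\bg(M^{(s-1)})=\bg(\nu_r)$; since $\mu_{r-s}\subset\nu_r$, we have $\bg(\mu_{r-s})\leqslant\bg(\nu_r)=\bg(M^{(s-1)})$ and (along the iteration) $\ed(\mu_{r-s})\geqslant\ed(M^{(s-1)})$, so $\mu_{r-s}\subset M^{(s-1)}$: the left column is \emph{contained in} the moving one. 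This is \textbf{Case 3} of Definition \ref{T-operation}, not Case 2 as you state — and in fact your own justification (``the moving segment keeps the beginning $\bg(\nu_r)$'') shows that the moving segment contains the left one, contradicting ``left $\supset$ moving''.

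Consequently the recursion for the type of the moving column is governed by \eqref{operation-sub}, and since the moving column exits the $s$-th step as the \emph{precedent} (left) output, the correct formula is \eqref{eq: operation-sub-k}, namely $\nu^{(s)}_{r;i}=\mu_{r-s;i}-\min\bigl\{0,\ \min_{j>i}\bigl(\mu_{r-s;j}-\nu^{(s-1)}_{r;j}\bigr)\bigr\}$. Your recursion $\nu^{(s)}_{r;i}=\nu^{(s-1)}_{r;i}+\min\bigl\{0,\ \min_{j<i}\bigl(\mu_{r-s;j}-\nu^{(s-1)}_{r;j}\bigr)\bigr\}$ has the wrong base term, the wrong sign, and the wrong index range, and it already fails on small examples: take $r=3$, $h=1$, $\mu_1=[6,3]$ (one component), $\mu_2=[4,3]$ (one component, $L_{2,1}=2$), $\nu_3=[5,1]$ with $L_{3,1}=2,\ L_{3,2}=3,\ L_{3,3}=5$. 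Then $\mu_2$--type is $(5,3,3)$, $\nu_3$--type is $(6,4,3,1)$, and the precedent segment after the first step is $[5,3]$ with type $(6,3,3)$ at $i=0,1,2$, whereas your recursion yields $(6,3,2)$ at $i=0,1,2$. Since the closed form you would obtain by unwinding is built on this recursion, the subsequent matching to \eqref{upper-bound} cannot go through as written; the fix is to use \eqref{eq: operation-sub-k}, which is exactly what the paper invokes when it writes $\mu_{r-1;j}^{(1)}=\max\{\mu_{r-1;j},\ \mu_{r-1;j}-\mu_{r-1;i}+\nu_{r;i}\mid i>j\}$ in step (b) of its inductive argument.
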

\begin{proof}
	Prove by induction on $r-h \geqslant 1$.
	If $r= h+1$, then \eqref{upper-bound} means $\nu_{r; i} \leqslant \mu_{h; i}$.
	The equivalence follows from Lemma \ref{overlap-geq-sing} then.
	
	If $r> h+1$, $T\sqcup S_r \nsim 0$ implies the following.
	\begin{subequations}\label{eq: equiv-non-zero}
\begin{enumerate}[fullwidth, itemindent=2em, label=(\alph*)]
	\item $T_{r-1} \sqcup S_r$ is converted into non-zero $T_{r-1}^{(1)} \sqcup S_r'$ via Trapa's operation, 
	which is equivalent to 
	\begin{align}\label{eq: equiv-non-zero-a}
	\nu_{r; i} - \nu_{r; 0} \stackrel?\leqslant \mu_{r-1; i} - \mu_{r-1; 0}
	\end{align}
	according to Lemma \ref{overlap-geq-sing}.
	\item $\left( \bigsqcup_{k<r-1} T_k \right) \sqcup T_{r-1}^{(1)} \nsim 0$;
	denote the segment filling into $T_{r-1}^{(1)}$ by $\mu_{r-1}^{(1)}$, 
	then due to induction hypothesis, this is equivalent to
	\[\begin{aligned}
	\mu_{r-1; j}^{(1)} \stackrel?\leqslant &
	\min\left\{ 
	\sum_{s=1}^{r-h-2} \mu_{r-1-s; j_{s-1}} - \mu_{r-1-s; j_s} + \mu_{h; j_{r-h-2}}\Bigm | 
	j = j_0 > \cdots > j_{r-h-2}
	\right \},\\
	\mu_{r-1; j}^{(1)} - \mu_{r-1; 0}^{(1)} \stackrel?\leqslant&  
	\min\left\{ 
	\sum_{s=1}^{t} \mu_{r-1-s; j_{s-1}} - \mu_{r-1-s; j_s} \Bigm | 
	j = j_0 > \cdots > j_t =0, 1 \leqslant t \leqslant r-h-2
	\right \}.
	\end{aligned}\]
	By \eqref{eq: operation-sub-k}, we know
	\[\mu_{r-1; j}^{(1)} = \max\{ \mu_{r-1; j},
	\mu_{r-1; j} - \mu_{r-1; i} + \nu_{r; i} \mid i>j \},\]
	and $\mu_{r-1; 0}^{(1)} = \nu_{r; 0}$,
	so this is equivalent further to 
	\begin{align}
	&\mu_{r-1; j} \stackrel?\leqslant 
		\label{eq: equiv-non-zero-b-1}\\
	&\min\left\{ 
	\sum_{s=1}^{r-h-2} \mu_{r-1-s; j_{s-1}} - \mu_{r-1-s; j_s} + \mu_{h; j_{r-h-2}}\Bigm | 
	j = j_0 > \cdots > j_{r-h-2}
	\right \},	\notag\\
	\mu_{r-1; j} - &\nu_{r; 0} \stackrel?\leqslant
		\label{eq: equiv-non-zero-b-2}\\
	&  \min\left\{ 
	\sum_{s=1}^{t} \mu_{r-1-s; j_{s-1}} - \mu_{r-1-s; j_s} \Bigm | 
	j = j_0 > \cdots > j_t =0, 1 \leqslant t \leqslant r-h-2
	\right \},	\notag\\
	&\nu_{r; i} \stackrel?\leqslant 
		\label{eq: equiv-non-zero-b-3}	\\
	&\min\left\{ 
	\sum_{s=0}^{r-h-2} \mu_{r-1-s; j_{s-1}} - \mu_{r-1-s; j_s} + \mu_{h; j_{r-h-1}} \Bigm | 
	i = j_{-1} > \cdots > j_{r-h-2}
	\right \},	\notag\\
	\nu_{r; i} - &\nu_{r; 0} \stackrel?\leqslant
		\label{eq: equiv-non-zero-b-4}	\\
	& \min\left\{ 
	\sum_{s=0}^{t} \mu_{r-1-s; j_{s-1}} - \mu_{r-1-s; j_s} \Bigm | 
	i = j_{-1} > \cdots > j_t =0, 1\leqslant t \leqslant r-h-2
	\right \}.
	\notag
	\end{align}
\end{enumerate}
	\end{subequations}
	The condition \eqref{upper-bound} is exactly \eqref{eq: equiv-non-zero-a} combined with \eqref{eq: equiv-non-zero-b-3}, \eqref{eq: equiv-non-zero-b-4}: 
	in \eqref{eq: equiv-non-zero-b-4}, $1\leqslant t \leqslant r-h-2$ can be replaced by $0 \leqslant t \leqslant r-h-2, j_0>0$; 
	then \eqref{eq: equiv-non-zero-a} adds the case $j_0=0$ to it.
	
	Moreover, \eqref{eq: equiv-non-zero-b-1}, \eqref{eq: equiv-non-zero-b-2} hold automatically:
\begin{itemize}
	\item $\forall j = j_0 > \cdots > j_{r-h-2}$, there is
	\[\mu_{r-1; j} \leqslant \mu_{r-2; j_0} 
	+ \sum_{s=1}^{r-h-2}  - \mu_{r-1-s; j_s} + \mu_{r-1-s-1; j_s},\]
	since $\mu_{r-1-s; j_s} \leqslant \mu_{r-1-s-1; j_s}, \forall s=0, \cdots, r-h-2;$ 
	\item $\forall j=j_0 > \cdots > j_t=0$ with $1 \leqslant t \leqslant r-h-2$, there is
	\[\mu_{r-1; j} - \nu_{r; 0} \leqslant \mu_{r-2; j_0} 
	+ \sum_{s=1}^{t-1}  - \mu_{r-1-s; j_s} + \mu_{r-1-s-1; j_s}
	- \mu_{r-1-t;0},\]
	since $\mu_{r-1-s; j_s} \leqslant \mu_{r-1-s-1; j_s}, \forall s=0, \cdots, t-1,$
	and $\nu_{r;0} \geqslant \mu_{r-1-t; 0}$ due to $\nu_r \supset \mu_{r-1-t}$. 
\end{itemize}
	This shows \eqref{upper-bound}  is equivalent to \eqref{eq: equiv-non-zero}, which are necessary conditions for $T \sqcup S_r\nsim 0$.
	
	To show the sufficiency, we have to show the above  \eqref{eq: equiv-non-zero} guarantee $T\sqcup S_r \nsim 0$.
	Apply Trapa's operation to $T_k \sqcup T_{k+1}^{(1)}$ and get $T_k^{(1)} \sqcup T_{k+1}'$ for $k = r-2, \cdots, h+1$ sequencially.
	Denote the segments filling into $T_k^{(1)}$, $T_k'$ and $S_r'$ by $\mu_k^{(1)}$, $\mu_k'$ and $\nu_r'$.
	Then by the assumption on segments, $\mu_1 \geqslant \mu_h \geqslant \mu_{h+1}^{(1)} \geqslant \mu_{h+2}' \geqslant \cdots \geqslant \mu_{r-1}' \geqslant \nu_r'$.
	In other words, $\left( \bigsqcup_{k \leqslant h} T_k\right) \sqcup
	T_{h+1}^{(1)} \sqcup
	\left ( \bigsqcup_{h+1 < k < r-1} T_k' \right)$
	is the compatible partition of $\nu$-antitebleau that $\left( \bigsqcup_{k<r-1} T_k \right) \sqcup T_{r-1}^{(1)}$ is equivalent to.
	If we want to get $T\sqcup S_r \nsim 0$, it suffices that $T_{r-1}' \sqcup S_r' \nsim 0$, 
	since there is no need to adjust the $\nu$-filling on $\left( \bigsqcup_{k \leqslant h} T_k\right) \sqcup
	T_{h+1}^{(1)} \sqcup
	\left ( \bigsqcup_{h+1 < k < r-1} T_k' \right) \sqcup
	S_r'$ anymore.
	According to Lemma \ref{overlap-geq-sing}, we just need
	\[\mu_{r-1; i}' \stackrel? \geqslant \nu_{r; i}'.\]
	
	By \eqref{operation-sub}, 
	\[\begin{aligned}
	\mu_{r-1; i}^{(1)} &= \mu_{r-1; i} - \min\{ 0, \mu_{r-1; j} - \nu_{r; j} \mid j> i\},\\
	\mu_{r-1; i}' &= \mu_{r-1; i}^{(1)} + \min \{0, \mu_{r-2; j} - \mu_{r-1; j}^{(1)} \mid j \geqslant i\}.
	\end{aligned}\]
	We can show $\mu_{r-1; i}' \geqslant \mu_{r-1; i}$ as follows.
	By the first equation above, $\mu_{r-1; i}^{(1)} \geqslant \mu_{r-1; i}$, and $\forall j \geqslant i$,
	\[\begin{aligned}
	\mu_{r-1; i}^{(1)} - \mu_{r-1; j}^{(1)} + \mu_{r-2; j} =&
	\mu_{r-1; i} - \mu_{r-1; j} + \mu_{r-2; j} +\\
	&-
	\min\{0, \mu_{r-1; k} - \nu_{r; k} \mid k>i\} +
	\min\{0, \mu_{r-1; k} - \nu_{r; k} \mid k>j\}.
	\end{aligned}\]
	Since $\mu_{r-1; j} \leqslant \mu_{r-2; j}$ (by the condition on $T$), and 
	$\min\{0, \mu_{r-1; k} - \nu_{r; k} \mid k>i\}
	\leqslant
	\min\{0, \mu_{r-1; k} - \nu_{r; k} \mid k>j\}$,
	it follows
	\[\mu_{r-1; i}^{(1)} - \mu_{r-1; j}^{(1)} + \mu_{r-2; j} \geqslant \mu_{r-1; i}.\]
	Consequently,
	\[\mu_{r-1; i}' = \min\{ \mu_{r-1; i}^{(1)} , \mu_{r-1; i}^{(1)} - \mu_{r-1; j}^{(1)} + \mu_{r-2; j} \mid j\geqslant i \} \geqslant \mu_{r-1; i}.\]
	
	On the other hand, by \eqref{eq: operation-sub-k+1}, we have
	\[\nu_{r; i}' = \nu_{r; i} + \min\{ 0, \mu_{r-1; j} - \nu_{r; j} \mid j \geqslant i \} \leqslant \mu_{r-1; i}.\]
	Then we obtain that $\mu_{r-1; i}' \geqslant \mu_{r-1; i} \geqslant \nu_{r; i}'$.
\end{proof}
	
	We finished \textbf{Step 1} in subsection \ref{general-case}.

\subsection{How the partitions are changed by a small segment?}
	Let $\sigma = (r, r+1) \in \Sigma_{r+1}$.
	Assume $\underline p^r = (p_1, \cdots, p_r)$ and $\underline p^{\sigma, r} = ( p_1, \cdots, p_{r-1}, p_r^\sigma)$ parametrize non-zero representations, we shall prove that $\underline p$ also does;
	this is \textbf{Step 2} in subsection \ref{general-case}.
	
	Let's rephrase it in terms of tableaus.
	Denote the compatible partition of $\nu$-quasitableau constructed from $\underline p^\sigma \in \MR^\sigma$ by $S^\sigma = \bigsqcup_{k=1}^{r+1} S_k^\sigma$,
	and the type of $\nu$-filling on $S_k^\sigma$ by $\nu_{k; i}^\sigma$.
	Then $\bigsqcup_{k<r} S_k^\sigma = \bigsqcup_{k<r} S_k$ is the partition given by $\underline p^{r-1}$.
	It's harmless to assume it is adjusted to a compatible partition $T = \bigsqcup_{k<r} T_k$ of antitableau.
	Denote the segment filling into $T_k$ by $\mu_k$, its type by $\mu_{k; i}$,
	and we know from Lemma \ref{anti-by-type}, that 
	$\mu_1 \geqslant \cdots \geqslant \mu_{r-1}$, 
	and $\mu_{k; i} \geqslant \mu_{k+1; i}$.
	
\begin{prop}\label{upper-bound-control}
	If the $\nu$-quasitableaus over $T \sqcup S_r$, $T\sqcup S_r^\sigma$ and $S_r \sqcup S_{r+1} = S_r^\sigma \sqcup S_{r+1}^\sigma$ are all equivalent to non-zero ones, then so is $T\sqcup S_r \sqcup S_{r+1}$.
\end{prop}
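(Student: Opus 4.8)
The strategy is to translate everything into the explicit type inequalities supplied by Proposition \ref{equiv-for-non-zero} and Lemma \ref{overlap-geq-sing}, and then compare. First note that the hypotheses of Lemma \ref{lem: general-induction} force $\nu_r\subset\nu_{r+1}$: since $\sigma=(r,r+1)$ must be admissible, $\nu_r$ and $\nu_{r+1}$ lie in containment, and ``$\nu_r>\nu_{r+1}$ or $\nu_r\subset\nu_{r+1}$'' then leaves only $\nu_r\subset\nu_{r+1}$; moreover each $\mu_k$ either precedes $\nu_r$ or contains it. Since $\underline p^r$ parametrizes a non-zero representation, Trapa's algorithm adjusts $T\sqcup S_r$ to a non-zero antitableau; write its compatible partition as $\bigsqcup_{k\le r}T_k'$, with segments $\mu'_1\ge\cdots\ge\mu'_r$ and types $\mu'_{k;i}$ produced by the cascade of \textbf{Case 2} operations governed by \eqref{operation-sup}. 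A short check (elementary operations preserve ``$>\nu_{r+1}$ or $\subset\nu_{r+1}$'', using only the definitions of $>$ and $\subset$) shows each $\mu'_k$ either precedes $\nu_{r+1}$ or is contained in it, the preceding ones first, say $\mu'_1,\dots,\mu'_{h'}>\nu_{r+1}$ and $\mu'_{h'+1},\dots,\mu'_r\subset\nu_{r+1}$. Proposition \ref{equiv-for-non-zero} then applies to $\bigl(\bigsqcup_{k\le r}T_k'\bigr)\sqcup S_{r+1}$: the diagram $T\sqcup S_r\sqcup S_{r+1}$ is not equivalent to $0$ if and only if $\nu_{r+1;i}$ satisfies the system \eqref{upper-bound} relative to $\{\mu'_{k;i}\}$ and $h'$. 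It remains to establish this system.

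The two remaining hypotheses provide exactly the ingredients needed. By Proposition \ref{equiv-for-non-zero} applied to $T\sqcup S_r^\sigma$ --- whose bottom skew column $S_r^\sigma$ is filled by the small segment $\nu_{r+1}$, and for which $T$ is already the un-adjusted antitableau with some cutoff $h$ --- the hypothesis $T\sqcup S_r^\sigma\nsim0$ is equivalent to the system \eqref{upper-bound} for $\nu_{r+1;i}$ relative to $\{\mu_{k;i}\}$ and $h$, a family of upper bounds by path sums in $\mu_{h;\cdot},\dots,\mu_{r-1;\cdot}$. By Lemma \ref{overlap-geq-sing}(2) (case $\nu_r\subset\nu_{r+1}$) the hypothesis $S_r\sqcup S_{r+1}\nsim0$ is equivalent to $\nu_{r;i}-\nu_{r;0}\ge\nu_{r+1;i}-\nu_{r+1;0}$ for all $i$. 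Finally, unwinding the cascade \eqref{operation-sup} expresses the bottom types $\mu'_{r;i}$ as the minimum of $\nu_{r;i}$ and path sums in $\mu_{h;\cdot},\dots,\mu_{r-1;\cdot}$ ending at $\nu_r$ --- the same shape as the right-hand side of \eqref{upper-bound} --- and the upper types $\mu'_{k;i}$ ($k<r$) as maxima of $\mu_{k;i}$ with comparable expressions; in particular the inequalities $\mu'_{k;i}\ge\mu_{k;i}$ established inside the proof of Proposition \ref{equiv-for-non-zero} hold.

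With these translations in hand, the conclusion will follow by a term-by-term comparison of the path sums in \eqref{upper-bound}: for each strictly decreasing index sequence $i=j_0>j_1>\cdots$ occurring on the right-hand side of the $\mu'$-system I will exhibit a combination of (i) a path bound for $\nu_{r+1;i}$ coming from the $T\sqcup S_r^\sigma$-hypothesis and (ii) the overlap inequality between $\nu_r$ and $\nu_{r+1}$, which dominates the corresponding $\mu'$-term, using $\mu'_{k;i}\ge\mu_{k;i}$ and the explicit form of \eqref{operation-sup}. The main obstacle is precisely this matching: the $\mu'$-system for the $r$-column antitableau $T\sqcup S_r$ carries one more layer of indices than the $\mu$-system coming from $T\sqcup S_r^\sigma$, because $\nu_r$ has been inserted into the sorted chain between $\mu'_{h'}$ and the $\mu'_k\subset\nu_{r+1}$; one must track carefully where the cutoff $h'$ falls relative to $h$ and to the position of $\nu_r$, and show that inserting this extra, larger-order segment only weakens the upper bound on $\nu_{r+1;i}$, quantitatively by exactly the slack guaranteed by $S_r\sqcup S_{r+1}\nsim0$. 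Controlling this bookkeeping --- most cleanly by an induction on $r-h$ paralleling the proof of Proposition \ref{equiv-for-non-zero} --- is the technical heart of the argument, after which Lemma \ref{lem: general-induction}, and hence the sufficiency half of Theorem \ref{non-vanishing}, follows as explained at the start of subsection \ref{general-case}.
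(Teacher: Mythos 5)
Your framework is the right one --- apply Proposition~\ref{equiv-for-non-zero} to $T\sqcup S_r^\sigma$, run Trapa's cascade on $T\sqcup S_r$, apply Proposition~\ref{equiv-for-non-zero} again to $\bigl(\bigsqcup_{k\leqslant r}T_k'\bigr)\sqcup S_{r+1}$, and compare the two systems of bounds --- but there is a concrete gap at the translation step, and the actual comparison is left unaddressed. Proposition~\ref{equiv-for-non-zero} applied to $T\sqcup S_r^\sigma$ bounds $\nu_{r;j}^\sigma$, the type of the filling on the skew column $S_r^\sigma$, \emph{not} $\nu_{r+1;i}$: both columns are filled by the segment $\nu_{r+1}$, but their shapes $L^\sigma_{r,\cdot}$ and $L_{r+1,\cdot}$ differ (they are inserted after $S^{r-1}$ and after $S^{r}$ respectively, with the opposite signatures $(q_{r+1},p_{r+1})$ and $(p_{r+1},q_{r+1})$), hence so do the types. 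The bridge is the identity \eqref{fulfill-type-2} from Section~\ref{swap-tableau}, $\nu_{r;j}^\sigma=\nu_{r;j}-\min\{\nu_{r;i}-\nu_{r+1;i}\mid i>j\}$, which is precisely what converts the $\mu$-path bounds on $\nu_{r;j}^\sigma$ into the $\nu_{r;i}-\nu_{r;j_0}$-shifted bounds \eqref{upper-bound-have} on $\nu_{r+1;i}$, and whose $j=0$ case already contains the overlap inequality you list separately from $S_r\sqcup S_{r+1}\nsim0$. Your proposal never invokes this formula, so the two hypotheses you enumerate never actually combine into a usable bound on $\nu_{r+1;i}$.

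The larger issue is that the comparison \eqref{upper-bound-have}~$\Rightarrow$~\eqref{upper-bound-want} --- which the paper isolates as Proposition~\ref{control} and proves via the cascade recursion \eqref{process} together with the telescoping inequality of Lemma~\ref{key} --- is acknowledged but never carried out. You write that you ``will exhibit a combination'' dominating each $\mu'$-path-sum and then concede that controlling this bookkeeping ``is the technical heart of the argument.'' It is: Lemma~\ref{key} requires a delicate case analysis on the quantities $\Delta_{r-s;0,j}$ and does not follow from the crude inequalities $\mu'_{k;i}\geqslant\mu_{k;i}$ that you have in hand, and the deduction of \eqref{control-1}--\eqref{control-2} from it needs a further manipulation when $t<r-g-1$ (replacing $\mu'_{r-t;j_{t-1}}-\mu'_{r-t;j_t}$ by an expression in $\mu^{(1)}_{r-t;\cdot}$). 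What you have is a correct high-level plan and a correct identification of where the difficulty sits, but not yet a proof of Proposition~\ref{upper-bound-control}.
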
	
\begin{proof}
	According to Proposition \ref{equiv-for-non-zero}, $T\sqcup S_r^\sigma \nsim 0$ means $\forall j$, 
	\[\begin{aligned}
	\nu_{r; j}^\sigma \leqslant &
	\min\left\{ 
	\sum_{s=1}^{r-h-1} \mu_{r-s; j_{s-1}} - \mu_{r-s; j_s} + \mu_{h; j_{r-h-1}} \Bigm | 
	j = j_0 > \cdots > j_{r-h-1}
	\right \},\\
	\nu_{r; j}^\sigma - \nu_{r; 0}^\sigma \leqslant&  
	\min\left\{ 
	\sum_{s=1}^{t} \mu_{r-s; j_{s-1}} - \mu_{r-s; j_s} \Bigm | 
	j = j_0 > \cdots > j_t =0, 1 \leqslant t \leqslant r-h-1
	\right \},
	\end{aligned}\]
	where $h\leqslant r-1$ satisfies $\mu_1, \cdots, \mu_h > \nu_r^\sigma$, $\mu_{h+1}, \cdots, \mu_{r-1} \subset \nu_r^\sigma$. 
	By \eqref{fulfill-type-2}
	\[\nu_{r; j}^\sigma = \nu_{r; j} - \min\{ \nu_{r; i} - \nu_{r+1; i} \mid i > j\},\]
	so
\begin{subequations}\label{upper-bound-have}
	\begin{align}
	&\nu_{r+1; i} 
	\leqslant \label{upper-bound-have-1}\\
	&\min\left\{ \nu_{r; i} - \nu_{r; j_0} +
	\sum_{s=1}^{r-h-1} \mu_{r-s; j_{s-1}} - \mu_{r-s; j_s} + \mu_{h; j_{r-h-1}} \Bigm | 
	i> j_0 > \cdots > j_{r-h-1}
	\right \},
	\notag\\
	\nu_{r+1; i}- &\nu_{r+1; 0} 
	\leqslant \label{upper-bound-have-2}\\
	&\min\left\{ \nu_{r; i} - \nu_{r; j_0} +
	\sum_{s=1}^{t} \mu_{r-s; j_{s-1}} - \mu_{r-s; j_s} \Bigm | 
	i > j_0 > \cdots > j_t =0, 0 \leqslant t \leqslant r-h-1
	\right \}.
	\notag
	\end{align}
\end{subequations}
	Here \eqref{upper-bound-have-2} also contains the equality $\nu_{r+1; i} - \nu_{r+1; 0} \leqslant \nu_{r; i} - \nu_{r; 0}$ following from $S_r \sqcup S_{r+1} \nsim 0$ and Lemma \ref{overlap-geq-sing}.
	
	Now apply Trapa' algorithm to $T\sqcup S_r \nsim 0$, and 
	denote the resulted partition by $\bigsqcup_{k\leqslant r} T_k'$, 
	the segment filling into $T_k'$ by $\mu_k'$ .
	According to Proposition \ref{equiv-for-non-zero} again, $\left( \bigsqcup_{k\leqslant r} T_k' \right) \sqcup S_{r+1} \stackrel? \nsim 0$ is equivalent to 
\begin{subequations}\label{upper-bound-want}
	\begin{align}
	\nu_{r+1; i} \stackrel?\leqslant &
	\min\left\{ 
	\sum_{s=0}^{r-h-1} \mu_{r-s; j_{s-1}}' - \mu_{r-s; j_s}' + \mu_{h; j_{r-h-1}}' \Bigm | 
	i = j_{-1} > \cdots > j_{r-h-1}
	\right \},\\
	\nu_{r+1; i} - \nu_{r+1; 0} \stackrel?\leqslant&  
	\min\left\{ 
	\sum_{s=0}^{t} \mu_{r-s; j_{s-1}}' - \mu_{r-s; j_s}' \Bigm | 
	i = j_{-1} > \cdots > j_t =0, 1 \leqslant t \leqslant r-h-1
	\right \}.
	\end{align}
\end{subequations}
	Consequently, what we need is \eqref{upper-bound-have} $\Longrightarrow$ \eqref{upper-bound-want},
	which follows from the next Proposition.
\end{proof}

\begin{prop}\label{control}
\begin{subequations}
	$\forall i= j_{-1} > \cdots > j_{r-h-1}$, there is
	\begin{align}\label{control-1}
	&\sum_{s=0}^{r-h-1} \mu_{r-s; j_{s-1}}' - \mu_{r-s; j_s}' + \mu_{h; j_{r-h-1}}' \\
	\geqslant &
	\min\left\{ \nu_{r; i} - \nu_{r; j_0'} +
	\sum_{s=1}^{r-h-1} \mu_{r-s; j_{s-1}'} - \mu_{r-s; j_s'} + \mu_{h; j_{r-h-1}} \Bigm | 
	i> j_0' > \cdots > j_{r-h-1}
	\right \}.	\notag
	\end{align}
	$\forall i= j_{-1} > \cdots > j_t=0$ with $0 \leqslant t \leqslant r-h-1$, there is
	\begin{align}\label{control-2}
	&\sum_{s=0}^t \mu_{r-s; j_{s-1}}' - \mu_{r-s; j_s}' \\
	\geqslant &
	\min\left\{ \nu_{r; i} - \nu_{r; j_0'} +
	\sum_{s=1}^{t'} \mu_{r-s; j_{s-1}'} - \mu_{r-s; j_s'} \Bigm | 
	i > j_0' > \cdots > j'_{t'} =0, 0 \leqslant t' \leqslant r-h-1
	\right \}.	\notag
	\end{align}	
\end{subequations}
\end{prop}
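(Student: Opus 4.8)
The plan is to prove Proposition \ref{control} by induction on $r-h$, in lockstep with the proof of Proposition \ref{equiv-for-non-zero}, establishing \eqref{control-1} and its $0$-truncation \eqref{control-2} together. The preliminary step is to record the type-level description of the adjustment of $T\sqcup S_r$ into $\bigsqcup_{k\leqslant r}T_k'$: since $\mu_1,\dots,\mu_h>\nu_r$ and $\mu_{h+1},\dots,\mu_{r-1}\subset\nu_r$, Trapa's operation is applied first to $T_{r-1}\sqcup S_r$ and then to $T_k\sqcup T_{k+1}^{(1)}$ for $k=r-2,\dots,h+1$; using the elementary-operation description (the left output has the larger of the two begins and the larger of the two ends) one checks that the governing relation is $\mu_{r-1}\subset\nu_r$ at the first step and $\mu_{k-1}\supset\mu_k^{(1)}$ thereafter, that $h$ is unchanged throughout, and hence that \eqref{operation-sub} governs the first step and \eqref{operation-sup} the rest. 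Rewriting these as $\min$/$\max$ of affine ``telescoping'' expressions yields in particular
\[\mu_{r;i}'=\nu_{r;i}'=\min\bigl\{\nu_{r;i},\ \nu_{r;i}-\nu_{r;j}+\mu_{r-1;j}\ :\ j\geqslant i\bigr\},\qquad
\mu_{r-1;i}^{(1)}=\max\bigl\{\mu_{r-1;i},\ \mu_{r-1;i}-\mu_{r-1;j}+\nu_{r;j}\ :\ j>i\bigr\},\]
which are exactly the formulas already used in the proof of Proposition \ref{equiv-for-non-zero}.

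The base case $r=h+1$ is immediate: no operation is applied, so $\mu_k'=\mu_k$, $\nu_r'=\nu_r$, and both sides of \eqref{control-1}--\eqref{control-2} coincide after noting that the right-hand minimum is taken over a subfamily of the chains matching the left-hand side. For the inductive step I would peel off the first operation: $T_r'$ is already final, and the residual adjustment of $(\bigsqcup_{k<r-1}T_k)\sqcup T_{r-1}^{(1)}$ is a strictly smaller instance of the same kind --- one fewer column, the same $h$, with $\mu_{r-1}^{(1)}$ in the role of the small appended segment. Feeding a shift of the given chain into the induction hypothesis bounds the portion of the left-hand side of \eqref{control-1} that starts from $\mu_{r-1}'$ by a minimum of unprimed telescoping sums built from $\mu_{r-2},\dots,\mu_{h+1},\mu_h$ and $\mu_{r-1}^{(1)}$, with a matching residual block. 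What then remains is the \emph{interface term} $\mu_{r;i}'-\mu_{r;j_0}'+\mu_{r-1;j_0}^{(1)}-\mu_{r-1;j_0'}^{(1)}$: substituting the $\min$-formula for $\nu_r'$ and the $\max$-formula for $\mu_{r-1}^{(1)}$ and simplifying (several $\mu_{r-1;j}$ and $\nu_{r;j}$ terms cancel in pairs) reduces the needed inequality to finding an index $\hat\jmath_0$ with $i>\hat\jmath_0>j_0'$ for which $\nu_{r;i}-\nu_{r;\hat\jmath_0}+\mu_{r-1;\hat\jmath_0}-\mu_{r-1;j_0'}$ is suitably dominated; this is settled by a short case analysis according to where $\mu_{r-1;j}-\nu_{r;j}$ attains its minimum over the ranges $\{j\geqslant i\}$, $\{j>j_0\}$, $\{j>j_0'\}$, using the monotonicity $\nu_{r;a}\geqslant\nu_{r;b}$ for $a\leqslant b$ and the hypothesis $\mu_h>\nu_r$ for the endpoint contribution. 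Once \eqref{control-1}--\eqref{control-2} hold, the implication \eqref{upper-bound-have}$\Rightarrow$\eqref{upper-bound-want} follows, hence Proposition \ref{upper-bound-control} and \textbf{Step 2} of Subsection \ref{general-case}.

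The main obstacle I expect is precisely this last interface estimate together with the bookkeeping of the two ``boundary'' contributions (the two $\nu_r$-terms and the $\mu_h$-endpoint) through the induction: after expanding the nested $\min$/$\max$ one must assemble, from the left-hand chain and the auxiliary indices emitted by the single-step formulas, a \emph{single} strictly decreasing chain of the precise shape $i>j_0'>\dots>j_{r-h-1}$ realizing the right-hand minimum, and the branch selection in the case analysis does not obviously telescope across the induction. A minor but necessary supporting point is the verification, at each stage of the adjustment, of the containment relation that selects the correct case of \eqref{operation-sup}/\eqref{operation-sub}; this rests on the standing hypotheses on $(\nu_1,\dots,\nu_{r+1})$ from Lemma \ref{lem: general-induction} and the antitableau inequalities $\mu_{k;i}\geqslant\mu_{k+1;i}$ from Lemma \ref{anti-by-type}.
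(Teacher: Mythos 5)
The proposal has the right high-level shape (a step-by-step reduction using the single-step type formulas plus an ``interface'' estimate), but it starts from the wrong case of Trapa's operation, which corrupts the formulas that the whole argument rests on, and it leaves the crucial interface estimate (the paper's Lemma~\ref{key}) unproved.

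\textbf{Wrong containment.} In the setup preceding Proposition~\ref{control}, the paper chooses $g$ with $\mu_1,\dots,\mu_g>\nu_r$ and $\mu_{g+1},\dots,\mu_{r-1}\supset\nu_r$: here $\nu_r$ is the \emph{small} appended segment, contained in those $\mu_k$, so every Trapa step $T_k\sqcup T_{k+1}^{(1)}$ (for $k=r-1,\dots,g+1$, with $T_r^{(1)}=S_r$) is in the \emph{supset} Case~2 and is governed by \eqref{operation-sup} via \eqref{process}. You instead write ``$\mu_{h+1},\dots,\mu_{r-1}\subset\nu_r$'' and assert ``$\mu_{r-1}\subset\nu_r$ at the first step, $\dots$ and hence that \eqref{operation-sub} governs the first step''. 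That is the hypothesis of Proposition~\ref{equiv-for-non-zero}, where $\nu_r$ plays the \emph{big} role (and indeed it is applied there with $\nu_r^\sigma=\nu_{r+1}$); it is not what happens in the adjustment of $T\sqcup S_r$. Consequently your displayed formulas
\[
\nu_{r;i}'=\min\{\nu_{r;i},\ \nu_{r;i}-\nu_{r;j}+\mu_{r-1;j}\ :\ j\geqslant i\},\qquad
\mu_{r-1;i}^{(1)}=\max\{\mu_{r-1;i},\ \mu_{r-1;i}-\mu_{r-1;j}+\nu_{r;j}\ :\ j>i\}
\]
have the index range on the wrong side of $i$: the supset case gives $j<i$ (resp.\ $j\leqslant i$), exactly as encoded in \eqref{process}. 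Everything downstream of this point (the telescoping decomposition, the interface term, the assembly of the chain) is built on these formulas, so the error propagates. You also never use the threshold $g$ or the inequality $g\geqslant h$, which are exactly what the paper needs to split the left side of \eqref{control-1} into the primed block ($s\leqslant r-g-2$), the single $\mu_{g+1}^{(1)}$ contribution, and the untouched $\mu_k$ block ($k\leqslant g$); without this split the induction has no clean stopping rule and the endpoint contribution is unaccounted for.

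\textbf{Unproved core estimate.} You correctly identify that the whole proof reduces to a single two-term ``interface'' inequality converting a $\mu'_{r-s},\mu^{(1)}_{r-s-1}$ pair into a minimum over a sliding index of a $\mu^{(1)}_{r-s},\mu_{r-s-1}$ pair --- this is exactly the paper's Lemma~\ref{key}. But you only gesture at proving it by ``a short case analysis according to where $\mu_{r-1;j}-\nu_{r;j}$ attains its minimum'', and you explicitly say you have not verified that the branch selection telescopes through the induction. That is precisely where the work is: the proof must introduce the quantities $\Delta_{r-s;i,j}=\min\{0,\mu_{r-s-1;l}-\mu_{r-s;l}^{(1)}:i\leqslant l\leqslant j\}$, use the monotonicity $\Delta_{r-s;0,j_{s-1}-1}\leqslant\Delta_{r-s;0,j_s-1}\leqslant\Delta_{r-s;0,j_s}\leqslant\Delta_{r-s;0,j_{s+1}}$, and in the non-trivial branch combine the bound $\mu_{r-s-1;j_s}^{(1)}\geqslant\mu_{r-s;j_s}^{(1)}$ from \eqref{process} to produce a genuine sliding index $j_s'$ in $(j_{s+1},j_{s-1})$. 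As written, this is a gap, not a detail.

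So there is a genuine gap: fix the containment direction (supset throughout, via $g$ with $g\geqslant h$), replace your single-step formulas by \eqref{process}, and actually prove the interface estimate (Lemma~\ref{key}) before attempting to chain it through the telescoping sum.
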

	To prove it, we have to study Trapa's algorithm on $T\sqcup S_r$ in detail.
	Take $g\leqslant r-1$ that $\mu_1, \cdots, \mu_g > \nu_r$, $\mu_{g+1}, \cdots, \mu_{r-1} \supset \nu_r$.
	For convenience, let $T_r^{(1)}:= S_r$. 
	Apply Trapa's operation to $T_k \sqcup T_{k+1}^{(1)}$ and get $T_k^{(1)} \sqcup T_{k+1}'$ for $k = r-1, \cdots, g+1$ sequencially.
	Denote the segment filling into $T_k^{(1)}$, $T_k'$ by $\mu_k^{(1)}$, $\mu_k'$ .	
	Then by the assumption on segments, $\mu_1 \geqslant \cdots \geqslant \mu_g \geqslant 
	\mu_{g+1}^{(1)} \geqslant \mu_{g+2}' \geqslant \cdots \geqslant \mu_r'$.
	In other words, $\left( \bigsqcup_{k \leqslant g} T_k\right) \sqcup
	T_{g+1}^{(1)} \sqcup
	\left ( \bigsqcup_{g+1 < k \leqslant r} T_k' \right)$
	is the compatible partition of $\nu$-antitebleau that $T  \sqcup S_r = \left( \bigsqcup_{k<r} T_k \right) \sqcup T_{r}^{(1)}$ is equivalent to.

	From \eqref{operation-sup} we know
\begin{equation}\label{process}
	\begin{aligned}
	\mu_{k; i}^{(1)} =&
	\mu_{k; i} - \min\{0, \mu_{k; j} - \mu_{k+1; j}^{(1)} \mid j \leqslant i \},\\
	\mu_{k+1; i}' = &
	\mu_{k+1; i}^{(1)} + \min\{0, \mu_{k; j} - \mu_{k+1; j}^{(1)} \mid j < i \}.
	\end{aligned}
\end{equation}
	Proposition \ref{control} would follow easily from \eqref{process} and the following critical inequalities.
\begin{lem}\label{key}
	$\forall j_{s-1} > j_s > j_{s+1}$,
	\[\begin{aligned}
	& \mu_{r-s; j_{s-1}}' - \mu_{r-s; j_s}' + \mu_{r-s-1; j_s}^{(1)} - \mu_{r-s-1; j_{s+1}}^{(1)} \\
	\geqslant 
	& \min\{ \mu_{r-s; j_{s-1} }^{(1)} - \mu_{r-s; j_s'}^{(1)} + \mu_{r-s-1; j_s'} - \mu_{r-s-1; j_{s+1}} \mid
	j_{s-1} >  j_s' > j_{s+1} \}.
	\end{aligned}\]
\end{lem}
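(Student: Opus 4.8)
The plan is to unwind the two update formulas in \eqref{process} and reduce Lemma \ref{key} to a chain of elementary inequalities involving only the quantities $\mu_{r-s;i}$, $\mu_{r-s-1;i}^{(1)}$ and the ``min-corrections'' that appear when passing from $\mu^{(1)}$ to $\mu'$. Concretely, I would first substitute
\[\mu_{r-s;j}' = \mu_{r-s;j}^{(1)} + \min\{0, \mu_{r-s-1;k}-\mu_{r-s;k}^{(1)} \mid k< j\}\]
into the left-hand side, so that $\mu_{r-s;j_{s-1}}'-\mu_{r-s;j_s}'$ becomes $\bigl(\mu_{r-s;j_{s-1}}^{(1)}-\mu_{r-s;j_s}^{(1)}\bigr)$ plus a difference of two nested minima; since $j_{s-1}>j_s$, the minimum over $k<j_s$ dominates the one over $k<j_{s-1}$, and this difference is $\geqslant 0$ but also controlled above. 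The point is that the ``loss'' we incur by replacing $\mu_{r-s;j_s}'$ by $\mu_{r-s;j_s'}^{(1)}$ for some intermediate index $j_{s-1}>j_s'>j_{s+1}$ on the right-hand side is exactly the correction term $\mu_{r-s-1;j_s'}-\mu_{r-s-1;j_{s+1}}$ being subtracted there; so the inequality is really the statement that the witness $j_s'$ realizing the minimum $\min\{0,\mu_{r-s-1;k}-\mu_{r-s;k}^{(1)}\mid k<j_s\}$ can be chosen in the open interval $(j_{s+1},j_{s-1})$ without harm, using $\mu_{r-s-1;k}\geqslant \mu_{r-s;k}$ for the boundary cases $k\leqslant j_{s+1}$.

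The key steps, in order, are: (i) expand the LHS of Lemma \ref{key} using \eqref{process}, isolating the term $\delta := \min\{0,\mu_{r-s-1;k}-\mu_{r-s;k}^{(1)}\mid k< j_{s-1}\} - \min\{0,\mu_{r-s-1;k}-\mu_{r-s;k}^{(1)}\mid k< j_s\}$; (ii) observe $\delta\leqslant 0$ and let $j_s'$ be an index in $[j_s, j_{s-1})$ at which the first minimum (over $k<j_{s-1}$) is attained but the second (over $k<j_s$) is not, so that $\mu_{r-s-1;j_s'}-\mu_{r-s;j_s'}^{(1)}\leqslant \delta + \bigl(\text{second min}\bigr)\leqslant 0$ and in particular $j_s'\geqslant j_s$; (iii) handle the sub-case $j_s' = j_s$ separately (where $\delta$ can be taken $0$, the inequality is immediate from the monotonicity $\mu_{r-s-1;j}\geqslant\mu_{r-s;j}^{(1)}$ established en route, which itself follows from the first line of \eqref{process}); (iv) in the main sub-case, check that the chosen $j_s'$ is strictly less than $j_{s-1}$ and strictly greater than $j_{s+1}$ — the latter because for $k\leqslant j_{s+1}$ we have $\mu_{r-s-1;k}-\mu_{r-s;k}^{(1)}\geqslant \mu_{r-s-1;k}-\mu_{r-s-1;k}=0$ by the antitableau inequality of Lemma \ref{anti-by-type} applied to the $\mu^{(1)}$-filling, hence such $k$ never strictly attain a negative minimum; (v) assemble: the RHS minimum is witnessed by this $j_s'$, and the leftover terms cancel against $\mu_{r-s-1;j_s}^{(1)}-\mu_{r-s-1;j_{s+1}}^{(1)}$ versus $\mu_{r-s-1;j_s'}-\mu_{r-s-1;j_{s+1}}$ using $\mu_{r-s-1;j_s}^{(1)}\geqslant\mu_{r-s-1;j_s}$ and again Lemma \ref{anti-by-type}.

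After Lemma \ref{key} is in hand, I would telescope: summing the $s=0,\dots$ instances of Lemma \ref{key} along a descending sequence $i=j_{-1}>j_0>\cdots$ and inserting the definitions $\mu_{r;i}^{(1)} = \nu_{r;i}$ (recall $T_r^{(1)}=S_r$) converts \eqref{control-1}–\eqref{control-2} into a valid estimate, because each application of the lemma lets us re-route one ``joint'' $j_s$ to an admissible intermediate index $j_s'$ while only improving the running sum. The boundary term at the top, $\mu_{h;j_{r-h-1}}'\geqslant\mu_{h;j_{r-h-1}}$, is the monotonicity noted in step (iii); the boundary at the bottom ($t'=t$ or the equality cases in \eqref{upper-bound-want}) is direct. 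Then Proposition \ref{control}, and with it Proposition \ref{upper-bound-control}, follow, completing \textbf{Step 2} and hence Lemma \ref{lem: general-induction}.

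The main obstacle I anticipate is step (iv): pinning down that the minimizing index $j_s'$ genuinely lands in the \emph{open} interval $(j_{s+1},j_{s-1})$, rather than at or below $j_{s+1}$. This is where the containment hypotheses ($\mu_{r-s-1}\supset\mu_{r-s}$, etc.) and the precise form of the antitableau inequality for the partially-adjusted fillings $\mu^{(1)}$ must be used carefully, and a clean statement likely requires first proving as a separate sub-lemma that $\mu_{k;i}^{(1)}\geqslant\mu_{k;i}$ and $\mu_{k;i}^{(1)}\geqslant\mu_{k+1;i}^{(1)}$ for all $k,i$ in the range under consideration — the analogue of the monotonicity already used in the proof of Proposition \ref{equiv-for-non-zero}. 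Once that monotonicity package is isolated, the rest of Lemma \ref{key} is bookkeeping with nested minima.
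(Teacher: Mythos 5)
Your overall route is the same as the paper's: expand the primed quantities via \eqref{process}, bundle the corrections into nested minima (the paper's $\Delta_{r-s;0,j} := \min\{0,\mu_{r-s-1;l}-\mu_{r-s;l}^{(1)}\mid l\leqslant j\}$, so that $\mu_{r-s-1;j}^{(1)}=\mu_{r-s-1;j}-\Delta_{r-s;0,j}$ and $\mu_{r-s;j}'=\mu_{r-s;j}^{(1)}+\Delta_{r-s;0,j-1}$), split on whether the chain $\Delta_{r-s;0,j_{s-1}-1}\leqslant\Delta_{r-s;0,j_s-1}\leqslant\Delta_{r-s;0,j_s}\leqslant\Delta_{r-s;0,j_{s+1}}$ is all-equal or not, and in the latter case locate the minimizing index $j_s'$ strictly inside $(j_{s+1},j_{s-1})$.

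However, there is a genuine error in your justifications. Twice you invoke the ``monotonicity'' $\mu_{r-s-1;j}\geqslant\mu_{r-s;j}^{(1)}$, once in step (iii) and once in step (iv) where you cite Lemma \ref{anti-by-type} for the $\mu^{(1)}$-filling. This inequality is false in general: if it held for all $j$ then $\Delta_{r-s;0,j}=0$ for all $j$, i.e.\ the operation at stage $r-s$ would never do anything, and the pair $T_{r-s-1}\sqcup T_{r-s}^{(1)}$ would already be an antitableau — but it need not be, that is precisely why Trapa's operation is applied. The intermediate $\mu^{(1)}$-filling is \emph{not} an antitableau, so Lemma \ref{anti-by-type} does not apply to it. What \eqref{process} actually gives (taking the index $j=i$ inside the minimum defining $\Delta_{r-s;0,i}$) is
\[
\mu_{r-s-1;i}^{(1)} \;=\; \mu_{r-s-1;i}-\Delta_{r-s;0,i} \;\geqslant\; \mu_{r-s;i}^{(1)},
\]
with a superscript $(1)$ on the left; this is the inequality the paper uses and the one your argument needs. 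Your step (iii) goes through once that correction is made (the relevant bound there is $\Delta_{r-s;0,j_s}\leqslant\Delta_{r-s;0,j_{s+1}}$, which is just monotonicity of nested minima). In step (iv) the appeal to the false inequality is actually superfluous: if $\delta=\Delta_{r-s;0,j_{s-1}-1}-\Delta_{r-s;0,j_s-1}<0$, the minimizer of $\Delta_{r-s;0,j_{s-1}-1}$ cannot lie in $[0,j_s-1]$ (else $\Delta_{r-s;0,j_s-1}$ would be as small) and cannot be the ``$0$'' term, so it already lands in $[j_s,j_{s-1}-1]\subset(j_{s+1},j_{s-1})$ — no auxiliary positivity for $k\leqslant j_{s+1}$ is required. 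Once you replace $\mu_{r-s-1;j}\geqslant\mu_{r-s;j}^{(1)}$ by $\mu_{r-s-1;j}^{(1)}\geqslant\mu_{r-s;j}^{(1)}$ and drop the antitableau appeal in step (iv), your bookkeeping closes exactly as in the paper's two-case computation.
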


\begin{proof}[Proof of Proposition \ref{control}]
	By assumptions on $\nu_r$, $\nu_{r+1}$, we have $g \geqslant h$. 
	Then the left hand side of \eqref{control-1} would be 
	\[\begin{aligned}
	&\left( \sum_{s=0}^{r-g-2} \mu_{r-s; j_{s-1}}' - \mu_{r-s; j_s}' \right)
	+\left( \mu_{g+1; j_{r-g-2}}^{(1)} - \mu_{g+1; j_{r-g-1}}^{(1)} \right)	\\
	+&\left(\sum_{s=r-g}^{r-h-1} \mu_{r-s; j_{s-1}} - \mu_{r-s; j_s} \right)
	+\mu_{h; j_{r-h-1}},
	\end{aligned}\]
	so \eqref{control-1} follows easily from Lemma \ref{key} by induction.
	Note that in our notations, $\mu_r^{(1)}$ and $\nu_r$ is the same segment filling into $T_r^{(1)} = S_r$.

	If $r-g-1 \leqslant t \leqslant r-h-1$, then the left hand side of \eqref{control-2} would be
	\[\left( \sum_{s=0}^{r-g-2} \mu_{r-s; j_{s-1}}' - \mu_{r-s; j_s}' \right)
	+\left( \mu_{g+1; j_{r-g-2}}^{(1)} - \mu_{g+1; j_{r-g-1}}^{(1)} \right)
	+\left(\sum_{s=r-g}^{t} \mu_{r-s; j_{s-1}} - \mu_{r-s; j_s} \right) ,
	\]
	so \eqref{control-2} also follows easily from Lemma \ref{key} by induction.
	If $t< r-g-1$,  then the left hand side of \eqref{control-2} would be
	\[\sum_{s=0}^{t} \mu_{r-s; j_{s-1}}' - \mu_{r-s; j_s}' ,\]
	with $r-t> g+1$, and we have to replace $\mu_{r-t, j_{t-1}}' - \mu_{r-t, j_t}'$ by $\mu_{r-t, j_{t-1}}^{(1)} - \mu_{r-t, j_t}^{(1)}$ together with some acceptable term. 
	Notice that $\mu_{r-t; j_t}' = \mu_{r-t; 0}' = \mu_{r-t; 0}^{(1)} = \nu_{r; 0} \leqslant \mu_{r-t -1; 0}$,
	so according to \eqref{process},
	\[\begin{aligned}
	\mu_{r-t; j_{t-1}}' - \mu_{r-t; j_t}' =&
	\mu_{r-t; j_{t-1}}^{(1)} 
	+ \min\{0, \mu_{r-t-1; j_t'} - \mu_{r-t; j_t'}^{(1)} \mid j_t'< j_{t-1}\}
	- \mu_{r-t; 0}^{(1)} \\
	= & 
	\min_{j_{t-1} > j_t' >0}	\{ \mu_{r-t; j_{t-1}}^{(1)} - \mu_{r-t; 0}^{(1)},
	\mu_{r-t; j_{t-1}}^{(1)} - \mu_{r-t; j_t'}^{(1)} + \mu_{r-t-1; j_t'} - \mu_{r-t; 0}^{(1)}\}\\
	\geqslant &
	\min_{ j_{t-1} > j_t' >0}	\{ \mu_{r-t; j_{t-1}}^{(1)} - \mu_{r-t; 0}^{(1)},
	\mu_{r-t; j_{t-1}}^{(1)} - \mu_{r-t; j_t'}^{(1)} + \mu_{r-t-1; j_t'} - \mu_{r-t-1; 0}\},	
	\end{aligned}\]
	and then \eqref{control-2} follows also from Lemma \ref{key} by induction.
\end{proof}

\begin{proof}[Proof of Lemma \ref{key}]
	For convenience, let
	\[\Delta_{r-s; i, j} = \min\{0, \mu_{r-s-1; l} - \mu_{r-s; l}^{(1)} \mid i \leqslant l \leqslant j\}.\]
	Then \eqref{process} can be simplified to 
	\[\mu_{r-s-1; i}^{(1)} =
	\mu_{r-s-1; i} - \Delta_{r-s;0, i},\quad
	\mu_{r-s; i}' = 
	\mu_{r-s; i}^{(1)} +\Delta_{r-s;0, i-1}.\]
	Hence,
	\[\begin{aligned}
	&\mu_{r-s; j_{s-1}}' - \mu_{r-s; j_s}' + \mu_{r-s-1; j_s}^{(1)} - \mu_{r-s-1; j_{s+1}}^{(1)}	\\
	=
	& \mu_{r-s; j_{s-1}}^{(1)} - \mu_{r-s; j_s}^{(1)} + \mu_{r-s-1; j_s} - \mu_{r-s-1; j_{s+1}}	\\
	&+ \Delta_{r-s; 0, j_{s-1} -1} - \Delta_{r-s; 0, j_s -1} - \Delta_{r-s; 0, j_s} + \Delta_{r-s; 0, j_{s+1}},
	\end{aligned}\]
	where by $j_{s-1} > j_s > j_{s+1}$,
	\[\Delta_{r-s; 0, j_{s-1} -1} \leqslant \Delta_{r-s; 0, j_s -1} \leqslant \Delta_{r-s; 0, j_s} \leqslant \Delta_{r-s; 0, j_{s+1}}.\]
	If they are all the same, then the conclusion holds trivially.
	
	If $\Delta_{r-s; 0, j_{s-1} -1}< \Delta_{r-s; 0, j_{s+1}}$, then 
	\[\Delta_{r-s; 0, j_{s-1} -1} = \Delta_{r-s; j_{s+1} + 1, j_{s-1} -1}< 0.\]
	Combine it with $\Delta_{r-s; 0, j_s -1} \leqslant  \Delta_{r-s; 0, j_{s+1}}$, and 
	\[\mu_{r-s-1; j_s}^{(1)} \geqslant \mu_{r-s; j_s}^{(1)}\]
	following from first equation of \eqref{process}, we deduce
	\[\begin{aligned}
	&\mu_{r-s; j_{s-1}}' - \mu_{r-s; j_s}' + \mu_{r-s-1; j_s}^{(1)} - \mu_{r-s-1; j_{s+1}}^{(1)} \\
	=
	& \mu_{r-s; j_{s-1}}^{(1)} - \mu_{r-s; j_s}^{(1)} + \mu_{r-s-1; j_s}^{(1)} - \mu_{r-s-1; j_{s+1}} 
	+ \Delta_{r-s; 0, j_{s-1} -1} - \Delta_{r-s; 0, j_s -1} + \Delta_{r-s; 0, j_{s+1}}\\
	\geqslant 
	& \mu_{r-s; j_{s-1}}^{(1)} - \mu_{r-s-1; j_{s+1}} + \Delta_{r-s; j_{s+1} +1, j_{s-1} -1} \\
	=
	& \mu_{r-s; j_{s-1}}^{(1)} 
	+\min\{ -\mu_{r-s; j_s'}^{(1)} + \mu_{r-s-1; j_s'} \mid j_{s-1} > j_s' > j_{s+1} \} 
	- \mu_{r-s-1; j_{s+1}} 
	\end{aligned}\]
	Now the conclusion follows.	
\end{proof}

\bibliographystyle{alpha}
\bibliography{ref}

\vspace{1em}
\begin{flushleft} \small
	Chang Huang: Yau Mathematical Sciences Center, Tsinghua University, Haidian District, Beijing 100084, China. \\
	E-mail address: \href{mailto:hc21@mails.tsinghua.edu.cn}{\texttt{hc21@mails.tsinghua.edu.cn}}
\end{flushleft}

\end{document}